
\documentclass [leqno, spanish, 10pt]{amsart}
\setlength{\oddsidemargin}{+0.1cm}
\setlength{\evensidemargin}{-0.1cm}
\setlength{\textwidth}{15.1cm}
\setlength{\textheight}{20.7cm}

\usepackage[T1]{fontenc}
\usepackage[utf8]{inputenc}

\usepackage{etoolbox} 	
\usepackage{graphicx}
\usepackage{epstopdf}

\usepackage[all]{xy}
\usepackage{amssymb, amsmath, amsfonts, mathtools, amsthm, amsbsy, amscd,stmaryrd}
\usepackage[bbgreekl]{mathbbol} 
\usepackage[mathscr]{euscript}
\usepackage{esint}
\usepackage{upgreek}
\usepackage{xypic}
\usepackage{mathcomp}

\usepackage{url}
\urlstyle{same} 

\usepackage[linktocpage]{hyperref}


\usepackage{etoolbox}
\apptocmd{\sloppy}{\hbadness 10000\relax}{}{}

\usepackage{thmtools} 
\newtheorem{theorem}{Theorem}[section]
\newtheorem{corollary}[theorem]{Corollary}
\newtheorem{lemma}[theorem]{Lemma}

\theoremstyle{definition}
\newtheorem{example}[theorem]{Example}
\newtheorem{remark}[theorem]{Remark}

\numberwithin{equation}{section}

\def\cprime{$'$}

\newcommand{\fili}{\mathbb{F}}
\newcommand{\rvf}{H}
\newcommand{\livf}{L}

\newcommand{\lin}{\ell}
\renewcommand{\S}{\mathbb{S}}

\newcommand{\cent}{\mathbb{c}}

\newcommand{\mult}{\mathscr{M}}
\newcommand{\mnil}{\mathbb{M}}
\newcommand{\rnil}{\mathbb{R}}
\newcommand{\lnil}{\mathbb{L}}
\newcommand{\triv}{\mathbb{T}}

\newcommand{\cayn}{\mathbb{C}_{n}}

\newcommand{\dev}{\mathsf{dev}}
\newcommand{\hol}{\mathsf{h}}
\newcommand{\trans}{\mathbb{t}}

\newcommand{\amc}{\mathcal{H}}

\newcommand{\std}{\mathbb{V}^{\ast}}

\newcommand{\Om}{\Omega}

\newcommand{\amg}{\mathscr{A}}

\newcommand{\stw}{\mathbb{W}}

\newcommand{\imt}{\iota}

\newcommand{\A}{\mathsf{A}}
\newcommand{\B}{\mathsf{B}}

\newcommand{\kc}{\mathsf{c}}

\newcommand{\fie}{\mathbb{k}}

\newcommand{\lc}{\Sigma}

\renewcommand{\H}{\mathsf{H}}

\newcommand{\alg}{\mathbb{A}}

\newcommand{\balg}{\mathbb{B}}

\newcommand{\mlt}{\star}

\newcommand{\hess}{\operatorname{Hess}}

\newcommand{\vol}{\mathsf{vol}}

\renewcommand{\part}{\vdash}

\newcommand{\rad}{\mathbb{e}}

\newcommand{\Id}{\text{Id}}

\newcommand{\dum}{\,\cdot\,\,}
\newcommand{\Ga}{\Gamma}

\newcommand{\nm}{\mathsf{W}}

\newcommand{\I}{\mathbb{I}}
\newcommand{\J}{\mathbb{J}}

\newcommand{\la}{\lambda}
\newcommand{\ep}{\epsilon}

\newcommand{\zmodtwo}{\mathbb{Z}/2\mathbb{Z}}
\newcommand{\reat}{\mathbb{R}^{\times}}

\newcommand{\reap}{\mathbb{R}^{+}}

\newcommand{\cinf}{C^{\infty}}

\newcommand{\eno}{\text{End}}

\newcommand{\si}{\sigma}
\newcommand{\pr}{\partial}

\newcommand{\sign}{\operatorname{sgn}}

\newcommand{\im}{\operatorname{Im}}
\newcommand{\Aff}{\mathbb{Aff}}
\newcommand{\aff}{\mathbb{aff}}

\newcommand{\lb}{\langle}
\newcommand{\ra}{\rangle}

\newcommand{\ste}{\mathbb{V}}

\newcommand{\al}{\alpha}
\newcommand{\be}{\beta}
\newcommand{\ga}{\gamma}

\newcommand{\hnabla}{\widehat{\nabla}}
\newcommand{\tnabla}{\tilde{\nabla}}
\newcommand{\gl}{\mathfrak{gl}}

\newcommand{\g}{\mathfrak{g}}

\newcommand{\n}{\mathfrak{n}}

\newcommand{\ad}{\text{ad}}
\newcommand{\Ad}{\text{Ad}}

\newcommand{\tensor}{\otimes}

\newcommand{\rea}{\mathbb R}

\newcommand{\tr}{\operatorname{\mathsf{tr}}}

\newcommand{\parab}{\mathbb{Par}}

\begin{document}
\title{Left-symmetric algebras and homogeneous improper affine spheres}
\author{Daniel J.~F. Fox} 
\address{Departamento de Matemática Aplicada a la Ingeniería Industrial \\ Escuela Técnica Superior de Ingeniería y Diseño Industrial\\ Universidad Politécnica de Madrid\\Ronda de Valencia 3\\ 28012 Madrid España}
\email{daniel.fox@upm.es}

\begin{abstract}
The nonzero level sets in $n$-dimensional flat affine space of a translationally homogeneous function are improper affine spheres if and only if the Hessian determinant of the function is equal to a nonzero constant multiple of the $n$th power of the function. 
The exponentials of the characteristic polynomials of certain left-symmetric algebras yield examples of such functions whose level sets are analogues of the generalized Cayley hypersurface of Eastwood-Ezhov. There are found purely algebraic conditions sufficient for the characteristic polynomial of the left-symmetric algebra to have the desired properties. Precisely, it suffices that the algebra has triangularizable left multiplication operators and the trace of the right multiplication is a Koszul form for which right multiplication by the dual idempotent is projection along its kernel, which equals the derived Lie subalgebra of the left-symmetric algebra.  
\end{abstract}

\maketitle
\setcounter{tocdepth}{1}  
\tableofcontents

\section{Introduction}
Let $\hnabla$ be the standard flat affine connection on $\rea^{n+1}$ and fix a $\hnabla$-parallel volume form $\Psi$.  Define $\H(F)$ by $\det \hess F = \H(F) \Psi^{\tensor 2}$. An immersed hypersurface $\Sigma$ in $\rea^{n+1}$ is \textit{nondegenerate} if its second fundamental form with respect to $\hnabla$ is nondegenerate. In this case, there is a distinguished equiaffinely invariant transverse vector field defined along $\Sigma$, the \textit{equiaffine normal}. A nondegenerate connected hypersurface $\Sigma$ is an \textit{improper affine sphere} if its equiaffine normals are parallel. 
By Theorem \ref{ahtheorem}, the level sets of a smooth translationally homogeneous function $F$ on $\rea^{n+1}$ satisfying 
\begin{align}\label{ma}
\H(F) = \kc F^{n+1}
\end{align}
for some $\kc \neq 0$ are improper affine spheres. That $F$ be translationally homogeneous means that there is a constant vector $\rad^{i} \in \rea^{n+1}$ such that $F(x + t \rad) = e^{\la t}F(x)$ for all $t \in \rea$ and $x \in \rea^{n+1}$.

The main result reported here is Theorem \ref{triangularizabletheorem0}, which yields translationally homogeneous solutions of \eqref{ma} having the form $F = e^{P}$ where $P$ is a weighted homogeneous polynomial arising as the characteristic polynomial of the left-symmetric algebra associated with the prehomogeneous action of a simply-connected solvable Lie group. These solutions of \eqref{ma} have the nice properties that they are defined and nonvanishing on all of $\rea^{n+1}$ and their level sets are connected, everywhere nondegenerate graphs that are homogeneous with respect to the action of a group of affine transformations. 

An \textit{algebra} $(\alg, \mlt)$ means a finite-dimensional vector space $\alg$ with a bilinear product (multiplication) $\mlt: \alg \times \alg \to \alg$ that need not be either unital or associative. Here mainly Lie algebras and left-symmetric algebras are considered, although other algebras are mentioned occasionally. A \textit{left-symmetric algebra}\footnote{Left-symmetric algebras are also called \textit{pre-Lie algebras}, \textit{Vinberg algebras}, \textit{Koszul-Vinberg algebras}, and \textit{chronological algebras}, and some authors prefer to work with the opposite category of \textit{right-symmetric algebras}.}  (abbreviated \textit{LSA}) $(\alg, \mlt)$ is a vector space $\alg$ equipped with a multiplication $\mlt$ such that the associated skew-symmetric bracket $[a, b] = a\mlt b - b\mlt a$ satisfies the Jacobi identity, so makes $\alg$ into a Lie algebra, and such that the left regular representation $L:\alg \to \eno(\alg)$ defined by $L(a)b = a\mlt b$ is a Lie algebra representation, meaning $[L(a), L(b)] = L([a, b])$. Equivalently, the right regular representation $R:\alg \to \eno(\alg)$ defined by $R(a)b = b\mlt a$ satisfies
\begin{align}\label{rlsa}
R(x\mlt y) - R(y)R(x) = [L(x), R(y)].
\end{align}
By \eqref{rlsa} the \textit{trace form} $\tau$ defined on $\alg$ by $\tau(x, y) = \tr R(x)R(y) = \tr R(x\mlt y)$ is symmetric. An LSA is \textit{incomplete} if the linear form $\tr R$ is not zero. For an incomplete LSA with nondegenerate trace form $\tau$, the unique element $r \in \alg$ such that $\tr R(x) = \tau(r, x)$ for all $x \in \alg$, is an idempotent called the \textit{right principal idempotent}. An LSA $(\alg, \mlt)$ defined over a field $\fie$ of characteristic zero is \textit{triangularizable} if there is a basis of $\alg$ with respect to which every $L(x)$ is triangular. By Lemma \ref{cslemma} this is equivalent to the condition that the underlying Lie algebra $(\alg, [\dum, \dum])$ is solvable and for every $x \in \alg$ the eigenvalues of $L(x)$ are contained in $\fie$.

\begin{theorem}\label{triangularizabletheorem0}
Let $(\alg, \mlt)$ be a triangularizable $n$-dimensional LSA over a field of characteristic zero and having nondegenerate trace form $\tau$ and codimension one derived Lie subalgebra $[\alg, \alg]$. Let $G$ be the simply-connected Lie group with Lie algebra $(\alg, [\dum, \dum])$. There are a nonzero constant $\kc$ and a closed unimodular subgroup $H \subset G$ having Lie algebra $[\alg, \alg]$, such that the characteristic polynomial $P(x) = \det(I + R(x))$ of $(\alg, \mlt)$ solves $\H(e^{P}) = \kc e^{nP}$, and the level sets of $P$ are improper affine spheres homogeneous for the action of $H$ and having affine normals equal to a constant multiple of the right principal idempotent $r$. 
\end{theorem}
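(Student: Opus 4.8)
The plan is to exhibit $P$ as a relative invariant of the étale affine action defined by the LSA and then to deduce the Monge--Ampère equation from the resulting homogeneity. The axiom \eqref{rlsa} is precisely the statement that $a\mapsto X_{a}$, where $X_{a}$ is the affine vector field $x\mapsto L(a)x+a$, is a Lie algebra homomorphism from $(\alg,[\dum,\dum])$ into the affine vector fields on $\alg$; since $X_{a}(0)=a$, the point $0$ lies in the open orbit and $G$ acts on $\alg$ by affine transformations. The decisive first step is the identity
\[
X_{a}\log P=\tr R(a)\qquad\text{for all } a\in\alg ,
\]
which I would prove by writing $\partial_{j}\log P=\tr\!\big(S\,R(e_{j})\big)$ with $S=(I+R(x))^{-1}$, expanding $R(a\mlt x)$ by \eqref{rlsa}, and using $SR(x)=R(x)S=I-S$ together with cyclicity of the trace; every $x$-dependent term cancels and only $\tr R(a)$ survives. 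Integrating along one-parameter subgroups yields $P(g\cdot x)=\chi(g)P(x)$ for the character $\chi$ of $G$ with $d\chi=\tr R$.

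Next I would isolate $H$. Symmetry of $\tau$ gives $\tr R([x,y])=\tau(x,y)-\tau(y,x)=0$, and since $L$ is a representation $\tr L([x,y])=\tr[L(x),L(y)]=0$; hence both $\tr R$ and $\tr L$ annihilate $[\alg,\alg]$. Therefore the connected subgroup $H$ with Lie algebra $[\alg,\alg]$---closed because $[\alg,\alg]$ is an ideal with one-dimensional quotient---fixes $P$ (so its orbits lie in the level sets of $P$), acts by affine maps whose linear parts $\exp L(a)$ have unit determinant, and is unimodular since $\tr\ad_{a}=\tr L(a)-\tr R(a)=0$ on $[\alg,\alg]$. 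A dimension count together with the connectedness of the level sets identifies each level set of $P$ with a single $H$-orbit.

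For translational homogeneity I would use that $r$ is idempotent and that $R(r)$ is the rank-one projection of $\alg$ onto $\fie r$ along $[\alg,\alg]=\ker(\tr R)$, whose dual functional is $\tr R$ itself. Rank one makes $t\mapsto P(x+tr)=\det(I+R(x)+tR(r))$ affine by the matrix determinant lemma, so $\hess P(r,\dum)$ vanishes along $r$; and $L(a)r=R(r)a=(\tr R(a))\,r=0$ for $a\in[\alg,\alg]$ shows the linear part of $H$ fixes $r$, so that $\langle dP,r\rangle$ is $H$-invariant. Being simultaneously $H$-invariant and constant along $r$, the slope $\langle dP,r\rangle$ is a genuine constant $\la$, whence $e^{P}$ is translationally homogeneous with $\rad=r$. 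In coordinates $(y,s)$ with $s$ along $r$ and $y$ spanning $[\alg,\alg]$ this reads $P=\la s+p(y)$, so the level sets are the nondegenerate graphs $s=(c-p(y))/\la$ with constant transverse direction $r$.

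Finally I would establish $\H(e^{P})=\kc e^{nP}$. From $P=\la s+p(y)$ and $\hess(e^{P})=e^{P}(\hess P+dP\otimes dP)$, a Schur-complement computation gives $\det(\hess P+dP\otimes dP)=\la^{2}\det\hess_{y}p$, which is independent of $s$. On the other hand $\det(\hess P+dP\otimes dP)=e^{-nP}\H(e^{P})$ is $H$-invariant, because $e^{P}$ is $H$-invariant and the linear parts of $H$ are unimodular, so it is a function of $P$ alone; a function of $P=\la s+p(y)$ that is independent of $s$ must be constant. Hence $\det(\hess P+dP\otimes dP)=\kc$ and $\H(e^{P})=\kc e^{nP}$, and evaluating at $0$, where $\hess\log P=-\tau$ and $dP=\tr R=\tau(r,\dum)$, gives $\kc$ equal to a nonzero multiple of $\det\tau$ by nondegeneracy. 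Theorem \ref{ahtheorem} then identifies the level sets of $P$ as improper affine spheres, homogeneous under $H$ and with affine normals a constant multiple of $r$. I expect the principal obstacle to be the purely algebraic verification that triangularizability, nondegeneracy of $\tau$, and codimension one of $[\alg,\alg]$ force $R(r)$ to be exactly this rank-one projection---the point where triangularizability of the operators $L(x)$ is needed to control nilpotent parts---together with the secondary global facts that the open orbit exhausts $\alg$, that $H$ is closed, and that the level sets are connected.
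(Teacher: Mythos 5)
Your superstructure is sound and closely parallels the paper's: the identity $dP_{x}((I+R(x))a)=P(x)\tr R(a)$ (the paper's \eqref{dptra}, a consequence of Lemma \ref{charpolylemma}), the identification of $H$ with the kernel of the character $\chi$ satisfying $d\chi=\tr R$, the reduction of the Monge--Amp\`ere equation to constancy of $\det(\hess P+dP\tensor dP)$, and the evaluation at $0$ against $\tau$ to see $\kc\neq 0$ all appear in the paper (Lemma \ref{twisteddetlemma}, Theorem \ref{lsacptheorem}, Theorem \ref{ahtheorem}), and your trace computation for the first identity is correct. However, there is a genuine gap, and it is not peripheral: you assume, and explicitly defer, the statement that $R(r)$ is the rank-one projection onto $\lb r\ra$ along $[\alg,\alg]=\ker\tr R$ (equivalently $\ker R(r)=\ker\tr R$, $R(r)^{2}=R(r)$, $\tr R(r)=1$). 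Every subsequent step of your argument --- affineness of $t\mapsto P(x+tr)$ via the matrix determinant lemma, the fixing of $r$ by $\lin(H)$, the graph structure and hence connectedness of the level sets --- rests on this, and it is precisely the content for which the theorem's hypotheses are needed. It cannot be dispatched softly: Lemma \ref{principalidempotentlemma} yields only that $R(r)-R(r)^{2}$ is $\tau$-self-adjoint and nilpotent, and the paper remarks that in indefinite signature these two properties do not force its vanishing; moreover the claim is simply false without the codimension-one hypothesis, since in a clan (which is triangularizable with nondegenerate $\tau$) the right principal idempotent is a right unit and $\tr R(r)=n$.

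The paper closes exactly this gap with Lemma \ref{triangularizablelemma}: by Lemma \ref{cslemma} (Lie's theorem) triangularizability yields a complete flag invariant under $L(\alg)$, hence a vector $u$ and a functional $\la\in\alg^{\ast}$ with $R(u)x=L(x)u=\la(x)u$ for all $x$; nondegeneracy of $\tau$ forces $\la(u)=\tr R(u)\neq 0$, and after rescaling $R(u)$ is the desired rank-one projection. Only then does codimension one of $[\alg,\alg]$ enter: since $[\alg,\alg]\subset\ker\la\cap\ker\tr R$, all three spaces coincide, whence $\la=\tr R$ and $u=r$. With this lemma supplied, your proof goes through, and your remaining ``secondary global facts'' are routine: closedness of $H$ follows by taking $H=\ker\chi$ as the paper does; connectedness of level sets follows from $P(y+sr)=P(y)+s$; and ``the open orbit exhausts $\alg$'' is neither true in general nor needed. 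Where you genuinely differ from the paper is the final step: the paper routes through the graph-extension machinery (Lemmas \ref{graphextensionlemma}--\ref{incompletelsalemma}) and the identity $2\tr L=(n+1)\tr R$ in Theorem \ref{lsacptheorem}, whereas your direct argument --- that $\det(\hess P+dP\tensor dP)$ is invariant under $H$ (whose linear parts are unimodular) and independent of the $r$-coordinate, hence constant --- is a legitimate and somewhat leaner alternative, at the cost of needing the sweep-out argument that $H$-orbits and $r$-translates fill a neighborhood of $0$.
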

The translational homogeneity of $e^{P}$ is equivalent to the identity $P(x + tr) = P(x) + t$, while the weighted homogeneity of $P$ is the statement that $dP(E) = P$ where $E$ is the vector field $E_{x} = r + r\mlt x$. 

LSAs were introduced by Vinberg in \cite{Vinberg} as a tool in the classification of homogeneous convex cones. 
In \cite{Vinberg}, a triangularizable LSA with a Koszul form for which the associated metric is positive definite is called a \textit{clan}; see also \cite{Shima-homogeneoushessian}, where there is studied the more general class of triangularizable LSAs equipped with a positive definite Hessian metric (the definitions are recalled in section \ref{hessiansection}). In \cite{Vinberg}, Vinberg showed that the automorphism group of a homogeneous convex cone contains a triangularizable solvable subgroup acting simply transitively on the cone, and established a bijective correspondence between clans and homogeneous convex cones. Although it has not been completely developed, there should be a correspondence similar to that for homogeneous convex cones relating a different sort of prehomogeneous actions of solvable Lie groups with the domains bounded by homogeneous improper affine spheres. 
In the special case of a convex cone that is a component of the complement of the zero set of the fundamental relative invariant of a real form of an irreducible regular prehomogeneous vector space, the relative invariant $Q$ of the prehomogeneous vector space is among the relative invariants of this triangular subgroup. If the underlying space has dimension $n$, $Q$ solves an equation of the form $\H(Q) = \kc Q^{m}$ where $m = n(k-2)/k$ and $k = \deg Q$. The equation $\H(P) = \kc P^{n}$ results formally when the homogeneity degree $k$ tends to $\infty$, so in some formal sense the analogue for this equation of degree $k$ homogeneous polynomial solutions of $\H(Q) = \kc Q^{m}$ should be functions that somehow can be regarded as polynomials homogeneous of infinite degree. The conclusion of Theorem \ref{triangularizabletheorem0} shows that this makes sense if one regards a translationally homogeneous exponential of a weighted homogeneous polynomial as having infinite homogeneity degree.
The point relevant here is that the $P$ of Theorem \ref{triangularizabletheorem0} is relatively invariant for the action of the simply-connected Lie group corresponding to the Lie algebra underlying the LSA, so that Theorem \ref{triangularizabletheorem0} fits the case of improper affine spheres in a common framework with the case of proper affine spheres studied in \cite{Fox-prehom}.

Section \ref{affinespheresection} presents the needed background on improper affine spheres. Theorem \ref{ahtheorem} shows that the level sets of a translationally homogeneous function are improper affine spheres if and only if the function solves a Monge-Ampère equation of the form \eqref{ma}. Lemma \ref{improperlemma} shows the equivalence of \eqref{ma} to various other equations of Monge-Ampère type; these alternative formulations are used in the proof of Theorem \ref{triangularizabletheorem0}.

Section \ref{impropersection} reviews background on left-symmetric algebras, affine actions, and completeness. Although most of this material can be found in other sources, it is recalled here to have in one place all that is needed in subsequent sections. Following H. Shima, an LSA is Hessian if it admits a nondegenerate symmetric bilinear form (a metric) satisfying the compatibility condition \eqref{hessianmetric}. Section \ref{hessiansection} treats Hessian LSAs. The technical Lemma \ref{principalidempotentlemma} generalizes to indefinite signature Hessian LSAs results obtained for clans by H. Shima and E.~B. Vinberg. Theorem \ref{lsacptheorem} gives conditions on a Hessian LSA that in conjunction with Theorem \ref{ahtheorem} guarantee that the level sets of its characteristic polynomial are improper affine spheres. 

There are many notions of nilpotence used in studying LSAs and section \ref{nilpotencesection} discusses the interrelationships between those most relevant here. Some of the results obtained have purely algebraic interest. Theorem \ref{trivalgtheorem} shows that a finite-dimensional LSA over a field of characteristic zero is nilpotent if and only if it is right nilpotent with nilpotent underlying Lie algebra. The reader should see section \ref{nilpotencesection} for the definitions because terminology related to notions of nilpotent varies with the source; here the conventions follow those standard in the study of nonassociative algebras, so that an algebra is \textit{nilpotent} if the associative multiplication algebra generated by all left and right multiplication operators is nilpotent. 

By Lemma \ref{rightnilpotentlemma} such a right nilpotent LSA with nilpotent underlying Lie algebra is triangularizable, and there results the following corollary of Theorem \ref{triangularizabletheorem}.

\begin{corollary}\label{triangularizabletheorem2}
Let $(\alg, \mlt)$ be an $n$-dimensional LSA over a field of characteristic zero that is right nilpotent with nilpotent underlying Lie algebra. Suppose the trace-form $\tau$ is nondegenerate and the derived Lie subalgebra $[\alg, \alg]$ has codimension one. Let $G$ be the simply-connected Lie group with Lie algebra $(\alg, [\dum, \dum])$. There are a nonzero constant $\kc$ and a closed unimodular subgroup $H \subset G$ having Lie algebra $[\alg, \alg]$, such that the characteristic polynomial $P(x) = \det(I + R(x))$ of $(\alg, \mlt)$ solves $\H(e^{P}) = \kc e^{nP}$, and the level sets of $P$ are improper affine spheres homogeneous for the action of $H$ and having affine normals equal to a constant multiple of the right principal idempotent $r$. 
\end{corollary}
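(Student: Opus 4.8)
The plan is to deduce this corollary directly from the main theorem by checking that the hypotheses imposed here are at least as strong as those required there. Comparing the two statements, the sole difference in hypothesis is that the corollary assumes $(\alg, \mlt)$ is right nilpotent with nilpotent underlying Lie algebra, in place of the assumption of triangularizability; the remaining conditions—that $\alg$ be an $n$-dimensional LSA over a field of characteristic zero, that the trace form $\tau$ be nondegenerate, and that $[\alg, \alg]$ have codimension one—are word for word the same, and the conclusions are likewise identical. So the work reduces to passing from the nilpotence hypotheses to triangularizability.

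First I would invoke Lemma \ref{rightnilpotentlemma}, which asserts precisely that a right nilpotent LSA with nilpotent underlying Lie algebra is triangularizable. This is the single substantive step, and conceptually it rests on the criterion of Lemma \ref{cslemma}: triangularizability of the left multiplications is equivalent to solvability of the underlying Lie algebra together with the requirement that every $L(x)$ have all eigenvalues in the ground field $\fie$. A nilpotent Lie algebra is in particular solvable, so the first condition is automatic; and the combination of right nilpotence with nilpotence of the bracket should force each $L(x)$ to be a nilpotent operator, hence to have all eigenvalues equal to zero and a fortiori lying in $\fie$. Thus both halves of the criterion hold and the algebra is triangularizable.

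Having established triangularizability, all the hypotheses of Theorem \ref{triangularizabletheorem} (equivalently Theorem \ref{triangularizabletheorem0}) are in force, and I would simply quote its conclusion: there exist a nonzero constant $\kc$ and a closed unimodular subgroup $H \subset G$ with Lie algebra $[\alg, \alg]$ such that the characteristic polynomial $P(x) = \det(I + R(x))$ solves $\H(e^{P}) = \kc e^{nP}$, and the level sets of $P$ are improper affine spheres homogeneous under $H$ with affine normals a constant multiple of the right principal idempotent $r$. This is verbatim the assertion of the corollary. The step carrying the real content is the reduction via Lemma \ref{rightnilpotentlemma}; the only point demanding care there is that right nilpotence a priori concerns the operators $R(x)$ rather than the $L(x)$, so one must check that right nilpotence together with bracket nilpotence genuinely controls the left multiplications—which is exactly the content that Lemma \ref{rightnilpotentlemma} is designed to encapsulate.
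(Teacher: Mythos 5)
Your proposal is correct and is essentially the paper's own derivation: the paper obtains this corollary (as stated just before it in the introduction) precisely by citing Lemma \ref{rightnilpotentlemma} to conclude that the LSA is triangularizable and then quoting Theorem \ref{triangularizabletheorem}. The only cosmetic difference is your conceptual gloss on why Lemma \ref{rightnilpotentlemma} holds — you route it through Lemma \ref{cslemma} (solvability plus eigenvalues in $\fie$, using that right nilpotence together with a nilpotent bracket forces each $L(x)$ to be nilpotent), whereas the paper proves that lemma by exhibiting a complete flag of two-sided ideals stable under $L(\alg)$; in fact the lemma needs only right nilpotence for triangularizability, so this does not affect the proof of the corollary.
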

By Theorem \ref{trivalgtheorem} the nilpotency hypothesis of Corollary \ref{triangularizabletheorem2} can be restated simply as that $(\alg, \mlt)$ be nilpotent.

Theorem \ref{triangularizabletheorem} gives a sort of weight space decomposition of an LSA as in Theorem \ref{triangularizabletheorem0} that is useful in constructing examples. Although this is not developed systematically, section \ref{examplesection} concludes with some illustrative examples obtained by applying Theorem \ref{triangularizabletheorem0}. 

A motivating example, treated in Example \ref{cayleyexample}, is given by the $n$th generalized \textit{Cayley hypersurface}, defined by M. Eastwood and V. Ezhov in \cite{Eastwood-Ezhov} as the zero level set of the polynomial
\begin{align}\label{eepolynomials}
\Phi_{n}(x_{1}, \dots, x_{n}) = \sum_{i = 1}^{n}(-1)^{i}\frac{1}{i}\sum_{j_{1} + \dots + j_{i} = n}x_{j_{1}}\dots x_{j_{i}} = \sum_{\la \part n}(-1)^{|\la|}\frac{c_{\la}}{|\la|}x_{(\la)},
\end{align}
where the second sum is over all partitions $\la$ of $n$; $|\la|$ is the length of the partition $\la$; $x_{(\la)} = x_{1}^{m_{1}}\dots x_{n}^{m_{n}}$, where $m_{i}$ is the multiplicity of $i$ in $\la$; and $c_{\la}$ is the number of integer compositions of $n$ determining the partition $\la$. Eastwood and Ezhov prove that the Cayley hypersurface is an improper affine sphere admitting a transitive abelian group of affine motions and whose full symmetry group has one-dimensional isotropy. They ask if these properties characterize these hypersurfaces, and with the additional assumption that the domain above the hypersurface is homogeneous this was proved by Y. Choi and H. Kim in \cite{Choi-Kim} using the theory of LSAs. Relations between homogeneous improper affine spheres, LSAs, and Monge-Ampère equations like that in Theorem \ref{triangularizabletheorem0} and Theorems \ref{lsacptheorem} and \ref{triangularizabletheorem} in section \ref{impropersection} have been studied by Choi and Kim and K. Chang in the papers \cite{Choi-domain}, \cite{Choi-Chang}, and \cite{Choi-Kim}, that address a characterization of the generalized Cayley hypersurfaces conjectured in \cite{Eastwood-Ezhov}. Their work, as well as that of H. Shima \cite{Shima-homogeneoushessian} and A. Mizuhara \cite{Mizuhara, Mizuhara-solvable}, provided motivation for the material described here. In example \ref{cayleyexample} there is constructed for each positive integer $n$ an LSA $(\cayn, \mlt)$ that satisfies the hypotheses of Theorem \ref{triangularizabletheorem0} and, by Lemma \ref{cayleypolynomiallemma}, has the polynomial $P_{n} = 1 - n\Phi_{n}$ as its characteristic polynomial. This gives an alternative demonstration that the Cayley hypersurfaces are homogeneous improper affine spheres with the properties demonstrated in \cite{Eastwood-Ezhov}. A consequence, also proved in Lemma \ref{cayleypolynomiallemma}, of the realization of $1 - n\Phi_{n}$ as a determinant, is the recursive formula 
\begin{align}\label{cayleyrecursion2}
\Phi_{n}(x_{1}, \dots, x_{n}) = - x_{n} + \sum_{i = 1}^{n-1}(\tfrac{i}{n} - 1)x_{i}\Phi_{n-i}(x_{1}, \dots, x_{n-i}),
\end{align}
determining $\Phi_{n}$ (where $\Phi_{1}(x) = -x$). 

\section{Improper affine spheres as level sets}\label{affinespheresection}
This section gives the background on improper affine spheres and translationally homogeneous functions needed to understand the statement and proof of Theorem \ref{triangularizabletheorem0}. 
The reader primarily interested in left-symmetric algebras can skip directly to section \ref{impropersection}.

The group $\Aff(n+1, \rea)$ of affine transformations of $\rea^{n+1}$ comprises the automorphisms of the standard flat affine connection $\hnabla$ on $\rea^{n+1}$. Elements of its subgroup preserving the tensor square $\Psi^{2}$ of a fixed $\hnabla$-parallel volume form $\Psi$ are called \textit{unimodular affine} or \textit{equiaffine}. 

Let $\Sigma$ be a connected coorientable nondegenerate immersed hypersurface in $\rea^{n+1}$. Via the splitting $T\rea^{n+1} = T\Sigma \oplus \lb N\ra$ determined by a vector field $N$ transverse to $\Sigma$, the connection $\hnabla$ induces on $\Sigma$ a connection $\nabla$, a symmetric covariant two tensor $h$ representing the second fundamental form, a shape operator $S \in \Ga(\eno(T\Sigma))$, and the connection one-form $\tau \in \Ga(T^{\ast}\Sigma)$; these are defined by $\hnabla_{X}Y = \nabla_{X}Y + h(X, Y)N$ and $\hnabla_{X}N = -S(X) + \tau(X)N$, where $X$ and $Y$ are tangent to $\Sigma$. Here, as in what follows, notation indicating the restriction to $\Sigma$, the immersion, the pullback of $T\rea^{n+1}$, etc. is omitted. As generally in what follows, when indices are used the abstract index and summation conventions are employed and indices are to be understood as labels indicating valence and symmetries. Tensors on $\Sigma$ are labeled using capital Latin abstract indices. That $\Sigma$ be \textit{nondegenerate} means that the second fundamental form of $\Sigma$, equivalently $h_{IJ}$, is nondegenerate. Since by assumption $\Sigma$ is cooriented, it is orientable, and the interior multiplication $\imt(N)\Psi$ is a volume form on $\Sigma$. Since $\hnabla \Psi = 0$, for $X$ tangent to $\Sigma$, $\nabla_{X} \imt(N)\Psi = \tau(X)\imt(N)\Psi$. Let $\vol_{h} = q\imt(N)\Psi$ be the volume form induced on $\Sigma$ by $h$ and the orientation consistent with $\imt(N)\Psi$. Since $\vol_{h}^{2} = |\det h|$,
\begin{equation}\label{deth}
h^{PQ}\nabla_{I}h_{PQ} = 2\vol_{h}^{-1}\nabla_{I}\vol_{h}  = 2\left(q^{-1}dq_{I} + \tau_{I} \right).
\end{equation}
Any other transversal to $\Sigma$ has the form $\tilde{N} = a(N + Z)$ for a nowhere vanishing function $a$ and a vector field $Z$ tangent to $\Sigma$. The second fundamental form $\tilde{h}$, connection $\tnabla$, and connection one-form $\tilde{\tau}$ determined by $\tilde{N}$ and $\hnabla$ are related to $h$, $\nabla$, and $\tau$ by
\begin{align}\label{transform}
&\tilde{h}_{IJ} = a^{-1}h_{IJ},& &\tnabla = \nabla - h_{IJ}Z^{K}, & &\tilde{\tau}_{I} = \tau_{I} + a^{-1}da_{I} + h_{IP}Z^{P}.
\end{align}
It follows from \eqref{deth} and \eqref{transform} that 
\begin{equation}\label{normalize}
n\tilde{\tau}_{I} + \tilde{h}^{PQ}\tnabla_{I}\tilde{h}_{PQ} = n \tau_{I} + h^{PQ}\nabla_{I}h_{PQ} + (n+2)Z^{P}h_{IP}, 
\end{equation}
where $h^{IJ}$ and $\tilde{h}^{IJ}$ are the symmetric bivectors inverse to $h_{IJ}$ and $\tilde{h}_{IJ}$. Since \eqref{normalize} does not depend on $a$, the span of $\tilde{N}$ is determined by requiring $n\tilde{\tau}_{I} =- \tilde{h}^{PQ}\tnabla_{I}\tilde{h}_{PQ}$, so that, by \eqref{deth} and \eqref{normalize},
\begin{equation}\label{zdet}
Z^{P}h_{PI} =  -\tfrac{1}{n+2}\left(n\tau_{I} + h^{PQ}\nabla_{I}h_{PQ}\right) = -\tau_{I} - \tfrac{2}{n+2}q^{-1}dq_{I}= -\tfrac{1}{2}h^{PQ}\nabla_{I}h_{PQ} + \tfrac{1}{n+2}q^{-1}dq_{I}.
\end{equation}
Whatever is $a$, the resulting transversal $\tilde{N}$ is called an \textit{affine normal}, and the line field it spans is called the \textit{affine normal distribution of $\Sigma$}. Since $\det \tilde{h} = a^{-n}\det h$, the \textit{equiaffine normal} $\nm = a(N + Z)$ is determined up to sign by requiring $|\vol_{\tilde{h}}| = |\imt(\nm)\Psi|$, which forces $q = |a|^{(n+2)/2}$. 
By \eqref{transform}, the connection one-form associated with the equiaffine normal vanishes. Once a coorientation has been fixed, let $\nabla$, $h$, and $S$ be determined by the cooriented equiaffine normal. The pseudo-Riemannian metric $h_{IJ}$ is called the \textit{equiaffine metric}. The \textit{equiaffine mean curvature} is $\amc = n^{-1}S_{I}\,^{I}$. 

A coorientable nondegenerate connected hypersurface $\Sigma$ is an \textit{improper affine sphere} if its equiaffine normals are parallel. It is straightforward to check that $\Sigma$ is an improper affine sphere if and only if the shape operator determined by any affine normal vanishes identically. 

The definition of a connected affine sphere does not require a choice of coorientation, but some coherence condition on coorientations is necessary when there are multiple connected components. The convention used in this paper is the following. A smoothly immersed hypersurface having more than one connected component is an improper affine sphere if each connected component is an improper affine sphere and the affine normal lines of the different components are all parallel and there is a choice of coorientations of the components so that for the equiaffine normal consistent with this choice the signatures modulo $4$ of the equiaffine metrics of the different components are all the same. Note that if a disconnected hypersurface is an affine sphere with respect to a given choice of coorientations of the components, it is an affine sphere with respect to the opposite choice of coorientations, but with respect to no other choice of coorientations. In this sense, the definition is consistent with the definition for a connected hypersurface. 

Let $\Omega \subset \rea^{n+1}$ be an open domain (a nonempty open subset). For $F \in C^{k}(\Om)$ let $F_{i_{1}\dots i_{k}} = \hnabla_{i_{1}}\dots\hnabla_{i_{k-1}}dF_{i_{k}}$, and let $g_{ij} = (\hess F)_{ij} = F_{ij} = \hnabla_{i}dF_{j}$ be the \textit{Hessian} of $F$. As $\det \hess F$ and the tensor square $\Psi^{2}$ are $2$-densities, it makes sense to define the \textit{Hessian determinant} $\H(F)$ of a $C^{2}$ function $F$ by $\det \hess F = \H(F)\Psi^{2}$. If $x^{1}, \dots, x^{n+1}$ are coordinate functions such that $dx^{1}, \dots, dx^{n+1}$ is a $\hnabla$-parallel coframe and $\Psi = dx^{1}\wedge \dots \wedge dx^{n+1}$, then $\H(F) = \det \tfrac{\pr^{2}F}{\pr x^{i}\pr x^{j}}$. 

Where $\H(F)$ is nonzero, $g_{ij}$ is a pseudo-Riemannian metric with inverse symmetric bivector $g^{ij}$. In this case, indices are raised and lowered using $g_{ij}$ and $g^{ij}$, so, for example, $F^{i} = g^{ip}F_{p}$. There is written $|dF|_{g}^{2} = F^{p}F_{p}$, although this need not be positive when $g_{ij}$ is not positive definite. 

Let $\lin:\Aff(n+1, \rea) \to GL(n+1, \rea)$ be the projection onto the linear part. Because of the identity $g\cdot \H(F) = \det{}^{2}\lin(g) \H(g\cdot F)$, the equation 
\begin{align}\label{mai}
&\H(F) = \phi(F)
\end{align}
is affinely covariant in the sense that $F$ solves \eqref{mai} for some function $\phi$ if and only if $g\cdot F$ solves \eqref{mai} with a positive constant multiple of $\phi$ in place of $\phi$. In particular, it is natural to consider solutions of \eqref{mai} up to unimodular affine equivalence. Moreover, the affine covariance suggests also that properties of the equations \eqref{mai} should be reflected in the unimodular affine geometry of the level sets of $F$. 

An interesting general problem is the determination up to affine equivalence of all sufficiently smooth solutions of \eqref{mai} on a domain $\Om \subset \rea^{n+1}$ for some particular choice of $\phi$, e.g. when $\phi$ is a power or an exponential, and for particular choices of $\Om$. Of particular interest are cases of \eqref{mai} that admit solutions that are \textit{entire}, meaning defined on all of $\rea^{n+1}$, or \textit{polynomial}, meaning that $F_{i_{1}\dots i_{k}} = 0$ for some $k$.

Here the interest is in equations of the form \eqref{mai} whose solutions have level sets that are improper affine spheres. Requiring some kind of homogeneity property of the function $F$ restricts the possible forms of $\phi$ in \eqref{mai}. In particular, here there will be considered functions $F$ that are translationally homogeneous in the sense explained next, and that such a function solve an equation of the form \eqref{mai} forces $\phi$ to be a polynomial. The precise statement is Theorem \ref{ahtheorem}.

Let $\Omega \subset \rea^{n+1}$ be an open domain. For $F \in C^{0}(\Omega)$ and $r \in \rea$, let $\lc_{r}(F, \Omega) = \{x \in \Omega: F(x) = r\}$. For $\la\in \rea$ define $\amg^{\la}(\Omega)$ to comprise those $F \in C^{0}(\Omega) \cap \cinf(\Omega \setminus \lc_{0}(F, \Omega))$ for which there exists a parallel vector field (that is, a constant vector) $\rad^{i} \in \rea^{n+1}$ such that $F(x + t \rad) = e^{\la t}F(x)$ for all $t \in \rea$ and $x \in \Om$ such that $x + t \rad \in \Omega$.
An element of $\amg^{\la}$ is \textit{$\la$-translationally (affinely) homogeneous} with \textit{axis} $\rad^{i}$. For $F \in C^{0}(\Om)$ and $g \in \Aff(n+1, \rea)$ define $g\cdot F \in C^{0}(g \Om)$ by $(g \cdot F)(x) = F(g^{-1}x)$. Translational affine homogeneity is an affinely invariant condition in the sense that $F$ is translationally homogeneous if and only if $g\cdot F$ is translationally homogeneous for all $g \in \Aff(n+1, \rea)$. 
\begin{lemma}\label{affhomlemma}
A function $F \in \cinf(\Omega \setminus \lc_{0}(F, \Omega)) \cap C^{0}(\Omega) $ is in $\amg^{\la}(\Om)$ if and only if there is $\rad^{i} \in \rea^{n+1}$ such that $\rad^{p}F_{p} = \la F$. 
\end{lemma}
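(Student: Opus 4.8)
The plan is to pass between the finite functional equation defining $\amg^{\la}(\Om)$ and the first-order equation $\rad^{p}F_{p} = \la F$ by differentiating along, respectively integrating along, the lines in the direction of the axis $\rad^{i}$. For the direct implication, suppose $F \in \amg^{\la}(\Om)$ with axis $\rad^{i}$. Fix $x \in \Om \setminus \lc_{0}(F, \Om)$; since this set is open, $F$ is smooth near $x$ and $t \mapsto F(x + t\rad)$ is differentiable near $t = 0$. Differentiating the identity $F(x + t\rad) = e^{\la t}F(x)$ in $t$ and setting $t = 0$ gives, by the chain rule, $\rad^{p}F_{p}(x) = \la F(x)$, which is the asserted identity on the smooth locus.

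For the converse, assume $\rad^{p}F_{p} = \la F$ wherever $F$ is smooth, and fix a segment $x + t\rad$, $t \in I$, lying in $\Om$, where $I$ is an interval containing $0$; it suffices to establish $\phi(t) := F(x + t\rad) = e^{\la t}F(x)$ on such $I$, the general admissible pair being reached by concatenating such segments. Put $\psi(t) = e^{-\la t}\phi(t)$, which is continuous on $I$ since $F \in C^{0}(\Om)$. On the open set $U = \{t \in I : \phi(t) \neq 0\}$ the point $x + t\rad$ avoids $\lc_{0}(F, \Om)$, so $F$ is smooth there, and the chain rule together with the hypothesis gives $\phi'(t) = \rad^{p}F_{p}(x + t\rad) = \la\phi(t)$; hence $\psi'(t) = e^{-\la t}(\phi'(t) - \la\phi(t)) = 0$ and $\psi$ is constant on each connected component of $U$.

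The delicate point, and the one I expect to be the main obstacle, is to propagate this constancy across the zeros of $\phi$, where $F$ is not assumed differentiable. The key observation is that $U$ is either empty or all of $I$: if $(a, b)$ is a component of $U$ on which $\psi \equiv c \neq 0$ and $a$ were interior to $I$, then continuity of $\psi$ would force $\psi(a) = \lim_{t \to a^{+}}\psi(t) = c \neq 0$, contradicting $a \notin U$; so each component of $U$ exhausts $I$. Thus $\psi$ is constant on $I$ (including the trivial case $U = \emptyset$, where $\phi \equiv 0 \equiv \psi$), necessarily equal to its value $\psi(0) = F(x)$ at the origin, and therefore $\phi(t) = e^{\la t}F(x)$ throughout $I$. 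Once this continuity argument shows that the multiplier $e^{-\la t}\phi(t)$ cannot jump across a zero of $F$, the equivalence is complete.
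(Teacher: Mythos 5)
Your proof is correct, and its skeleton is the same as the paper's: differentiate $F(x+t\rad) = e^{\la t}F(x)$ at $t = 0$ for the forward implication, and solve the ODE $\phi' = \la\phi$ along lines parallel to $\rad$ for the converse. Where you genuinely add something is in the converse. The paper simply asserts that $f(t) = F(x+t\rad)$ solves the initial value problem $f(0) = F(x)$, $f'(t) = \la f(t)$, and concludes by uniqueness, even though the hypothesis $\rad^{p}F_{p} = \la F$ is available only off $\lc_{0}(F,\Om)$, where $F$ is not assumed differentiable; so ODE uniqueness cannot be invoked directly on the whole parameter interval. Your continuity argument --- that a connected component of $\{t : \phi(t) \neq 0\}$ cannot have an endpoint interior to the parameter interval, because $\psi = e^{-\la t}\phi$ would there be simultaneously zero and equal to a nonzero constant --- is exactly the step needed to make this rigorous, and it also disposes cleanly of the case $F(x) = 0$. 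One caveat, which you inherit from the paper rather than introduce: both proofs establish the identity only for $t$ in the connected component of $0$ in $\{s : x + s\rad \in \Om\}$, whereas the definition of $\amg^{\la}(\Om)$ as written demands it for every $t$ with $x + t\rad \in \Om$, even when the intervening segment leaves $\Om$. In that situation neither the paper's argument nor your proposed ``concatenation of segments'' applies, and the implication can genuinely fail (e.g.\ $\Om = \{u \neq 0\} \subset \rea^{2}$, $\rad = \partial_{u}$, $\la = 1$, with $F = e^{u}$ on $\{u>0\}$ and $F = 2e^{u}$ on $\{u<0\}$); this is best regarded as an imprecision in the paper's definition, not a defect of your argument.
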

\begin{proof}
First suppose $F \in \amg^{\la}(\Om)$. If $x \in \Om$ and $F(x) \neq 0$ then, for any $t \in \rea$ such that $x + t\rad \in \Om$, $F(x + t\rad) = e^{\la t}F(x) \neq 0$, so $x + t\rad \notin \lc_{0}(F, \Om)$. Since $\Om$ is open, there is some small interval $I \subset \rea$ containing $0$ such that $x + t \rad \in \Om$ for $t \in I$. Hence $\rad^{p}F_{p}(x) = \tfrac{d}{dt}\big|_{t = 0}F(x + t\rad)  = \tfrac{d}{dt}\big|_{t = 0}\left(e^{\la t}F(x)\right) = \la F(x)$. Now suppose $F \in \cinf(\Om\setminus \lc_{0}(F, \Omega)) \cap C^{0}(\Omega)$ satisfies $\rad^{p}F_{p} = \la F$ for some fixed $\rad^{i} \in \rea^{n+1}$. Then $f(t) = F(x + t\rad)$ solves the initial value problem $f(0) = F(x)$ and $\tfrac{d}{dt}f(t) = \la f(t)$, so $F(x + t\rad) = f(t) = e^{\la t}F(x)$ for $t$ such that $x + t\rad \in \Om$.
\end{proof}

Let $\reat = GL(1, \rea)$ be the group of nonzero real numbers.
\begin{lemma}\label{homognondegenlemma}
Suppose given an open domain $\Omega \subset \rea^{n+1}$ and $F \in \amg^{\la}(\Om)$ for $\la \in \reat$. By Lemma \ref{affhomlemma} there is a vector field $\rad^{i} \in \rea^{n+1}$ such that $\rad^{p}F_{p} = \la F$. For $r \in \reat$, the level set  $\lc_{r}(F, \Omega)$ is smoothly immersed and transverse to $\rad^{i}$, and $\lc_{r}(F, \Omega)$ is nondegenerate if and only if $\H(F)$ does not vanish on $\lc_{r}(F, \Omega)$, in which case $dF$ and $|dF|^{2}_{g}$ do not vanish on $\lc_{r}(F, \Omega)$, and $\lc_{r}(F, \Om)$ is coorientable with equiaffine normal 
\begin{equation}\label{nm2}
\nm^{i}  =  -\la^{-1}(n+2)^{-1}\left|F \H(F)\right|^{1/(n+2)}\left(F^{-1}\rad^{i} + \la \H(F)^{-1}\H(F)^{i}\right).
\end{equation}
\end{lemma}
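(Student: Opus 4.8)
The plan is to establish the nondegeneracy criterion and the formula \eqref{nm2} by carrying out the normal-construction machinery of this section (equations \eqref{deth}--\eqref{zdet}) explicitly for a level set of a translationally homogeneous function, using $N = \rad$ as the initial transversal. First I would verify the immersion and transversality claims. Since $\rad^{p}F_{p} = \la F = \la r \neq 0$ on $\lc_{r}(F,\Om)$, the differential $dF$ cannot vanish there, so $\lc_{r}(F,\Om)$ is a smoothly immersed hypersurface by the implicit function theorem, and the same computation $\rad^{p}F_{p} = \la r \neq 0$ shows that $\rad^{i}$ is transverse to it. This identifies $N := \rad$ as a legitimate transversal along $\lc_{r}(F,\Om)$, and I will use it throughout.

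Next I would compute the second fundamental form $h$ and the connection one-form $\tau$ associated to the transversal $N = \rad$. The key observation is that for a hypersurface $\{F = r\}$ the restriction of the Hessian $g_{ij} = F_{ij}$ to tangent directions represents the second fundamental form up to a factor determined by $N$: differentiating $\rad^{p}F_{p} = \la F$ gives the useful identities $\rad^{p}F_{pi} = (\la - 1)F_{i}$ (i.e.\ $\rad^{p}g_{pi} = (\la-1)dF_{i}$), which tie the Hessian, the axis, and $dF$ together. From this I expect to read off that $h_{IJ}$ is nondegenerate exactly when $\det \hess F = \H(F)\Psi^{2}$ is nonzero along the level set — this is the asserted equivalence. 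Concretely, the second fundamental form with respect to $N=\rad$ is (up to a nonvanishing scalar) the pullback of $g_{ij}$ to $T\lc_{r}$, and its determinant differs from $\H(F)$ by a factor involving $|dF|^{2}_{g}$ and $\la$, so that nonvanishing of $\H(F)$ forces $dF$ and $|dF|^{2}_{g}$ to be nonzero on the level set as well. I would make the normalization that $\rad^{p}g_{pi}=(\la-1)dF_{i}$ precise and extract the scalar relating $\det h$ to $\H(F)$.

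Having the data $(N, h, \tau)$ for the transversal $N=\rad$, the equiaffine normal is obtained by applying \eqref{zdet} to solve for the tangential correction $Z$ and then \eqref{transform} together with the normalization $q = |a|^{(n+2)/2}$ to fix the scale $a$. I would compute $Z^{P}h_{PI}$ from \eqref{zdet} using the explicit expressions for $\tau_{I}$ and $h^{PQ}\nabla_{I}h_{PQ}$ coming from the homogeneity identities; here the raised-index gradient $\H(F)^{i} = g^{ip}\hnabla_{p}\H(F)$ enters through the derivative of $\det h$ along the level set. Combining the tangential shift $Z$ with the axis direction $\rad^{i}$ and the scale factor forced by the volume normalization should reassemble precisely into the stated combination $F^{-1}\rad^{i} + \la\,\H(F)^{-1}\H(F)^{i}$, with overall factor $-\la^{-1}(n+2)^{-1}|F\,\H(F)|^{1/(n+2)}$. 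The exponents $1/(n+2)$ and the power of $|F\,\H(F)|$ should drop out of matching $q = |a|^{(n+2)/2}$ against $\det\tilde h = a^{-n}\det h$ and the relation between $\det h$ and $\H(F)$.

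The main obstacle I anticipate is the bookkeeping in the final step: correctly tracking how the raising of indices by $g^{ij}$ (the ambient Hessian inverse) interacts with raising by $h^{IJ}$ (the intrinsic metric inverse on $T\lc_{r}$), since $\H(F)^{i}$ and the tangential $Z^{P}h_{PI}$ live in different but related pairings, and keeping the scalar normalization factors consistent so that the $|F\,\H(F)|^{1/(n+2)}$ and the $\la^{-1}(n+2)^{-1}$ emerge with the right signs and powers. The cleanest route is probably to work in $\hnabla$-parallel coordinates with $\Psi = dx^{1}\wedge\dots\wedge dx^{n+1}$, expressing everything in terms of $F_{i}$, $F_{ij}$, $\H(F)$, and $\rad^{i}$, and to use the homogeneity relation $\rad^{p}F_{pi}=(\la-1)F_{i}$ repeatedly to eliminate the axis in favor of $dF$, so that the purely ambient quantity \eqref{nm2} is recovered without ever having to invert $h$ intrinsically.
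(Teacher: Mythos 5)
Your overall strategy is the paper's own: take $N=\rad$ as the initial transversal, compute the induced data along the level set, and then run the machinery of \eqref{deth}--\eqref{zdet} with the normalization $q=|a|^{(n+2)/2}$ to pin down $Z$ and the scale $a$. However, there is a concrete error in the central identity on which your whole computation rests. You assert that differentiating $\rad^{p}F_{p}=\la F$ gives $\rad^{p}F_{pi}=(\la-1)F_{i}$. This is false: since $\rad^{i}$ is a \emph{parallel} (constant) vector field, $\hnabla_{i}\rad^{j}=0$, differentiation produces no extra term and the correct identity is $\rad^{p}F_{pi}=\la F_{i}$, and more generally $\rad^{p}F_{pi_{1}\dots i_{k}}=\la F_{i_{1}\dots i_{k}}$. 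The shift by $1$ that you introduced is the signature of \emph{Euler} (radial) homogeneity, where $x^{p}F_{p}=\la F$ yields $x^{p}F_{pi}=(\la-1)F_{i}$ because $\hnabla_{i}x^{p}=\delta_{i}^{p}$; you have transplanted that computation into the translationally homogeneous setting, where it does not apply.

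This is not a cosmetic slip, because the correct identity is exactly what produces the normalizations that make \eqref{nm2} come out right: contracting $g_{pi}$ with $\rad^{p}$ gives $\rad^{i}=\la F^{i}$ and hence $|dF|^{2}_{g}=F$ (the paper's \eqref{hompol1}), and tracing gives $\rad^{p}\H(F)_{p}=\la(n+1)\H(F)$; these feed into the block computation \eqref{hdl} ($F_{IJ}=-\la F h_{IJ}$, $F_{Ip}\rad^{p}=0$, $F_{pq}\rad^{p}\rad^{q}=\la^{2}F$), which is what relates $\det h$ to $\H(F)$, determines $q=|\la|^{-(n+2)/2}|F|^{-(n+1)/2}|\H(F)|^{1/2}$, and ultimately yields the factors $F^{-1}\rad^{i}$ and $|F\H(F)|^{1/(n+2)}$ in \eqref{nm2}. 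With your version of the identity you would instead get $\rad^{i}=(\la-1)F^{i}$ and $|dF|^{2}_{g}=\la(\la-1)^{-1}F$, which is inconsistent with the stated formula, breaks down entirely at $\la=1$, and garbles every scalar factor downstream. Once you replace $(\la-1)$ by $\la$, your outline does reduce to the paper's proof; as written, the computation cannot "reassemble precisely into the stated combination" as you claim.
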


\begin{proof}
Since $\hnabla_{i}\rad^{j} = 0$, differentiating $\rad^{p}F_{p} = \la F$ yields $\rad^{p}F_{pi_{1}\dots i_{k}} = \la F_{i_{1}\dots i_{k}}$. Hence
\begin{align}\label{hompol1}
&\la F^{i}= \rad^{i},&
&|dF|^{2}_{g}  = F.
\end{align}
Tracing $\rad^{p}F_{ijp} = \la F_{ij}$ and combining the result with \eqref{hompol1} yields
\begin{align}\label{hompol2}
&\rad^{p}\H(F)_{p} = \H(F)\rad^{p}F_{pq}\,^{q} = \la(n+1)\H(F), &
\end{align}
Since for $x \in \lc_{r}(F, \Omega)$, $\rad^{i}F_{i}(x) = \la r  \neq 0$, $dF$ does not vanish on $\lc_{r}(F, \Omega)$ and so the level set $\lc_{r}(F, \Omega)$ is smoothly immersed; moreover, $\rad^{i}$ is transverse to $\lc_{r}(F, \Omega)$. Let $h_{IJ}$ be the corresponding second fundamental form. 
The restrictions $F_{IJ}$, $F_{Ip}\rad^{p}$, and $F_{I}$ satisfy
\begin{align}\label{hdl}
&F_{IJ} = -\la Fh_{IJ},& &F_{Ip}\rad^{p} = \la F_{I} = 0,& & F_{pq}\rad^{p}\rad^{q} = \la^{2}F,
\end{align}
along $\lc_{r}(F, \Omega)$. By \eqref{hdl}, $h_{IJ}$ is nondegenerate along $\lc_{r}(F, \Omega)$ if and only $\H(F)$ does not vanish along $\lc_{r}(F, \Omega)$. In this case, it follows from \eqref{hompol1} and $\la r \neq 0$ that neither $dF$ nor $|dF|^{2}_{g}$ vanishes along $\lc_{r}(F, \Omega)$.

The equiaffine normal of $\lc_{r}(F, \Om)$ has the form $\nm^{i} = a(\rad^{i} + Z^{i})$ where $Z^{p}F_{p} = 0$. Let $\vol_{h} = q\imt(\rad)\Psi$. By \eqref{deth}, $2q^{-1}q_{I} = h^{PQ}\nabla_{I}h_{PQ}$ and so, since $\hnabla_{i}\rad^{j} = 0$, it follows from \eqref{normalize} that $\la^{-1}F^{-1}(n+2)Z^{P}g_{IP} = -(n+2)Z^{P}h_{IP} = 2q^{-1}dq_{I}$. On the other hand, it follows from \eqref{hdl} that $q = |\la|^{-(n+2)/2}|F|^{-(n+1)/2}|\H(F)|^{1/2}$. Hence, by \eqref{hompol1} and \eqref{hompol2}, $Z^{i} = (n+2)^{-1}\la F(\H(F)^{-1}\H(F)^{i} -(n+1)F^{-1}F^{i})$ is tangent to $\lc_{r}(F, \Om)$, and $|a| = |\la|^{-1}|F|^{-(n+1)/(n+2)}|\H(F)|^{1/(n+2)}$. With the coorientation convention these formulas combine to yield \eqref{nm2}.
\end{proof}

Let $\sign:\reat \to \zmodtwo$ be the sign homomorphism $\sign(r) = r|r|^{-1}$. Define the \textit{standard coorientation} of a connected component of a level set of $F$ to be that consistent with the vector field $-\sign(|dF|^{2}_{g})F^{i}$, where $F^{i} = g^{ip}F_{p}$. That under the hypotheses of Theorem \ref{ahtheorem} this vector field is nonzero follows from Lemma \ref{homognondegenlemma}. 
Theorem \ref{ahtheorem} shows that the nonzero level sets of a translationally homogeneous solution of \eqref{ma} on $\rea^{n+1}$ are improper affine spheres. 

\begin{theorem}\label{ahtheorem}
Let $\la \in \reat$. Let $\Omega \subset \rea^{n+1}$ be a nonempty open subset, and let $I \subset \reat$ be a nonempty, connected, open subset. For $F \in \amg^{\la}(\Omega)$, let $\Omega_{I} = F^{-1}(I)\cap \Omega$. Let $\rad^{i} \in \rea^{n+1}$ be the axis of $F$. The following are equivalent.
\begin{enumerate}
\item \label{aht2} There is a nonvanishing function $\phi:I \to \rea$ such that $F$ solves $\H(F) = \phi(F)$ on $\Omega_{I}$.
\item \label{aht1improper} For all $r \in I$ each level set $\lc_{r}(F, \Omega_{I})$, equipped with the coorientation of its components consistent with $-\sign(|dF|^{2}_{g})F^{i}$, is an improper affine sphere with equiaffine normal equal to $c\rad^{i}$ for a constant $c$ depending only on $r$ (and not the connected component).
\end{enumerate}
When these conditions hold, there is a nonzero constant $\kc$ such that $\phi$ has the form $\phi(r) = \kc r^{n+1}$.
\end{theorem}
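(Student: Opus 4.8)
The plan is to reduce everything to the explicit formula \eqref{nm2} for the equiaffine normal of a level set, which already encodes the full geometry. Since $\la\in\reat$ and $F\in\amg^{\la}(\Om)$, Lemma \ref{affhomlemma} gives $\rad^{p}F_{p}=\la F$, and the identities \eqref{hompol1}, \eqref{hompol2}, \eqref{hdl} established in the proof of Lemma \ref{homognondegenlemma} are available; in particular $\la F^{i}=\rad^{i}$ and the homogeneity of the Hessian determinant, $\rad^{p}\H(F)_{p}=\la(n+1)\H(F)$, will be used repeatedly. I would prove \ref{aht2}$\Rightarrow$\ref{aht1improper}, then \ref{aht1improper}$\Rightarrow$\ref{aht2}, and finally deduce the form of $\phi$ from an ordinary differential equation.

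For \ref{aht2}$\Rightarrow$\ref{aht1improper}, assume $\H(F)=\phi(F)$ with $\phi$ nonvanishing on $I$. Then $\H(F)\neq 0$ on $\Om_{I}$, so by Lemma \ref{homognondegenlemma} each level set is nondegenerate and \eqref{nm2} applies. Differentiating the composite $\H(F)=\phi(F)$ gives $\H(F)_{p}=\phi'(F)F_{p}$, hence $\H(F)^{i}=\phi'(F)F^{i}=\la^{-1}\phi'(F)\rad^{i}$ by $\la F^{i}=\rad^{i}$. Substituting into \eqref{nm2}, the bracket collapses to $\bigl(F^{-1}+\phi'(F)\phi(F)^{-1}\bigr)\rad^{i}$, so $\nm^{i}$ is a scalar multiple of the parallel vector $\rad^{i}$; on $\lc_{r}(F,\Om_{I})$ that scalar depends only on $r$, so $\nm^{i}=c\rad^{i}$ is parallel and $\lc_{r}$ is an improper affine sphere. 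In the disconnected case the signature-coherence requirement holds automatically because $\H(F)=\phi(r)$ takes one value on all of $\lc_{r}$.

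For the converse \ref{aht1improper}$\Rightarrow$\ref{aht2}, each $\lc_{r}$ is a nondegenerate improper affine sphere, so $\H(F)\neq 0$ on $\Om_{I}$ and \eqref{nm2} holds with $\nm^{i}=c(r)\rad^{i}$. Comparing the two expressions for $\nm^{i}$ shows that $\H(F)^{i}$ is proportional to $\rad^{i}=\la F^{i}$, hence $\H(F)_{p}=\rho F_{p}$ for a scalar $\rho$; contracting with $\rad^{p}$ and invoking \eqref{hompol2} forces $\rho=(n+1)F^{-1}\H(F)$. Substituting this back collapses the normal to $\nm^{i}=-\la^{-1}F^{-1}\bigl|F\H(F)\bigr|^{1/(n+2)}\rad^{i}$, so on $\lc_{r}$ the hypothesis that $c(r)$ is independent of the component forces $\bigl|r\,\H(F)\bigr|$, and therefore $|\H(F)|$, to take a single value on $\lc_{r}$; thus $|\H(F)|$ is a function of $F$. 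The one remaining point is the sign of $\H(F)$ across the different components of a given level set, and here the coherence clause of the disconnected-sphere definition enters: by \eqref{hdl}, $F_{IJ}=-\la F h_{IJ}$, so with $r$ and $\la$ fixed the parity of the signature of the equiaffine metric on each component is governed by $\sign\H(F)$ there, and equality of signatures modulo $4$ pins down $\sign\H(F)$ uniformly along $\lc_{r}$. Hence $\H(F)=\phi(F)$ for a single nonvanishing $\phi:I\to\rea$.

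Finally, once $\H(F)=\phi(F)$ holds, differentiate and contract with $\rad^{p}$: by \eqref{hompol2}, $\phi'(F)\la F=\rad^{p}\H(F)_{p}=\la(n+1)\H(F)=\la(n+1)\phi(F)$, so $r\phi'(r)=(n+1)\phi(r)$ on the connected interval $I\subset\reat$. Integrating gives $\phi(r)=\kc r^{n+1}$, with $\kc\neq 0$ because $\phi$ is nonvanishing. I expect the main obstacle to be the converse direction, specifically promoting the per-component conclusions into a single-valued nonvanishing $\phi$: the magnitude is fixed by the hypothesis that $c$ depends only on $r$, but the sign requires the signature-coherence clause together with the relation \eqref{hdl} between the Hessian and the equiaffine metric, and some care is needed to check that the sign bookkeeping forced by the standard coorientation matches exactly.
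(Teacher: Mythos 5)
Your overall route is the paper's own: reduce everything to the normal formula \eqref{nm2} via Lemma \ref{homognondegenlemma}, show the normal collapses to a multiple of $\rad^{i}$ when the gradient of $\H(F)$ is proportional to $dF$, run the same comparison in reverse for the converse (including the signature-mod-$4$ bookkeeping via \eqref{hdl}, which you resolve exactly as the paper does), and finish with the ODE $r\phi'(r)=(n+1)\phi(r)$. The geometric computations are right; in particular your collapsed expression $\nm^{i}=-\la^{-1}F^{-1}|F\H(F)|^{1/(n+2)}\rad^{i}$ checks out, and your use of the hypothesis that $c$ depends only on $r$ to pin down $|\H(F)|$ on a whole level set matches the paper.

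The genuine gap is that you differentiate $\phi$ at two key points — in the forward direction ($\H(F)_{p}=\phi'(F)F_{p}$) and again when deriving the ODE — whereas hypothesis \eqref{aht2} supplies only a nonvanishing \emph{function} $\phi:I\to\rea$, with no continuity, let alone differentiability, assumed; the chain rule is simply not available as written. In the forward direction this is easily repaired, and is how the paper proceeds: since $\H(F)=\phi(F)$ is constant on each level set and $dF\neq 0$ on $\Om_{I}$, the covector $d\H(F)$ annihilates $\ker dF$, so $d\log\H(F)\wedge dF=0$ and hence $\H(F)^{i}$ is proportional to $F^{i}$, with no reference to $\phi'$. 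But in the final step the regularity of $\phi$ is genuinely part of what must be proved, and this is the missing idea: the paper first derives from the homogeneity \eqref{hompol2} the functional equation $\phi(e^{\la t}r)=e^{(n+1)\la t}\phi(r)$ for small $t$, concludes that $\phi$ is continuous, and then computes the difference quotient with increment $h(t)=(e^{\la t}-1)r$ to show $\phi$ is differentiable with $\phi'(r)=(n+1)r^{-1}\phi(r)$; only then can one integrate to get $\phi(r)=\kc r^{n+1}$. (Alternatively you could patch your argument by observing that $\rad^{p}F_{p}=\la F\neq 0$ makes $F$ a submersion on $\Om_{I}$, so near any attained value $\phi=\H(F)\circ\gamma$ for a smooth local section $\gamma$ of $F$, whence $\phi$ is smooth there.) Note that the obstacle you flagged as the main one — the sign bookkeeping across components — you actually handled correctly and just as the paper does; it is the regularity of $\phi$ that your write-up overlooks.
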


\begin{proof}
Suppose there holds \eqref{aht1improper}. That is, $F \in \amg^{\la}(\Omega)$ and there is a connected open interval $I \subset \rea \setminus\{0\}$ such that for all $r \in I$ each level set $\lc_{r}(F, \Omega_{I})$, equipped with the coorientation of its components consistent with $-\sign(|dF|^{2}_{g})F^{i}$, is an affine sphere with affine normal parallel to a fixed vector $\rad$. 
Because, by assumption, each connected component of $\lc_{r}(F, \Om_{I})$ is nondegenerate, Lemma \ref{homognondegenlemma} implies that neither $\H(F)$ nor $|dF|^{2}_{g}$ vanishes on $\lc_{r}(F, \Omega_{I})$. A posteriori, this justifies assigning to each component the coorientation given by $-\sign(|dF|^{2}_{g})F^{i}$. By assumption, the equaffine normal $\nm$ satisfies $\nm \wedge \rad = 0$ along $\lc_{r}(F, \Om_{I})$. Comparing with \eqref{nm2} shows that $d_{i}\log\H(F) = c\rad_{i} = c\la F_{i}$ for some $c$ locally constant on $\lc_{r}(F, \Om_{I})$. Contracting with $\rad^{i} = \la F^{i}$ and using \eqref{hompol2} yields $(n+1)\la = c\la^{2}F$, so that $d_{i}\log\H(F) = (n+1)F^{-1}F_{i}$. Hence $F^{-n-1}\H(F)$ is locally constant on $\lc_{r}(F, \Om_{I})$.  By assumption the signatures of the second fundamental forms of the connected components of $\lc_{r}(F, \Om_{I})$ are the same modulo $4$, and by \eqref{hdl} this implies that the signatures of $\hess F$ on the different connected components are the same modulo $4$, and so the signs of $\H(F)$ on the different connected components must be the same. This means that $|\H(F)|$ can be replaced by one of $\pm \H(F)$ coherently on all of $\lc_{r}(F, \Om_{I})$. Since by assumption there is $\rad^{i} \in \rea^{n+1}$ such that $\nm^{i} = c\rad^{i}$ for some $c$ depending only on $r$ and not the connected component of $\lc_{r}(F, \Om_{I})$, it follows from \eqref{nm2} that $\H(F)$ is constant on $\lc_{r}(F, \Om_{I})$. This is true for each $r \in I$, and so there is a function $\phi$ defined on $I$ such that $\H(F) = \phi(F)$ for $x \in \Omega_{I}$. This shows \eqref{aht1improper}$\implies$\eqref{aht2}. 

The implication \eqref{aht2}$\implies$\eqref{aht1improper} is proved as follows. If $F \in \amg^{\la}(\Omega)$ solves $\H(F) = \phi(F)$ on $\Om_{I}$ for some nonvanishing function $\phi:I \to \rea$, then, by Lemma \ref{homognondegenlemma}, each level set $\lc_{r}(F, \Omega_{I})$ is nondegenerate  and $dF$ and $|dF|^{2}_{g}$ do not vanish on $\lc_{r}(F, \Om_{I})$. In particular, the equiaffine normal $\nm^{i}$ is defined on $\Omega_{I}$. Since $\H(F)$ is constant on $\lc_{r}(F, \Omega_{I})$, $d\log\H(F) \wedge dF = 0$ on $\Omega_{I}$. Hence, by \eqref{hompol1}, $\H(F)^{-1}\H(F)^{i}$ is a multiple of $F^{i} = \la^{-1}\rad^{i}$. In \eqref{nm2} this shows that $\nm^{i}$ is a multiple of $\rad^{i}$, so that the connected components of $\lc_{r}(F, \Omega_{I})$ are affine spheres with affine normals parallel to $\rad^{i}$. Since by assumption $\H(F) = \phi(F)$ depends only on $r$, and not on the component, it follows that the equiaffine mean curvatures of different components of $\lc_{r}(F, \Om_{I})$ are the same. In both cases, from the constancy of $\H(F)$ on each $\lc_{r}(F, \Om_{I})$ and \eqref{hdl} it follows that the signatures of the distinct connected components of $\lc_{r}(F, \Om_{I})$ are the same modulo $4$. 

Suppose given $F \in \amg^{\la}(\Omega)$, an open interval $I \subset \rea \setminus\{0\}$, and a function $\phi$ defined on $I$ such that $\H(F) = \phi(F)$ for $x \in \Omega_{I}$. Since, by \eqref{hompol2}, $\H(F)$ has positive homogeneity $(n+1)\la$, there holds $\phi(e^{\la t}r) = e^{(n+1)\la t}\phi(r)$ for $r \in I$ and $t$ sufficently small. In particular, this shows that $\phi$ is continuous on $I$. Similarly, setting $h(t) = (e^{\la t} - 1) r $,
\begin{align}
\lim_{t \to 0} \tfrac{\phi(r + h(t)) - \phi(r)}{h(t)} 
= \lim_{t \to 0} \tfrac{\phi(e^{\la t}r ) - \phi(r)}{\la r t} = \lim_{t \to 0}\tfrac{(e^{(n+1)\la t} - 1)}{\la r}\phi(r)= \tfrac{(n+1) }{r }\phi(r),
\end{align}
so that $\phi$ is differentiable at $r$ and $\phi^{\prime}(r) =  \tfrac{(n+1)}{r}\phi(r)$. The general solution has the form $\phi(r) = \kc r^{n+1}$ for some $\kc  \neq 0$. 
\end{proof}

\begin{lemma}\label{twisteddetlemma}
If $F \in \cinf(\rea^{n+1})$ satisfies $F(x + t\rad) = F(x) + \la t$ for some $0 \neq \rad^{i} \in \rea^{n+1}$ and some $\la \in \rea$, then $\det(\hess F + cdF\tensor dF) = c\det(\hess F + dF \tensor dF)$ for all $c \in \cinf(\rea^{n+1})$.
\end{lemma}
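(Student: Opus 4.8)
The plan is to recognize that the asserted identity reduces to two elementary facts: that $\H(F) = \det \hess F$ vanishes identically under the hypothesis, and that the determinant of a rank-one perturbation is affine-linear in the perturbation parameter. Once these are in hand the identity follows by linear interpolation.

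First I would differentiate the additive homogeneity hypothesis. Differentiating $F(x + t\rad) = F(x) + \la t$ in $t$ and setting $t = 0$ gives $\rad^{p}F_{p} = \la$, a constant function on $\rea^{n+1}$. Applying $\hnabla_{i}$ and using that $\rad^{i}$ is parallel, so $\hnabla_{i}\rad^{j} = 0$, yields $\rad^{p}F_{pi} = 0$. Thus the nonzero vector $\rad^{i}$ lies in the kernel of the Hessian $(F_{ij})$ at every point, whence $\H(F) = \det \hess F = 0$ everywhere.

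Next I would fix a point and regard $g(c) = \det(\hess F + c\, dF\tensor dF)$ as a function of the scalar $c$. Since $dF\tensor dF$ has components $F_{i}F_{j}$, it is a rank-one symmetric matrix; expanding the determinant by multilinearity in its columns, any term carrying two or more factors of $c$ would involve two columns proportional to the single covector $dF$ and so would vanish. Hence $g$ is a polynomial of degree at most one in $c$; equivalently, one may invoke the matrix determinant lemma $\det(A + c\,vv^{\mathsf{T}}) = \det A + c\, v^{\mathsf{T}}\adj(A)\,v$, which is manifestly affine in $c$ and valid for all square $A$ by polynomial continuation.

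Finally I would combine the two observations. An affine function $g$ with $g(0) = \det\hess F = \H(F) = 0$ satisfies $g(c) = g(0) + c\bigl(g(1) - g(0)\bigr) = c\,g(1)$, and since $g(1) = \det(\hess F + dF\tensor dF)$ this is precisely the claim $\det(\hess F + c\, dF\tensor dF) = c\det(\hess F + dF\tensor dF)$, holding pointwise for each value of the smooth function $c$. There is no genuine obstacle here; the only step needing care is the justification of the degree-one dependence on $c$, which is exactly where the rank-one structure of $dF\tensor dF$ enters, the remaining content being the vanishing of $\H(F)$ established in the first step.
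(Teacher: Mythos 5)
Your proof is correct. It takes a genuinely different route from the paper's. The paper's argument is a direct computation in an adapted basis: fixing a unimodular basis $e_{1},\dots,e_{n+1}$ with $e_{n+1}=\rad$ and using the differentiated identities $\rad^{p}F_{p}=\la$, $\rad^{p}F_{pi}=0$, it reduces the matrix of $\hess F + c\,dF\tensor dF$ to the block form $\left(\begin{smallmatrix} F_{IJ} & 0\\ 0 & c\la^{2}\end{smallmatrix}\right)$ (implicitly via row operations that clear the terms $c\la F_{I}$), whose determinant $c\la^{2}\det(F_{IJ})$ is visibly $c$ times its value at $c=1$. You instead factor the statement into two independent general facts: first, the same differentiated identities put $\rad$ in the kernel of $\hess F$ at every point, so $\det\hess F\equiv 0$; second, $c\mapsto \det(\hess F + c\,dF\tensor dF)$ is affine in $c$ because the perturbation $dF\tensor dF$ has rank one (equivalently, the matrix determinant lemma $\det(A+c\,vv^{\mathsf{T}})=\det A + c\,v^{\mathsf{T}}\adj(A)v$, valid for singular $A$ by polynomial continuation); interpolation between $c=0$ and $c=1$ then yields the identity, pointwise in the smooth function $c$. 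Your decomposition is basis-free, and it isolates exactly what the translational homogeneity is used for: only the everywhere-degeneracy of the Hessian matters, so your argument proves the identity for any $F$ whose Hessian is singular at every point. What the paper's computation buys in exchange is the explicit common value $c\la^{2}\det(F_{IJ})$, i.e., the determinant of the Hessian restricted to a complement of the axis, which is the form in which the lemma is actually exploited elsewhere (e.g., in Lemma \ref{improperlemma} and Lemma \ref{homognondegenlemma}); but as a proof of the stated lemma the two arguments are equally complete.
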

\begin{proof}
By assumption $\rad^{i}F_{i} = \la$ and $\rad^{i}F_{ij} = 0$. Fix a unimodular basis $e_{1}, \dots, e_{n+1}$ such that $e_{n+1} = v$. Writing $\hess F + cdF\tensor dF$ as a matrix with respect to this basis, there results
\begin{align}
\begin{split}
\det(\hess F + cdF\tensor dF) & = \begin{vmatrix} F_{IJ} & 0 \\ 0 & c\la^{2}\end{vmatrix} 
= c\begin{vmatrix} F_{IJ} & 0 \\ 0 & \la^{2}\end{vmatrix} = c \det(\hess F + dF \tensor dF)
\end{split}
\end{align}
where the indices $I$ and $J$ run over $\{1, \dots, n\}$. 
\end{proof}

\begin{lemma}\label{improperlemma}
Let $\ste$ be an $(n+1)$-dimensional real vector space equipped with the standard equiaffine structure $(\nabla, \Psi)$, where $\Psi$ is given by the determinant. Let $\stw \subset \ste$ be a codimension one subspace, let $0 \neq \rad \in \ste$ be a vector transverse to $W$, and let $\mu \in \std$ be such that $\mu(\rad) = 1$ and $\ker \mu = \stw$. 
Equip $\stw$ with the induced affine structure and the parallel volume form $\mu = \imt(\rad)\Psi$ and define the operator $\H$ with respect to this induced equiaffine structure. Let $\pi:\ste \to \stw$ be the projection along the span $\lb \rad \ra$ of $\rad$. The following are equivalent.
\begin{enumerate}
\item\label{grp1} The \emph{graph of $f \in \cinf(\stw)$ along $\rad$}, $\{(w, t\rad) \in \stw \oplus \lb \rad \ra: t = f(w)\}$, is an improper affine sphere with affine normals parallel to $\rad$.
\item\label{grp2} There is $\kc \in \reat$ such that $\H(f) = \kc$ on $\stw$.
\item\label{grp3} There is $\kc \in \reat$ such that $F = \mu - f\circ \pi$ solves $\det(\hess F + dF \tensor dF) = (-1)^{n}\kc \Psi^{2}$ on $\ste$.
\item\label{grp4} There is $\kc \in \reat$ such that $G = \exp(\mu - f\circ \pi)$ solves $\H(G) = (-1)^{n}\kc G^{n+1}$ on $\ste$.
\item\label{grp5} There is $\kc \in \reat$ such that $\phi = \log(\mu - f\circ \pi)$ solves $\H(\phi) = (-1)^{n+1}\kc e^{-(n+2)\phi}$ on $\{x \in \ste: \mu(x) > f\circ \pi(x)\}$.
\end{enumerate}
\end{lemma}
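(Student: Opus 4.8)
The plan is to prove the ring of equivalences by treating \eqref{grp2}$\Leftrightarrow$\eqref{grp3}$\Leftrightarrow$\eqref{grp4}$\Leftrightarrow$\eqref{grp5} as pointwise determinant identities, and \eqref{grp1}$\Leftrightarrow$\eqref{grp4} as the single piece of genuine affine geometry, for which Theorem \ref{ahtheorem} does the work. First I would choose a basis $e_{1}, \dots, e_{n+1}$ of $\ste$ with $e_{n+1} = \rad$ and $e_{1}, \dots, e_{n}$ a basis of $\stw$, normalized so that $\Psi$ is the associated coordinate volume; then in the linear coordinates $x^{1}, \dots, x^{n+1}$ one has $\mu(x) = x^{n+1}$, the volume $\mu = \imt(\rad)\Psi$ induces the coordinate volume on $\stw$, and $F = \mu - f\circ\pi = x^{n+1} - f(x^{1}, \dots, x^{n})$. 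In particular $F(x + t\rad) = F(x) + t$, so Lemma \ref{twisteddetlemma} applies to $F$.

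For \eqref{grp2}$\Leftrightarrow$\eqref{grp3} I would note that $dF = dx^{n+1} - df$ and that $F_{ab} = 0$ whenever $a = n+1$ or $b = n+1$, so $\hess F$ has rank at most $n$ and the rank-one term $dF\tensor dF$ is what makes the sum nondegenerate. Writing $\hess F + dF\tensor dF$ as a matrix in this basis and eliminating the last row and column by the Schur complement of the $(n+1,n+1)$-entry, which equals $1$, cancels the outer product and yields $\det(\hess F + dF\tensor dF) = (-1)^{n}\H(f)\,\Psi^{2}$; hence \eqref{grp3} holds with constant $\kc$ exactly when $\H(f) = \kc$, which is \eqref{grp2}. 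For \eqref{grp3}$\Leftrightarrow$\eqref{grp4} I would use $\hess(e^{F}) = e^{F}\left(\hess F + dF\tensor dF\right)$, giving $\det\hess G = G^{n+1}\det(\hess F + dF\tensor dF)$ with $G = e^{F} > 0$, so that $\H(G) = (-1)^{n}\kc\, G^{n+1}$ is equivalent to \eqref{grp3}. For \eqref{grp3}$\Leftrightarrow$\eqref{grp5}, on the domain $\{\mu > f\circ\pi\} = \{F > 0\}$ I would use $\hess(\log F) = F^{-1}\bigl(\hess F - F^{-1}dF\tensor dF\bigr)$ together with Lemma \ref{twisteddetlemma} applied pointwise with $c = -F^{-1}$, which gives $\det(\hess F - F^{-1}dF\tensor dF) = -F^{-1}\det(\hess F + dF\tensor dF)$ and hence $\det\hess\phi = -F^{-(n+2)}\det(\hess F + dF\tensor dF)$; since $F = e^{\phi}$, the equation $\H(\phi) = (-1)^{n+1}\kc\,e^{-(n+2)\phi}$ is once more equivalent to \eqref{grp3}.

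Finally, for \eqref{grp1}$\Leftrightarrow$\eqref{grp4} I would pass to $G = e^{F}$, which is everywhere positive and lies in $\amg^{1}(\ste)$ with axis $\rad$, and observe that the graph of $f$ along $\rad$ is exactly the connected level set $\lc_{1}(G)$. Translation by $t\rad$ is an equiaffine transformation carrying $\lc_{r}(G)$ to $\lc_{e^{t}r}(G)$ and fixing the line spanned by $\rad$; therefore $\lc_{1}(G)$ is an improper affine sphere with affine normals parallel to $\rad$ if and only if every level set $\lc_{r}(G)$, $r \in \reap$, is. Each $\lc_{r}(G)$ is connected, so the coherence-of-coorientation and per-component conditions in Theorem \ref{ahtheorem} are automatic. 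Applying Theorem \ref{ahtheorem} with $\Omega = \ste$ and $I = \reap$ (so $\Omega_{I} = \ste$ since $G > 0$) then shows this is equivalent to $G$ solving $\H(G) = \psi(G)$ for a nonvanishing function $\psi$, and the theorem forces $\psi(r) = \kc' r^{n+1}$ for a nonzero constant; setting $\kc' = (-1)^{n}\kc$ recovers \eqref{grp4}.

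I expect the last step to be the only nonroutine one. The geometric hypothesis \eqref{grp1} is imposed on a single hypersurface, whereas Theorem \ref{ahtheorem} characterizes the affine-sphere property of the whole family of level sets, so the work lies in using the translational homogeneity of $G$ to propagate the condition from $\lc_{1}(G)$ to all of $\lc_{r}(G)$ and in checking that the connectedness of each level set renders the component-wise hypotheses of Theorem \ref{ahtheorem} vacuous. The equivalences \eqref{grp2}--\eqref{grp5} are mechanical once the two identities $\hess(e^{F}) = e^{F}(\hess F + dF\tensor dF)$ and $\hess(\log F) = F^{-1}(\hess F - F^{-1}dF\tensor dF)$ are combined with Lemma \ref{twisteddetlemma} and the Schur-complement computation.
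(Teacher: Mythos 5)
Your proposal is correct and takes essentially the same route as the paper: the equivalences \eqref{grp2}--\eqref{grp5} follow from the same chain of pointwise determinant identities (the paper's \eqref{improperequals}, which you verify via the Schur complement and the exponential/logarithm Hessian identities together with Lemma \ref{twisteddetlemma}), and \eqref{grp1}$\iff$\eqref{grp4} is obtained by applying Theorem \ref{ahtheorem} to the translationally homogeneous function $G$. The details you add in the last step (level sets as translates of the graph, connectedness making the component-wise coorientation conditions vacuous) are precisely what the paper's one-sentence argument leaves implicit.
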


\begin{proof}
Routine computations show the first two equalities of 
\begin{align}\label{improperequals}
\begin{split}
 (-1)^{n}\H(f)\circ \pi =\Psi^{-2}\tensor (\det(\hess F + dF \tensor dF)) = G^{-n-1}\H(G) = -e^{(n+2)\phi}\H( \phi),
\end{split}
\end{align}
while the third equality follows from Lemma \ref{twisteddetlemma}. From \eqref{improperequals} the equivalence of \eqref{grp2}-\eqref{grp5} is immediate. Since $G$ is by definition translationally homogeneous in the $\rad$ direction, the equivalence of \eqref{grp1} and \eqref{grp4} follows from Theorem \ref{ahtheorem}.
\end{proof}

\begin{remark}
After an equiaffine transformation, $\ste$, $\stw$, $\rad$, and the associated connections and volume forms can always be put in the following standard form. Let $\ste = \rea^{n+1}$ be equipped with its standard equiaffine structure $(\nabla, \Psi)$, where $\Psi = dx^{1}\wedge \dots \wedge dx^{n+1}$, and regard $\rea^{n}$ as the equiaffine subspace $\stw = \{x \in \ste: x^{n+1} = 0\}$ with the induced connection, also written $\nabla$, and the volume form $\mu =dx^{1}\wedge \dots \wedge dx^{n}$. Here $\mu = dx^{n+1}$, and the relation between $f$ and $F$ is $F(x_{1}, \dots, x_{n+1}) = x_{n+1} - f(x_{1}, \dots, x_{n})$.
\end{remark}

\begin{remark}
Examples of solutions of $\H(f) = \kc$ abound.
\begin{enumerate}
\item If $\kc < 0$, any function of the form $f(x_{1}, \dots, x_{n+1}) = (-\kc)^{1/2}x_{1}x_{n+1} + q(x_{1}) + \tfrac{1}{2}\sum_{i = 2}^{n}x_{i}^{2}$ with $q \in C^{2}(\rea)$ solves $\H(f) = \kc$ on all of $\rea^{n+1}$. 
This gives an infinitude of affinely inequivalent solutions to $\H(f) = \kc$ for $\kc < 0$, and so, by Lemma \ref{improperlemma}, an infinitude of affinely inequivalent entire solutions of $\H(G) = (-1)^{n}\kc G^{n+1}$ with $\kc < 0$. 

\item Let $\ste$ be an $n$-dimensional vector space. Let $\Phi:\ste \to \ste$ be a $C^{1}$ diffeomorphism and write $\Phi_{i}\,^{j} = \tfrac{\pr}{\pr x^{i}}\Phi(x)^{j}$. Define $f:\ste \times \std \to \rea$ by $f(x, y) = y_{p}\Phi(x)^{p}$. Let $\mu$ be the standard parallel volume form on $\ste$ and let $\Psi$ be the parallel volume form on $\ste \times \std$ determined by $\mu$ and the dual volume form on $\std$. A straightforward computation shows that $\H(f) = (-1)^{n}(\det \Phi_{i}\,^{j})^{2}$, where $\H$ is defined with respect to $\Psi$ and $\phi^{\ast}(\mu) = (\det \Phi_{i}\,^{j})\mu$. In particular, if $\Phi$ has constant Jacobian, $\det \Phi_{i}\,^{j} = \kc$, then $\H(f) = (-1)^{n}\kc^{2}$. In this case $\hess F$ has split signature.
\end{enumerate}

In section \ref{impropersection} it is shown how to construct many more solutions of the equations in Lemma \ref{improperlemma} from prehomogeneous actions of solvable Lie groups.
Some simple, but typical, examples obtained in this way are given by the translationally homogeneous (in the $z$-direction) functions
\begin{align}\label{expcayley}
&F(x, y,z) = e^{z - x^{2} - y^{2}},& &G(x, y, z) = e^{x^{3}/3 - xy + z},
\end{align}
that solve $\H(F) = 4F^{3}$ and $\H(G) = -G^{3}$, respectively. By Lemma \ref{improperlemma} their level sets are improper affine spheres. 
\end{remark}

\section{Left-symmetric algebras, affine actions, and completeness}\label{impropersection}
This section reviews basic material on LSAs in a form adequate for later applications. Part of the material presented is an amalgamation of background taken from  \cite{Goldman-Hirsch-orbits}, \cite{Helmstetter}, \cite{Kim-completeleftinvariant}, \cite{Kim-lsa},  \cite{Kim-developingmaps}, \cite{Segal-lsa}, \cite{Vinberg}. In particular, many results from J. Helmstetter's \cite{Helmstetter} are used. 

Although all LSAs considered in this section have finite dimension over a field of characteristic zero, these conditions are sometimes repeated so that the statements of lemmas and theorems are self-contained. The base field is usually supposed to be $\rea$, but this is stated explicitly where it is necessary, and many claims remain true over an arbitrary field $\fie$ of characteristic zero.

For a connected Lie group $G$ with Lie algebra $\g$, the isomorphism classes of the following structures are in pairwise bijection: 
\begin{itemize}
\item Left-invariant flat torsion-free affine connections on a Lie group $G$. 
\item Left-symmetric structures on the Lie algebra $\g$ of $G$ compatible with the Lie bracket on $\g$.
\item Étale affine representations of the Lie algebra $\g$. 
\end{itemize}
This section begins by sketching the constructions of the bijections. While this is explained elsewhere, for example in Proposition $1.1$ of \cite{Kim-completeleftinvariant}, \cite{Burde-etale}, or \cite{Kang-Bai}, it is recalled here to fix terminology and notation for later use.

A map $f:\A \to \B$ between affine spaces $\A$ and $\B$ is affine if there is a linear map $\lin(f):\ste \to \stw$, between the vector spaces $\ste$ and $\stw$ of translations of $\A$ and $\B$, such that $f(q) - f(p) = \lin(f)(q - p)$ for all $p, q \in \A$. With the operation of composition the bijective affine maps of $\A$ to itself form the Lie group $\Aff(\A)$ of affine automorphisms of $\A$, and $\lin:\Aff(\A) \to GL(\ste)$ is a surjective Lie group homomorphism. 
A one-parameter subgroup through the identity in $\Aff(\A)$ has the form $\Id_{A} + t\phi + O(t^{2})$ for some affine map $\phi:\A \to \ste$. Consequently, the Lie algebra $\aff(\A)$ of $\Aff(\A)$ is the vector space of affine maps from $\A$ to $\ste$ equipped with the bracket $[f, g] = \lin(f)\circ g - \lin(g) \circ f$. An \textit{affine representation} of a Lie algebra $(\g, [\dum, \dum])$ on the affine space $\A$ is a Lie algebra homomorphism $\rho:\g \to \aff(\A)$, that is, a linear map satisfying $\rho([a, b]) = \lin(\rho(a))\rho(b) - \lin(\rho(b))\rho(a)$ for all $a,b\in \g$. Any choice of fixed point (origin) $a_{0} \in \A$ determines a projection $\trans:\aff(\A) \to \ste$ onto the translational part defined by $\trans(f) = f(a_{0}) $, so that $f(a) = \lin(f)(a - a_{0}) + \trans(f)$ for any $a \in \A$. The Lie bracket on $\aff(\A)$ can be transported to $\eno(\ste) \oplus \ste$ via the resulting linear isomorphism $\lin \oplus \trans: \aff(\A) \to \eno(\ste) \oplus \ste$. 

Let $\rho:\g \to \aff(\A)$ be an affine representation of the Lie algebra $(\g, [\dum, \dum])$ on the affine space $\A$ with translations $\ste$. The representation $\rho$ is \textit{faithful} if it is injective, it is \textit{prehomogeneous at $x_{0}$} if there exists $x_{0} \in \A$ such that the map $\g \to \ste$ defined by $a \to \rho(a)x_{0}$ is a linear surjection, and it is \textit{étale at $x_{0}$} if there exists $x_{0} \in \A$ such that the map $\g \to \ste$ defined by $a \to \rho(a)x_{0}$ is a linear isomorphism. (If it is not important what $x_{0}$ is, $\rho$ is simply said to be \textit{prehomogeneous} or \textit{étale}.) An affine representation is étale if and only if it is faithful and prehomogeneous.

Let $(\alg, \mlt)$ be a finite-dimensional LSA. 
Let $\lin:\aff(\alg) \to \eno(\alg)$ be the projection onto the linear part and let $\trans:\aff(\alg) \to \alg$ be the projection, $\trans(\phi) = \phi(0)$, corresponding to the origin $0 \in \alg$. That $\mlt$ be left-symmetric is equivalent to the requirement that the map $\phi = L \oplus I:\alg \to \eno(\alg) \oplus \alg \simeq \aff(\alg)$ be an affine Lie algebra representation, where $I$ is the identity endomorphism of $\alg$ and the isomorphism $ \eno(\alg) \oplus \alg \simeq \aff(\alg)$ is that inverse to $\lin \oplus \trans$. Since $\trans \circ \phi = I$, that is $\phi(a)0 = a$, $\phi$ is étale. The map $\phi$ is the \textit{canonical affine representation} of the LSA $(\alg, \mlt)$.

In the other direction, given an étale affine representation $\rho: \g \to \aff(\A)$, for $a, b \in \g$ there exists a unique $a \mlt b \in \g$ such that $\lin(\rho(a))\rho(b)x_{0} = \rho(a \mlt b)x_{0}$. From the fact that $\rho$ is a Lie algebra representation it follows that $\mlt$ is a left-symmetric multiplication on $\g$ with underlying Lie bracket $[\dum, \dum]$. The special case where $\g = \alg = \A$ is a vector space with origin $x_{0} = 0$  and $\trans \circ \rho = I$ yields the affine representation $\rho:\alg \to \aff(\alg)$ of the Lie algebra $(\alg, [\dum, \dum])$ and the compatible left-symmetric multiplication $a\mlt b = \rho(a\mlt b)0 = \lin(\rho(a))b = \rho(a)(0 + b) - \rho(a)0 = \rho(a)b - a$ with left multiplication operator $L = \ell \circ \rho$.

The étale affine representation $\rho:\alg \to \aff(\alg)$ determined by a left-symmetric multiplication $\mlt$ extends to a faithful linear representation $\hat{\rho}:\alg \to \gl(\alg \oplus \rea)$ defined by $\hat{\rho}(a)(b, t) = (\lin(\rho(a))b + t\trans(\rho(a)), 0)$. Consequently, an $n$-dimensional Lie algebra that admits no faithful linear representation of dimension $n+1$ admits no compatible left-symmetric multiplication. The first such example, with $n = 11$, was constructed by Y. Benoist in \cite{Benoist-nilvariete}.

Given an LSA $(\alg, \mlt)$, let $G$ be the simply-connected Lie group with Lie algebra $(\alg, [\dum, \dum])$. For $a \in \alg$, the vector field $\livf^{a}_{g} = \tfrac{d}{dt}\big|_{t = 0}g\cdot \exp_{G}(ta)$ generated on $G$ by right multiplication by $\exp_{G}(ta)$ is left-invariant. The relations defining an LSA mean that the left-invariant connection $\nabla$ on $G$ defined by $\nabla_{\livf^{a}}\livf^{b} = \livf^{a\mlt b}$ is torsion-free and flat. Conversely, given a flat torsion-free left-invariant connection $\nabla$ on a Lie group $G$, defining $a\mlt b$ by $\nabla_{\livf^{a}}\livf^{b} = \livf^{a\mlt b}$ for $a$ and $b$ in the Lie algebra $\g$ of $G$, makes $\g$ into an LSA. 

Suppose $G$ is simply-connected with identity element $e$ and let $\dev:G \to \alg$ be the developing map such that $\dev(e) = 0$.  
Let $R_{g}$ be the operator on $G$ of right multiplication by $g$. By the left invariance of $\nabla$ there exists a unique $\hol(g) \in \Aff(\alg)$ such that $\dev \circ R_{g} = \hol(g)\circ \dev$. The map $\hol:G \to \Aff(\alg)$ is a group homomorphism. Although the homomorphism $\hol$ depends on the choice of origin $0$, another such choice leads to a homomorphism conjugate to $\hol$ by an element of $\Aff(\alg)$.

Since $\dev$ is an open map, the image $\dev(G)$ is an open subset of $\alg$ on which $\hol(G)$ acts transitively with discrete isotropy group. This is the canonical affine representation of $G$ on $\alg$. Since the isotropy group of $\hol(G)$ is discrete, the corresponding affine representation $T\hol(e):\g \to \aff(\alg)$ is étale. Lemma \ref{infhollemma} shows that $T\hol(e)$ is the affine representation determined by the original LSA and shows how the affine representation $\hol$ intertwines the exponential maps on $G$ and $\Aff(\alg)$. It is equivalent to results in \cite{Kim-developingmaps}.
\begin{lemma}[cf. \cite{Kim-developingmaps}]\label{infhollemma}
Let $G$ be the simply-connected Lie group with Lie algebra the underlying Lie algebra of an LSA $(\alg, \mlt)$, and let $\nabla$ be the flat torsion-free left-invariant affine connection determined on $G$ by the multiplication $\mlt$. The differential $T\hol(e)$ at the identity $e \in G$ of the affine representation $\hol:G \to \Aff(\alg)$ corresponding to the developing map $\dev:G \to \alg$ such that $\dev(e) = 0$ is equal to the canonical affine representation $\phi = L \oplus I:\alg \to \aff(\alg)$, where $L$ is the operator of left multiplication in $(\alg, \mlt)$. 
Precisely, for $a, b \in \alg$,
\begin{align}\label{holexp0}
&\tfrac{d}{dt}\big|_{t = 0}\hol(\exp_{G}(ta))\cdot b  = T\hol(e)a \cdot b = \phi(a)b = a\mlt b + a = (I + R(b))a,\\
\label{holexp}
&\hol(\exp_{G}(a)) = \exp_{\Aff(\alg)}\circ \phi(a) = (e^{L(a)}, E(a)),
\end{align}
where $E(a) = \sum_{k \geq 1}\tfrac{1}{k!}L(a)^{k-1}a$. The differential $T\dev_{g}:T_{g}G \to \alg$ of the developing map at $g \in  G$ equals $(I + R(\dev(g)))\circ TR_{g^{-1}}$. For $a \in \alg$ regard the map $x \to (I + R(x))a = \rvf^{a}_{x}$ as a vector field on $\alg$. The flow of $\rvf^{a}$ is $\phi^{a}_{t}(x) = \hol(\exp_{G}(ta))\cdot x = e^{tL(a)}x + E(ta)$.
\end{lemma}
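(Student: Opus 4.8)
The plan is to deduce all four assertions from the single identification $T\hol(e) = \phi$. Two preliminary observations organize the argument. Evaluating the defining relation $\dev\circ R_{g} = \hol(g)\circ\dev$ at the identity and using $\dev(e) = 0$ gives $\dev(g) = \hol(g)\cdot 0 = \trans(\hol(g))$, so $\dev$ is the translational part of $\hol$; this is the bridge that will let me read off both the developing map and its differential from $\hol$. Next, because $\hol$ is a homomorphism of Lie groups into $\Aff(\alg)$, it intertwines the exponential maps, so $\hol(\exp_{G}a) = \exp_{\Aff(\alg)}(T\hol(e)a)$; and the first equality in \eqref{holexp0} is just functoriality of the differential, $\tfrac{d}{dt}\big|_{0}\hol(\exp_{G}ta) = T\hol(e)(T\exp_{G}|_{0}a) = T\hol(e)a$. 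Hence everything reduces to computing the infinitesimal holonomy $T\hol(e)a\in\aff(\alg)$.

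I would identify $T\hol(e)a$ with the affine field $\phi(a):y\mapsto a\mlt y + a = (I + R(y))a$ by computing its translational and linear parts separately. The translational part is immediate from the first observation: $\trans(T\hol(e)a) = \tfrac{d}{dt}\big|_{0}\dev(\exp_{G}ta) = T_{e}\dev(a) = a$, the normalization of the developing map. For the linear part I would differentiate the intertwining relation in the group variable; the conclusion is that the vector field on the open set $\dev(G)$ generating the flow $y\mapsto\hol(\exp_{G}ta)\cdot y$ is the $\dev$-pushforward of the invariant field on $G$ generating the corresponding translation flow. Since each $\hol(\exp_{G}ta)$ is an affine map, this pushforward lies in $\aff(\alg)$, so it is determined by its value and its linearization at $0 = \dev(e)$. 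Both are computed from the fact that $\dev$ is affine (it carries $\nabla$ to the flat connection on $\alg$) together with the defining identity $\nabla_{\livf^{a}}\livf^{b} = \livf^{a\mlt b}$: the value at $0$ is $T_{e}\dev(a) = a$, and the linearization works out to the left multiplication $L(a)$ (this is the delicate left/right point discussed below). This yields $T\hol(e)a = \phi(a)$, which is the content of \eqref{holexp0}.

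With $T\hol(e) = \phi$ established, the remaining formulas are computations in $\Aff(\alg)$. To prove \eqref{holexp} I would solve the linear inhomogeneous equation $\dot x(t) = L(a)x(t) + a$ defining the flow of $\phi(a)$: its linear part is $e^{tL(a)}$, and the Duhamel integral $\int_{0}^{t}e^{sL(a)}a\,ds = \sum_{k\geq 1}\tfrac{t^{k}}{k!}L(a)^{k-1}a = E(ta)$ gives the translational part, so at $t = 1$ one reads off $\hol(\exp_{G}a) = (e^{L(a)}, E(a))$. This same computation is the final sentence of the lemma: the field $x\mapsto(I + R(x))a = \rvf^{a}_{x}$ is precisely $\phi(a)$, so its time-$t$ flow is $\exp_{\Aff(\alg)}(t\phi(a)) = \hol(\exp_{G}ta)$, acting by $x\mapsto e^{tL(a)}x + E(ta)$. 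Finally, the formula for $T\dev_{g}$ is the linear-part computation carried out at a general point: the identification shows that $T_{g}\dev$ sends the invariant frame value $TR_{g}(a)$ to $(I + R(\dev(g)))a$, and writing an arbitrary $w\in T_{g}G$ as $TR_{g}(TR_{g^{-1}}w)$ gives $T\dev_{g} = (I + R(\dev(g)))\circ TR_{g^{-1}}$.

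I expect the linear-part computation to be the main obstacle, entirely because of the left/right bookkeeping that pervades the theory of left-symmetric algebras. The connection $\nabla$ is left-invariant and is built from left multiplications through $\nabla_{\livf^{a}}\livf^{b} = \livf^{a\mlt b}$, while the flows implementing $\hol$ are translations whose infinitesimal generators are invariant fields differing from the $\livf^{a}$ by the adjoint action. The delicate point is to arrange the two invariant framings so that it is the left regular representation $L$ that surfaces in the linear part (consistently with the canonical representation $\phi = L\oplus I$ and with $\hol$ being a genuine homomorphism), rather than its opposite; this is exactly where affineness of $\dev$ and the identity for $\nabla$ must be combined with care. A subsidiary, harmless point is that $\dev$ is only a local diffeomorphism onto the open set $\dev(G)$, so values $\hol(\exp_{G}ta)\cdot b$ at a general $b$ are computed by writing $b = \dev(x)$; this causes no loss because an affine transformation is determined by its restriction to any nonempty open subset.
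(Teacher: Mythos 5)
Your overall architecture is the same as the paper's: everything is reduced to the identification $T\hol(e) = \phi$, the translational part comes from $T\dev(e) = I$ together with $\dev(g) = \hol(g)\cdot 0$, and \eqref{holexp}, the flow statement, and the formula for $T\dev_{g}$ then follow by routine computations in $\Aff(\alg)$ (the paper exponentiates $\phi(a)$ inside $\eno(\alg\oplus\rea)$ where you solve the ODE by Duhamel --- an immaterial difference). But there is a genuine gap at the one place where the lemma has real content: you never prove that $\lin(T\hol(e)a) = L(a)$. You write that the linearization ``works out to'' $L(a)$ from the affineness of $\dev$ and the identity $\nabla_{\livf^{a}}\livf^{b} = \livf^{a\mlt b}$, and then concede in your final paragraph that this left/right point is the main obstacle which ``must be combined with care'' --- that is, you name the difficulty without resolving it. The difficulty is real, not bookkeeping: the defining identity for $\nabla$ only controls covariant derivatives of the left-invariant fields $\livf^{a}$, whereas (with the homomorphism convention $\dev\circ L_{g} = \hol(g)\circ\dev$, which is what both the paper's proof and your argument actually use, the paper's displayed definition via $R_{g}$ notwithstanding) the generator of the flow $y\mapsto\hol(\exp_{G}(ta))\cdot y$ is the $\dev$-pushforward of the \emph{right}-invariant field $\tilde{a}_{g} = TR_{g}(a)$. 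If one instead naively pushes forward the left-invariant field and applies your cited ingredients, the linearization at $0$ comes out as $b\mapsto(\nabla_{\livf^{b}}\livf^{a})_{e} = b\mlt a = R(a)b$ --- the opposite multiplication, i.e.\ precisely the wrong answer you say you are worried about.

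Two ways to close the gap. (i) The paper's route: compute $a\mlt b$ as $(\pr_{Y^{a}}Y^{b})_{0}$ with $Y^{a} = \dev_{*}\livf^{a}$, unwinding it as the double derivative $\tfrac{d}{dt}\big|_{t = 0}\tfrac{d}{ds}\big|_{s = 0}\dev(\exp_{G}(ta)\exp_{G}(sb))$; the intertwining relation and $\hol\circ\exp_{G} = \exp_{\Aff(\alg)}\circ T\hol(e)$ turn this into $\tfrac{d}{dt}\big|_{t = 0}\lin(\exp_{\Aff(\alg)}(t\,T\hol(e)a))b = \lin(T\hol(e)a)b$, so $\lin(T\hol(e)a) = L(a)$; note that placing $a$ in the differentiating slot of $\nabla$ is exactly what makes $L$ rather than $R$ appear. (ii) Within your own framing: the linearization of the affine field $\dev_{*}\tilde{a}$ at $0$ is $b\mapsto T\dev(e)\bigl((\nabla_{\livf^{b}}\tilde{a})_{e}\bigr)$; since left- and right-invariant fields commute, torsion-freeness gives $\nabla_{\livf^{b}}\tilde{a} = \nabla_{\tilde{a}}\livf^{b}$, and since the covariant derivative is tensorial in the differentiating direction and $\tilde{a}_{e} = a = \livf^{a}_{e}$, this equals $(\nabla_{\livf^{a}}\livf^{b})_{e} = a\mlt b = L(a)b$. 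Either argument is short, but one of them is needed; as it stands your proposal asserts the lemma's key identity rather than proving it.
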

Note that, by \eqref{holexp}, 
\begin{align}
\dev(\exp_{G}(a)) = \hol(\exp_{G}(a))0 = (e^{L(a)}, E(a))0 = E(a). 
\end{align}
The identity \eqref{holexp} appears in an equivalent form in section II.$1$. of \cite{Vinberg}; see also \cite{Choi-domain}. 

\begin{proof}
It follows from the definition of the exponential map of a Lie group that $\hol \circ \exp_{G} = \exp_{\Aff(\alg)}\circ T\hol(e)$, where $T\hol(e):\alg \to \aff(\alg)$ is the differential of $\hol$ at $e$. Precisely, for $a \in \g$, $\si(t) = \exp_{\Aff(\alg)}(tT\hol(e)a)$ is by definition the unique one-parameter subgroup of $\Aff(\alg)$ such that $\si(0) = e_{\Aff(\alg)}$ and $\si^{\prime}(0) = T\hol(e)a$; since $\hol\circ \exp_{G}(ta)$ is a one-parameter subgroup of $\Aff(\alg)$ satisfying the same initial conditions, it equals $\si(t)$.

By the definition of $\dev$, the image $\dev(\ga(t))$ of the $\nabla$-geodesic $\ga(t)$ such that $\ga(0) = e$ and $\dot{\ga}(0) = a$ is a straight line in $\alg$ tangent to $a$ at $0 \in \alg$, so $T\dev(e)a = \tfrac{d}{dt}\big|_{t = 0}\dev(\ga(t)) = a$. Since $\ga(t)$ is tangent at $e$ to the curve $\exp_{G}(ta)$, 
\begin{align}\label{atdevea}
\begin{split}
a &= \tfrac{d}{dt}\big|_{t = 0}\dev(\ga(t)) = T\dev(e)a = \tfrac{d}{dt}\big|_{t = 0}\dev(\exp_{G}(ta)) \\
&=  \tfrac{d}{dt}\big|_{t = 0}\hol(\exp_{G}(ta))\dev(e) = T\hol(e)a \cdot \dev(e)= \trans(T\hol(e)a),
\end{split}
\end{align}
so $\trans \circ T\hol(e) = I$. By the definition of $\dev$, $\nabla$ is the pullback via $\dev$ of the flat connection $\pr$ on $\alg$ determined by the flat affine structure on $\alg$ whose geodesics are the lines in $\alg$. 
Let $Y^{a}_{\dev(g)} = \tfrac{d}{dt}\big|_{t = 0}\dev(g\cdot \exp_{G}(ta)) = T\dev(g)(\livf^{a}_{g})$. The integral curve through $e$ of $\livf^{a}$ is $\exp_{G}(ta)$ and the integral curve through $\dev(e)$ of $Y^{a}$ is $\dev(\exp_{G}(ta))$. Combining the preceding observations with \eqref{holexp0} and \eqref{atdevea} yields
\begin{align}
\begin{split}
a\mlt b& = T\dev(e)(\livf^{a\mlt b}_{e}) = T\dev(e)((\nabla_{\livf^{a}}\livf^{b})_{e}) = (\pr_{Y^{a}}Y^{b})_{e}\\
& = \tfrac{d}{dt}\big|_{t = 0}Y^{b}_{\dev(\exp_{G}(at))} = \tfrac{d}{dt}\big|_{t = 0}\tfrac{d}{ds}\big|_{s = 0} \dev(\exp_{G}(ta)\exp_{G}(sb))\\
& = \tfrac{d}{dt}\big|_{t = 0}\tfrac{d}{ds}\big|_{s = 0} \hol(\exp_{G}(ta))\dev(\exp_{G}(sb))\\
& = \tfrac{d}{dt}\big|_{t = 0}\tfrac{d}{ds}\big|_{s = 0}\exp_{\Aff(\alg)}(T\hol(e)(ta))\dev(\exp_{G}(sb))\\
& = \tfrac{d}{dt}\big|_{t = 0}\lin(\exp_{\Aff(\alg)}(tT\hol(e)a))b = \lin(T\hol(e)a) \cdot b.
\end{split}
\end{align}
This shows $\lin (T\hol(e)a) = L(a)$, and so $T\hol(e) = \phi$. The exponential map $\exp_{G}$ of $G$ can be described explicitly for $a \in \alg$ near $0$ by representing $\aff(\alg)$ as a subgroup of $\eno(\alg \oplus \rea)$ and exponentiating the image $\phi(a)$. There results \eqref{holexp}. Finally, for $a \in \alg$ and $g \in G$,
\begin{align}
\begin{split}
T\dev_{g}\circ TR_{g}(a) & = \tfrac{d}{dt}\big|_{t = 0}\dev(\exp_{G}(ta)g) =  \tfrac{d}{dt}\big|_{t = 0}\hol(\exp_{G}(ta))\dev(g) \\&= T\hol(e)a \cdot \dev(g) = (I + R(\dev(g))a = \rvf^{a}_{\dev(g)},
\end{split}
\end{align}
by \eqref{atdevea}. There remains to show that the flow of $\rvf^{a}$ is $\phi^{a}_{t}(x) = e^{tL(a)}x + E(ta)$. Because
\begin{align}
\tfrac{d}{dt}E(ta) = \tfrac{d}{dt}\sum_{k\geq 1}\tfrac{t^{k}}{k!}L(a)^{k-1}a = \sum_{k \geq 1}\tfrac{t^{k-1}}{(k-1)!}L(a)^{k-1}a  = e^{L(a)}a,
\end{align}
there holds
\begin{align}
\tfrac{d}{dt}\phi^{a}_{t}(x) = \tfrac{d}{dt}(e^{tL(a)}x + E(ta)) = L(a)e^{tL(a)}x + e^{tL(a)}a.
\end{align}
On the other hand,
\begin{align}
\rvf^{a}_{\phi^{a}_{t}(x)} = (I + R(\phi^{a}_{t}(x))a = a + L(a)e^{tL(a)}x + L(a)E(ta)  = L(a)e^{tL(a)}x + e^{tL(a)}a.
\end{align}
This shows the final claim.
\end{proof}
By \eqref{holexp0}, for $a, x \in \alg$, the differential at $e \in G$ of the map $g \to \hol(g)\cdot x$ is $I + R(x)$, and the tangent space $T_{x}\hol(G)\cdot x$ at $x$ of the orbit $\hol(G)\cdot x$ is the image of $I + R(x)$, which is spanned by the vector fields $\rvf^{a}$.

Following Helmstetter in \cite{Helmstetter}, define the \textit{characteristic polynomial} of the LSA by $P(x) = \det(I + R(x))$. Since $R(x)$ is linear in $x$, $\deg P \leq \dim \alg$.

\begin{lemma}[J. Helmstetter \cite{Helmstetter}] \label{charpolylemma}
Let $G$ be the simply-connected Lie group with Lie algebra the underlying Lie algebra of the finite-dimensional LSA $(\alg, \mlt)$. The characteristic polynomial $P(x) = \det(I + R(x))$ of $(\alg, \mlt)$ satisfies 
\begin{align}\label{holgp}
(\hol(g)\cdot P)(x) = P(\hol(g^{-1})x) = \chi(g^{-1})P(x) 
\end{align}
for $g \in G$ and $x \in \alg$, where $\chi:G \to \reap$ is the group character satisfying $\chi\circ \exp_{G}(a) = e^{\tr R(a)}$ for $a \in \alg$.
\end{lemma}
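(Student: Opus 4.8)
The plan is to reduce the group identity \eqref{holgp} to an infinitesimal statement along the orbit vector fields $\rvf^{a}$ and then integrate it, using the explicit flow provided by Lemma \ref{infhollemma}; the connectedness of $G$ then upgrades the result from one-parameter subgroups to all of $G$. First I would record that $a \mapsto \tr R(a)$ is a Lie algebra homomorphism $\alg \to \rea$: applying \eqref{rlsa} twice gives $R([a,b]) = R(b)R(a) - R(a)R(b) + [L(a), R(b)] - [L(b), R(a)]$, whose trace vanishes by cyclicity and because traces of commutators are zero. Since $G$ is simply-connected, this homomorphism integrates to a unique character $\chi:G \to \reap$ with $\chi\circ\exp_{G}(a) = e^{\tr R(a)}$, so the $\chi$ in the statement is well-defined.

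The core of the argument is the infinitesimal identity
\[
dP_{x}\big((I + R(x))a\big) = \tr R(a)\, P(x) \qquad (x, a \in \alg).
\]
On the open dense set where $P(x) = \det(I + R(x)) \neq 0$, Jacobi's formula gives $dP_{x}(\xi) = P(x)\tr\big((I+R(x))^{-1}R(\xi)\big)$, since $R$ is linear and hence $\tfrac{d}{dt}\big|_{0}(I + R(x + t\xi)) = R(\xi)$. Taking $\xi = (I + R(x))a = a + a\mlt x$ and using \eqref{rlsa} in the form $R(a\mlt x) = R(x)R(a) + [L(a), R(x)]$ yields $R(\xi) = (I+R(x))R(a) + [L(a), I + R(x)]$, the last term because $[L(a), I] = 0$. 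Therefore $\tr\big((I+R(x))^{-1}R(\xi)\big) = \tr R(a) + \tr\big((I+R(x))^{-1}[L(a), I + R(x)]\big)$, and the second summand vanishes by conjugation-invariance of the trace, $\tr\big(M^{-1}L(a)M\big) = \tr L(a)$ with $M = I + R(x)$. As both sides of the displayed identity are polynomial in $x$, it extends from the dense open set to all of $\alg$.

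With this in hand, recall from Lemma \ref{infhollemma} that $\rvf^{a}_{x} = (I + R(x))a$ has flow $\phi^{a}_{t}(x) = \hol(\exp_{G}(ta))\cdot x$. Setting $u(t) = P(\phi^{a}_{t}(x))$, the infinitesimal identity gives $u'(t) = dP_{\phi^{a}_{t}(x)}\big(\rvf^{a}_{\phi^{a}_{t}(x)}\big) = \tr R(a)\, u(t)$, so $u(t) = e^{t\tr R(a)}P(x) = \chi(\exp_{G}(ta))P(x)$; thus $P(\hol(g)x) = \chi(g)P(x)$ for $g$ in the image of $\exp_{G}$. Finally, the set of $g \in G$ for which $P(\hol(g)x) = \chi(g)P(x)$ holds for all $x$ is a subgroup, because both $\hol$ and $\chi$ are homomorphisms; since it contains $\exp_{G}(\alg)$ and $G$ is connected, it is all of $G$. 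Replacing $g$ by $g^{-1}$ and using $(\hol(g)\cdot P)(x) = P(\hol(g)^{-1}x) = P(\hol(g^{-1})x)$ then gives \eqref{holgp}.

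The main obstacle is the infinitesimal identity, and within it the cancellation of the commutator trace term: it is precisely the left-symmetry relation \eqref{rlsa} that converts $R\big((I+R(x))a\big)$ into $(I+R(x))R(a)$ plus a commutator with $I + R(x)$, after which conjugation-invariance of the trace does the rest. Everything else is routine: the integration of a linear ODE along the flow of $\rvf^{a}$ and the connectedness bootstrap from $\exp_{G}$ to $G$.
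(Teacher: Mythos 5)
Your proof is correct, but it takes a genuinely different route from the paper's. The paper works at the group level: for $g = \exp_{G}(b)$ it establishes, from \eqref{holexp0} and \eqref{holexp}, the operator identity $(I + R(\hol(\exp_{G}(b))x))a = e^{L(b)}(I + R(x))\Ad(\exp_{G}(-b))a$, and then simply takes determinants, obtaining $P(\hol(\exp_{G}(b))x) = e^{\tr L(b) - \tr \ad(b)}P(x) = e^{\tr R(b)}P(x)$ in one stroke. You instead prove the infinitesimal identity $dP_{x}((I + R(x))a) = \tr R(a)\,P(x)$ --- which is precisely \eqref{dptra}, obtained in the paper only as a \emph{consequence} of the lemma --- directly from Jacobi's formula and \eqref{rlsa}, and then integrate the resulting linear ODE along the flow $\phi^{a}_{t}$ supplied by Lemma \ref{infhollemma}. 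So the logical order is reversed: in the paper \eqref{dptra} is a corollary of \eqref{holgp}, whereas for you it is the engine. Both proofs finish with the identical bootstrap (the set of admissible $g$ is a subgroup containing $\exp_{G}(\alg)$, which generates the connected group $G$). What your route buys is a transparent accounting of exactly where left-symmetry enters --- \eqref{rlsa} is what converts $R((I+R(x))a)$ into $(I + R(x))R(a)$ plus a commutator killed by the trace --- together with an explicit check that $\tr R$ vanishes on $[\alg, \alg]$, which the paper gets from the symmetry of the trace form and from $\tr L(b) - \tr \ad(b) = \tr R(b)$. What it costs is extra analytic scaffolding that the paper avoids: Jacobi's formula is valid only where $P \neq 0$, so you need the density-plus-polynomiality extension, and then the ODE integration; all of this is correct as you wrote it, just more machinery than the paper's one-line determinant of a factorization.
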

\begin{proof}
Proposition $3.1$ of H. Kim's \cite{Kim-developingmaps} gives a proof of \eqref{holgp} in terms of the developing map. The proof given here, based on \eqref{holexp0}, is similar to Kim's. By \eqref{holexp0}, for $g \in G$ and $a, x \in \alg$,
\begin{align}
\begin{split}
(I + R(\hol(g)x))a & = \tfrac{d}{dt}\big|_{t = 0}\hol(\exp_{G}(ta))\hol(g)x =  \tfrac{d}{dt}\big|_{t = 0}\hol(g\exp_{G}(t\Ad(g^{-1})a))x.
\end{split}
\end{align}
Suppose that $g = \exp_{G}(b)$. Then, using \eqref{holexp0} and \eqref{holexp},
\begin{align}
\begin{split}
(I &+ R(\hol( \exp_{G}(b))x))a   =  \tfrac{d}{dt}\big|_{t = 0}\hol(\exp_{g}(b)\exp_{G}(t\Ad(\exp_{G}(-b))a)x\\
& =  \tfrac{d}{dt}\big|_{t = 0}\left(e^{L(b)}\hol(\exp_{G}(t\Ad(\exp_{G}(-b))a))x  + E(b)\right)
 = e^{L(b)}(I + R(x))\Ad(\exp_{G}(-b))a.
\end{split}
\end{align}
Consequently $P(\hol(\exp_{G}(b))x) = e^{(\tr L(b) - \tr \ad(b))}P(x) = e^{\tr R(b)}P(x)$. Since $G$ is generated by any given open neighborhood of the identity, in particular by $\exp_{G}(\alg)$, corresponding to the infinitesimal character $\tr R$ there is a unique group character $\chi:G \to \reat$ satisfying $\chi\circ \exp_{G}(b) = e^{\tr R(b)}$ for all $b \in \alg$. Then $P(\hol(g)x) = \chi(g)P(x)$ holds for $g \in \exp_{G}(\alg)$. Since  $\exp_{G}(\alg)$ generates $G$, this implies that equality holds for all $g \in G$. 
\end{proof}

A consequence of Lemma \ref{charpolylemma} is the result of \cite{Goldman-Hirsch-orbits} (see section $1A.8$) that the orbit $\hol(G) 0 = \hol(G)\dev(e) = \dev(G)$ is the connected component containing $0$ of the complement of the zero set of $P$ (see also Proposition $3.2$ of \cite{Kim-developingmaps}).
\begin{corollary}[\cite{Goldman-Hirsch-orbits}, \cite{Kim-developingmaps}, \cite{Helmstetter}]\label{orbitcorollary}
Let $G$ be the simply-connected Lie group with Lie algebra the underlying Lie algebra of the finite-dimensional LSA $(\alg, \mlt)$. The orbit $\hol(G) 0 = \hol(G)\dev(e) = \dev(G)$ is the connected component $\Om$ containing $0$ of the complement of the zero set $\{x \in \alg: P(x) = 0 \}$ of the characteristic polynomial $P$ and $\dev:G \to \Om$ is a diffeomorphism.
\end{corollary}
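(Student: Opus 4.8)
The plan is to establish, in order, that $\dev(G) \subseteq \Om$, that $\dev(G) = \Om$, and finally that $\dev$ is injective. First I would record that $P(0) = \det(I + R(0)) = \det I = 1$, so that, setting $x = 0$ in \eqref{holgp} of Lemma \ref{charpolylemma} and replacing $g$ by $g^{-1}$,
\begin{align}
P(\dev(g)) = P(\hol(g)\cdot 0) = \chi(g)P(0) = \chi(g) > 0
\end{align}
for every $g \in G$, since $\chi$ is valued in $\reap$. Hence $\dev(G) \subseteq \{x \in \alg : P(x) \neq 0\}$. By Lemma \ref{infhollemma} the differential $T\dev_{g} = (I + R(\dev(g)))\circ TR_{g^{-1}}$ is invertible, its first factor having determinant $P(\dev(g)) \neq 0$, so $\dev$ is a local diffeomorphism and $\dev(G)$ is open. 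Being a continuous image of the connected group $G$ and containing $\dev(e) = 0$, it is connected, whence it lies in the connected component $\Om$ of $\{P \neq 0\}$ that contains $0$.

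To promote this containment to equality I would run an open-orbit argument inside $\Om$. Because $\chi(g) > 0$, \eqref{holgp} shows each $\hol(g)$ preserves $\{P \neq 0\}$; since $\hol(G)$ is connected, the orbit of any $x \in \Om$ is a connected subset of $\{P \neq 0\}$ meeting $\Om$, hence contained in $\Om$, so $\hol(G)$ acts on $\Om$. At any $x \in \Om$ the infinitesimal action $a \mapsto \rvf^{a}_{x} = (I + R(x))a$ from \eqref{holexp0} is surjective onto $\alg$, as $I + R(x)$ is invertible there; therefore every orbit in $\Om$ is open. The complement of any single orbit is a union of the others, hence open, so each orbit is also closed in $\Om$. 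Since $\Om$ is connected and $\dev(G) = \hol(G)\cdot 0$ is a nonempty clopen orbit, $\dev(G) = \Om$.

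For the remaining injectivity I would exploit the equivariance $\dev(hg) = \hol(g)\dev(h)$ of the developing map. Together with the homomorphism property of $\hol$ it gives $\dev(h_{1}) = \dev(h_{2})$ if and only if $h_{2}^{-1}h_{1}$ lies in the isotropy group $\Gamma = \{g \in G : \hol(g)\cdot 0 = 0\} = \dev^{-1}(0)$, so the fibers of $\dev$ are the left cosets of $\Gamma$. Since the action of $\hol(G)$ on $\dev(G)$ has discrete isotropy, $\Gamma$ is discrete, and $\dev$ factors as a covering $G \to G/\Gamma \xrightarrow{\sim} \Om$. As $G$ is simply connected this exhibits it as the universal cover of $\Om$ with deck group $\Gamma \cong \pi_{1}(\Om)$, and $\dev$ is a diffeomorphism precisely when $\Gamma$ is trivial.

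The triviality of $\Gamma$ — equivalently, the simple connectivity of $\Om$, equivalently the injectivity of the developing map — is the step I expect to be the main obstacle. It is a genuinely global fact, not forced by the local and equivariance considerations above, and I would supply it by appealing to the cited results of Goldman--Hirsch, Kim, and Helmstetter, which establish that the developing map of the canonical affine representation is injective; in the solvable situations that govern the later applications $\Om$ is a cell and the triviality of $\Gamma$ can be seen directly.
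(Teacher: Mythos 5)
Your first two steps are essentially the paper's own: nonvanishing of $P$ on the orbit comes from \eqref{holgp} at $x=0$, and the local-diffeomorphism property from Lemma \ref{infhollemma}. For surjectivity the paper merely asserts that $\dev(G)$ is a connected open subset of $\Om$, ``hence equals $\Om$''; your clopen-orbit argument supplies exactly the justification that terse assertion needs, so up to this point your proposal is a more careful rendering of the same approach.

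The divergence is at the final step, and there the comparison cuts the other way than you might expect. The paper does attempt a self-contained argument: ``since an open map is proper, $\dev$ is a proper local diffeomorphism and so a covering map; since $G$ is simply-connected, $\dev$ must be a diffeomorphism.'' This is doubly flawed: an open map need not be proper, and a covering map with simply connected total space is the \emph{universal covering} of the base, not a homeomorphism, unless $\pi_{1}(\Om)$ is trivial. Your factorization $\dev\colon G \to G/\Gamma \xrightarrow{\ \sim\ } \Om$ with $\Gamma = \dev^{-1}(0)$ discrete is the correct core of this step, and your instinct that the triviality of $\Gamma$ is a genuinely global fact, not forced by anything established so far, is exactly right --- so right that the corollary is in fact false in the stated generality, and hence can neither be proved by the paper's route nor safely quoted from the references as you propose. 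Take $(\alg, \mlt) = (\com, \cdot)$, the complex numbers regarded as a two-dimensional real associative (hence left-symmetric) algebra. Its underlying Lie algebra is abelian, so $G = (\rea^{2}, +)$; one computes $P(x) = |1+x|^{2}$, so the zero set is the single point $-1$ and $\Om = \rea^{2}\setminus\{-1\}$, while $\dev(a) = e^{a} - 1$. This is the universal covering of $\Om$, with deck group $\Gamma = 2\pi i \zt$, and is not injective. What your argument (and a repaired version of the paper's) actually proves is precisely the true general statement: $\dev\colon G \to \Om$ is the universal covering of $\Om$.

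Two consequences for your closing remarks. First, solvability is not what rescues the later applications: the counterexample above is abelian. What rescues them is simple connectivity of $\Om$, which holds wherever the corollary is invoked in the paper --- in Lemma \ref{completenesslemma} because there $\Om = \alg$, and in Theorem \ref{triangularizabletheorem} because there $P(x + tr) = P(x) + t$, so $\Om = \{P > 0\}$ is the epigraph of a continuous function, hence contractible; in either case your covering-space argument then yields the diffeomorphism immediately. Second, at this level of generality the only statement available, whether from your argument or from the cited literature, is the covering statement, so the citation cannot close the gap; the diffeomorphism claim needs an added hypothesis guaranteeing $\pi_{1}(\Om) = 1$.
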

\begin{proof}
Taking $x = 0$ in \eqref{holgp} shows that
\begin{align}\label{pdev}
  P(\dev(g))= P(\hol(g)0) =\chi(g),
\end{align}
for $g \in G$, so that $P$ is nonvanishing on $\dev(G)$. 
By \eqref{holgp}, the action of $\hol(G)$ preserves $\{x \in \alg: P(x) = 0 \}$. Since $G$ is path-connected, $\hol(g)\Om = \Om$ for all $g \in G$, so $\hol(G)\Om = \Om$. Consequently $\dev(G) = \hol(G)0 \subset \hol(G)\Om = \Om$. 
By Lemma \ref{infhollemma}, the differential of $\dev$ at $g \in G$ equals $I + R(\dev(g))$, and so is a linear isomorphism, since $P(\dev(g)) \neq 0$. Hence $\dev:G \to \Om$ is a local diffeomorphism, so an open map, and $\dev(G)$ is connected open subset of $\Om$, hence equals $\Om$. Since an open map is proper, $\dev$ is a proper local diffeomorphism and so a covering map. Since $G$ is simply-connected, $\dev$ must be a diffeomorphism.
\end{proof}

Let $(\alg, \mlt)$ be a finite-dimensional LSA. The \textit{trace form} $\tau$ of the LSA $(\alg, \mlt)$ is the symmetric bilinear form defined by 
\begin{align}\label{traceformdefined}
\tau(x, y) = \tr R(x)R(y) = \tr R(x\mlt y)
\end{align}
(the second equality follows from \eqref{rlsa}).

Differentiating $\hol(\exp_{G}(-ta))\cdot P$ using \eqref{holexp} and \eqref{holgp} yields
\begin{align}\label{dptra}
\begin{split}
dP_{x}((I + R(x))a) & = \tfrac{d}{dt}\big|_{t = 0}P(\hol(\exp_{G}(ta))x) = \tfrac{d}{dt}\big|_{t = 0}e^{t\tr R(a)}P(x) = P(x)\tr R(a).
\end{split}
\end{align}
In particular, $dP_{0} = d\log P_{0} = \tr R$. 

Since $R(x)$ is linear in $x$, differentiating $R(x)$ in the direction of $b \in \alg$ yields $R(b)$. Writing \eqref{dptra} as $d\log P_{x}((I + R(x))a) = \tr R(a)$ and taking the derivative in the $b$ direction yields
\begin{align}\label{hesslogp}
(\hess \log P)_{x}((I + R(x))a,  b) = - d\log P_{x}(a \mlt b) = -\tr R((I + R(x))^{-1}(a\mlt b)).
\end{align}
In particular, $\tau(a, b) = -(\hess \log P)_{0}(a,  b)$, so that nondegeneracy of the trace form is the algebraic condition corresponding to nondegeneracy of the Hessian of $\log P$, or, what is essentially the same, the nondegeneracy of the level sets of $P$. Since the eventual goal here is to find characteristic polynomials $P$ solving $\H(e^{P}) = \kc e^{nP}$ with $\kc \neq 0$, the focus is LSAs for which $\tau$ is nondegenerate. Differentiating \eqref{hesslogp} yields
\begin{align}\label{cubiclogp}
(\pr^{2}d\log P)_{x}((I + R(x))a, b, c) =  - (\hess\log P)_{x}(a\mlt b, c) -(\hess \log P)_{x}(b, a \mlt c),
\end{align}
which combined with \eqref{hesslogp} shows that 
\begin{align}\label{taucubic}
(\pr^{2}d\log P)_{0}(a, b, c) = \tau(a\mlt b, c) + \tau(b, a\mlt c).
\end{align} 
More generally there holds the following identity due to Helmstetter.
\begin{lemma}[J. Helmstetter; $(5)$ of \cite{Helmstetter}]
The characteristic polynomial $P$ of an LSA $(\alg, \mlt)$ satisfies
\begin{align}\label{nablaklogp2}
(\pr^{k}d\log &P)_{0}(a_{0}, a_{1}, \dots, a_{k}) = (-1)^{k}\sum_{\si \in S_{k}}\tr\left(R(a_{\si(1)})\dots R(a_{\si(k)})R(a_{0}) \right),
\end{align}
for all $a_{0}, \dots, a_{k} \in \alg$.
\end{lemma}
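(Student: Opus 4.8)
The plan is to reduce the identity to Jacobi's formula for the logarithmic derivative of a determinant together with the elementary rule for differentiating a matrix inverse, and then to organize the resulting terms by a standard sequential-insertion combinatorics. Throughout I would work on the neighbourhood of $0 \in \alg$ on which $P \neq 0$ (note $P(0) = \det I = 1$), and use that $\pr$ is the flat connection, so that, extending $a_{0}, \dots, a_{k}$ as parallel (constant) vector fields, $(\pr^{k}d\log P)_{0}(a_{0}, \dots, a_{k})$ is just $\pr_{a_{1}}\cdots\pr_{a_{k}}$ applied to the scalar function $x \mapsto d\log P_{x}(a_{0})$ and evaluated at $0$.

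First I would record the order-zero identity. Since $P(x) = \det(I + R(x))$ and $R$ is linear, writing $M(x) = I + R(x)$ and applying Jacobi's formula gives
\begin{align*}
d\log P_{x}(a_{0}) = \tr\left[M(x)^{-1}R(a_{0})\right],
\end{align*}
in which only the factor $M(x)^{-1}$ depends on $x$, because $R(a_{0})$ is constant. The one analytic input needed for the iteration is then $\pr_{b}\left(M(x)^{-1}\right) = -M(x)^{-1}R(b)M(x)^{-1}$, which holds because $\pr_{b}M(x) = R(b)$ by linearity of $R$.

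Next I would iterate and bookkeep the terms. For a word $w = (c_{1}, \dots, c_{m})$ in $\alg$ set
\begin{align*}
T_{w}(x) = \tr\left[M(x)^{-1}R(c_{1})\,M(x)^{-1}R(c_{2})\cdots M(x)^{-1}R(c_{m})\right],
\end{align*}
so that $d\log P_{x}(a_{0}) = T_{(a_{0})}(x)$. Applying the inverse-derivative rule to each of the $m$ factors $M(x)^{-1}$ and summing by the Leibniz rule yields
\begin{align*}
\pr_{b}T_{(c_{1}, \dots, c_{m})} = -\sum_{i = 1}^{m}T_{(c_{1}, \dots, c_{i-1},\, b,\, c_{i}, \dots, c_{m})},
\end{align*}
i.e. differentiating in the direction $b$ inserts the letter $b$ immediately before each existing letter, carries an overall sign $-1$, and admits no insertion after the last letter (there is no factor $M(x)^{-1}$ to the right of $R(c_{m})$). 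Hence differentiating $(a_{0})$ successively in the directions $a_{1}, \dots, a_{k}$ produces $(-1)^{k}$ times a sum of terms $T_{w}$, each a word $(b_{1}, \dots, b_{k}, a_{0})$ in which $a_{0}$ occupies the last position and $(b_{1}, \dots, b_{k})$ is a rearrangement of $(a_{1}, \dots, a_{k})$; the sign $(-1)^{k}$ is the same for every term.

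Finally I would identify these terms with $S_{k}$ and set $x = 0$. The map sending an insertion history to the arrangement $(b_{1}, \dots, b_{k})$ is the classical bijection between sequences of insertion choices --- the $j$-th insertion having exactly $j$ admissible slots --- and the $k!$ permutations of $\{a_{1}, \dots, a_{k}\}$; bijectivity follows because the target arrangement determines the slot chosen at each step (delete $a_{k}, a_{k-1}, \dots$ in turn), and the resulting count of $k!$ terms matches $|S_{k}|$. Evaluating at $x = 0$, where $M(0)^{-1} = I$, collapses each term $T_{(a_{\si(1)}, \dots, a_{\si(k)}, a_{0})}(0)$ to $\tr[R(a_{\si(1)})\cdots R(a_{\si(k)})R(a_{0})]$, which yields exactly \eqref{nablaklogp2}. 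The one genuinely delicate point is this combinatorial bookkeeping, namely that the insertions realize each permutation exactly once; the analytic ingredients are routine. As a consistency check, the cases $k = 1$ and $k = 2$ reproduce the already-derived identities $(\pr d\log P)_{0}(a_{0}, a_{1}) = -\tau(a_{1}, a_{0})$ and \eqref{taucubic}.
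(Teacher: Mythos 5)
Your proof is correct, and it takes a genuinely different route from the paper's. The paper proves \eqref{nablaklogp2} by induction on $k$: the base case is \eqref{hesslogp}, and the inductive step combines the recursion \eqref{nablakdlogp} (itself a consequence of the equivariance \eqref{holgp} of $P$ under the affine action of $G$) with the left-symmetry identity \eqref{rlsa} and the trace identity $\sum_{\si \in S_{k+1}}[A_{\si(1)}\cdots A_{\si(k)}, A_{\si(k+1)}] = 0$. You instead compute directly from $P(x) = \det(I + R(x))$ via Jacobi's formula and the derivative-of-the-inverse rule, and organize the result by insertion combinatorics; your bookkeeping is sound --- the $j$-th insertion has exactly $j$ admissible slots, insertion histories biject with $S_{k}$ because the history can be recovered by deleting $a_{k}, a_{k-1}, \dots$ in turn, and $a_{0}$ stays in the last position precisely because there is no factor $M(x)^{-1}$ to the right of the final $R$-factor --- so evaluation at $x = 0$, where $M(0) = I$, yields exactly \eqref{nablaklogp2}. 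What your argument buys: it never invokes the LSA axioms, only the linearity of $x \mapsto R(x)$, so it establishes the identity for $\det(I + R(x))$ with $R$ an arbitrary linear map into $\eno(\alg)$, and it is entirely self-contained. What the paper's argument buys: the intermediate recursion \eqref{nablakdlogp} holds at every $x$ (paired against the vector $(I+R(x))a_{0}$), encodes how the identity is tied to the affine $G$-action, and is reused elsewhere in the paper (for instance in the proof of Lemma \ref{completenesslemma}), so the inductive route produces machinery the paper needs anyway.
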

\begin{proof}
The identity \eqref{nablaklogp2} is proved by induction on $k$.
The base case $k = 1$ is true by \eqref{hesslogp}. Repeated differentiation shows that, for $k \geq 1$, 
\begin{align}\label{nablakdlogp}
\begin{split}
(\pr^{k}d\log &P)_{x}((I + R(x))a_{0}, a_{1}, \dots, a_{k}) = -\sum_{i = 1}^{k}(\pr^{k-1}d\log P)_{x}(a_{1}, \dots, a_{i-1}, a_{0}\mlt a_{i}, a_{i+1}, \dots, a_{k}).
\end{split}
\end{align}
The inductive step is proved using \eqref{nablakdlogp}, \eqref{rlsa}, and that $\sum_{\si \in S_{k+1}}[A_{\si(1)}\dots A_{\si(k)}, A_{\si(k+1)}]  = 0$
for any $A_{1}, \dots, A_{k+1} \in \eno(\alg)$.
\end{proof}

An LSA $(\alg, \mlt)$ is \textit{right nil} (\textit{left nil}) if $R(a)$ (respectively $L(a)$) is nilpotent for every $a \in \alg$. 
A finite-dimensional LSA $(\alg, \mlt)$ is \textit{complete} if its characteristic polynomial is nowhere vanishing. Equivalently, $I + R(a)$ is invertible for all $a \in \alg$. The LSA is \textit{incomplete} otherwise. If $R(a)$ is nilpotent, then $I + R(a)$ is invertible, so $P(a) \neq 0$. Hence a right nil LSA is complete. The contrary claim, that a complete LSA is right nil, is due to D. Segal. This and other equivalent characterizations of completeness are explained in Lemma \ref{completenesslemma}.

\begin{lemma}\label{completenesslemma}
Let $(\alg, \mlt)$ be a finite-dimensional real LSA $(\alg, \mlt)$ with characteristic polynomial $P$. Let $\nabla$ be the flat left-invariant affine connection determined by $\mlt$ on the simply-connected Lie group $G$ with Lie algebra $(\alg, [\dum, \dum])$. The following are equivalent.
\begin{enumerate}
\item\label{cmp1} $\nabla$ is complete.
\item\label{cmp5} $R(a)$ is nilpotent for all $a \in \alg$; that is, $(\alg, \mlt)$ is right nil.
\item\label{cmp6} $\tr R(a) = 0$ for all $a \in \alg$. 
\item\label{cmp7} The trace form $\tau$ defined in \eqref{traceformdefined} is identically zero.
\item\label{cmp8} $(\pr^{N}d\log P)_{0} = 0$ for some $N \geq 1$.
\item\label{cmp3} $P$ is constant and equal to $1$.
\item\label{cmp4} $P$ has a critical point at $0 \in \alg$.
\item\label{cmp2} $P$ is nowhere vanishing on $\alg$; that is, $(\alg, \mlt)$ is complete.
\end{enumerate}
\end{lemma}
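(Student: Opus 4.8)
The plan is to organize the eight conditions into an \emph{algebraic} cluster---\eqref{cmp3}, \eqref{cmp4}, \eqref{cmp5}, \eqref{cmp6}, together with the trace conditions \eqref{cmp7}, \eqref{cmp8}---and the analytic/geometric pair \eqref{cmp1}, \eqref{cmp2}, to prove the equivalences inside the algebraic cluster by elementary manipulations of $P(x)=\det(I+R(x))$, and then to bridge to \eqref{cmp1}, \eqref{cmp2} using the developing map and Segal's theorem. Almost every arrow is formal; the single substantial input is the passage from a vanishing-trace hypothesis back to right nilpotence.

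\emph{Elementary core.} If every $R(a)$ is nilpotent then $I+R(a)$ has all eigenvalues equal to $1$, so $P\equiv 1$, giving \eqref{cmp5}$\Rightarrow$\eqref{cmp3}; conversely $P\equiv 1$ means $t\mapsto\det(I+tR(a))=\prod_i(1+t\lambda_i)$ is identically $1$, which forces the eigenvalues $\lambda_i$ of $R(a)$ to vanish, so \eqref{cmp3}$\Rightarrow$\eqref{cmp5}. Because $dP_0=\tr R$ by \eqref{dptra}, the critical-point condition \eqref{cmp4} is literally $\tr R=0$, i.e.\ \eqref{cmp4}$\Leftrightarrow$\eqref{cmp6}, and $P\equiv 1$ trivially implies both. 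The return \eqref{cmp6}$\Rightarrow$\eqref{cmp3} is the one place I would use the character: if $\tr R\equiv 0$ then $\chi\circ\exp_G(a)=e^{\tr R(a)}=1$, so $\chi\equiv 1$, and Lemma \ref{charpolylemma} gives $P(\hol(g)x)=P(x)$; evaluating at $x=0$ and using $\hol(g)0=\dev(g)$ shows $P\equiv P(0)=1$ on the orbit $\dev(G)=\Om$ of Corollary \ref{orbitcorollary}, hence $P\equiv 1$ on $\alg$ since a polynomial constant on a nonempty open set is constant. This makes \eqref{cmp3}, \eqref{cmp4}, \eqref{cmp5}, \eqref{cmp6} equivalent. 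The trace conditions attach on the easy side via $\tau(x,y)=\tr R(x\mlt y)$ from \eqref{traceformdefined} (so \eqref{cmp6}$\Rightarrow$\eqref{cmp7}) and via the $k=1$ case of Helmstetter's formula \eqref{nablaklogp2}, which reads $(\pr\,d\log P)_0(x,y)=-\tau(x,y)$ (so \eqref{cmp7}$\Rightarrow$\eqref{cmp8} with $N=1$).

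\emph{Geometric equivalence.} For \eqref{cmp1}$\Leftrightarrow$\eqref{cmp2} I would work through the developing map. By Corollary \ref{orbitcorollary}, $\dev\colon G\to\Om$ is a diffeomorphism onto the component $\Om$ of $\{P\neq 0\}$ containing $0$, and $\dev$ intertwines $\nabla$ with the flat connection on $\alg$; since $T\dev(e)=I$, it carries the $\nabla$-geodesic from $e$ with initial velocity $a$ to the affine line $t\mapsto ta$, defined exactly while $ta\in\Om$. By left-invariance, completeness of $\nabla$ amounts to completeness of all geodesics through $e$, hence to every line through $0$ lying in $\Om$; as each point of $\alg$ sits on such a line, this holds if and only if $\Om=\alg$, i.e.\ if and only if $P$ is nowhere zero.

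\emph{Closing the cycle and the main obstacle.} It remains to go from the weak conditions \eqref{cmp2}, \eqref{cmp7}, \eqref{cmp8} back to right nilpotence \eqref{cmp5}, and this is where the real content lies. The useful reformulation is $\log P(x)=\tr\log(I+R(x))=\sum_{m\ge 1}\tfrac{(-1)^{m-1}}{m}\tr R(x)^m$, so that right nilpotence is exactly the vanishing of all the power traces $\tr R(x)^m$, and if these vanish for all large $m$ then $\log P$ is a polynomial $Q$ with $Q(0)=0$; matching the entire functions $P$ and $e^{Q}$ then forces $Q\equiv 0$ and $P\equiv 1$. The difficulty is that neither the single power trace $\tr R(x)^{N+1}\equiv 0$ extracted from \eqref{cmp8} by setting all arguments equal in \eqref{nablaklogp2}, nor the vanishing of $\tau$ in \eqref{cmp7}, nor even $\Om=\alg$ in \eqref{cmp2}, forces nilpotence of $R(a)$ by linear algebra alone: over $\rea$ a conjugation-closed set of nonzero eigenvalues can make a given power sum vanish. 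This is precisely the obstruction that the left-symmetric structure removes, and I expect it to be the hard step; I would dispatch it by invoking Segal's theorem \cite{Segal-lsa} for \eqref{cmp2}$\Rightarrow$\eqref{cmp5} and the completeness calculus of Helmstetter \cite{Helmstetter} to route \eqref{cmp7} and \eqref{cmp8} into the class, after which \eqref{cmp1} and \eqref{cmp4} follow from the links already established and all eight conditions coincide.
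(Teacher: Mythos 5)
Most of your proposal is sound and close to the paper's own proof. The equivalences among \eqref{cmp3}, \eqref{cmp4}, \eqref{cmp5}, \eqref{cmp6} are correctly handled (your observation that $P\equiv 1$ forces $\det(I+tR(a))\equiv 1$ and hence nilpotence of $R(a)$ is even a bit more elementary than the paper's use of \eqref{nablaklogp2}, and your character/orbit route to \eqref{cmp6}$\Rightarrow$\eqref{cmp3} via Lemma \ref{charpolylemma} and Corollary \ref{orbitcorollary} is a valid variant of the paper's vector-field computation); the developing-map argument for \eqref{cmp1}$\Leftrightarrow$\eqref{cmp2} and the appeal to Segal's theorem for \eqref{cmp2}$\Rightarrow$\eqref{cmp5} are exactly what the paper does; and the easy directions \eqref{cmp6}$\Rightarrow$\eqref{cmp7}$\Rightarrow$\eqref{cmp8} are correct.

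The genuine gap is the return from \eqref{cmp7} and \eqref{cmp8} to the other conditions, which you never prove: you extract only the single identity $\tr R(a)^{N+1}=0$ from \eqref{cmp8}, observe (correctly) that one vanishing power trace cannot force nilpotence, and then defer to an unspecified ``completeness calculus of Helmstetter.'' But no external input is needed, and the difficulty is not where you locate it. The hypothesis \eqref{cmp8} is the vanishing of the \emph{full tensor} $(\pr^{N}d\log P)_{0}$, and the recursion \eqref{nablakdlogp} evaluated at $x=0$ shows that $(\pr^{k-1}d\log P)_{0}=0$ implies $(\pr^{k}d\log P)_{0}=0$; hence \eqref{cmp8} propagates to all orders $k\ge N$. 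Since $P(0)=1$, the function $\log P$ is real analytic near $0$, so it coincides near $0$ with a polynomial $Q$; your own matching argument ($P=e^{Q}$ with $P$ and $Q$ polynomials and $P(0)=1$ forces $Q\equiv 0$) then yields $P\equiv 1$, which is \eqref{cmp3} --- no nilpotence argument is needed at this stage, since \eqref{cmp3}$\Rightarrow$\eqref{cmp5} is already in your core. Finally, \eqref{cmp7} is precisely the case $N=1$, because by \eqref{hesslogp} one has $\tau(a,b)=-(\hess\log P)_{0}(a,b)$, so it is subsumed by the same argument. Without this propagation step your cycle does not close: \eqref{cmp7} and \eqref{cmp8} are left as apparently weaker conditions with no proven implication back to the remaining six.
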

\begin{proof}
\eqref{cmp1}$\iff$\eqref{cmp2}.  By Corollary \ref{orbitcorollary}, $\dev$ is a diffeomorphism onto the connected component containing $0$ of the complement in $\alg$ of the zero set of $P$. The affine connection $\nabla$ is complete if and only if $\dev$ is a diffeomorphism from $G$ onto $\alg$. Hence $\nabla$ is complete if and only if $P$ is nowhere vanishing.

\eqref{cmp5}$\implies$\eqref{cmp2} and \eqref{cmp6}.  If $R(a)$ is nilpotent, then $I + R(a)$ is invertible and $\tr R(a) = 0$.

\eqref{cmp2}$\implies$\eqref{cmp6}. It is equivalent to show that $I + R(a)$ is invertible for all $a \in \alg$ if and only if $(\alg, \mlt)$ is right nil. This is proved as Theorem $1$ of D. Segal's \cite{Segal-lsa} (an alternative algebraic proof is given in \cite{Elduque-Myung-lsas}). The proof uses facts about dominant morphisms and, in particular, the fact that the orbits of a morphic action of a unipotent algebraic group are closed. Segal's proof shows the stronger statement that for a base field of characteristic zero, an LSA is complete if and only if the LSA obtained by extension of scalars to the algebraic closure of the base field is also complete. In the present context, this amounts to showing that the characteristic polynomial of the complexified LSA is nowhere vanishing; since the nonvanishing of the characteristic polynomial implies that no $R(b)$ has a nonzero eigenvalue in the base field, this suffices to show that all $R(b)$ are trace free. 

\eqref{cmp3}$\iff$\eqref{cmp5}. If $P$ is equal to $1$ then \eqref{nablaklogp2} with $a_{0} = a_{1} = \dots = a_{k} = a$ implies $\tr R(a)^{k+1} = 0$ for all $k \geq 0$. Hence $R(a)$ is nilpotent for all $a \in \alg$.
If $R(a)$ is nilpotent for all $a \in \alg$, then $I + R(a)$ is unipotent for all $a \in \alg$, so $P(a) = \det (I + R(a)) = 1$.

\eqref{cmp3}$\iff$\eqref{cmp6}. If $P$ is nowhere vanishing then $I+R(x)$ is invertible for all $x \in \alg$. Associate with $a \in \alg$ the vector field $\rvf^{a}_{x} = (I + R(x))a$. If $P$ is nonvanishing, and $\{a_{1}, \dots, a_{n}\}$ is a basis of $\alg$, then $\rvf^{a_{1}}_{x}, \dots, \rvf^{a_{n}}_{x}$ are linearly independent for all $x \in \alg$. By \eqref{dptra}, $d\log P_{x}(\rvf^{a_{i}}_{x}) = \tr R(a_{i})$. If $P$ is constant, the constant must be $1$ because $P(0) = 1$, and so $\tr R(a) = d\log P_{x}(\rvf^{a}_{x})) = 0$ for all $a \in \alg$. On the other hand, if $\tr R = 0$, then $d\log P_{x}(\rvf^{a_{i}}_{x}) = 0$ for $1 \leq i \leq n$, so $\log P$ is constant.

\eqref{cmp4}$\iff$\eqref{cmp6}. Since $P(0) = 1$ and $dP_{0} = d\log P_{0} = \tr R$, $P$ has a critical point at $0 \in \alg$ if and only if $\tr R = 0$.
	
\eqref{cmp6}$\iff$\eqref{cmp7}. If there holds \eqref{cmp6}, by \eqref{traceformdefined}, $\tau(x, y) = \tr R(x \mlt y) = 0$ for all $x, y \in \alg$. Suppose there holds \eqref{cmp7}. Since $\tr R$ is linear there is $r \in \alg^{\ast}$ such that $\tr R(x) = r(x)$. Since $P(0) = 1$ and $P$ is a polynomial, $\log P$ is real analytic in a neighborhood of $0$. By \eqref{nablakdlogp} with $x = 0$, if there holds $(\pr^{k-1}d\log P)_{0} = 0$ then $(\pr^{k}d\log P)_{0} = 0$. By \eqref{hesslogp}, that $\tau = 0$ means $(\pr d\log P)_{0} = 0$, so $(\pr^{k}d\log P)_{0} = 0$ for $k \geq 2$. Hence for $x$ sufficiently near $0$, $\log P(x) = r(x)$, or $P(x) = e^{r(x)}$. Differentiating this equality $k$ times yields $(\pr^{k}P)_{0} = r^{\tensor k}$, the $k$fold tensor product of $r$ with itself. Since $P$ is a polynomial, this vanishes for $k > \deg P$, and hence $r$ must be $0$.

\eqref{cmp8}$\iff$\eqref{cmp3}. That \eqref{cmp3}$\implies$\eqref{cmp8} is immediate, so suppose there holds \eqref{cmp8}. Since $P(0) = 1$ and $P$ is a polynomial, $\log P$ is real analytic in a neighborhood $U$ of $0$. By \eqref{nablakdlogp}, that $(\pr^{N}d\log P)_{0} = 0$ means that $(\pr^{k}d\log P)_{0} =0$ for all $k \geq N$. Since $\log P$ is real analytic, this implies it equals some degree $N-1$ polynomial $Q$ in $U$. That $P$ and $e^{P}$ both be polynomials means that both are constant, so $P$ is constant, equal to $1$.
\end{proof}

\begin{remark}
It may be tempting to believe that \eqref{cmp2} implies \eqref{cmp4} for any real polynomial, that is, that a nonvanishing real polynomial must have a critical point, but this is false. For example the polynomial $Q(x, y) = (1 + x +x^{2}y)^{2} + x^{2}$ takes on all positive real values and has nonvanishing gradient (this example is due to \cite{fedja}). This means that this $Q$ cannot be the characteristic polynomial of a real finite-dimensional LSA. This suggests the problem: \textit{Characterize those polynomials that occur as the characteristic polynomials of some real finite-dimensional LSA}.
\end{remark}

If $(\alg, \mlt)$ is incomplete, there is $a \in \alg$ such that $\tr R(a) \neq 0$ and so $dP_{0}$ cannot be identically $0$ and $\ker \tr R$ has codimension one.

\begin{lemma}\label{cpafflemma}
If $\Psi:(\alg, \mlt) \to (\balg, \circ)$ is an isomorphism of LSAs, the characteristic polynomials $P_{\mlt}$ and $P_{\circ}$ satisfy $P_{\circ} \circ \Psi = P_{\mlt}$.
\end{lemma}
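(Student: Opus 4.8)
The plan is to reduce the claim to the conjugation-invariance of the determinant by showing that the right regular representations of the two LSAs are intertwined by $\Psi$. First I would record what it means for $\Psi$ to be an isomorphism of LSAs: it is a linear isomorphism $\Psi:\alg \to \balg$ satisfying $\Psi(a\mlt b) = \Psi(a)\circ \Psi(b)$ for all $a, b \in \alg$. Writing $R_{\mlt}(x)b = b\mlt x$ and $R_{\circ}(u)v = v\circ u$ for the two right regular representations, the characteristic polynomials are $P_{\mlt}(x) = \det(I + R_{\mlt}(x))$ and $P_{\circ}(u) = \det(I + R_{\circ}(u))$.

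The central step is the intertwining identity
\[
\Psi \circ R_{\mlt}(x) = R_{\circ}(\Psi(x))\circ \Psi
\]
as linear maps $\alg \to \balg$, valid for every $x \in \alg$. I would verify this by evaluating both sides on an arbitrary $b \in \alg$: the left side yields $\Psi(b\mlt x)$, the right side yields $\Psi(b)\circ \Psi(x)$, and these agree precisely because $\Psi$ is an algebra homomorphism. Since $\Psi$ is invertible, this rearranges to $R_{\circ}(\Psi(x)) = \Psi \circ R_{\mlt}(x)\circ \Psi^{-1}$ as an endomorphism of $\balg$.

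Next I would add the identity operator. Because $I = \Psi \circ I \circ \Psi^{-1}$, the previous line gives
\[
I + R_{\circ}(\Psi(x)) = \Psi\circ\bigl(I + R_{\mlt}(x)\bigr)\circ \Psi^{-1},
\]
exhibiting $I + R_{\circ}(\Psi(x))$ as a conjugate of $I + R_{\mlt}(x)$ by the linear isomorphism $\Psi$. Taking determinants and using their invariance under conjugation then yields
\[
P_{\circ}(\Psi(x)) = \det\bigl(I + R_{\circ}(\Psi(x))\bigr) = \det\bigl(I + R_{\mlt}(x)\bigr) = P_{\mlt}(x),
\]
which is exactly the assertion $P_{\circ}\circ \Psi = P_{\mlt}$.

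There is no real obstacle here: the only point that requires care is the intertwining relation, and that is immediate from the homomorphism property. The one bookkeeping remark worth making explicit is that the relation is an equality of maps between \emph{different} spaces, which is why I phrase the conjugation as $\Psi \circ R_{\mlt}(x)\circ \Psi^{-1}$ acting on $\balg$; once that is in place the determinant computation is mechanical, and no appeal to the left-symmetry axiom \eqref{rlsa} beyond the bare homomorphism property is needed.
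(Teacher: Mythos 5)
Your proof is correct and follows the same route as the paper's: the intertwining relation $R_{\circ}(\Psi(x)) \circ \Psi = \Psi \circ R_{\mlt}(x)$, followed by conjugation-invariance of the determinant. The paper states exactly this relation and leaves the determinant step implicit, so your write-up is just a more detailed version of the same argument.
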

\begin{proof}
The right multiplication operators satisfy $R_{\circ}(\Psi(x)) \circ \Psi = \Psi \circ R_{\mlt}(x)$ for all $x \in \alg$. The claim follows from the definition of the characteristic polynomial.
\end{proof}

\begin{remark}
By Lemma \ref{cpafflemma}, isomorphic LSAs have affinely equivalent characteristic polynomials. The converse is trivially false, because by Lemma \ref{completenesslemma} any two complete LSAs have affinely equivalent characteristic polynomials. Example \ref{negeigexample} exhibits nonisomorphic LSAs having the same nondegenerate characteristic polynomial.
\end{remark}

\section{Hessian LSAs, Koszul forms, idempotents, and the trace-form}\label{hessiansection}
A symmetric bilinear form $h$ on an LSA $(\alg, \mlt)$ is \textit{Hessian} if its \textit{cubic form} $C_{h}$ defined by $C_{h}(x, y, z) = h(x\mlt y, z) + h(y, x\mlt z)$ is completely symmetric, meaning
\begin{align}\label{hessianmetric}
0 = C_{h}(x, y, z) - C_{h}(y, x, z) = h([x, y], z) - h(x, y\mlt z) + h(y, x \mlt z),
\end{align} 
for all $x, y, z\in \alg$.  A \textit{metric} is a nondegenerate symmetric bilinear form $h$. A \textit{Hessian LSA} is a triple $(\alg, \mlt, h)$ comprising an LSA $(\alg, \mlt)$ and a Hessian metric $h$. The terminology \textit{Hessian} follows \cite{Shima-homogeneoushessian}. It follows from \eqref{rlsa} that the cubic form $C = C_{\tau}$ of the trace-form $\tau$ of an LSA is completely symmetric, so $\tau$ is Hessian. Hence, if $\tau$ is nondegenerate then it is a Hessian metric. Moreover, the identity \eqref{taucubic} shows that $C(a, b, c) = (\pr^{2}d\log P)_{0}(a, b, c)$. 

A \textit{Koszul form} on the LSA $(\alg,\mlt)$ is an element $\la \in \alg^{\ast}$ such that $h(x, y) = \la(x \mlt y)$ is a metric. 
The assumed symmetry of $h$ implies $[\alg, \alg] \subset \ker \la$, while the left symmetry of $\mlt$ implies that $h$ is a Hessian metric.
If an LSA admits a Koszul form, then its left regular representation $L$ is faithful, for, if $L(x) = 0$, then $h(x, y) = \la(L(x)y) = 0$ for all $y \in \alg$ implies $x = 0$. 

An algebra $(\alg, \mlt)$ is \textit{perfect} if $\alg \mlt \alg = \alg$. 
An algebra $(\alg, \mlt)$ is \textit{simple} if its square $\alg^{2}$ is not zero, and $\alg$ contains no nontrivial two-sided ideal. A nontrivial ideal in an LSA $(\alg, \mlt)$ is a nontrivial ideal in the underlying Lie algebra $(\alg, [\dum, \dum])$. Consequently, if the Lie algebra underlying an LSA is simple then the LSA is simple.

Since a simple algebra is perfect, if $(\alg, \mlt)$ is a simple LSA then two Koszul forms inducing the same metric must be equal.

Let $(\alg, \mlt)$ be a finite-dimensional LSA. If the trace form $\tau$ defined in \eqref{traceformdefined} is nondegenerate, then $\tr R$ is a Koszul form. 

By \eqref{cmp7} of Lemma \ref{completenesslemma}, if $(\alg, \mlt)$ is complete, then $\tau$ vanishes identically, so an LSA with nontrivial trace form $\tau$ is necessarily incomplete.

Given a Koszul form $\la$ on $(\alg, \mlt)$, the unique element $u \in \alg$ such that $\la(a) = \la(a \mlt u)$ for all $a \in \alg$ is idempotent. By \eqref{hessianmetric}, $h(R(u)x, y) - h(x, R(u)y) = h(u, [x,y]) = \la([x, y]) = 0$. Consequently, $h(u\mlt u, x) = h(u, x\mlt u) = \la(x\mlt u) = h(x, u)$ for all $x \in \alg$, so $u\mlt u = u$. The element $u \in\alg$ is the \textit{idempotent associated with $\la$}. 

Write $\lb x_{1}, \dots, x_{k} \ra$ for the span of $x_{1}, \dots, x_{k} \in \alg$ and  $V^{\perp}$ for the orthogonal complement of the subspace $V \subset \alg$ with respect to a given Hessian metric $h$.
For $A \in \eno(\alg)$, let $A^{\ast} \in \eno(\alg)$ be the \textit{$h$-adjoint} of $A$ defined by $h(Ax, y) = h(x, A^{\ast}y)$.

\begin{lemma}\label{principalidempotentlemma}
For an $n$-dimensional LSA $(\alg, \mlt)$ with a Koszul form $\la$ with associated Hessian metric $h$, the idempotent $u$ associated with $\la$ has the following properties.
\begin{enumerate}
\item\label{lurupre} $L(u)$ and $R(u)$ preserve the Lie ideal $\ker \la = \lb u \ra^{\perp}$.
\item\label{lladj} $L(u) + L(u)^{\ast} = R(u) + I$.
\item\label{rsa} $R(u)$ is $h$-self-adjoint, $R(u) = R(u)^{\ast}$.
\item\label{trru} $R(u) - R(u)^{2}$ is nilpotent, and there is an integer $k$ such that $1 \leq \tr R(u) = \dim \alg_{1} = k \leq n$ and $(R(u) - R(u)^{2})^{\max\{k, n-k\}} = 0$.
\item \label{a0l} $\sum_{k \geq 1} \ker R(u)^{k} \subset \ker \la$.
\item\label{rclsa6b} No nontrivial left ideal of $(\alg, \mlt)$ is contained in $\ker \la$.
\item\label{kos8} $\ker R(u) \subset \ker \la$ is a Lie subalgebra of $\alg$ on which $L(u)$ acts as a Lie algebra derivation.
\end{enumerate}
\end{lemma}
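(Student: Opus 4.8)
The plan is to reduce everything to the single observation that $u$ is the $h$-dual of $\la$. Since $h(x, y) = \la(x \mlt y)$, the symmetry of $h$ together with the defining relation $\la(z) = \la(z \mlt u)$ gives $h(u, z) = h(z, u) = \la(z \mlt u) = \la(z)$ for all $z$; that is, $\la = h(\dum, u)$, so $\ker \la = \lb u \ra^{\perp}$, and $h(u, x \mlt y) = \la(x \mlt y) = h(x, y)$. These identities will be used throughout. Parts \eqref{lurupre}--\eqref{rsa} are then formal. For \eqref{lurupre}, I would compute $\la(R(u)x) = \la(x \mlt u) = h(x, u) = \la(x)$ and $\la(L(u)x) = \la(u \mlt x) = h(u, x) = \la(x)$, so both operators fix $\la$ and hence preserve $\ker \la$, which is a Lie ideal because $[\alg, \alg] \subset \ker \la$. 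For \eqref{lladj} and \eqref{rsa} I would invoke the complete symmetry of the cubic form $C = C_{h}$: writing $C(u, x, y) = h((L(u) + L(u)^{\ast})x, y)$ and $C(x, u, y) = h(R(u)x, y) + h(u, x \mlt y) = h((R(u) + I)x, y)$, the equality $C(u, x, y) = C(x, u, y)$ and nondegeneracy of $h$ yield \eqref{lladj}, while $C(x, u, y) = C(y, u, x)$ yields $R(u) = R(u)^{\ast}$, which is \eqref{rsa}.

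The main obstacle is the nilpotency assertion in \eqref{trru}. Setting $x = y = u$ in \eqref{rlsa} and using $u \mlt u = u$ gives the key identity $R(u) - R(u)^{2} = [L(u), R(u)]$. Abbreviating $A = R(u)$ and $S = A - A^{2}$, the crucial trick is that for any polynomial $q$ the operator $q(A)$ commutes with $A$, so $q(A)S = q(A)[L(u), A] = [q(A)L(u), A]$ has zero trace. Taking $q(A) = S^{m-1}$ (a polynomial in $A$, since $S$ is) gives $\tr S^{m} = 0$ for every $m \geq 1$, whence $S = R(u) - R(u)^{2}$ is nilpotent over the characteristic-zero field. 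Every eigenvalue $\mu$ of $R(u)$ then satisfies $\mu - \mu^{2} = 0$, so lies in $\{0, 1\}$, and $\alg = \alg_{0} \oplus \alg_{1}$ splits into the generalized $0$- and $1$-eigenspaces of $R(u)$. Hence $\tr R(u) = \dim \alg_{1} =: k$ is an integer, with $k \geq 1$ because $R(u)u = u \mlt u = u$ forces $u \in \alg_{1}$, and $k \leq n$. Writing $R(u)$ as identity-plus-nilpotent on $\alg_{1}$ and as nilpotent on $\alg_{0}$, a direct index estimate for $S$ on each summand gives $(R(u) - R(u)^{2})^{\max\{k, n-k\}} = 0$.

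The remaining parts follow from this decomposition together with the metric. For \eqref{a0l}, note $\sum_{k \geq 1} \ker R(u)^{k} = \alg_{0}$; since $R(u)$ is $h$-self-adjoint by \eqref{rsa}, its generalized eigenspaces for the distinct eigenvalues $0$ and $1$ are $h$-orthogonal, so $\alg_{0} \perp \alg_{1} \ni u$, giving $\alg_{0} \subset \lb u \ra^{\perp} = \ker \la$. For \eqref{rclsa6b}, if a nonzero left ideal $\ideal$ were contained in $\ker \la$, I would pick $0 \neq z \in \ideal$ and, by nondegeneracy of $h$, some $w$ with $h(w, z) \neq 0$; but $h(w, z) = \la(w \mlt z)$ and $w \mlt z = L(w)z \in \ideal \subset \ker \la$, a contradiction.

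Finally, for \eqref{kos8} I would use the left-symmetry in associator form, $(x \mlt y)\mlt u - x \mlt (y \mlt u) = (y \mlt x)\mlt u - y \mlt (x \mlt u)$, which rearranges to $R(u)[x, y] = L(x)R(u)y - L(y)R(u)x$; this vanishes when $x, y \in \ker R(u)$, so $\ker R(u)$ is closed under the bracket. Evaluating the identity $R(u) - R(u)^{2} = [L(u), R(u)]$ on $x \in \ker R(u)$ gives $R(u)L(u)x = 0$, so $L(u)$ preserves $\ker R(u)$; and there $L(u) = L(u) - R(u) = \ad(u)$, an inner derivation of $(\alg, [\dum, \dum])$ that restricts to a derivation of the subalgebra $\ker R(u)$. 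The containment $\ker R(u) \subset \alg_{0} \subset \ker \la$ is immediate from \eqref{a0l}. I expect no difficulty in these last steps; the only genuinely delicate point is the trace computation underlying \eqref{trru}.
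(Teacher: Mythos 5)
Your proof is correct and follows essentially the same route as the paper's: the key part \eqref{trru} rests on the identical identity $R(u) - R(u)^{2} = [L(u), R(u)]$ combined with the same commutator trace trick (multiplying by powers of $R(u) - R(u)^{2}$, which commute with $R(u)$, to kill all power traces), followed by the same generalized-eigenspace/Fitting analysis giving $\tr R(u) = \dim \alg_{1}$ and the index bound $\max\{k, n-k\}$. The only variations are cosmetic: you obtain \eqref{a0l} from the $h$-orthogonality of the generalized eigenspaces of the self-adjoint operator $R(u)$ rather than from the paper's direct computation $\la(R(u)^{k}x) = \la(x)$, and you supply full details for \eqref{kos8}, which the paper dismisses as straightforward.
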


\begin{proof}
If $x \in \ker \la$ then $\la(R(u)x) = \la(L(u)x) = h(u, x) = \la(x) = 0$, so $L(u)\ker \la \subset \ker \la$ and $R(u)\ker \la \subset \ker \la$. This shows \eqref{lurupre}. Taking $y = u$ and $h = \tau$ in \eqref{hessianmetric} yields \eqref{lladj} and so also \eqref{rsa}. Let $N = R(u) - R(u)^{2} = [L(u), R(u)]$. Then $\tr N^{k+1} = \tr [L(u), R(u)]N^{k} = \tr L(u)[R(u), N^{k}] = 0$ for all $k \geq 0$. This implies $N = R(u) - R(u)^{2}$ is nilpotent.

Next it is claimed that the Fitting decomposition $\alg = \alg_{0} \oplus \alg_{1}$ of $\alg$ with respect to $R(u)$ is an $h$-orthogonal $L(u)$-invariant direct sum, that is $\alg_{0}^{\perp} = \alg_{1}$ and $\alg_{1}^{\perp} = \alg_{0}$ and $L(u)$ preserves $\alg_{0}$ and $\alg_{1}$, and, moreover, the Fitting decomposition $\alg = \bar{\alg}_{0} \oplus \bar{\alg}_{1}$ of $\alg$ with respect to $I - R(u)$ satisfies $\bar{\alg}_{0} = \alg_{1}$ and $\bar{\alg}_{1} = \alg_{0}$. By definition of the Fitting decomposition $\alg_{0} = \sum_{k \geq 1} \ker R(u)^{k}$ and $\alg_{1} = \cap_{k \geq 1}R(u)^{k}\alg$. Because $\alg$ is finite-dimensional there is a minimal $p$ such that $\alg_{0} = \ker R(u)^{p}$ and $\alg_{1} = R(u)^{p}\alg$. If $z\in \alg_{1}$ there is $y \in \alg$ such that $z = R(u)^{p}y$ and so for all $x \in \alg_{0}$, $h(z, x) = h(R(u)^{p}y, x) = h(y, R(u)^{p}x) = 0$, showing $\alg_{1} \subset \alg_{0}^{\perp}$. If $z \in \alg_{1}^{\perp}$, then $h(R(u)^{p}z, x) = h(z, R(u)^{p}x) = 0$ for all $x \in \alg$, so $R(u)^{p}z = 0$ and $z \in \alg_{0}$. Thus $\alg_{1}^{\perp} \subset \alg_{0}$. This shows $\alg_{0}^{\perp} = \alg_{1}$ and $\alg_{1}^{\perp} = \alg_{0}$. It is easily checked, by induction on $k$, that $[R(u)^{k},L(u)] = k(R(u)^{2} -R(u))R(u)^{k-1}$ for $k \geq 1$. If $x \in \alg_{0}$ then $R(u)^{p}L(u)x = L(u)R(u)^{p}x + p(R(u)^{p+1} -R(u)^{p})x = 0$, so $L(u)x \in \alg_{0}$. If $x \in \alg_{1}$ then there is $y \in \alg$ such that $x = R(u)^{p}y$ and $L(u)x = L(u)R(u)^{p}y = R(u)^{p}(L(u) + p(I - R(u)))y \in \alg_{1}$. This shows that the Fitting decomposition is $L(u)$ invariant. If $(I - R(u))^{m}x = 0$ then 
\begin{align}\label{xru}
x = -\sum_{j = 1}^{m}\binom{m}{j}(-R(u))^{j}x = R(u)\sum_{j = 1}^{m}\binom{m}{j}(-R(u))^{j-1}x. 
\end{align}
This shows $x \in \im R(u)$, and, substituted in \eqref{xru} repeatedly, this implies $x \in \cap_{j \geq 1}\im R(u)^{j} = \alg_{1}$. Consequently $\bar{\alg}_{0} \subset \alg_{1}$. Let $q \geq 1$ be such that $(R(u) - R(u)^{2})^{q} = 0$. If $m \geq q$ and $x = (I - R(u))^{m}y$ then $R(u)^{m}x = (R(u) - R(u)^{2})^{m}y = 0$, so $x \in \ker R(u)^{m} \subset \alg_{0}$. Hence $\bar{\alg}_{1} \subset \alg_{0}$. Since $\bar{\alg}_{0} \oplus \bar{\alg}_{1} = \alg = \alg_{0}\oplus \alg_{1}$, this suffices to show $\bar{\alg}_{1} = \alg_{0}$ and $\bar{\alg}_{0} = \alg_{1}$. 

Since the only eigenvalue of $R(u) - R(u)^{2}$ (over the algebraic closure of the base field) is $0$, the minimal polynomial of $R(u)$ divides some power of $x(1-x)$, and any eigenvalue of $R(u)$ is either $0$ or $1$.
Hence $k = \tr R(u)$ is an integer no greater than $n$. Since $R(u)$ is invertible on $\alg_{1}$ and nilpotent on $\alg_{0}$, $k = \tr R(u) = \dim \alg_{1}$. Since $R(u)u = u$, $k \geq 1$. Let $q \geq 1$ be the minimal integer such that $(R(u) - R(u)^{2})^{q} = 0$. Since $R(u)$ is nilpotent on $\alg_{0}$ and $I - R(u)$ is nilpotent on $\alg_{1}$ there is a minimal integer $t\geq 1$ such that $R(u)^{t}$ vanishes on $\alg_{0}$ and $(I-R(u))^{t}$ vanishes on $\alg_{1}$. Then $(R(u) - R(u)^{2})^{t} = R(u)^{t}(I - R(u))^{t} = 0$, so $t \geq q$. Since $t \leq \max\{\dim \alg_{0}, \dim \alg_{1}\} = \max\{n-k, k\}$, it follows that $q \leq \max\{n-k, k\}$.

Since, for all $x \in \alg$, $\la(R(u)x) = \la(x \mlt u) = h(x, u) = \la(x)$, there holds $\la(R(u)^{k}x) = \la(x)$ for all $k \geq 1$. If $x \in \alg_{0}$ then $x \in \ker R(u)^{p}$ for some $p \geq 0$, so $0 = \la(R(u)^{p}x) = \la(x)$, showing that $x \in \ker \la$. This shows \eqref{a0l}. Suppose $J$ is a nontrivial left ideal in $(\alg, \mlt)$ and $0\neq x \in J \cap \ker \la$. By the nondegeneracy of $h$ there is $y$ such that $\la(y\mlt x) = h(x, y) \neq 0$, and so $y \mlt x \in J$ but $y \mlt x \notin \ker \la$. This shows \eqref{rclsa6b}. Claim \eqref{kos8} is straightforward.
\end{proof}

Claims \eqref{lladj} and \eqref{rsa} are stated explicitly as  Proposition $2.6$ and Corollary $2.7$ of \cite{Choi-Chang}. When $h$ is positive definite the content of Lemma \ref{principalidempotentlemma} is due to Vinberg in \cite{Vinberg} (see pages $369-370$), and is also in \cite{Shima-homogeneoushessian}; the proofs for general $h$ are essentially the same, although the conclusions are not as strong as for positive definite $h$. If $h$ is positive definite then that $R(u) - R(u)^{2}$ be nilpotent and self-adjoint means it is zero, so $R(u)$ is a projection operator. In this case, $R(u)$ commutes with $L(u)$, so if the eigenvalues of $L(u)$ are real, so too are those of $R(u)$, in which case it follows that $L(u)$ is self-adjoint. From these observations Vinberg constructed a canonical decomposition of the LSA on which is based his structure theory for homogeneous convex cones. 
These results motivate much of what follows. When $h$ has mixed signature the conditions that $R(u) - R(u)^{2}$ be self-adjoint and nilpotent alone are insufficient to conclude its vanishing; in this case it is not clear what purely algebraic condition forces the conclusion that $R(u)$ be a projection.

The \textit{rank} of a Koszul form is the integer $\tr R(u)$. The rank is a basic invariant of the Koszul form. For the LSAs appearing in the classification of homogeneous convex cones, the idempotent $u$ is a unit, and so $\tr R(u) = n$, whereas for the LSAs giving rise to homogeneous improper affine spheres, $R(u)$ is a projection onto a one-dimensional ideal, so $\tr R(u) = 1$. Thus these two cases can be seen as opposite extremes.

The \emph{right principal idempotent} of an incomplete LSA with nondegenerate trace form $\tau$ is the idempotent $r$ associated with the Koszul form $\tr R$. 

\begin{lemma}\label{trrunilemma}
For an incomplete LSA $(\alg, \mlt)$ with nondegenerate trace form $\tau$ and right principal idempotent $r$, $(\ker \tr R, [\dum, \dum])$ is a unimodular Lie algebra if and only if $\tr R(r) \tr L = \tr L(r) \tr R$.
\end{lemma}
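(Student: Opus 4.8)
The plan is to reduce unimodularity of $(\ker \tr R, [\dum,\dum])$ to the vanishing of the linear form $\tr L$ on $\ker \tr R$, and then to recognize that vanishing as the proportionality of $\tr L$ and $\tr R$ encoded by the stated identity. Throughout I write $\h = \ker \tr R$ and exploit the splitting $\alg = \h \oplus \lb r \ra$.

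First I would collect the needed structural facts. Incompleteness gives $\tr R \neq 0$, so $\h$ has codimension one. Since $\tr R([x,y]) = \tr R(x\mlt y) - \tr R(y \mlt x) = \tau(x,y) - \tau(y,x) = 0$ by the symmetry of $\tau$, one has $[\alg, \alg] \subseteq \h$; in particular $\h$ is an ideal of the Lie algebra $(\alg, [\dum,\dum])$. Applying property \eqref{trru} of Lemma \ref{principalidempotentlemma} with Koszul form $\tr R$ and metric $\tau$ gives $\tr R(r) = \dim \alg_{1} = k \geq 1$, so $\tr R(r) \neq 0$, $r \notin \h$, and the direct-sum decomposition $\alg = \h \oplus \lb r \ra$ is genuine.

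The key step is a trace computation. From $\ad(y) = L(y) - R(y)$ it is clear that $\im \ad(y) \subseteq [\alg,\alg] \subseteq \h$ for every $y \in \alg$. Choosing a basis adapted to $\alg = \h \oplus \lb r \ra$, the matrix of $\ad(y)$ has vanishing $r$-row, whence $\tr_{\alg}\ad(y) = \tr_{\h}\left(\ad(y)|_{\h}\right)$. For $y \in \h$ the restriction $\ad(y)|_{\h}$ is precisely the intrinsic adjoint operator $\ad_{\h}(y)$, and $\tr_{\alg}\ad(y) = \tr L(y) - \tr R(y) = \tr L(y)$ since $y \in \ker \tr R$. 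Hence $\tr \ad_{\h}(y) = \tr L(y)$ for all $y \in \h$, and $(\h, [\dum,\dum])$ is unimodular if and only if $\tr L$ vanishes on $\ker \tr R$.

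Finally I would translate $\ker \tr R \subseteq \ker \tr L$ into the stated identity. Because $\tr R \neq 0$ has codimension-one kernel, $\tr L$ annihilates $\ker \tr R$ exactly when $\tr L$ is a scalar multiple of $\tr R$; evaluating at $r$ and dividing by $\tr R(r) \neq 0$ fixes the scalar as $\tr L(r)/\tr R(r)$, which yields $\tr R(r)\tr L = \tr L(r)\tr R$, and conversely this identity forces $\tr L$ proportional to $\tr R$ and hence $\ker \tr R \subseteq \ker \tr L$. I do not anticipate a genuine obstacle here; the only delicate point, and the reason the idempotent enters, is the nonvanishing of $\tr R(r)$, which simultaneously furnishes the splitting $\alg = \h \oplus \lb r \ra$ and lets one solve for the proportionality constant.
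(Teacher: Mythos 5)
Your proposal is correct and follows essentially the same route as the paper: both reduce unimodularity of $\ker \tr R$ to the identity $\tr \ad_{\ker \tr R}(x) = \tr \ad(x) = \tr L(x)$ for $x \in \ker \tr R$ (justified by $[\alg,\alg] \subset \ker \tr R$), and both use $\tr R(r) \geq 1$ from Lemma \ref{principalidempotentlemma} to convert vanishing of $\tr L$ on $\ker \tr R$ into the stated proportionality $\tr R(r)\tr L = \tr L(r)\tr R$. The only cosmetic difference is that the paper packages the argument via the single linear form $\mu = \tr R(r)\tr L - \tr L(r)\tr R$, while you phrase it as proportionality of $\tr L$ and $\tr R$; these are the same computation.
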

\begin{proof}
Clearly $\mu = \tr R(r) \tr L - \tr L(r) \tr R \in \alg^{\ast}$ satisfies $\mu(r) = 0$. Since $[\alg, \alg] \subset \ker \tr R$, there holds $\tr \ad_{\ker \tr R}(x) = \tr \ad(x) = \tr L(x) - \tr R(x) = \tr L(x)$ for $x \in \ker \tr R$. Hence $\mu(x) = \tr R(r)\tr\ad_{\ker \tr R}(x)$ for $x \in \ker \tr R$. Since by Lemma \ref{principalidempotentlemma}, $1 \leq \tr R(r) \leq \dim \alg$, it follows that $\mu$ vanishes if and only if $\ker \tr R$ is unimodular.
\end{proof}

Theorem \ref{lsacptheorem} shows that if the characteristic polynomial of an LSA satisfies certain conditions then its level sets are improper affine spheres. This motivates identifying algebraic properties of the LSA that guarantee that its characteristic polynomial satisfies these conditions.

\begin{theorem}\label{lsacptheorem}
If the characteristic polynomial $P(x)$ of an $n$-dimensional LSA $(\alg, \mlt)$ satisfies $P(x + tv) = P(x) + \la t$ for some $0 \neq v \in \alg$ and $\la \in \rea$ and $(\alg, \mlt)$ satisfies $2\tr L = (n+1)\tr R$, then $\H(e^{P}) = \kc e^{nP}$ for some constant $\kc \in \rea$. 
If, moreover, the trace form $\tau$ is nondegenerate, then 
\begin{enumerate}
\item\label{thom2} $\kc \neq 0$ and each level set of $P$ is an improper affine sphere with affine normal equal to a multiple of $v$;
\item\label{thom1} $\la \neq 0$, the element $r = \la^{-1}v$ is the right principal idempotent of $(\alg, \mlt)$, and $\tr R(r) = 1$.
\end{enumerate}
\end{theorem}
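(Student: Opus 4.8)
The plan is to derive the Monge--Ampère identity from the affine covariance of the quantity $Q := \det(\hess P + dP \tensor dP)$, and then, under nondegeneracy of $\tau$, to read off the statements about $\la$, $r$, and the level sets from the second-order data of $P$ at the origin together with Theorem \ref{ahtheorem}. The starting point is the pointwise identity $\hess(e^{P}) = e^{P}(\hess P + dP \tensor dP)$, which upon taking determinants of $n \times n$ matrices gives $\H(e^{P}) = e^{nP}Q$. Hence the first assertion is equivalent to showing that $Q$ is constant.

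To prove $Q$ constant I would exploit that for an affine map $\Phi$ with linear part $A$ the operator $F \mapsto \hess F + dF \tensor dF$ is covariant, conjugation by $A$ multiplying its determinant by $(\det A)^{2}$: explicitly $\det(\hess(F\circ\Phi) + d(F\circ\Phi)\tensor d(F\circ\Phi)) = (\det A)^{2}\,\big(\det(\hess F + dF\tensor dF)\big)\circ\Phi$. Applying this with $\Phi = \hol(\exp_{G}(a))$, whose linear part is $e^{L(a)}$ by \eqref{holexp}, and using the relative invariance $P\circ\hol(g) = \chi(g)P$ of \eqref{holgp}, the left-hand side becomes $\det(\chi(g)\hess P + \chi(g)^{2}dP\tensor dP) = \chi(g)^{n}\det(\hess P + \chi(g)\,dP\tensor dP)$; since $P(x + tv) = P(x) + \la t$, Lemma \ref{twisteddetlemma} with the constant $c = \chi(g)$ reduces this to $\chi(g)^{n+1}Q$, while the right-hand side is $e^{2\tr L(a)}\,(Q\circ\hol(\exp_{G}(a)))$. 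The crux is that the hypothesis $2\tr L = (n+1)\tr R$ makes $\chi(\exp_{G}(a))^{n+1} = e^{(n+1)\tr R(a)}$ coincide with $e^{2\tr L(a)}$, so the character factors cancel and $Q = Q\circ\hol(\exp_{G}(a))$ for every $a$. Because $\exp_{G}(\alg)$ generates $G$, $Q$ is $\hol(G)$-invariant, and since by Corollary \ref{orbitcorollary} the orbit $\dev(G) = \hol(G)\cdot 0$ is open, evaluation at $0$ gives $Q \equiv Q(0)$ on $\dev(G)$; as $Q$ is a polynomial it is then globally constant, equal to $\kc := Q(0)$, proving $\H(e^{P}) = \kc e^{nP}$. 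This covariance bookkeeping, where the three transformation rules must combine so that the characters cancel precisely under $2\tr L = (n+1)\tr R$, is the main obstacle.

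Now assume $\tau$ nondegenerate. Differentiating $P(x + tv) = P(x) + \la t$ once and twice in $t$ gives $dP_{x}(v) = \la$ and $\hess P_{x}(v, \cdot) = 0$ for all $x$. Evaluating at $x = 0$, where $dP_{0} = \tr R$ and $\hess P_{0} = \tr R\tensor\tr R - \tau$ (the latter from $(\hess\log P)_{0} = -\tau$ and $P(0) = 1$), yields $\tr R(v) = \la$ and $\tau(v, b) = \la\,\tr R(b)$ for all $b$. Writing $\tr R(b) = \tau(r, b)$, nondegeneracy gives $v = \la r$; in particular $\la = 0$ would force $\tau(v, \cdot) = 0$ and hence $v = 0$, contradicting $v \neq 0$, so $\la \neq 0$, $r = \la^{-1}v$, and $\tr R(r) = \la^{-1}\tr R(v) = 1$, which is \eqref{thom1}.

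Finally $\kc = Q(0) = \det(2\tr R\tensor\tr R - \tau)$. Since $\tr R = \tau(r,\cdot)$ with $\tr R(r) = 1$, any $w$ with $(2\tr R\tensor\tr R - \tau)(w, \cdot) = 0$ satisfies $\tau(w, \cdot) = 2\tr R(w)\tau(r, \cdot)$, so $w = 2\tr R(w)r$ and $\tr R(w) = 2\tr R(w)\tr R(r) = 2\tr R(w)$, forcing $w = 0$; thus $\kc \neq 0$ (equivalently $\kc = (-1)^{n+1}\det\tau$ by the matrix determinant lemma). The function $e^{P}$ lies in $\amg^{\la}$ with axis $v$ and $\la \neq 0$, so Theorem \ref{ahtheorem}, applied with $n$ in place of $n+1$ to the nonvanishing equation $\H(e^{P}) = \kc e^{nP}$, shows that the level sets of $e^{P}$, equivalently those of $P$, are improper affine spheres whose equiaffine normals are multiples of the axis $v$. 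This gives \eqref{thom2}.
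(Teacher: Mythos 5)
Your proof is correct and is essentially the paper's own argument: the same covariance identity for $\det(\hess P + dP\tensor dP)$ under $\hol(\exp_{G}(a))$ (the paper's \eqref{hessholp}), reduced via Lemma \ref{twisteddetlemma} and the hypothesis $2\tr L = (n+1)\tr R$ to $\hol(G)$-invariance, hence constancy, of that polynomial on the open orbit, followed by the same evaluation of \eqref{dptra} and \eqref{hesslogp} at the origin to obtain $\la = \tr R(v)$, $v = \la r$, and $\tr R(r) = 1$. Your verification that $\kc \neq 0$ — computing $\det(\hess P + dP\tensor dP)$ at $0$ as $\det(2\tr R\tensor\tr R - \tau) = (-1)^{n+1}\det\tau$ and checking nondegeneracy of this form directly — is an equivalent variant of the paper's step, which instead deduces it from the relation $P^{n}\H(\log P) = -e^{-nP}\H(e^{P})$ together with $(\hess \log P)_{0} = -\tau$; both rest on the nondegeneracy of $\tau$ at the origin.
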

\begin{proof}
For any LSA, differentiating \eqref{holgp} yields
\begin{align}\label{hessholp}
\begin{split}
(\det &\lin(\hol(g)))^{-2} \hol(g)\cdot \det(\hess P + dP \tensor dP) \\&= \det(\hess(\hol(g)\cdot P) + d(\hol(g)\cdot P) \tensor d(\hol(g)\cdot P))\\
& = \det(\chi(g^{-1})\hess P + \chi(g^{-2})dP\tensor dP)= \chi(g)^{-n}\det(\hess P + \chi(g)^{-1}dP\tensor dP).
\end{split}
\end{align}
By Lemma \ref{twisteddetlemma}, if $e^{P}$ is translationally homogeneous in some direction then the last term in \eqref{hessholp} equals $\chi(g)^{-n-1}\det(\hess P + dP \tensor dP)$. In such a case taking $g = \exp_{G}(a)$ and noting $\det \lin(\hol(\exp_{G}(a))) = \det e^{L(a)} = e^{\tr L(a)}$ yields
\begin{align}
\hol(g)\cdot \det(\hess P + dP \tensor dP) = e^{2\tr L(a) - (n+1)\tr R(a)}\det (\hess P + dP \tensor dP).
\end{align}
If $2\tr L  = (n+1)\tr R$, it follows that $\det (\hess P + dP \tensor dP)$ is constant on an open set, and so constant, because it is a polynomial. In this case $e^{P}$ solves $\H(e^{P}) = \kc e^{n P}$ for some constant $\kc$. If $\kc \neq 0$ then the conclusion of claim \eqref{thom2} follows from Theorem \ref{ahtheorem}. Suppose $\tau$ is nondegenerate. To show $\kc \neq 0$ it suffices to show that $\H(e^{P})$ is nonzero at a single point of $\alg$. Since $P\hess(\log P) = P_{ij} - P_{i}P_{j}$ and $e^{-P}\hess(e^{P}) = P_{ij} + P_{i}P_{i}$, it follows from $P(x + tv) = P(x) + \la t$ and Lemma \ref{twisteddetlemma} that $P^{n}\H(\log P) = - e^{-nP}\H(e^{P})$. By \eqref{hesslogp} the nondegeneracy of $\tau$ implies that $\H(\log P) \neq 0$ at $0$, and so $\H(e^{P})$ is nonzero at $0$. There remains to show \eqref{thom1}. Differentiating $P(x + tv) = P(x) + \la t$ at $t = 0$ and using \eqref{dptra} shows that $\la = dP_{0}(v) = P(0)\tr R(v) = \tr R(v)$, and using this and \eqref{hesslogp}, shows that $-\la \tr R(a) = (\hess P)_{0}(a, v) - \la \tr R(a) = (\hess \log P)_{0}(a, v) = -\tau(a, v)$ for all $a \in \alg$. By the nondegeneracy of $\tau$, were $\la$ zero then $v$ would be zero, a contradiction. Hence $\la \neq 0$ and $r = \la^{-1}v$ satisfies $\tr R(r) = 1$ and $\tau(a, r) = \tr R(a)$.  
\end{proof}

\section{Notions of nilpotence for LSAs}\label{nilpotencesection}
Various notions of nilpotence, and their interrelations, that play a role in what follows are explained now. The reader should be aware that the terminology for various different notions of nilpotence is different in different papers on LSAs. For example, in \cite{Helmstetter} an LSA is called \textit{nilpotent} if it is what is here called \textit{right nil}, while here \textit{nilpotent} is used in a different sense.  

\begin{lemma}\label{solvablekernellemma}
For a finite-dimensional LSA $(\alg, \mlt)$ over a field of characteristic zero the kernel $\ker L$ of the left regular representation $L$ is a two-sided ideal and a trivial left-symmetric subalgebra of $(\alg, \mlt)$, for which $-R$ is a Lie algebra representation by commuting operators.
\end{lemma}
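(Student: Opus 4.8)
The plan is to verify three separate assertions about $\ker L$: that it is a two-sided ideal, that it is a trivial subalgebra (meaning the product of any two of its elements is zero), and that $-R$ restricted to $\ker L$ is a representation of the underlying Lie algebra by commuting operators. I would proceed by unpacking the defining identities of an LSA directly, since all the claims follow from elementary manipulations of the left-symmetry relation and its right-hand counterpart \eqref{rlsa}.

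\begin{proof}
Suppose $x \in \ker L$, so $L(x) = 0$, meaning $x \mlt y = 0$ for all $y \in \alg$. First I show $\ker L$ is a left ideal. For $a \in \alg$ and $x \in \ker L$, the left-symmetry condition $[L(a), L(x)] = L([a,x])$ gives $L([a,x]) = [L(a), 0] = 0$, so $[a,x] \in \ker L$. Since $a \mlt x = [a,x] + x \mlt a = [a,x]$ (using $x \mlt a = L(x)a = 0$), it follows that $a \mlt x = [a,x] \in \ker L$, so $\ker L$ is a left ideal. Next, $\ker L$ is trivial as a subalgebra: for $x, y \in \ker L$ we have $x \mlt y = L(x) y = 0$, so the product of any two elements vanishes; in particular $\ker L$ is closed under $\mlt$ and is a trivial left-symmetric subalgebra. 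Because $x \mlt y = 0$ for $x, y \in \ker L$, the bracket $[x,y] = x\mlt y - y \mlt x = 0$ as well, so $\ker L$ is an abelian Lie subalgebra.

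To see $\ker L$ is a right ideal, take $x \in \ker L$ and $a \in \alg$ and consider $x \mlt a = R(a) x$. Applying $L$ to this element and using \eqref{rlsa} in the form $L(x \mlt a) = L(x) L(a) - [R(x), L(a)] + \dots$; more directly, evaluate the left-symmetry identity on the pair $(x \mlt a, b)$. I would instead argue as follows: since $\ker L$ is a left ideal and also a Lie ideal (being the kernel of the Lie algebra representation $L$), and since $x \mlt a = [x,a] + a \mlt x$ with both $[x,a] \in \ker L$ and $a \mlt x \in \ker L$ (the latter by the left-ideal property just shown), we conclude $x \mlt a \in \ker L$. Hence $\ker L$ is a two-sided ideal.

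Finally, I verify that $-R$ restricts to $\ker L$ as a Lie algebra representation by commuting operators. Applying the right-symmetric identity \eqref{rlsa}, namely $R(x \mlt y) - R(y) R(x) = [L(x), R(y)]$, to elements $x, y \in \ker L$: since $x \mlt y = 0$ we have $R(x \mlt y) = 0$, and since $L(x) = 0$ the right side $[L(x), R(y)] = 0$, giving $R(y) R(x) = 0$ for all $x, y \in \ker L$. By symmetry $R(x) R(y) = 0$ as well, so $[R(x), R(y)] = 0$, i.e. the operators $R(x)$ for $x \in \ker L$ commute; equivalently they all square and multiply to zero. Thus $-R$ maps the abelian Lie algebra $\ker L$ to commuting operators, and since $[x,y] = 0$ for $x,y \in \ker L$ the representation property $-R([x,y]) = [-R(x), -R(y)]$ reduces to $0 = 0$, confirming that $-R$ is a Lie algebra representation on $\ker L$ by commuting operators.
\end{proof}

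The main point requiring care is the two-sided ideal claim, specifically closure under right multiplication; the cleanest route is to combine the left-ideal property with the fact that $\ker L$ is automatically a Lie ideal (being the kernel of the representation $L$), which together force the decomposition $x \mlt a = [x,a] + a \mlt x$ to land in $\ker L$.
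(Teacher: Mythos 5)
Your proof is correct and uses essentially the same ingredients as the paper's own argument: the representation property $L([a,x]) = [L(a),L(x)]$ gives closure of $\ker L$ under left multiplication, and \eqref{rlsa} applied to kernel elements gives $R(y)R(x) = 0$, which yields both the commuting-operator and representation claims trivially. The only inefficiency is your second paragraph: since $x \mlt a = L(x)a = 0$ for $x \in \ker L$ (as you yourself note in the first paragraph), the right-ideal property is immediate, and the detour through the Lie-ideal decomposition $x \mlt a = [x,a] + a \mlt x$ is unnecessary.
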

\begin{proof}
By the definition of an LSA, $\ker L$ is a Lie subalgebra of $(\alg, [\dum, \dum])$. If $x \in \alg$ and $n \in \ker L$ then $n\mlt x = L(n)x = 0 \in \ker L$, and $L(x\mlt n) = L(n\mlt x) + [L(x), L(n)] = 0$, showing that $\ker L$ is a two-sided ideal of $(\alg, \mlt)$. By \eqref{rlsa}, if $n, m \in \ker L$ then $0 = R(m \mlt n) = R(n)R(m)$, so $0 = R([m, n]) = -[R(m), R(n)]$. This shows $-R$ is a Lie algebra representation of $\ker L$ on $\alg$ by commuting operators. 
\end{proof}

A finite-dimensional LSA $(\alg, \mlt)$ defined over a field of characteristic zero is \textit{triangularizable} if there is a basis of $\alg$ with respect to which $L(x)$ is triangular for every $x \in \alg$. 
\begin{lemma}\label{cslemma}
For a finite-dimensional LSA $(\alg, \mlt)$ defined over a field $\fie$ of characteristic zero, the following are equivalent:
\begin{enumerate}
\item\label{csl1} $(\alg, \mlt)$ is triangularizable.
\item\label{csl2} The underlying Lie algebra $(\alg, [\dum, \dum])$ is solvable and for every $x \in \alg$ the eigenvalues of $L(x)$ are contained in $\fie$.
\end{enumerate}
\end{lemma}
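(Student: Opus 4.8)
The plan is to prove the two implications separately, with the substantive content lying in $\eqref{csl2}\implies\eqref{csl1}$, which is a form of Lie's theorem valid over a field that need not be algebraically closed.

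For $\eqref{csl1}\implies\eqref{csl2}$, suppose a basis has been fixed making every $L(x)$ upper triangular. The eigenvalues of $L(x)$ are then its diagonal entries, which lie in $\fie$, so the eigenvalue condition holds. For solvability, the point is that $L$ is a Lie algebra homomorphism, so $L(\alg)$ is a Lie subalgebra of $\gl(\alg)$ consisting of upper triangular matrices and is therefore solvable. Since $L:\alg \to L(\alg)$ is surjective with kernel $\ker L$, and by Lemma \ref{solvablekernellemma} the ideal $\ker L$ is a trivial left-symmetric subalgebra, hence abelian as a Lie algebra (because $[n, m] = n \mlt m - m \mlt n = 0$ for $n, m \in \ker L$), the Lie algebra $\alg$ is an extension of the solvable Lie algebra $L(\alg)$ by the abelian Lie algebra $\ker L$, and is therefore solvable.

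For $\eqref{csl2}\implies\eqref{csl1}$, the key step is the following refinement of Lie's theorem: \emph{if $\mathfrak{s} \subseteq \gl(V)$ is a solvable Lie subalgebra over $\fie$ acting on a nonzero finite-dimensional space $V$, and every $A \in \mathfrak{s}$ has all its eigenvalues in $\fie$, then $\mathfrak{s}$ has a common eigenvector in $V$.} I would prove this by induction on $\dim \mathfrak{s}$: choose a codimension-one ideal $\mathfrak{h} \supseteq [\mathfrak{s}, \mathfrak{s}]$ and write $\mathfrak{s} = \mathfrak{h} \oplus \fie x_{0}$; the inductive hypothesis supplies a weight $\lambda:\mathfrak{h} \to \fie$ whose weight space $V_{\lambda}$ is nonzero; the standard Lie--Dynkin argument (using $\operatorname{char}\fie = 0$ to divide by the dimension of a cyclic subspace) shows that $\lambda$ annihilates $[\mathfrak{s}, \mathfrak{h}]$ and hence that $V_{\lambda}$ is $\mathfrak{s}$-invariant; finally $x_{0}$ restricts to an endomorphism of $V_{\lambda}$ whose characteristic polynomial divides that of $x_{0}$ on $V$, so its eigenvalues lie in $\fie$ and it has an eigenvector in $V_{\lambda}$, which is then a common eigenvector for all of $\mathfrak{s}$.

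With this lemma in hand I would apply it to $\mathfrak{s} = L(\alg)$, which is solvable as the homomorphic image of the solvable Lie algebra $(\alg, [\dum, \dum])$ and satisfies the eigenvalue hypothesis by assumption. Induction on $\dim \alg$ then builds a complete flag of $L(\alg)$-invariant subspaces: a common eigenvector spans a one-dimensional invariant subspace $V_{1}$, and $L(\alg)$ acts on $\alg/V_{1}$ with each induced operator again having all eigenvalues in $\fie$ (its characteristic polynomial divides that of the corresponding $L(x)$ on $\alg$), so the flag is completed by the inductive hypothesis. Any basis adapted to this flag makes every $L(x)$ upper triangular, yielding $\eqref{csl1}$. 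The main obstacle is the refined Lie's theorem itself: ordinary Lie's theorem needs algebraic closure to guarantee eigenvectors, and the entire role of hypothesis $\eqref{csl2}$ is to replace that closure with the explicit requirement that all eigenvalues already lie in $\fie$, so care is needed at each inductive step to check that this eigenvalue condition persists when passing to weight spaces and to quotients.
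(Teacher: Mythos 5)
Your proof is correct, and it has the same overall skeleton as the paper's, but it differs in what it takes as given, so a comparison is worthwhile. For \eqref{csl1}$\implies$\eqref{csl2} the paper also combines solvability of the image $L(\alg)$ with the abelian kernel $\ker L$ supplied by Lemma \ref{solvablekernellemma}, but it gets solvability by Cartan's criterion: triangularity makes $L(a)$ strictly triangular for $a \in [\alg, \alg]$, whence $\tr L(a)L(b) = 0$ there; your route, observing that a Lie subalgebra of upper triangular matrices is automatically solvable and that $\alg$ is an extension of $L(\alg)$ by the abelian ideal $\ker L$, is more elementary and equally valid. For \eqref{csl2}$\implies$\eqref{csl1} the paper simply cites the split form of Lie's theorem (Theorem $1.2$ in chapter $1$ of \cite{Gorbatsevich-Onishchik-Vinberg}), which directly yields a complete $L(\alg)$-invariant flag, whereas you prove that statement from scratch: the common-eigenvector lemma by induction on $\dim \mathfrak{s}$ via the Lie--Dynkin trace argument (which is where characteristic zero enters), followed by induction on $\dim \alg$ to assemble the flag, verifying that the eigenvalue hypothesis persists on invariant subspaces and quotients because characteristic polynomials divide. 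That persistence check is precisely the point at which the hypothesis that all eigenvalues of every $L(x)$ lie in $\fie$ substitutes for algebraic closure, and you handle it correctly, including the solvability of the induced algebra acting on the quotient (a homomorphic image of $L(\alg)$). In sum, the paper's proof is shorter but rests on two citations; yours is self-contained at the cost of reproving a standard refinement of Lie's theorem.
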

\begin{proof}
Suppose $(\alg, \mlt)$ is triangularizable and fix a basis with respect to which $L(x)$ is triangular for all $x \in \alg$. This implies that the eigenvalues of $L(x)$ are contained in $\fie$. Then $L([x, y]) = [L(x), L(y)]$ is strictly triangular for all $x, y \in \alg$ and so $L(a)$ is strictly triangular for every $a \in [\alg, \alg]$. This implies $\tr L(a)L(b) = 0$ for all $a, b \in [\alg, \alg]$. Since, by Lemma \ref{solvablekernellemma}, $\ker L$ is an abelian Lie algebra,  this implies $(\alg, [\dum, \dum])$ is solvable, by Cartan's criterion (see, e.g., section III.$4$ of \cite{Jacobson}). On the other hand, if there holds \eqref{csl2} then, by one version of Lie's Theorem (see, e.g., Theorem $1.2$ in chapter $1$ of \cite{Gorbatsevich-Onishchik-Vinberg}), there is a complete flag in $\alg$ invariant under $L(\alg)$, so $(\alg, \mlt)$ is triangularizable.  
\end{proof}

Lemma \ref{completesolvablelemma} is due to Helmstetter.
\begin{lemma}[Proposition $(20)$ and Corollary $(21)$ of \cite{Helmstetter}]\label{completesolvablelemma} Let $\fie$ be a field of characteristic zero.
\noindent
\begin{enumerate}
\item\label{ulie1} The underlying Lie algebra of a complete finite-dimensional LSA over $\fie$ is solvable.
\item\label{ulie2} The underlying Lie algebra of a finite-dimensional LSA over $\fie$ is not perfect. In particular, the codimension of $[\alg, \alg]$ in $\alg$ is always at least one.
\end{enumerate}
\end{lemma}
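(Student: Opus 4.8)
The plan is to derive both statements from the completeness criteria already established in Lemma~\ref{completenesslemma}, treating \eqref{ulie1} as the substantive claim and deducing \eqref{ulie2} from it. For \eqref{ulie1}, recall that by Lemma~\ref{completenesslemma} completeness is equivalent to the right-nil condition that $R(a)$ be nilpotent for every $a\in\alg$, and also to the vanishing of $\tr R$ and of the trace form $\tau$. The argument I would run is an induction on $\dim\alg$ organized around the kernel of the left regular representation. If $\ker L\neq 0$, then by Lemma~\ref{solvablekernellemma} it is an abelian two-sided ideal, the quotient $\alg/\ker L$ inherits an LSA structure whose right multiplications are the operators induced by the $R(a)$, hence are again nilpotent, so $\alg/\ker L$ is a complete LSA of strictly smaller dimension. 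By the inductive hypothesis its underlying Lie algebra is solvable, and since $\ker L$ is abelian the extension $0\to\ker L\to\alg\to\alg/\ker L\to 0$ forces $(\alg,[\dum,\dum])$ to be solvable. This reduces everything to the case in which $L$ is faithful.

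In the faithful case $L$ embeds $(\alg,[\dum,\dum])$ as a subalgebra $L(\alg)\subseteq\gl(\alg)$, and since $[L(\alg),L(\alg)]=L([\alg,\alg])$, Cartan's solvability criterion reduces the claim to the trace identity
\begin{align}
\tr\bigl(L(a)L(b)\bigr)=0 \qquad\text{for all } a\in\alg,\ b\in[\alg,\alg].
\end{align}
Equivalently, using the faithful representation $\hat\rho(a)=\bigl(\begin{smallmatrix}L(a)&a\\0&0\end{smallmatrix}\bigr)$ on $\alg\oplus\fie$, for which $\tr(\hat\rho(a)\hat\rho(b))=\tr(L(a)L(b))$, this is exactly Cartan's criterion for $\hat\rho(\alg)$. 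Verifying this identity is the crux of the whole proof and the step I expect to be hardest: it must be extracted from the left-symmetry relation \eqref{rlsa}, the homomorphism property $L([x,y])=[L(x),L(y)]$, and the right-nilpotence, for instance by combining the derivation-type identity $[\ad(a),R(b)]=R(a\mlt b)-R(a)R(b)$ (itself a rearrangement of \eqref{rlsa}) with Helmstetter's symmetrized-trace formula \eqref{nablaklogp2}, which for a complete LSA makes all symmetrized traces of products of right multiplications vanish. Alternatively one may invoke Segal's algebraic-group argument, cited in the proof of Lemma~\ref{completenesslemma}, which realizes the holonomy group as a simply transitive subgroup of $\Aff(\alg)$ and exploits the resulting unipotence.

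Claim \eqref{ulie2} would then follow quickly. The key observation I would record first is that $\tr R$ annihilates the derived algebra: for any $x,y\in\alg$, by the symmetry of the trace form \eqref{traceformdefined},
\begin{align}
\tr R([x,y])=\tr R(x\mlt y)-\tr R(y\mlt x)=\tau(x,y)-\tau(y,x)=0.
\end{align}
Hence if $\alg$ were perfect, so that $\alg=[\alg,\alg]$, then $\tr R$ would vanish identically, and Lemma~\ref{completenesslemma} would make $(\alg,\mlt)$ complete; by \eqref{ulie1} its underlying Lie algebra would then be solvable. Since a nonzero solvable Lie algebra satisfies $[\alg,\alg]\subsetneq\alg$, this contradicts perfectness, so a nonzero LSA cannot be perfect and $[\alg,\alg]$ has codimension at least one.
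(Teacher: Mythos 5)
Your part (2) is sound and coincides with the paper's own argument: $\tr R$ annihilates $[\alg,\alg]$ by symmetry of $\tau$, so a perfect LSA would have $\tr R\equiv 0$, hence be complete by Lemma \ref{completenesslemma}, hence solvable by (1), contradicting perfectness. The reduction of (1) to the case of faithful $L$ via $\ker L$ is also fine. But your proof of (1) has a genuine gap, and it sits exactly where you say it does: the trace identity $\tr\left(L(a)L(b)\right)=0$ for $a\in\alg$, $b\in[\alg,\alg]$ is never established, only asserted to be "extractable." The ingredients you propose cannot obviously deliver it: writing $L(a)=\ad(a)+R(a)$ gives
\begin{align*}
\tr L(a)L(b) = \tr\ad(a)\ad(b) + \tr\ad(a)R(b) + \tr R(a)\ad(b) + \tr R(a)R(b),
\end{align*}
and while completeness kills the last term (and, via \eqref{nablaklogp2}, all symmetrized traces of products of right multiplications), it says nothing about the Killing-form term or the mixed terms. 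Controlling $\tr\ad(a)\ad(b)$ for $b\in[\alg,\alg]$ is, by Cartan's criterion, equivalent to the very solvability you are trying to prove, so this route is essentially circular without a new idea. Your fallback --- Segal's theorem plus "unipotence" of the resulting simply transitive affine action --- is Auslander's solvability theorem for simply transitive groups of affine motions, a separate nontrivial result (stated over $\rea$; over a general $\fie$ of characteristic zero it would need an algebraic-group formulation), which you also do not prove.

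The paper avoids all of this with a short cohomological argument: arguing contrapositively, if $(\alg,[\dum,\dum])$ is not solvable it contains a nontrivial semisimple subalgebra $\S$, and $L$ restricts to a representation of $\S$ on $\alg$. The inclusion $\S\hookrightarrow\alg$ is a $1$-cocycle for this representation, so by Whitehead's lemma (vanishing of first cohomology of semisimple Lie algebras) it is a coboundary: there is $a\in\alg$ with $x = L(x)a = R(a)x$ for all $x\in\S$. Then $I-R(a)$ is singular, so the characteristic polynomial satisfies $P(-a)=0$ and the LSA is incomplete. To salvage your Cartan-criterion strategy you would need an honest proof of the displayed trace identity; as written, the crux of part (1) is missing.
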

\begin{proof}
If the underlying Lie algebra of a finite-dimensional LSA $(\alg, \mlt)$ over $\fie$ is not solvable, it contains a nontrivial semisimple Lie subalgebra $\S$ and $L$ is a representation of $\S$ on $\alg$. Since $\S$ is semisimple, the first Lie algebra cohomology of any finite-dimensional representation of $\S$ is trivial. View $\alg$ as an $\S$-module with the action given by $L$. Since the inclusion of $\S$ in $\alg$ is a Lie algebra cocycle of $\S$ with coefficients in $\alg$, there exists $a \in \S$ such that $x = L(x)a  = R(a)x$ for all $x \in \S$. This means that $I - R(a)$ is not invertible, so $P(-a) = 0$. This shows \eqref{ulie1}.
For a general finite-dimensional LSA $(\alg, \mlt)$, were $\alg = [\alg, \alg]$, then, by \eqref{rlsa}, $\alg$ would be right nil, and so complete. By \eqref{ulie1} this would imply that the underlying Lie algebra was solvable, contradicting $\alg = [\alg, \alg]$.
\end{proof}

\begin{corollary}
A complete finite-dimensional LSA over a field $\fie$ of characteristic zero is triangularizable if and only if for every $x \in \alg$ the eigenvalues of $L(x)$ are contained in $\fie$. In particular, a complete finite-dimensional LSA over an algebraically closed field of characteristic zero is triangularizable.
\end{corollary}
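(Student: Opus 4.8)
The plan is to deduce this corollary directly from the two preceding lemmas, the key observation being that completeness makes the solvability half of the triangularizability criterion automatic, so that only the eigenvalue condition remains to be checked.

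I would split the biconditional into its two implications. For the forward direction I would take a complete triangularizable LSA $(\alg, \mlt)$ and simply apply the equivalence \eqref{csl1}$\iff$\eqref{csl2} of Lemma \ref{cslemma}: triangularizability by itself already forces the eigenvalues of each $L(x)$ to lie in $\fie$, so completeness plays no role in this direction. For the reverse direction I would assume that $(\alg, \mlt)$ is complete and that the eigenvalues of every $L(x)$ lie in $\fie$. The one nontrivial input is Lemma \ref{completesolvablelemma}\eqref{ulie1}, which guarantees that the underlying Lie algebra $(\alg, [\dum,\dum])$ of a complete LSA is solvable. With solvability in hand, condition \eqref{csl2} of Lemma \ref{cslemma} holds in full, and the implication \eqref{csl2}$\implies$\eqref{csl1} delivers triangularizability.

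For the ``in particular'' clause I would note that over an algebraically closed $\fie$ the eigenvalue hypothesis is vacuous, since every $L(x) \in \eno(\alg)$ splits over $\fie$; hence any complete finite-dimensional LSA over such a field falls under the reverse implication and is therefore triangularizable. I do not anticipate any genuine obstacle here: the essential content is carried entirely by the solvability statement of Lemma \ref{completesolvablelemma}, whose proof already exploited the vanishing of the first cohomology of a finite-dimensional representation of a semisimple Lie algebra, and everything else is a matter of invoking the eigenvalue criterion of Lemma \ref{cslemma} in the correct order.
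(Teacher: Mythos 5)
Your proof is correct and is exactly the paper's argument: the paper's own proof just says the corollary follows from Lemmas \ref{completesolvablelemma} and \ref{cslemma}, and your write-up spells out precisely how (completeness supplies solvability, the eigenvalue hypothesis supplies the rest of condition \eqref{csl2}, and algebraic closedness makes that hypothesis automatic).
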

\begin{proof}
This follows from Lemmas \ref{completesolvablelemma} and \ref{cslemma}.
\end{proof}

\begin{lemma}[Proposition $(26)$ of \cite{Helmstetter}]\label{underlyingnilpotentlemma}
A finite-dimensional LSA over a field of characteristic zero is left nil if and only if it is right nil and its underlying Lie algebra is nilpotent.
\end{lemma}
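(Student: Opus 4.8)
The plan is to prove the two implications separately, the forward direction being a short trace computation and the reverse direction requiring the generalized weight space decomposition of the left regular representation. I expect the reverse implication to be the main obstacle.

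For the forward direction, suppose $(\alg,\mlt)$ is left nil. Since $[L(x),L(y)]=L([x,y])$, the image $L(\alg)\subset\eno(\alg)$ is a Lie algebra of nilpotent operators, so by Engel's theorem it is a nilpotent Lie algebra and $\alg/\ker L\cong L(\alg)$ is nilpotent. By Lemma \ref{solvablekernellemma}, $\ker L$ is abelian, and for $a\in\alg$ and $n\in\ker L$ one has $[a,n]=a\mlt n-n\mlt a=L(a)n$ because $n\mlt a=L(n)a=0$; hence $\ad(a)$ restricts to the nilpotent operator $L(a)$ on $\ker L$ and induces a nilpotent operator on $\alg/\ker L$, so $\ad(a)$ is nilpotent. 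By Engel again, $(\alg,[\dum,\dum])$ is nilpotent. Then $\tr L(a)=0$ (left nil) and $\tr\ad(a)=0$ (Lie nilpotent), and since $R(a)=L(a)-\ad(a)$ this gives $\tr R(a)=0$ for all $a$. The equivalence \eqref{cmp6}$\Leftrightarrow$\eqref{cmp5} of Lemma \ref{completenesslemma}, which rests on Segal's theorem and holds over any field of characteristic zero, then yields that $(\alg,\mlt)$ is right nil.

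For the reverse direction, assume $(\alg,\mlt)$ is right nil with nilpotent underlying Lie algebra, and aim to show each $L(a)$ is nilpotent. First I would pass to the algebraic closure $\bar{\fie}$; all three hypotheses survive, since the nilpotent endomorphisms form a Zariski-closed subset of $\eno(\alg)$ on which the $\fie$-points of the subspaces $L(\alg)$ and $R(\alg)$ are dense. The nilpotent Lie algebra $\alg$ acting on itself through $L$ then decomposes into generalized weight spaces $\alg=\bigoplus_{\mu}\alg_{\mu}$, where $L(a)-\mu(a)\Id$ is nilpotent on $\alg_{\mu}$ and $\mu$ is linear in $a$; it suffices to prove that the only weight is $\mu=0$. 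The crucial structural input is that left-symmetry can be rewritten as $a\mlt(x\mlt y)=[a,x]\mlt y+x\mlt(a\mlt y)$, which says precisely that $\mlt$ is a homomorphism of $\alg$-modules $\alg^{\ad}\otimes\alg^{L}\to\alg^{L}$, the first factor carrying the adjoint action. Because $\alg$ is Lie nilpotent, the adjoint representation has the single weight $0$, so this homomorphism forces $\alg\mlt\alg_{\mu}\subseteq\alg_{\mu}$: every weight space is a left ideal. In particular, for $x\in\alg_{\nu}$ one has $y\mlt x\in\alg\mlt\alg_{\nu}\subseteq\alg_{\nu}$, so the image of $R(x)$ lies in $\alg_{\nu}$.

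The weights are then eliminated by a trace argument. Fix weights $\mu,\nu$ and $x\in\alg_{\nu}$. The subspace $\alg_{\mu}\oplus\alg_{\nu}$ is preserved by $L(x)$ (weight spaces are left ideals) and by $R(x)$ (its image lies in $\alg_{\nu}$), hence by $\ad(x)=L(x)-R(x)$. On this subspace $\ad(x)$ is nilpotent by Lie nilpotence and $R(x)$ is nilpotent by right nilness, so $L(x)$ is traceless there; but its trace equals $\mu(x)\dim\alg_{\mu}+\nu(x)\dim\alg_{\nu}$. Taking $\mu=\nu$ gives $\nu(x)=0$, and feeding this back into the general case gives $\mu(x)=0$. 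Since $x\in\alg_{\nu}$ and $\nu$ are arbitrary and $\alg=\bigoplus_{\nu}\alg_{\nu}$, every weight $\mu$ vanishes identically, so each $L(a)$ is nilpotent. The main obstacle is exactly this reverse direction: the naive attempt to deduce nilpotence of $L(a)=\ad(a)+R(a)$ from nilpotence of the two summands fails because they need not commute, and the decisive step is recognizing, through the module-homomorphism reading of the left-symmetric identity, that the generalized weight spaces of $L$ are left ideals, which is what makes the trace bookkeeping close up. By contrast, the forward direction is routine once Engel's theorem and Lemma \ref{completenesslemma} are available.
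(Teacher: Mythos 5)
The paper offers no proof of this lemma at all: its ``proof'' is the citation of Proposition $(26)$ of \cite{Helmstetter}, with a pointer to section $2$ of \cite{Kim-completeleftinvariant}. Your argument is therefore necessarily a different route --- a self-contained proof --- and, having checked it, I believe it is correct. The forward direction's Engel bookkeeping is right: $L(\alg)\cong\alg/\ker L$ is a nilpotent Lie algebra by Engel, $\ad(a)$ preserves the two-sided ideal $\ker L$ (Lemma \ref{solvablekernellemma}), acts on it as the nilpotent operator $L(a)$ and on the quotient as the adjoint of a nilpotent Lie algebra, hence is nilpotent, and Engel applies once more. The reverse direction --- the substantive half --- is also sound: the generalized weight space decomposition of $L$ over the algebraic closure, the rewriting of left-symmetry as $a\mlt(x\mlt y)=[a,x]\mlt y+x\mlt(a\mlt y)$ to conclude $\alg\mlt\alg_{\mu}\subseteq\alg_{\mu}$, and the trace identity $0=\mu(x)\dim\alg_{\mu}+\nu(x)\dim\alg_{\nu}$ on $\alg_{\mu}\oplus\alg_{\nu}$ (nilpotence of $\ad(x)$ and $R(x)$ restricted to an invariant subspace) do eliminate all weights; linearity of the weights, valid in characteristic zero, finishes it. Two remarks on economy: the left-ideal property of $\alg_{\mu}$ is also just the standard fact that generalized weight spaces of a representation of a nilpotent Lie algebra are submodules, and then the containment of the image of $R(x)$ in $\alg_{\nu}$ for $x\in\alg_{\nu}$ is immediate from $R(x)a=L(a)x$; and this weight-space-plus-trace scheme is close in spirit to Kim's treatment in the reference the paper points to. What your write-up buys is that the reader need not consult \cite{Helmstetter} for the hard direction at all.

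Two small repairs are needed in the forward direction. First, the equivalence \eqref{cmp6}$\iff$\eqref{cmp5} of Lemma \ref{completenesslemma} does not rest on Segal's theorem: Segal's theorem is what gives \eqref{cmp2}$\implies$\eqref{cmp6}, whereas the direction you actually use, $\tr R\equiv 0\implies$ right nil, follows from the elementary chain \eqref{cmp6}$\implies$\eqref{cmp3}$\implies$\eqref{cmp5}. Second, Lemma \ref{completenesslemma} is stated for \emph{real} LSAs, and its proof of \eqref{cmp6}$\implies$\eqref{cmp3} is written in real-analytic language, while your lemma is over an arbitrary field $\fie$ of characteristic zero; so the citation, as written, covers only $\fie=\rea$. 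The implication is true in full generality (it is among Helmstetter's algebraically proved results, stated over characteristic-zero fields), and alternatively the paper's argument algebraizes: granting \eqref{dptra} as a polynomial identity, $dP_{x}((I+R(x))a)=P(x)\tr R(a)=0$ for all $x,a$ forces $dP$ to vanish on the nonempty Zariski-open set $\{P\neq 0\}$, hence identically, so $P\equiv P(0)=1$ in characteristic zero, and then $\det(I+tR(a))=1$ for all $t$ in the infinite field $\fie$ makes $R(a)$ nilpotent. Either cite Helmstetter for this step or include such a remark; with that, your proof is complete.
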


\begin{proof}
This is Proposition $(26)$ of \cite{Helmstetter} (see also section $2$ of \cite{Kim-completeleftinvariant}).
\end{proof}

Associated with an LSA $(\alg, \mlt)$ there are the following descending series of subspaces of $\alg$. Define $\alg^{1} = \alg$, $\lnil^{1}(\alg) = \alg$ and $\rnil^{1}(\alg) = \alg$, and define recursively $\alg^{i+1} = [\alg, \alg^{i}]$, $\lnil^{i+1}(\alg) = \alg \mlt \lnil^{i}(\alg) = L(\alg)\lnil^{i}(\alg)$, and $\rnil^{i+1}(\alg) = \rnil^{i}(\alg)\mlt \alg = R(\alg)\rnil^{i}(\alg)$. It can be checked by induction on $i$ that $\alg^{i}\mlt \lnil^{j}(\alg) \subset \lnil^{i+j}(\alg)$; using this it can be checked by induction on $i$ that $\alg^{i} \subset \lnil^{i}(\alg)$. Using $\alg^{i} \subset \lnil^{i}(\alg)$ it can be checked by induction on $i$ that $\rnil^{i}(\alg)$ is a two-sided ideal of $(\alg, \mlt)$. This fact is contained in Proposition $23$ of \cite{Helmstetter} and the proof just sketched is indicated in \cite{Kim-completeleftinvariant}. The LSA $(\alg, \mlt)$ is \textit{right nilpotent} of \textit{length $k$} if there is $k \geq 1$ such that $\rnil^{k}(\alg) \neq \{0\}$ and $\rnil^{k+1}(\alg) = \{0\}$. A two-sided ideal in a right nilpotent LSA is right nilpotent, as is the quotient of the LSA by the ideal. A right nilpotent LSA is right nil, but a right nil LSA need not be right nilpotent.

Lemma \ref{rightnilpotentlemma} is a slight refinement of claim $(3)$ of Proposition $24$ of \cite{Helmstetter}.
\begin{lemma}\label{rightnilpotentlemma} 
For an $n$-dimensional LSA $(\alg, \mlt)$ the following are equivalent:
\begin{enumerate}
\item\label{jdes1} $(\alg, \mlt)$ is right nilpotent.
\item\label{jdes2} There is a sequence of two-sided ideals $\alg = \I^{1} \supset \I^{2} \supset \dots \supset \I^{r} = \{0\}$ such that $R(\alg)\I^{i} \subset \I^{i+1}$ for all $1 \leq i \leq r$. In particular each quotient $\I^{i}/\I^{i+1}$ is a trivial LSA.
\item\label{jdes3} There is a sequence of two-sided ideals $\alg = \J^{1} \supset \J^{2} \supset \dots \supset \J^{n} \supset \J^{n+1} = \{0\}$ such that $\dim \J^{i} = n+1-i$ and such that $R(\alg)\J^{i} \subset \J^{i+1}$ for all $1 \leq i \leq n$. In particular each quotient $\J^{i}/\J^{i+1}$ is a trivial one-dimensional LSA.
\end{enumerate}
In the case there hold \eqref{jdes1}-\eqref{jdes3}, the LSA $(\alg, \mlt)$ is triangularizable.
\end{lemma}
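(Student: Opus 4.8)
The plan is to establish the equivalences $(1)\Leftrightarrow(2)$, $(3)\Rightarrow(2)$, and $(2)\Rightarrow(3)$, together with the concluding triangularizability, organizing everything around the canonical descending right series $\rnil^i(\alg)$ and around the single structural observation that the hypothesis $R(\alg)\I^i\subseteq\I^{i+1}$ forces \emph{right} multiplication to drop one level of the filtration. First I would dispose of the easy implications. For $(1)\Rightarrow(2)$ take $\I^i=\rnil^i(\alg)$: the text already records that each $\rnil^i(\alg)$ is a two-sided ideal, that $\rnil^{i+1}(\alg)=R(\alg)\rnil^i(\alg)\subseteq\rnil^i(\alg)$, and right nilpotency gives $\rnil^r(\alg)=\{0\}$ for some $r$; the condition $R(\alg)\I^i\subseteq\I^{i+1}$ then holds with equality, and since it reads $\I^i\mlt\alg\subseteq\I^{i+1}$, the quotient $\I^i/\I^{i+1}$ has zero product, i.e.\ is a trivial LSA. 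The implication $(3)\Rightarrow(2)$ is immediate, a chain as in $(3)$ being a special case of one as in $(2)$ with one-dimensional, hence trivial, quotients.

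For $(2)\Rightarrow(1)$ I would show by induction that $\rnil^i(\alg)\subseteq\I^i$: the base case is $\rnil^1(\alg)=\alg=\I^1$, and if $\rnil^i(\alg)\subseteq\I^i$ then $\rnil^{i+1}(\alg)=R(\alg)\rnil^i(\alg)\subseteq R(\alg)\I^i\subseteq\I^{i+1}$; hence $\rnil^r(\alg)\subseteq\I^r=\{0\}$ and the LSA is right nilpotent. The substance is $(2)\Rightarrow(3)$, the refinement of the chain to unit steps, and for this two preliminary points are decisive. Solvability of the underlying Lie algebra: for $x,y\in\I^i$ one has $x\mlt y=R(y)x\in R(\alg)\I^i\subseteq\I^{i+1}$ and likewise $y\mlt x\in\I^{i+1}$, so $[\I^i,\I^i]\subseteq\I^{i+1}$; iterating, the derived series satisfies $\alg^{(k)}\subseteq\I^{k+1}$, whence $\alg^{(r-1)}=\{0\}$. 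Automatic right-invariance of interpolants: any subspace $W$ with $\I^{i+1}\subseteq W\subseteq\I^i$ satisfies $R(\alg)W\subseteq R(\alg)\I^i\subseteq\I^{i+1}\subseteq W$, so $W$ is already a right ideal, and to interpolate a chain of two-sided ideals of $\alg$ between $\I^{i+1}$ and $\I^i$ it suffices to produce a complete flag of $\overline{L}(\alg)$-invariant subspaces of $\I^i/\I^{i+1}$, where $\overline{L}$ is the representation of the solvable Lie algebra induced by left multiplication (well defined since both bounding ideals are $L(\alg)$-invariant).

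Lie's theorem for the solvable $\alg$ then furnishes such a flag; pulling it back and splicing these refinements across all $i$ produces two-sided ideals $\J^1\supset\dots\supset\J^{n+1}=\{0\}$ with $\dim\J^j=n+1-j$, the shift $R(\alg)\J^j\subseteq\J^{j+1}$ holding because $R(\alg)$ carries each level of the coarse filtration into the next, which is contained in $\J^{j+1}$. Finally triangularizability is read off directly from $(3)$: choosing a basis adapted to the flag, so that $\J^j$ is the span of the last $n+1-j$ basis vectors, each $\J^j$ is a left ideal and hence $L(x)$-invariant for every $x$, so all the $L(x)$ are simultaneously triangular.

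I expect the main obstacle to be precisely the use of Lie's theorem in $(2)\Rightarrow(3)$: extracting a \emph{one}-dimensional $\overline{L}(\alg)$-invariant subspace of each quotient requires the eigenvalues of the $\overline{L}(x)$ to lie in the base field $\fie$. This is automatic when $\fie$ is algebraically closed, and in the setting of the intended applications, where the underlying Lie algebra is nilpotent, one computes on each quotient that $\overline{R}(x)=0$ and hence $\overline{L}(x)=\overline{\ad(x)}$ acts nilpotently, so the eigenvalues all vanish; over a general field of characteristic zero this split condition is exactly the delicate hypothesis that must be met, and is the one place in the argument where genuine care is needed. The remaining implications, and the final triangularizability claim conditioned on $(3)$, by contrast hold over any field of characteristic zero.
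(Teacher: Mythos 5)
Your proof is correct and follows the same skeleton as the paper's: $\I^{i}=\rnil^{i}(\alg)$ gives \eqref{jdes1}$\Rightarrow$\eqref{jdes2}, the chain in \eqref{jdes2} is refined to a complete flag for \eqref{jdes3}, composing right multiplications along the flag recovers right nilpotency, and triangularizability is read off the flag. The genuine difference is at \eqref{jdes2}$\Rightarrow$\eqref{jdes3}. The paper argues that, each quotient $\I^{i}/\I^{i+1}$ being a trivial LSA, \emph{any} complete flag in it lifts to a chain of two-sided ideals of $\alg$; you observe instead that such a lift is automatically a right ideal of $\alg$ (because $R(\alg)\I^{i}\subset\I^{i+1}$) but need not be a left ideal, so the flag must be chosen invariant under the induced left multiplications $\bar{L}(\alg)$, which you produce via Lie's theorem at the cost of the hypothesis that the relevant eigenvalues lie in $\fie$. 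Your extra care here is not pedantry: it exposes a real gap in the paper's own argument, and the eigenvalue hypothesis you flag cannot be dropped.

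Indeed, over $\rea$ the implication \eqref{jdes1}$\Rightarrow$\eqref{jdes3} (and the final triangularizability claim) fails as stated. Let $\alg=\lb a,x,y\ra$ with the only nonzero products of basis vectors $a\mlt x=y$ and $a\mlt y=-x$. This is an LSA: the associated bracket makes $\alg$ the solvable Lie algebra $\rea\ltimes_{J}\rea^{2}$, with $J$ the quarter-turn rotation, and $L$ is a Lie algebra representation because $L(x)=L(y)=0$, so $L(\alg)$ is abelian while $L$ annihilates $[\alg,\alg]\subset\lb x,y\ra$. It is right nilpotent, since $\rnil^{2}(\alg)=\alg\mlt\alg=\lb x,y\ra$ and $\rnil^{3}(\alg)=\lb x,y\ra\mlt\alg=\{0\}$, so \eqref{jdes1} and \eqref{jdes2} hold with the chain $\alg\supset\lb x,y\ra\supset\{0\}$. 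But $\alg$ has no one-dimensional two-sided ideal at all: if $w=\la a+\mu x+\nu y$ spans a right ideal, then $w\mlt x=\la y\in\lb w\ra$ forces $\la=0$; and a line in $\lb x,y\ra$ that is a left ideal would be spanned by a real eigenvector of the rotation $L(a)|_{\lb x,y\ra}$, which has none. Hence \eqref{jdes3} fails, and consistently with Lemma \ref{cslemma} the algebra is not triangularizable, $L(a)$ having eigenvalues $\pm i$. The lemma, with the paper's lifting argument repaired exactly as you propose, is correct over an algebraically closed field, or under the additional hypothesis that the underlying Lie algebra is nilpotent --- which is in force in every application the paper makes of the \eqref{jdes1}$\Rightarrow$\eqref{jdes3} direction (Theorem \ref{trivalgtheorem}, Corollary \ref{triangularizabletheorem2}). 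In that case, as you note, $\bar{R}=0$ on each quotient, so $\bar{L}=\overline{\ad}$ acts nilpotently there, and Engel's theorem supplies the invariant flag over any field of characteristic zero.
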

\begin{proof}
If $(\alg, \mlt)$ is right nilpotent, then 
\eqref{jdes1} implies \eqref{jdes2} with $\I^{i}=  \rnil^{i}(\alg)$. In a trivial LSA any subspace is a two-sided ideal, so any descending sequence of subspaces, each having codimension one in the preceding, is a descending sequence of two-sided ideals, each having codimension one in the preceding. Supposing given ideals as in \eqref{jdes2}, choosing such a sequence in each quotient $\I^{i}/\I^{i+1}$ and lifting the result to $\alg$ yields the desired sequence of ideals $\J^{j}$. Suppose given a sequence of ideals $\J^{i}$ as in \eqref{jdes3}. Then $R(a)\J^{i} \subset \J^{i+1}$ for all $a \in \alg$, so $R(a_{1})\dots R(a_{n})\J^{1} = \{0\}$ for all $a_{1}, \dots, a_{n} \in \alg$, which means $\alg$ is right nilpotent. If there holds \eqref{jdes3}, then there is a complete flag of subspaces in $\alg$ stable under $L(\alg)$, so $(\alg, \mlt)$ is triangularizable.
\end{proof}

In a finite-dimensional algebra $(\alg, \mlt)$ define $\mnil^{1}(\alg) = \alg$ and define $\mnil^{i}(\alg)$ to be the vector subspace of $\alg$ generated by all products of at least $i$ elements of $\alg$, however associated. Each term of the decreasing sequence $\alg = \mnil^{1}(\alg) \supset \mnil^{2}(\alg) \supset \dots \supset \{0\}$ is a two-sided ideal in $\alg$. If there is $k$ such that $\mnil^{k}(\alg) = \{0\}$, then $(\alg, \mlt)$ is \textit{nilpotent}. By a theorem of I.~M.~H. Etherington \cite{Etherington}, an algebra is nilpotent if and only if the associative multiplication algebra $\mult(\alg) \subset \eno(\alg)$ generated by $L(\alg)$ and $R(\alg)$ is nilpotent. The proof amounts to showing that 
\begin{align}\label{mnilmult}
\mnil^{i+1}(\alg) \supset \mult(\alg)^{i}\alg \supset \mnil^{2^{i}}(\alg). 
\end{align}
(Care is needed because although $\mult^{i}(\alg)\alg  = L(\alg)\mult^{i-1}(\alg)\alg + R(\alg)\mult^{i-1}(\alg)\alg$, by definition of $\mult(\alg)$, it need not be the case that $L(\alg)\mnil^{i}(\alg) + R(\alg)\mnil^{i}(\alg)$ equal $\mnil^{i+1}(\alg)$.)

By \eqref{mnilmult}, a nilpotent LSA is right nilpotent with nilpotent underlying Lie algebra because $\rnil^{k+1}(\alg) = R(\alg)^{k}\alg \subset \mult(\alg)^{k}\alg$ and $\alg^{k+1} = \ad(\alg)^{k}\alg \subset \mult(\alg)^{k}\alg$. Although, for a general not necessarily associative algebra, nilpotence is stronger than right nilpotence, Theorem \ref{trivalgtheorem} shows that a right nilpotent LSA with nilpotent underlying Lie algebra is nilpotent.

When considering multiple LSAs it is convenient to use subscripts indicating the dependence of left and right regular representations $L_{\mlt}$ and $R_{\mlt}$ corresponding to a given LSA $(\alg, \mlt)$. However, such subscripts will be used only when necessary, and when they are omitted, $L$ and $R$ are always defined with respect to the multiplication indicated by the symbol $\mlt$.

\begin{lemma}\label{hereditylemma}
Let $(\alg, \mlt)$ be a finite-dimensional LSA. If $(\alg, \mlt)$ is left nil, right nil, right nilpotent, nilpotent, or has nilpotent underlying Lie algebra, then any left-symmetric subalgebra or any homomorphic image of $(\alg, \mlt)$ has the same property. 
\end{lemma}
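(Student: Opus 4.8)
The plan is to handle all five properties in parallel by reducing each to one of two structural facts. For a left-symmetric subalgebra $\balg \subset \alg$, the multiplications of $(\balg, \mlt)$ are the restrictions of those of $(\alg, \mlt)$, so for $b \in \balg$ the operators $L(b)$ and $R(b)$ preserve $\balg$ and their restrictions to $\balg$ are the left and right multiplications of $\balg$. Since the restriction of a nilpotent endomorphism to an invariant subspace is again nilpotent, left nil and right nil are immediately inherited by $\balg$.

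For the filtration properties I would verify, by straightforward inductions on $i$ from the recursive definitions, the termwise inclusions $\lnil^{i}(\balg) \subset \lnil^{i}(\alg)$, $\rnil^{i}(\balg) \subset \rnil^{i}(\alg)$, $\mnil^{i}(\balg) \subset \mnil^{i}(\alg)$, and $\balg^{i} \subset \alg^{i}$ for the Lie lower central series; each holds because products and brackets of elements of $\balg$ are computed inside $\alg$. Consequently, if one of these descending series of $\alg$ reaches $\{0\}$, so does the corresponding series of $\balg$, which settles the subalgebra case for right nilpotent, nilpotent, and nilpotent underlying Lie algebra (the last being the classical fact that a Lie subalgebra of a nilpotent Lie algebra is nilpotent).

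For a homomorphic image I may assume a surjective LSA homomorphism $\phi:(\alg, \mlt) \to (\balg, \cmlt)$, replacing the target by $\phi(\alg)$ if necessary. The key identities are the intertwining relations $L_{\cmlt}(\phi(a)) \circ \phi = \phi \circ L_{\mlt}(a)$ and $R_{\cmlt}(\phi(a)) \circ \phi = \phi \circ R_{\mlt}(a)$, both immediate from $\phi(a \mlt b) = \phi(a) \cmlt \phi(b)$. Iterating gives $L_{\cmlt}(\phi(a))^{k} \circ \phi = \phi \circ L_{\mlt}(a)^{k}$ and likewise for $R$, so surjectivity of $\phi$ forces $L_{\cmlt}(\phi(a))$ (resp. $R_{\cmlt}(\phi(a))$) to be nilpotent whenever $L_{\mlt}(a)$ (resp. $R_{\mlt}(a)$) is; this handles left nil and right nil. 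For the filtrations, surjectivity together with the intertwining relations yields by induction $\phi(\rnil^{i}(\alg)) = \rnil^{i}(\balg)$, $\phi(\mnil^{i}(\alg)) = \mnil^{i}(\balg)$, and $\phi(\alg^{i}) = \balg^{i}$, so vanishing of a series upstairs forces vanishing of its image downstairs.

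I expect no essential obstacle; the only point requiring care is the property \emph{nilpotent}, whose definition involves all products of a given length however associated (equivalently, nilpotence of the multiplication algebra $\mult(\alg)$, by Etherington's theorem cited above). Here one must confirm that every product of elements of $\balg$ is a product computed in $\alg$, and that $\phi$ carries a product in $\alg$ to the correspondingly associated product in $\balg$; both are immediate from $\balg$ being a subalgebra and $\phi$ a homomorphism, so the inclusion $\mnil^{i}(\balg) \subset \mnil^{i}(\alg)$ and the surjection $\phi(\mnil^{i}(\alg)) = \mnil^{i}(\balg)$ go through exactly as for the other filtrations.
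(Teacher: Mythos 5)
Your proposal is correct and follows essentially the same route as the paper: the intertwining relations $L_{\circ}(\Phi(x))\circ\Phi = \Phi\circ L_{\mlt}(x)$, $R_{\circ}(\Phi(x))\circ\Phi = \Phi\circ R_{\mlt}(x)$ together with surjectivity give the homomorphic-image claims (both the nil properties and the equalities $\Phi(\rnil^{i}(\alg)) = \rnil^{i}(\balg)$, $\Phi(\mnil^{i}(\alg)) = \mnil^{i}(\balg)$, and likewise for the Lie lower central series), while the subalgebra claims reduce to restriction of nilpotent operators to invariant subspaces and termwise inclusion of the descending series. The paper dismisses the subalgebra case as "straightforward"; you supply those details, but by the same argument the paper intends.
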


\begin{proof}
All the claims for a homomorphic image follow from the observation that, if $\Phi:(\alg, \mlt) \to (\balg, \circ)$ is a surjective LSA homormophism, then 
\begin{align}\label{rphir}
&R_{\circ}(\Phi(x))\circ \Phi = \Phi \circ R_{\mlt}(x), & &L_{\circ}(\Phi(x)) \circ \Phi = \Phi \circ L_{\mlt}(x),
\end{align}
for all $x \in \alg$. By \eqref{rphir} and its analogue for left multiplication operators it is immediate that a homomorphic image of a right nil or a left nil LSA has the same property. Similarly, by \eqref{rphir}, there hold $\rnil^{k}(\balg) = \Phi(\rnil^{k}(\alg))$ and $\mnil^{i}(\balg) = \Phi(\mnil^{i}(\alg))$, from which it follows that a homomorphic image of a right nilpotent or a nilpotent LSA is right nilpotent or nilpotent. The analogous claims for subalgebras are all straightforward.  
\end{proof}

Let $(\alg, \mlt)$ be a finite-dimensional LSA. 
Define a two-sided ideal  $\triv(\alg)$ by 
\begin{align}
\triv(\alg) = \ker L \cap \ker R = \{a \in \alg: a\mlt x = 0 = x\mlt a \,\,\text{for all}\,\, x \in \alg\}.
\end{align} 
Any vector subspace $\I \subset \triv(\alg)$ is also a two-sided ideal of $(\alg, \mlt)$. 
\begin{lemma}
The subspaces $\triv^{i}(\alg)$ of the LSA $(\alg, \mlt)$ defined by $\triv^{1}(\alg) = \triv(\alg)$ and
\begin{align}
\triv^{i+1}(\alg) = \{z \in \alg: L(z)\alg \subset \triv^{i}(\alg), R(z)\alg \subset \triv^{i}(\alg)\}
\end{align}
for $i \geq 1$ are two-sided ideals in $(\alg, \mlt)$ satisfying $\triv^{1}(\alg) \subset \triv^{2}(\alg) \subset \dots \subset \alg$.
\end{lemma}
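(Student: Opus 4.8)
The plan is to prove the entire statement by a single induction on $i\geq 1$ whose inductive hypothesis is that $\triv^{i}(\alg)$ is a two-sided ideal; the inclusions $\triv^{i}(\alg)\subset\triv^{i+1}(\alg)$ will then follow essentially for free. First I would record the base case: $\triv^{1}(\alg)=\triv(\alg)=\ker L\cap\ker R$, and for $a\in\triv(\alg)$ and $x\in\alg$ one has $a\mlt x=L(a)x=0$ and $x\mlt a=R(a)x=0$, both lying in $\triv(\alg)$, so $\triv(\alg)$ is a two-sided ideal. That each $\triv^{i+1}(\alg)$ is a linear subspace is immediate from the linearity of $L$ and $R$ together with $\triv^{i}(\alg)$ being a subspace, so I would note this once and not dwell on it.

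For the inductive step, assume $\triv^{i}(\alg)$ is a two-sided ideal. The key preliminary observation is monotonicity: if $y\in\triv^{i}(\alg)$ then, because $\triv^{i}(\alg)$ is an ideal, $L(y)\alg=y\mlt\alg\subset\triv^{i}(\alg)$ and $R(y)\alg=\alg\mlt y\subset\triv^{i}(\alg)$, which is exactly the defining condition for $y\in\triv^{i+1}(\alg)$; hence $\triv^{i}(\alg)\subset\triv^{i+1}(\alg)$. Applying this at every level yields the full chain $\triv^{1}(\alg)\subset\triv^{2}(\alg)\subset\dots\subset\alg$. It then remains to show $\triv^{i+1}(\alg)$ is an ideal, i.e. that for $z\in\triv^{i+1}(\alg)$ and $x\in\alg$ both $z\mlt x$ and $x\mlt z$ lie in $\triv^{i+1}(\alg)$. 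Unwinding the definition, $z\in\triv^{i+1}(\alg)$ means precisely that $z\mlt\alg\subset\triv^{i}(\alg)$ and $\alg\mlt z\subset\triv^{i}(\alg)$.

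The product $z\mlt x$ is handled at once: $z\mlt x\in z\mlt\alg\subset\triv^{i}(\alg)\subset\triv^{i+1}(\alg)$ by the monotonicity just established. The genuine obstacle, and the only place the algebraic structure enters nontrivially, is the left-multiplication direction $x\mlt z$, since $x\mlt z$ need not itself lie in $\triv^{i}(\alg)$. To show $x\mlt z\in\triv^{i+1}(\alg)$ I must verify $(x\mlt z)\mlt\alg\subset\triv^{i}(\alg)$ and $\alg\mlt(x\mlt z)\subset\triv^{i}(\alg)$, and I would expand both using the defining associator-symmetry identity of an LSA, namely $(a\mlt b)\mlt c-a\mlt(b\mlt c)=(b\mlt a)\mlt c-b\mlt(a\mlt c)$ (equivalently $L([a,b])=[L(a),L(b)]$). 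Taking $(a,b,c)=(x,z,w)$ rearranges to
\[
(x\mlt z)\mlt w=x\mlt(z\mlt w)+(z\mlt x)\mlt w-z\mlt(x\mlt w),
\]
and taking $(a,b,c)=(w,x,z)$ rearranges to
\[
w\mlt(x\mlt z)=(w\mlt x)\mlt z-(x\mlt w)\mlt z+x\mlt(w\mlt z).
\]
In the first identity every summand lies in $\triv^{i}(\alg)$: the elements $z\mlt w$ and $z\mlt x$ lie in $\triv^{i}(\alg)$ because $z\mlt\alg\subset\triv^{i}(\alg)$, and then the ideal property of $\triv^{i}(\alg)$ absorbs the remaining multiplication by $x$ or by $w$. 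In the second identity $(w\mlt x)\mlt z$ and $(x\mlt w)\mlt z$ lie in $\triv^{i}(\alg)$ because $\alg\mlt z\subset\triv^{i}(\alg)$, while $x\mlt(w\mlt z)\in\triv^{i}(\alg)$ since $w\mlt z\in\triv^{i}(\alg)$ and $\triv^{i}(\alg)$ is an ideal. Hence $x\mlt z\in\triv^{i+1}(\alg)$, completing the inductive step and the proof. I expect the bookkeeping in these two expansions to be the main point requiring care, but it is a direct application of the one LSA identity combined with the inductive ideal hypothesis.
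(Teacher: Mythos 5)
Your proof is correct, but the detour through the left-symmetric identity is unnecessary, and the sentence you use to justify it---that ``$x\mlt z$ need not itself lie in $\triv^{i}(\alg)$''---is false and contradicts your own unwinding of the definition two sentences earlier. Membership $z\in\triv^{i+1}(\alg)$ means $L(z)\alg\subset\triv^{i}(\alg)$ \emph{and} $R(z)\alg\subset\triv^{i}(\alg)$, and the second inclusion says precisely that $x\mlt z=R(z)x\in\triv^{i}(\alg)$ for every $x\in\alg$. (Indeed, inside your verification of the second associator expansion you use exactly this fact, in the form $w\mlt z\in\alg\mlt z\subset\triv^{i}(\alg)$.) So $x\mlt z$ is handled in the same one line as $z\mlt x$: both products lie in $\triv^{i}(\alg)$ by the definition of $\triv^{i+1}(\alg)$, and since $\triv^{i}(\alg)$ is a two-sided ideal by the inductive hypothesis, all of $L(z\mlt x)y$, $R(z\mlt x)y$, $L(x\mlt z)y$, $R(x\mlt z)y$ lie in $\triv^{i}(\alg)$, which is precisely the membership criterion for $z\mlt x,\, x\mlt z\in\triv^{i+1}(\alg)$. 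This is the paper's proof; it is purely formal, and in particular it shows the lemma holds for an arbitrary nonassociative algebra, with no appeal to left-symmetry. Your associator computations are themselves valid (each summand does lie in $\triv^{i}(\alg)$ for the reasons you give), so nothing you conclude is wrong---the argument is only longer than necessary and premised on a misreading of what remains to be checked.
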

\begin{proof}
The proof is by induction on $i$. The case $i = 1$ is clear. Suppose that for $1 \leq j \leq i$ it is known that $\triv^{j}(\alg)$ is a two-sided ideal of $(\alg, \mlt)$ and there holds $\triv^{1}(\alg) \subset \dots \subset \triv^{j}(\alg)$. If $z \in \triv^{i}(\alg)$ and $x \in \alg$ then $z \mlt x$ and $x \mlt z$ are contained in $\triv^{i}(\alg)$ by the inductive hypothesis, so $z \in \triv^{i+1}(\alg)$. Suppose $z \in \triv^{i+1}(\alg)$ and $x \in \alg$. Since $z \mlt x, x \mlt z \in \triv^{i}(\alg)$ and $\triv^{i}(\alg)$ is a two-sided ideal in $(\alg, \mlt)$ it follows that $L(z \mlt x)y$, $R(z\mlt x)y$, $L(x\mlt z)y$, and $R(x\mlt z)y$ are contained in $\triv^{i}(\alg)$, which shows that $x\mlt z$ and $z \mlt x$ are contained in $\triv^{i+1}(\alg)$. 
\end{proof}
From the definition it is immediate that $\triv(\alg/\triv^{i}(\alg)) = \triv^{i+1}(\alg)/\triv^{i}(\alg)$. 

\begin{lemma}\label{trivnilpotentlemma}
A finite-dimensional LSA $(\alg, \mlt)$ is nilpotent if and only if there is $m \geq 1$ such that $\triv^{m}(\alg) = \alg$.
\end{lemma}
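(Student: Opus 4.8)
The plan is to identify the ascending ideal series $\triv^{\bullet}(\alg)$ with an annihilator filtration coming from the multiplication algebra $\mult(\alg)$, and then to transfer the sandwich \eqref{mnilmult} relating nilpotence of $(\alg,\mlt)$ to nilpotence of $\mult(\alg)$ across this identification.

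First I would prove, by induction on $m$, the identity
\[
\triv^{m}(\alg) = \{z \in \alg : M_{1}\cdots M_{m}z = 0 \text{ for all } M_{1},\dots,M_{m}\in L(\alg)\cup R(\alg)\},
\]
that is, $\triv^{m}(\alg)$ consists of the elements annihilated by every word of length exactly $m$ in the left and right multiplication operators, with the convention $\triv^{0}(\alg) = \{0\}$. The base case $m=1$ is just the definition $\triv(\alg) = \ker L \cap \ker R$, using $x\mlt z = L(x)z$ and $z\mlt x = R(x)z$. For the inductive step I would rewrite the defining conditions $L(z)\alg \subset \triv^{m}(\alg)$ and $R(z)\alg \subset \triv^{m}(\alg)$ as $R(x)z \in \triv^{m}(\alg)$ and $L(x)z \in \triv^{m}(\alg)$ for all $x$; that is, $z \in \triv^{m+1}(\alg)$ iff $Mz \in \triv^{m}(\alg)$ for every generator $M \in L(\alg)\cup R(\alg)$. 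By the inductive hypothesis this says exactly that every word of length $m+1$ annihilates $z$, which is the claim.

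Next I would record the bookkeeping fact that \emph{``every word of length $m$ annihilates $\alg$''} is equivalent to $\mult(\alg)^{m} = 0$: if every length-$m$ word vanishes as an operator then so does every longer word, since a word of length $\geq m$ has a length-$m$ word as its outermost (leftmost) factor and hence vanishes; conversely $\mult(\alg)^{m}$ contains the length-$m$ words. With this in hand both implications are short. If $\triv^{m}(\alg) = \alg$ for some $m$, then by the identity every length-$m$ word annihilates all of $\alg$, so $\mult(\alg)^{m}=0$, and \eqref{mnilmult} gives $\mnil^{2^{m}}(\alg)\subset \mult(\alg)^{m}\alg = \{0\}$, so $(\alg,\mlt)$ is nilpotent. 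Conversely, if $\mnil^{k}(\alg)=\{0\}$, then \eqref{mnilmult} gives $\mult(\alg)^{k-1}\alg \subset \mnil^{k}(\alg)=\{0\}$, so every length-$(k-1)$ word annihilates $\alg$; by the identity this is precisely $\triv^{k-1}(\alg)=\alg$.

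The argument is essentially bookkeeping, so I do not expect a serious obstacle; the one point requiring care is the distinction between words of length \emph{exactly} $m$ and the power $\mult(\alg)^{m}$, whose elements are products of \emph{at least} $m$ generators. The induction cuts out $\triv^{m}(\alg)$ by exact-length-$m$ words, so the crux is the elementary observation above that vanishing of all exact-length-$m$ words already forces $\mult(\alg)^{m}=0$; this is exactly what makes \eqref{mnilmult} applicable cleanly in both directions, and keeping the convention $\triv^{0}(\alg)=\{0\}$ straight is what anchors the induction.
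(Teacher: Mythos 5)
Your proof is correct and takes essentially the same route as the paper's: the paper asserts without proof exactly your key identity---that $\triv^{m}(\alg)$ consists of the elements annihilated by every product of $m$ left and right multiplication operators---and then concludes via \eqref{mnilmult} just as you do. Your induction and the exact-length-versus-at-least-$m$ bookkeeping simply supply details the paper's one-line argument leaves implicit.
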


\begin{proof}
An element $z \in \alg$ is contained in $\triv^{m}(\alg)$ if and only if any product of $m$ left and right multiplication operators annihilates $z$. Consequently, $\triv^{m}(\alg) = \alg$ if and only if $\mult(\alg)^{m}\alg = \{0\}$, where $\mult(\alg) \subset \eno(\alg)$ is the multiplication algebra generated by $L(\alg)$ and $R(\alg)$. By \eqref{mnilmult}, $\mult(\alg)^{m}\alg = \{0\}$ if and only if $\alg$ is nilpotent. 
\end{proof}

Theorem \ref{trivalgtheorem} generalizes Lemma $4.2$ of Choi and Kim's \cite{Choi-Kim}, which reaches similar conclusions in the case of a complete abelian LSA. 
\begin{theorem}\label{trivalgtheorem}
If a finite-dimensional LSA $(\alg, \mlt)$ is right nilpotent and has nilpotent underlying Lie algebra, then $\triv(\alg)$ has dimension at least one. In this case there is some $m \geq 1$ such that $\triv^{m}(\alg) = \alg$ and so $(\alg, \mlt)$ is nilpotent. If, moreover, $\alg \mlt \alg \neq \{0\}$ then $\triv(\alg) \cap (\alg \mlt \alg) \neq \{0\}$.
\end{theorem}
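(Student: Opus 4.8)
The plan is to reduce all three assertions to the single statement that the associative multiplication algebra $\mult(\alg)$ generated by $L(\alg)$ and $R(\alg)$ is nilpotent, and to prove the latter by producing a basis in which every $L(x)$ and every $R(x)$ is strictly triangular. First I would record that the hypotheses force the LSA to be left nil: right nilpotency implies right nil, and together with nilpotence of the underlying Lie algebra, Lemma~\ref{underlyingnilpotentlemma} gives that every $L(x)$ is nilpotent.

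Next I would exploit the descending chain of two-sided ideals $\alg = \rnil^{1}(\alg) \supset \rnil^{2}(\alg) \supset \dots \supset \rnil^{k+1}(\alg) = \{0\}$ furnished by right nilpotency, using the two properties established just before the theorem: each $\rnil^{i}(\alg)$ is a two-sided ideal, so $L(\alg)$ preserves it, and $R(\alg)\rnil^{i}(\alg) \subset \rnil^{i+1}(\alg)$. On each quotient $\rnil^{i}(\alg)/\rnil^{i+1}(\alg)$ the induced operators $\bar L(x)$ form a Lie algebra of nilpotent endomorphisms, since $x \mapsto \bar L(x)$ is a Lie homomorphism (as $[L(x),L(y)] = L([x,y])$) and each $\bar L(x)$ is nilpotent by left-nilness; hence by Engel's theorem they are simultaneously strictly triangularizable on each factor. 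Refining the flag $\{\rnil^{i}(\alg)\}$ by such bases and ordering it so that the deepest term $\rnil^{k}(\alg)$ comes first, I obtain a full flag in which $R(\alg)$ is strictly lower triangular (it strictly lowers the $\rnil$-degree) and $L(\alg)$ is strictly lower triangular as well (it preserves the $\rnil$-filtration and acts strictly triangularly within each factor). Thus $\mult(\alg)$ consists of strictly lower triangular operators and is nilpotent, so by Etherington's theorem (the inclusions~\eqref{mnilmult}) $(\alg,\mlt)$ is nilpotent; equivalently, by Lemma~\ref{trivnilpotentlemma}, $\triv^{m}(\alg) = \alg$ for some $m$.

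The three conclusions then drop out of the $\mnil$-series. Let $\mnil^{s}(\alg)$ be the last nonzero term, so $\mnil^{s+1}(\alg) = \{0\}$; then every $w \in \mnil^{s}(\alg)$ satisfies $w \mlt x, x \mlt w \in \mnil^{s+1}(\alg) = \{0\}$, whence $\mnil^{s}(\alg) \subset \triv(\alg)$ and $\dim \triv(\alg) \geq 1$. The existence of $m$ with $\triv^{m}(\alg) = \alg$, and with it the nilpotence of $(\alg,\mlt)$, are exactly what Lemma~\ref{trivnilpotentlemma} records above. Finally, if $\alg \mlt \alg \neq \{0\}$ then $\mnil^{2}(\alg) = \alg \mlt \alg \neq \{0\}$, so the last nonzero term has $s \geq 2$ and $\{0\} \neq \mnil^{s}(\alg) \subset \triv(\alg) \cap (\alg \mlt \alg)$, as required.

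The main obstacle is the nilpotence of $\mult(\alg)$, and in particular the apparent danger that $L(\alg)$ and $R(\alg)$ might require opposite triangularizations (the sum of a strictly upper and a strictly lower triangular operator need not be nilpotent). The feature that defuses this is that the right-nilpotent filtration is simultaneously a chain of two-sided ideals: $R(\alg)$ strictly lowers it while $L(\alg)$ merely preserves it, so once the flag is ordered with the deepest ideal first, both families become strictly lower triangular in one and the same basis. The only ingredient beyond the filtration itself is the left-nilness supplied by Lemma~\ref{underlyingnilpotentlemma}, which is precisely what permits Engel's theorem to triangularize the induced $L$-action on each successive quotient.
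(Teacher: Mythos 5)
Your proof is correct, and it reaches the theorem by a genuinely different route than the paper. The paper runs the logic in the opposite order: it first produces a single trivial element, taking the one-dimensional two-sided ideal $\J^{n}$ supplied by Lemma \ref{rightnilpotentlemma} and showing its generator $z$ lies in $\triv(\alg)$ (here $L(z)=0$ is automatic since $R(\alg)\J^{n} = \{0\}$, and $R(z)\neq 0$ would give $L(x)z = z$ for some $x$, contradicting the left nilness furnished by Lemma \ref{underlyingnilpotentlemma}); it then passes to the quotients $\alg/\triv^{i}(\alg)$, which inherit the hypotheses by Lemma \ref{hereditylemma}, so that the chain $\triv^{i}(\alg)$ strictly increases until it equals $\alg$, and only at the end does it deduce nilpotence, via Lemma \ref{trivnilpotentlemma}. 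You prove nilpotence first --- by Engel's theorem applied to the induced operators on the graded pieces $\rnil^{i}(\alg)/\rnil^{i+1}(\alg)$, producing one full flag strictly lowered simultaneously by $L(\alg)$ and $R(\alg)$, hence nilpotence of $\mult(\alg)$ --- and then extract both statements about $\triv(\alg)$ from the last nonzero term $\mnil^{s}(\alg)$ of the $\mnil$-series. Both arguments consume exactly the same two inputs (that the $\rnil^{i}(\alg)$ are two-sided ideals, and that the algebra is left nil), but they buy different things: your flag argument gives an explicit bound ($\mult(\alg)^{\dim\alg} = 0$), makes the potential clash between triangularizations of $L(\alg)$ and $R(\alg)$ --- correctly identified as the real danger --- visibly a non-issue, and your derivation of the final claim from $\{0\}\neq\mnil^{s}(\alg)\subset\triv(\alg)\cap(\alg\mlt\alg)$ is tighter than the paper's unelaborated assertion that ``the ideals $\J^{i}$ may be chosen so that $\J^{n}\subset\alg\mlt\alg$''; the paper's induction, in turn, avoids Engel's theorem and directly exhibits the ascending chain $\triv^{1}(\alg)\subset\triv^{2}(\alg)\subset\cdots$ whose termination is precisely the content of Lemma \ref{trivnilpotentlemma}. (One bookkeeping remark: your use of the $\mnil$-series needs only the containments $\mnil^{s}(\alg)\mlt\alg\subset\mnil^{s+1}(\alg)$, $\alg\mlt\mnil^{s}(\alg)\subset\mnil^{s+1}(\alg)$ and the identity $\mnil^{2}(\alg)=\alg\mlt\alg$, all immediate from the definition of $\mnil^{i}(\alg)$ as the span of products of at least $i$ elements; the caveat the paper records after \eqref{mnilmult} concerns the reverse equality and does not affect your argument.)
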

\begin{proof}
Since $(\alg, \mlt)$ is right nilpotent it is right nil, and since it has nilpotent underlying Lie algebra, it is left nil.
Let $\J^{i}$ be a descending sequence of ideals as in Lemma \ref{rightnilpotentlemma}. Then $\J^{n}$ is one-dimensional where $n = \dim \alg$. Let $z$ generate $\J^{n}$. Then $z \mlt x = 0$ for all $x \in \alg$, so $L(z) = 0$. Since $\J^{n} = \lb z \ra$ is a two-sided ideal, $R(z)x \in \lb z \ra$ for all $x \in \alg$. If there is $x \in \alg$ such that $R(z)x \neq 0$, then, after rescaling $x$, it can be supposed that $L(x)z = z$. Since this means $L(x)$ is not nilpotent, it contradicts that $(\alg, \mlt)$ is left nil. Hence $R(z) = 0$ and so $z \in \triv(\alg)$. 
Since, by Lemma \ref{hereditylemma}, $\alg/\triv^{i}(\alg)$ is again right nilpotent with nilpotent underlying Lie algebra, if $\triv^{i}(\alg) \neq \alg$ then $\triv^{i+1}(\alg)/\triv^{i}(\alg) = \triv(\alg/\triv^{i}(\alg))$ has dimension at least one, by the preceding. Since this implies that $\dim \triv^{i+1}(\alg) > \dim \triv^{i}(\alg)$ unless $\triv^{i}(\alg) = \alg$, it implies that there is some $m \geq 1$ such that $\triv^{m}(\alg) = \alg$. By Lemma \ref{trivnilpotentlemma}, $(\alg, \mlt)$ is nilpotent.

If $\alg \mlt \alg \neq \{0\}$, then the ideals $\J^{i}$ may be chosen so that $\J^{n} \subset \alg \mlt \alg$, and since $z \in \J^{n}$, this proves the final claim.
\end{proof}

\begin{lemma}
A right nilpotent LSA admits no Koszul form.
\end{lemma}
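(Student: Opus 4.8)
The plan is to argue by contradiction: suppose $(\alg, \mlt)$ is right nilpotent yet admits a Koszul form $\la$, and derive a contradiction from the incompatibility between right nilpotence and the idempotent that any Koszul form produces. The starting observation is that a right nilpotent LSA is right nil (as recalled in section \ref{nilpotencesection}), so every right multiplication operator $R(a)$ is nilpotent; in particular every $R(a)$ has $0$ as its only eigenvalue.

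Next I would invoke the idempotent attached to the Koszul form. Since $\la$ is a Koszul form, $h(x,y) = \la(x\mlt y)$ is a metric, hence nondegenerate, so there is a unique $u \in \alg$ with $h(a,u) = \la(a)$ for all $a \in \alg$; equivalently $\la(a) = \la(a\mlt u) = \la(R(u)a)$ for all $a$. By the computation recorded just before Lemma \ref{principalidempotentlemma}, this $u$ is idempotent, that is $u\mlt u = u$, which is precisely the statement $R(u)u = u$. Thus $u$ is an eigenvector of $R(u)$ with eigenvalue $1$.

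The remaining step is to check that $u \neq 0$, which is the only point where nondegeneracy of $h$ is used: if $u = 0$ then $\la(a) = h(a,u) = 0$ for all $a$, forcing $\la = 0$ and hence $h \equiv 0$, contradicting that $h$ is a metric. With $u \neq 0$, the relation $R(u)u = u$ exhibits a nonzero eigenvalue $1$ of $R(u)$, contradicting the nilpotence of $R(u)$ established in the first step. This contradiction shows no Koszul form can exist.

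I do not expect a genuine obstacle here; the argument is short because all the machinery is already in place. The only care needed is the verification that the associated idempotent is nonzero, and the observation that "right nilpotent $\Rightarrow$ right nil $\Rightarrow$ every $R(a)$ nilpotent" is exactly the feature that clashes with the existence of an idempotent $u$ with $R(u)u = u$. (One could alternatively route through completeness and the vanishing of $\tr R$ from Lemma \ref{completenesslemma}, but the idempotent eigenvalue argument is more direct and avoids any reference to the trace form, which need not coincide with $\la$.)
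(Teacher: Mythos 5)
Your proof is correct, and it takes a genuinely different route from the paper's. The paper argues through the \emph{left} multiplication operators: if $(\alg,\mlt)$ is right nilpotent with $\rnil^{k}(\alg) \neq \{0\}$ and $\rnil^{k+1}(\alg) = \rnil^{k}(\alg)\mlt\alg = \{0\}$, then $\rnil^{k}(\alg)$ is a nontrivial ideal contained in $\ker L$, contradicting the fact (recorded in section \ref{hessiansection}) that a Koszul form forces $L$ to be faithful. You instead argue through the \emph{right} multiplication operators: right nilpotence implies right nilness, so every $R(a)$ is nilpotent, while the nonzero idempotent $u$ attached to the Koszul form satisfies $R(u)u = u\mlt u = u$, exhibiting the eigenvalue $1$. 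Both arguments are short and use only facts already established in the paper, but yours is strictly more general: it never uses the terminal ideal of the descending series, only that each $R(a)$ is nilpotent, so it proves that no right \emph{nil} LSA (equivalently, by Lemma \ref{completenesslemma}, no complete LSA) admits a Koszul form, a stronger statement than the one asserted. Your parenthetical caution about the trace form is also well placed: since a Koszul form need not equal $\tr R$, the route through $\tau \equiv 0$ would require extra work, and your idempotent argument cleanly avoids it. The only point requiring care, the nonvanishing of $u$, you handle correctly via nondegeneracy of $h$ (and note that right nilpotence in the paper's sense already forces $\alg \neq \{0\}$, so $h \equiv 0$ is indeed a contradiction).
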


\begin{proof}
Since $(\alg, \mlt)$ is right nilpotent there is $k \geq 1$ such that $\rnil^{k+1}(\alg) = \{0\}$ and $\rnil^{k}(\alg) \neq \{0\}$. Then $\rnil^{k}(\alg)$ is a two-sided ideal contained in $\ker L$. However, if $(\alg, \mlt)$ were to admit a Koszul form, then $L$ would be injective.
\end{proof}

\section{LSAs with nondegenerate trace form and rank one right principal idempotent}
A derivation $D$ of the Hessian LSA $(\alg, \mlt, h)$ is \textit{conformal} if there is a constant $c \in \rea$ such that
\begin{align}\label{conformalder}
h(Dx, y) + h(x, Dy) = ch(x, y),
\end{align}
for all $x, y \in \alg$.  Equivalently, $D + D^{\ast} = cI$. If $c = 0$ in \eqref{conformalder} then $D$ is an \textit{infinitesimally isometric derivation}. The conformal derivations and the infinitesimally isometric derivations constitute Lie subalgebras of the Lie algebra of derivation of $(\alg, \mlt)$. Following Definition $2.8$ of \cite{Choi-domain}, a derivation $D$ of $(\alg, \mlt)$ satisfying \eqref{conformalder} with $c = 1$ is called a \textit{compatible derivation} of the Hessian LSA $(\alg, \mlt, h)$.

Following Choi in \cite{Choi-domain}, define the \textit{graph extension} $(\alg, \mlt, \hat{h})$ of the Hessian LSA $(\balg, \circ, h)$ with respect to the compatible derivation $D \in \eno(\balg)$ to be the vector space $\alg = \balg \oplus \lb D \ra$ equipped with the multiplication 
\begin{align}\label{gemlt}
(x + aD) \mlt (y + bD)  = x\circ y + aDy + (h(x, y) + ab)D
\end{align}
and the symmetric bilinear form 
\begin{align}\label{gehath}
\hat{h}(x + aD, y + bD) = h(x, y) + ab. 
\end{align}
\begin{lemma}\label{graphextensionlemma}
The graph extension $(\alg = \balg \oplus \lb D \ra, \mlt, \hat{h})$ of the Hessian LSA $(\balg, \circ, h)$ with respect to the compatible derivation $D \in \eno(\balg)$ satisfies:
\begin{enumerate}
\item\label{ge1} $(\alg, \mlt, \hat{h})$ is a Hessian LSA with Koszul form $\la(x + aD) = a$ generating $\hat{h}$.
\item\label{ge2} $D$ is the idempotent element of $(\alg, \mlt, \hat{h})$ associated with $\la$ and satisfies $\ker R_{\mlt}(D) = \ker \la = \balg \oplus \lb 0 \ra$, and $R_{\mlt}(D)^{2} = R_{\mlt}(D)$.
\item\label{ge3} The restriction to $\balg$ of $L_{\mlt}(D)$ equals $D$,
and
\begin{align}\label{geadj}
L_{\mlt}(D)^{\ast} + L_{\mlt}(D) = R_{\mlt}(D) + I.
\end{align}
\item\label{ge4} Any nontrivial two-sided ideal of $(\alg, \mlt)$ contains $D$.
\item\label{ge6b}The Lie normalizer $\n(\lb D\ra)$ of the linear span of $D$ in $\alg$ equals $\ker D \oplus \lb D \ra$. In particular $D$ is invertible if and only if $\n(\lb D\ra) = \lb D\ra$.
\item\label{ge6} If $D$ is invertible then $\balg = [\alg, \alg]$ and $(\alg, \mlt)$ is simple.
\item\label{ge5} For $x \in \balg \oplus \lb 0 \ra \subset \alg$ and $\hat{y} \in \alg$, 
\begin{align}\label{geder}
L_{\mlt}(D)(x \mlt \hat{y}) = L_{\mlt}(D)x \mlt \hat{y} + x \mlt L_{\mlt}(D)\hat{y}.
\end{align}
Consequently $e^{tL_{\mlt}(D)}(x \mlt \hat{y}) = e^{tL_{\mlt}(D)}x \mlt e^{tL_{\mlt}(D)}\hat{y}$, and $e^{tL_{\mlt}(D)}$ is an automorphism of $(\balg, \circ)$.
\end{enumerate}
Moreover, 
\begin{align}
\label{rmltrcirc1}\tr R_{\mlt}(x + aD) &= \tr R_{\circ}(x) + a,\\
\label{rmltrcirc2}\hat{h}(x + aD, y + bD) &= \tau_{\mlt}(x + aD, y + bD) - \tau_{\circ}(x, y),\\
\label{papb0}P_{\alg, \mlt}(x + aD) &= P_{\balg, \circ}(x)\left(1 + a - h(x,  (I + R_{\circ}(x))^{-1}Dx)\right),
\end{align} 
where $\tau_{\mlt}$ and $\tau_{\circ}$ are the right trace forms and $P_{\alg, \mlt}$ and $P_{\balg, \circ}$ are the characteristic polynomials of $(\alg, \mlt)$ and $(\balg, \circ)$. 
\end{lemma}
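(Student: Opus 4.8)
The plan is to fix the decomposition $\alg = \balg \oplus \lb D\ra$ and first record the block form of the regular representations of $(\alg, \mlt)$, from which most assertions follow mechanically. From \eqref{gemlt} one computes $L_{\mlt}(D)(y + bD) = Dy + bD$ and $R_{\mlt}(D)(x + aD) = aD$; thus $L_{\mlt}(D)$ restricts to $D$ on $\balg$ and preserves $\balg$, while $R_{\mlt}(D)$ is the projection onto $\lb D\ra$ along $\balg$, giving $R_{\mlt}(D)^{2} = R_{\mlt}(D)$ and $\ker R_{\mlt}(D) = \balg = \ker \la$. More generally, writing $\hat{x} = x + aD$, the operator $R_{\mlt}(\hat{x})$ has, with respect to $\balg \oplus \lb D\ra$, the block form with $\balg \to \balg$ part $R_{\circ}(x)$, $\balg \to \lb D\ra$ part $w \mapsto h(w,x)$, $\lb D\ra \to \balg$ part $D \mapsto Dx$, and $\lb D\ra \to \lb D\ra$ part $a$; this single computation underlies \eqref{rmltrcirc1}, \eqref{rmltrcirc2}, and \eqref{papb0}.

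The \emph{main obstacle} is \eqref{ge1}, namely verifying that $(\alg, \mlt)$ is an LSA; everything else is comparatively formal. First I would check $\la(\hat x \mlt \hat y) = h(x,y) + ab = \hat h(\hat x, \hat y)$ directly from \eqref{gemlt} and \eqref{gehath}, so that once $\mlt$ is left-symmetric, nondegeneracy of $\hat h$ makes $\la$ a Koszul form and the induced metric is automatically Hessian. To prove left-symmetry I would expand the associator $(\hat x \mlt \hat y)\mlt \hat z - \hat x \mlt (\hat y \mlt \hat z)$ using \eqref{gemlt} and separate its $\balg$-component and its $D$-coefficient. The $\balg$-component reduces, after the terms linear in $a$ cancel, to the associator of $\circ$ plus a remainder symmetric in $\hat x,\hat y$, so its symmetry follows from left-symmetry of $(\balg, \circ)$. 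For the $D$-coefficient I would use three inputs: the Hessian identity \eqref{hessianmetric} for $h$, which cancels $h(x\circ y, z) - h(x, y\circ z)$ against its transpose via $h([x,y]_{\circ}, z)$; that $D$ is a derivation of $(\balg,\circ)$; and the compatibility relation $h(Dw,z) + h(w,Dz) = h(w,z)$ from \eqref{conformalder} with $c = 1$, which kills exactly the residual terms grouped by the coefficients $a$ and $b$. Thus left-symmetry of $\mlt$ is equivalent to the conjunction of these three hypotheses, and \eqref{ge1} follows.

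Next I would dispatch \eqref{ge2}--\eqref{ge6b}. Since $R_{\mlt}(D)\hat x = aD$ gives $\la(\hat x \mlt D) = a = \la(\hat x)$, the idempotent associated with $\la$ is $D$, and the remaining assertions of \eqref{ge2} are read off the block form; \eqref{ge3} then combines $L_{\mlt}(D)|_{\balg} = D$ with \eqref{geadj}, which is exactly \eqref{lladj} of Lemma \ref{principalidempotentlemma} applied to $u = D$. For \eqref{ge4} I would argue as in \eqref{rclsa6b}: given a nonzero $w = x + aD$ in a two-sided ideal $J$, either $a \neq 0$ and $w \mlt D = aD \in J$, or $x \neq 0$ and, choosing $y$ with $h(y,x) \neq 0$, the element $(y \mlt w)\mlt D$ has nonzero $D$-coefficient; in either case $D \in J$. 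The identity $[w, D]_{\mlt} = w\mlt D - D\mlt w = aD - (Dx + aD) = -Dx$ gives \eqref{ge6b}, since $[w,D]_{\mlt} \in \lb D\ra$ forces $Dx = 0$. For \eqref{ge6}, invertibility of $D$ makes $x \mapsto [x,D]_{\mlt} = -Dx$ map $\balg$ onto itself, so $\balg \subset [\alg,\alg]$; since also $[\alg,\alg] \subset \ker\la = \balg$, one gets $\balg = [\alg, \alg]$, and an ideal containing $D$ must (using $D \mlt x = Dx$ and surjectivity of $D$) contain all of $\balg$, so by \eqref{ge4} the only ideals are $\{0\}$ and $\alg$; with $\alg^2 \ni D\mlt D = D \neq 0$ this yields simplicity.

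Finally, for \eqref{ge5} I would verify \eqref{geder} by expanding both sides with \eqref{gemlt} for $x \in \balg$: the $\balg$-parts agree because $D$ is a derivation of $\circ$, and the $D$-coefficients agree by compatibility $h(Dx,y) + h(x,Dy) = h(x,y)$; since $L_{\mlt}(D)$ preserves $\balg$, the exponentiated multiplicativity and the automorphism claim follow by the standard Leibniz/ODE argument, using $e^{tL_{\mlt}(D)}|_{\balg} = e^{tD}$. For the numerical identities: \eqref{rmltrcirc1} is the sum of the diagonal blocks $\tr R_{\circ}(x) + a$; for \eqref{rmltrcirc2} I would write $\tau_{\mlt}(\hat x, \hat y) = \tr R_{\mlt}(\hat x \mlt \hat y)$ and apply \eqref{rmltrcirc1} to $\hat x \mlt \hat y$, whose $\balg$-part is $x\circ y + aDy$ and whose $D$-coefficient is $h(x,y) + ab$, giving $\tau_{\circ}(x,y) + a\tr R_{\circ}(Dy) + h(x,y) + ab$, where $\tr R_{\circ}(Dy) = \tr[D, R_{\circ}(y)] = 0$ because $D$ is a derivation (so $R_{\circ}(Dy) = [D, R_{\circ}(y)]$), leaving \eqref{rmltrcirc2}. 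And \eqref{papb0} is the Schur-complement expansion of the determinant of $I + R_{\mlt}(\hat x)$ with invertible corner $I + R_{\circ}(x)$, the Schur complement being $1 + a - h(x, (I + R_{\circ}(x))^{-1}Dx)$; the identity then extends off $\{P_{\balg,\circ} \neq 0\}$ by polynomiality.
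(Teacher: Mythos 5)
Your proposal is correct and follows essentially the same route as the paper: everything rests on the block form of $L_{\mlt}$ and $R_{\mlt}$ with respect to $\alg = \balg \oplus \lb D \ra$ (the paper's \eqref{mltcirc}), followed by item-by-item verification, with the determinant identity \eqref{papb0} obtained by the same Schur-complement/rank-one-perturbation computation. The only differences are substitutions of equivalent micro-arguments — you spell out the associator check that the paper calls ``straightforward,'' you obtain \eqref{geadj} by citing \eqref{lladj} of Lemma \ref{principalidempotentlemma} rather than by direct computation, you reprove \eqref{rclsa6b} inline for claim \eqref{ge4}, and you verify \eqref{geder} by direct expansion (derivation property for the $\balg$-part, compatibility for the $D$-coefficient) where the paper deduces it from the left-symmetric identity together with $\ker R_{\mlt}(D) = \ker \la$ — none of which changes the substance of the proof.
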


\begin{proof}
With respect to a basis $e_{1}, \dots, e_{n}$ of $\alg$ such that $e_{n} = D$ and $\balg \oplus \lb 0 \ra = \lb e_{1}, \dots, e_{n-1}\ra$, the left and right regular representations $L_{\circ}$, $R_{\circ}$, $L_{\mlt}$, and $R_{\mlt}$ of $(\balg, \circ)$ and $(\alg, \mlt)$ are related by
\begin{align}\label{mltcirc}
&L_{\mlt}(x + aD) = \begin{pmatrix} L_{\circ}(x) + aD & 0 \\ x^{\flat} & a \end{pmatrix},& &R_{\mlt}(x + aD) = \begin{pmatrix} R_{\circ}(x) & Dx \\ x^{\flat} & a \end{pmatrix}
\end{align}
where $x^{\flat} \in \balg^{\ast}$ is defined by $x^{\flat}(y) = h(x, y)$ for $y \in \balg$. Claims \eqref{ge1}-\eqref{ge3} can be verified by straightforward computations using the definitions and \eqref{mltcirc}.
If $J$ is a two-sided ideal in $(\alg, \mlt)$, by \eqref{rclsa6b} of Lemma \ref{principalidempotentlemma} there is $z \in J$ such that $\la(z) \neq 0$. Then $\la(z)D = z\mlt D \in J$, so $D \in J$. This shows \eqref{ge4}. 
If $x + aD \in \n(\lb D \ra)$ then $Dx = [D, x + aD] \in \lb D\ra$. As this holds if and only if $x \in \ker D$, $\n(\lb D \ra) = \ker D \oplus \lb D \ra$. This shows \eqref{ge6b}. 

If $D$ is invertible, then for $x \in \balg$ there exists $z \in \balg$ such that $x = Dz = D\mlt z - z\mlt D$. This shows that $\balg \subset [\alg, \alg]$. Since the opposite containment is clear from \eqref{gemlt}, this shows $\balg = [\alg, \alg]$. By \eqref{ge4}, a nontrivial two-sided ideal $\J$ in $(\alg, \mlt)$ contains $D$. From \eqref{mltcirc} it is apparent that $L_{\mlt}(D)$ is invertible if and only if $D$ is invertible. Consequently, if $D$ is invertible, $\alg = L_{\mlt}(D)\alg \subset \J$. This shows that $(\alg, \mlt)$ is simple and completes the proof of \eqref{ge6}.

Claim \eqref{ge5} is essentially Lemma $3.2$ of \cite{Shima-homogeneoushessian}. The proof is recalled for convenience.
Since $\ker R_{\mlt}(D) = \ker \la$, for $x \in \ker \la$ and $\hat{y} \in \alg$ there holds \eqref{geder}.
Since $L_{\mlt}(D)$ preserves $\ker \la$ there follows $L_{\mlt}(D)^{m}(x \mlt \hat{y}) = \sum_{j = 0}^{m}\binom{m}{j}L_{\mlt}(D)^{j}x \mlt L_{\mlt}(D)^{m-j}\hat{y}$, and so $e^{tL_{\mlt}(D)}(x \mlt \hat{y}) = \sum_{m \geq 0}\sum_{j = 0}^{m}t^{m}(j!(m-j)!)^{-1}L_{\mlt}(D)^{j}x \mlt L_{\mlt}(D)^{m-j}\hat{y} = e^{tL_{\mlt}(D)}x \mlt e^{tL_{\mlt}(D)}\hat{y}$. Differentiating $\hat{h}(e^{tL_{\mlt}(D)}x, e^{tL_{\mlt}(D)}\hat{y})$ in $t$ and simplifying the result using \eqref{hessianmetric} and \eqref{geadj} yields
\begin{align}
\begin{split}
\tfrac{d}{dt}\hat{h}(e^{tL_{\mlt}(D)}x, e^{tL_{\mlt}(D)}\hat{y}) &= \hat{h}(D \mlt e^{tL_{\mlt}(D)}x, e^{tL_{\mlt}(D)}\hat{y}) + \hat{h}(e^{tL_{\mlt}(D)}x, D\mlt e^{tL_{\mlt}(D)}\hat{y}) \\
&= \hat{h}(e^{tL_{\mlt}(D)}x \mlt D, e^{tL_{\mlt}(D)}\hat{y}) + \hat{h}(D, e^{tL_{\mlt}(D)}x \mlt e^{tL_{\mlt}(D)}\hat{y}) \\
&= \hat{h}(D, e^{tL_{\mlt}(D)}(x \mlt \hat{y})) = \hat{h}(e^{tL_{\mlt}(D)^{\ast}}D, x\mlt \hat{y}) = e^{t}\hat{h}(x, \hat{y}),
\end{split}
\end{align}
which implies $\hat{h}(e^{tL_{\mlt}(D)}x, e^{tL_{\mlt}(D)}\hat{y}) = e^{t}\hat{h}(x, \hat{y})$. Since for $x, y \in \ker \la$ there holds $x \mlt y = x \circ y$, there follows $e^{tL_{\mlt}(D)}(x \circ y) = e^{tL_{\mlt}(D)}x \circ e^{tL_{\mlt}(D)}y$, showing \eqref{ge5}.

The identities \eqref{rmltrcirc1}-\eqref{rmltrcirc2} follow from \eqref{mltcirc}. Writing vertical bars to indicate determinants,
\begin{align}
\begin{split}
P_{\alg, \mlt}(x + aD) 
& = \begin{vmatrix} I + R_{\circ}(x) & Dx\\ x^{\flat} & 1 + a \end{vmatrix} 
  = \begin{vmatrix} I + R_{\circ}(x) & Dx\\ x^{\flat} & 1 \end{vmatrix} + \begin{vmatrix} I + R_{\circ}(x) & 0\\ x^{\flat} & a \end{vmatrix}\\
& = \begin{vmatrix} I + R_{\circ}(x) - (Dx) \tensor x^{\flat}& 0 \\ x^{\flat} & 1 \end{vmatrix} + a| I + R_{0}(x)|\\
& = |I + R_{0}(x)|(1 - h(x, (I + R_{\circ}(x))^{-1}Dx)) +  a| I + R_{0}(x)| \\
&= P_{\balg, \circ}(x)\left(1 + a - h(x, (I + R_{\circ}(x))^{-1}Dx) \right),
\end{split}
\end{align}
where the penultimate equality follows from a standard identity for the determinant of a rank one perturbation.
\end{proof}

\begin{remark}
Claim \eqref{ge1} of Lemma \ref{graphextensionlemma} is a reformulation of Proposition $4.8$ of Choi's \cite{Choi-domain} (Choi assumes $(\balg, \mlt)$ is complete with unimodular underlying Lie algebra but, while this is necessary for the applications in \cite{Choi-domain}, it is irrelevant for the lemma). 
\end{remark}

\begin{remark}
Conclusion \eqref{ge6} of Lemma \ref{graphextensionlemma}, that the graph extension of a Hessian LSA by an invertible compatible derivation is simple, answers affirmatively a question posed by Choi in Remark $5.2$ of \cite{Choi-domain}.
\end{remark}

\begin{lemma}\label{completegraphextensionlemma}
Let $(\alg, \mlt, \hat{h})$ be the graph extension of the Hessian LSA $(\balg, \circ, h)$ with respect to the compatible derivation $D \in \eno(\balg)$. Then $(\balg, \circ)$ is complete if and only if $\ker \tr R_{\mlt} = \ker R_{\mlt}(D)$. In this case:
\begin{enumerate}
\item $\hat{h} = \tau_{\mlt}$.
\item\label{cge6} There is an integer $0 \leq m \leq \dim \balg - 1$ such that characteristic polynomial $P$ of $(\alg, \mlt)$ has the form
\begin{align}\label{papb}
P(x + aD) &= 1 + a - h(x, \sum_{l = 0}^{m}(-R_{\circ}(x))^{l}Dx).
\end{align}
In particular, $e^{P}$ is $1$-translationally homogeneous with axis $D$.
\item\label{cgenil} If $\deg P = k+1$, there exists $x \in \balg$ such that $R_{\circ}(x)^{k-1} \neq 0$; consequently, $\rnil^{k}(\balg, \circ) \neq \{0\}$.
\item\label{cge5b} The Lie algebras underlying $(\balg, \circ)$ and $(\alg, \mlt)$ are solvable.
\item\label{cgewt} $dP(E) = P$ where $E$ is defined by $E_{X} = (I + R_{\mlt}(X))D = D + D\mlt X$ for $X \in \alg$.
\end{enumerate}
If, additionally, the Lie algebra underlying $(\balg, \circ)$ is unimodular, then 
\begin{enumerate}
\setcounter{enumi}{5}
\item \label{cge10} there is a nonzero constant $\kc$ such that $\H(e^{P}) = \kc e^{nP}$, where $n = \dim \alg$, and the level sets of $P$ are improper affine spheres with affine normal a constant multiple of $D$.
\end{enumerate}
\end{lemma}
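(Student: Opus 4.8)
The plan is to reduce every assertion to the completeness criterion of Lemma \ref{completenesslemma} together with the explicit formulas \eqref{rmltrcirc1}--\eqref{papb0} of Lemma \ref{graphextensionlemma}, finishing with an application of Theorem \ref{lsacptheorem}. For the stated equivalence I would use \eqref{rmltrcirc1}, namely $\tr R_{\mlt}(x + aD) = \tr R_{\circ}(x) + a$, together with $\ker R_{\mlt}(D) = \ker \la = \balg \oplus \lb 0\ra$ from claim \eqref{ge2} of Lemma \ref{graphextensionlemma}. Since $\ker R_{\mlt}(D)$ is exactly the set of elements with $a = 0$, the two codimension-one subspaces $\ker \tr R_{\mlt}$ and $\ker R_{\mlt}(D)$ agree if and only if $\tr R_{\circ}(x) = 0$ for all $x \in \balg$, which by \eqref{cmp6} of Lemma \ref{completenesslemma} is equivalent to completeness of $(\balg, \circ)$. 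Assume henceforth that $(\balg,\circ)$ is complete, so $\tr R_{\circ} \equiv 0$ and every $R_{\circ}(x)$ is nilpotent.

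Claim (1) is then immediate from \eqref{rmltrcirc2}, since completeness forces $\tau_{\circ} \equiv 0$ by \eqref{cmp7} of Lemma \ref{completenesslemma}. For claim (2), nilpotence of $R_{\circ}(x)$ gives $P_{\balg,\circ} \equiv 1$ and lets the Neumann series $(I + R_{\circ}(x))^{-1} = \sum_{l \geq 0}(-R_{\circ}(x))^{l}$ in \eqref{papb0} terminate at $l = \dim\balg - 1$, producing \eqref{papb}; reading off the $D$-dependence gives $P(X + tD) = P(X) + t$, hence the $1$-translational homogeneity of $e^{P}$ with axis $D$. For claim (3) I would count degrees in \eqref{papb}: the summand $h(x, R_{\circ}(x)^{l}Dx)$ is homogeneous of degree $l + 2$ in $x$, so $\deg P = k+1$ forces $h(x, R_{\circ}(x)^{k-1}Dx) \not\equiv 0$, whence $R_{\circ}(x)^{k-1} \neq 0$ for some $x$ and therefore $\rnil^{k}(\balg,\circ) = R_{\circ}(\balg)^{k-1}\balg \neq \{0\}$.

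Claim (4) follows for $(\balg,\circ)$ from \eqref{ulie1} of Lemma \ref{completesolvablelemma}, and then for $(\alg,\mlt)$ from the bracket computation $[x + aD, y + bD] = [x,y]_{\circ} + aDy - bDx \in \balg$ read off \eqref{gemlt}, which exhibits $\balg$ as a solvable ideal of codimension one, so that $\alg$ is a solvable extension of an abelian line. Claim (5) is the identity \eqref{dptra} applied with $a = D$: because $R_{\mlt}(D)$ is a projection with one-dimensional image (claim \eqref{ge2} of Lemma \ref{graphextensionlemma}), $\tr R_{\mlt}(D) = 1$, and with $E_{X} = (I + R_{\mlt}(X))D$ one gets $dP_{X}(E_{X}) = P(X)\tr R_{\mlt}(D) = P(X)$.

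The substantive assertion is claim (6), which I would obtain by verifying the two hypotheses of Theorem \ref{lsacptheorem} for the $n$-dimensional LSA $(\alg,\mlt)$ with $n = \dim\alg$. The translational homogeneity $P(X + tD) = P(X) + t$, with $v = D \neq 0$ and $\la = 1$, is claim (2), and the nondegeneracy of the trace form $\tau_{\mlt} = \hat{h}$ is claim (1) together with the fact that $\hat{h}$ is a metric (claim \eqref{ge1} of Lemma \ref{graphextensionlemma}). The remaining hypothesis is $2\tr L_{\mlt} = (n+1)\tr R_{\mlt}$, and here I would use the block forms \eqref{mltcirc} to compute $\tr R_{\mlt}(x + aD) = \tr R_{\circ}(x) + a = a$ and $\tr L_{\mlt}(x + aD) = \tr L_{\circ}(x) + a(\tr_{\balg}D + 1)$. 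This is the one place where all three hypotheses enter at once: completeness gives $\tr L_{\circ} = \tr \ad_{\circ} + \tr R_{\circ} = \tr \ad_{\circ}$, unimodularity of $(\balg,\circ)$ gives $\tr \ad_{\circ} \equiv 0$ and hence $\tr L_{\circ} \equiv 0$, while the compatibility relation $D + D^{\ast} = I$ gives, on taking traces and using $\tr D^{\ast} = \tr D$, that $\tr_{\balg}D = \tfrac{1}{2}\dim\balg = \tfrac{n-1}{2}$. Thus $2\tr L_{\mlt}(x+aD) = 2a\big(\tfrac{n-1}{2} + 1\big) = (n+1)a = (n+1)\tr R_{\mlt}(x+aD)$, and Theorem \ref{lsacptheorem} then yields $\H(e^{P}) = \kc e^{nP}$ with $\kc \neq 0$, the level sets being improper affine spheres with affine normal a constant multiple of $D$. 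I expect this trace identity to be the main obstacle, since it is exactly where completeness, unimodularity, and the compatibility $D + D^{\ast} = I$ must be combined: the value $\tr_{\balg}D = (n-1)/2$ is precisely what matches the scalar to $(n+1)/2$, and without unimodularity the linear term $\tr L_{\circ}(x)$ would spoil the identity.
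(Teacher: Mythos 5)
Your proof is correct and follows essentially the same route as the paper's: the equivalence and claims (1)--(3), (5) via the formulas \eqref{rmltrcirc1}--\eqref{papb0} and \eqref{mltcirc} of Lemma \ref{graphextensionlemma} together with Lemma \ref{completenesslemma}, and claim (6) by verifying the hypotheses of Theorem \ref{lsacptheorem}. Two sub-steps differ in a minor but worthwhile way. For solvability you invoke \eqref{ulie1} of Lemma \ref{completesolvablelemma} directly, whereas the paper passes through the simply transitive action of the simply-connected group and Auslander's theorem; your route stays entirely inside the paper's earlier algebraic results and is, if anything, cleaner. For claim (6) the paper cites Lemma \ref{trrunilemma} and leaves the rest implicit, while you compute the traces from the block forms \eqref{mltcirc}: completeness kills $\tr R_{\circ}$, unimodularity kills $\tr \ad_{\circ}$ hence $\tr L_{\circ}$, and the compatibility $D + D^{\ast} = I$ gives $\tr_{\balg} D = (n-1)/2$, so that $\tr L_{\mlt}(D) = (n+1)/2$ and $2\tr L_{\mlt} = (n+1)\tr R_{\mlt}$. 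This explicit computation is exactly the content hidden in the paper's one-line appeal to Lemma \ref{trrunilemma} (which only yields $\tr L_{\mlt} = \tr L_{\mlt}(D)\tr R_{\mlt}$; the value $(n+1)/2$ still has to come from the compatible-derivation identity), so your version makes visible a step the paper glosses over.
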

\begin{proof}
Note that $\balg \oplus \lb 0 \ra = \ker R_{\mlt}(D)$ by definition of a graph extension. If $(\balg, \circ)$ is complete, then $\tr R_{\circ}(x) = 0$ for all $x \in \balg$, so by \eqref{rmltrcirc1}, $\tr R_{\mlt}(x + aD) = a$ for all $x \in \balg$, from which it is apparent that $\ker \tr R_{\mlt} = \balg \oplus \lb 0\ra = \ker R_{\mlt}(D)$. On the other hand, if $\ker \tr R_{\mlt} = \ker R_{\mlt}(D)$, then $\ker \tr R_{\mlt} = \balg \oplus \lb 0 \ra$, so, by \eqref{rmltrcirc1}, $\tr R_{\circ}(x) = \tr R_{\mlt}(x) = 0$ for all $x \in \balg$.

If $(\balg, \circ)$ is complete, then $\tau_{\circ}$ vanishes identically, so \eqref{rmltrcirc2} yields $\tau_{\mlt} = \hat{h}$. 
As $(\balg, \circ)$ is complete it is right nil, so $R_{\circ}(x)$ is nilpotent, and there is a minimal $m \leq \dim \balg -1$ such that for all $x \in \balg$, $R_{\circ}(x)^{m+1} = 0$, while for some $x\in \balg$, $R_{\circ}(x)^{m} \neq 0$. Hence $(I + R_{\circ}(x))^{-1} = \sum_{l = 0}^{m}(-R_{\circ}(x))^{l}$. With $P_{\balg, \circ} = 1$, in \eqref{papb0} this yields \eqref{papb}. 
By \eqref{papb}, if $P$ has degree $k+1$ then $m \geq k-1$. This means there exists $x \in \balg$ such that $R_{\circ}(x)^{k-1} \neq 0$. In particular, this implies $\rnil^{k}(\balg, \circ) \neq \{0\}$. This shows \eqref{cgenil}. 
Since $(\balg, \circ)$ is complete, the action on $\balg$ of the simply-connected Lie group of $(\balg, [\dum, \dum])$ is simply transitive (see \cite{Helmstetter} or \cite{Kim-completeleftinvariant}). By Theorem I.$1$ of \cite{Auslander-affinemotions} this group is solvable, and so also $(\balg, [\dum, \dum])$ is solvable. Since $\alg/[\alg, \alg]$ is an abelian Lie algebra, $(\alg, [\dum, \dum])$ is solvable too. This proves \eqref{cge5b}.
Claim \eqref{cgewt} follows from \eqref{dptra} and \eqref{rmltrcirc1}. 

If the Lie algebra underlying $(\balg, \circ)$ is unimodular, then by Lemma \ref{trrunilemma} and \eqref{cge6}, $P$ satisfies the conditions of Theorem \ref{lsacptheorem}, and so \eqref{cge10} follows. 
\end{proof}

By Lemma \ref{graphextensionlemma}, the graph extension of a Hessian LSA with respect to a compatible derivation is an LSA equipped with a Koszul form. Lemma \ref{gecharlemma} characterizes when an LSA equipped with a Koszul form is isometrically isometric to a graph extension.

\begin{lemma}\label{gecharlemma}
For a finite-dimensional LSA $(\alg, \mlt)$ equipped with a Koszul form $\la \in \alg^{\ast}$ having associated idempotent $u$ and Hessian metric $h$ the following are equivalent:
\begin{enumerate}
\item \label{geclsa2}$\ker \la = \ker R(u)$.
\item \label{geclsa3}$h(u,u) = \la(u) \neq 0$, $R(u)^{2} = R(u)$, $\tr R(u) = 1$, and the linear span $\lb u \ra$ is a left ideal and $\la(u) \neq 0$.
\item \label{geclsa1}$h(u,u) = \la(u) \neq 0$ and the multiplication $x \circ y = x \mlt y - \la(u)^{-1}h(x, y)u$ makes $\ker \la$ an LSA for which the restriction of any nonzero multiple of $h$ is a Hessian metric and on which $L(u)$ acts as a compatible derivation. 
\end{enumerate}
In the case there hold \eqref{geclsa2}-\eqref{geclsa1}, $(\alg, \mlt, \la(u)^{-1}h)$ is isometrically isomorphic to the graph extension of $(\ker \la, \circ, \la(u)^{-1}h)$ with respect to $L(u)$.

Moreover, if there hold \eqref{geclsa2}-\eqref{geclsa1}, $L(u)$ is invertible if and only if the linear span $\lb u \ra$ equals its normalizer in $(\alg, [\dum, \dum])$.
In this case, $[\alg, \alg] = \ker \la = \ker R(u)$ and $(\alg, \mlt)$ is a simple LSA.
\end{lemma}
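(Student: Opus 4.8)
The plan is to prove the cycle $\eqref{geclsa2}\Rightarrow\eqref{geclsa3}\Rightarrow\eqref{geclsa1}\Rightarrow\eqref{geclsa2}$, extracting the graph-extension isomorphism along the way, and relying throughout on Lemma \ref{principalidempotentlemma} and on the defining formulas \eqref{mltcirc}, \eqref{gemlt}, \eqref{gehath}. The equivalence $\eqref{geclsa2}\iff\eqref{geclsa3}$ is pure linear algebra built on the Fitting decomposition $\alg=\alg_0\oplus\alg_1$ of $R(u)$. The chain $\ker R(u)\subset\alg_0\subset\ker\la$, where the inclusions come from the definition of $\alg_0$ and from Lemma \ref{principalidempotentlemma}\eqref{a0l}, together with $\dim\ker\la=n-1$, forces everything. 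Assuming \eqref{geclsa2} one gets $\alg_0=\ker R(u)=\ker\la$, so $R(u)$ vanishes on $\ker\la$ and acts as the identity on the line $\alg_1\ni u$ (using $R(u)u=u\neq0$), whence $R(u)^2=R(u)$, $\tr R(u)=\dim\alg_1=1$, $\la(u)\neq0$ since $u\notin\ker R(u)$, and $\lb u\ra=\im R(u)$ is a left ideal. Conversely, if $R(u)$ is a rank-one idempotent then $\ker R(u)=\alg_0\subset\ker\la$, and a dimension count gives equality.

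Next I would treat $\eqref{geclsa3}\Rightarrow\eqref{geclsa1}$ together with the ``moreover'' isomorphism. Under \eqref{geclsa2}/\eqref{geclsa3} one has the $h$-orthogonal splitting $\alg=\ker\la\oplus\lb u\ra$ with $u$ non-null and $R(u)$ the projection onto $\lb u\ra$; set $\balg=\ker\la$, $\tilde h=\la(u)^{-1}h$ (so $\tilde h(u,u)=1$), and $D_{\balg}=L(u)|_{\balg}$. Expanding $(x+au)\mlt(y+bu)$ for $x,y\in\balg$, using $R(u)x=0$, $u\mlt u=u$, and the definition of $\circ$, I expect to land exactly on the graph-extension product \eqref{gemlt}; this identifies $\Phi(x+aD)=x+au$ as an isometric isomorphism onto the graph extension, once $(\balg,\circ,\tilde h|_{\balg})$ is shown to be a Hessian LSA with $D_{\balg}$ compatible. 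Those verifications are direct computations: $\circ$ lands in $\ker\la$; left-symmetry of $\circ$ reduces, after the ambient associator cancels, to the identity \eqref{hessianmetric}; the cubic form of $\circ$ coincides with that of $\mlt$ on $\balg$; and the derivation and compatibility properties of $D_{\balg}$ both collapse, via Lemma \ref{principalidempotentlemma}\eqref{lladj} and $R(u)|_{\balg}=0$, to $\tilde h(L(u)x,y)+\tilde h(x,L(u)y)=\tilde h(x,y)$.

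The reverse implication $\eqref{geclsa1}\Rightarrow\eqref{geclsa2}$ is the step I expect to be the main obstacle, since the Fitting structure is no longer given and must be recovered from the abstract hypotheses on $\circ$. As $\ker R(u)\subset\ker\la$ always holds, it suffices to show $R(u)$ kills $\ker\la$. Writing out the derivation identity $L(u)(x\circ y)=(L(u)x)\circ y+x\circ(L(u)y)$ for $x,y\in\balg$, then using the left-symmetric relation for $\mlt$ and the compatibility of $L(u)$ to cancel the $u$-component, I expect to be left with $(R(u)x)\mlt y=0$ for all $y\in\balg$. The trick to finish is to test $w=R(u)x$ against the metric: $w\in\ker\la$ gives $h(w,u)=\la(w)=0$, while $L(w)|_{\balg}=0$ gives $h(w,z)=\la(w\mlt z)=0$ for $z\in\balg$, so $w$ is $h$-orthogonal to all of $\alg$; nondegeneracy of $h$ then forces $w=0$, i.e. $\ker\la\subset\ker R(u)$.

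Finally, for the concluding assertions I would transport the relevant parts of Lemma \ref{graphextensionlemma} through $\Phi$. From the block form \eqref{mltcirc}, $L_{\mlt}(u)$ is lower triangular with diagonal blocks $D_{\balg}$ and $1$, so $L(u)$ is invertible if and only if $D_{\balg}$ is; combining this with Lemma \ref{graphextensionlemma}\eqref{ge6b}, which gives $\n(\lb u\ra)=\ker D_{\balg}\oplus\lb u\ra$, shows $L(u)$ invertible $\iff\n(\lb u\ra)=\lb u\ra$. In that case Lemma \ref{graphextensionlemma}\eqref{ge6} yields $[\alg,\alg]=\balg=\ker\la=\ker R(u)$ and the simplicity of $(\alg,\mlt)$.
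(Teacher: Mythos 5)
Your proposal is correct, and most of it runs parallel to the paper's proof: the identification of $R(u)$ as the projection onto $\lb u\ra$ along $\ker\la$, the map $\Phi(x+aD)=x+au$ intertwining the graph-extension product with $\mlt$, and the appeal to Lemma \ref{graphextensionlemma}\eqref{ge6b} and \eqref{ge6} for the concluding claims all match (the paper proves the normalizer criterion directly rather than via \eqref{ge6b} together with the block form \eqref{mltcirc}, and proves \eqref{geclsa3}$\Rightarrow$\eqref{geclsa2} by the computation $\la(u)R(u)x=\la(x)u$ rather than by your appeal to Lemma \ref{principalidempotentlemma}\eqref{a0l} plus a dimension count, but these differences are cosmetic). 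The step where you genuinely diverge is \eqref{geclsa1}$\Rightarrow$\eqref{geclsa2}, which you rightly singled out as the crux. The paper argues by cases: for $\dim\alg>2$ it uses the left-symmetry of $\circ$ through the defect formula \eqref{circlsa}, choosing $x,z\in\ker\la$ with $h(y,z)=0$ and $h(x,z)=1$ to isolate $R(u)y=0$ — a choice available only when $\dim\ker\la>1$ — and it handles $\dim\alg=2$ separately using the compatible-derivation hypothesis. Your argument is uniform in the dimension: expanding the derivation identity for $L(u)$ on $(\ker\la,\circ)$, cancelling the ambient terms by the LSA identity $u\mlt(x\mlt y)-x\mlt(u\mlt y)=(u\mlt x)\mlt y-(x\mlt u)\mlt y$, and killing the $u$-component by compatibility yields $(R(u)x)\mlt y=0$ for all $x,y\in\ker\la$; then nondegeneracy of $h$, tested via $h(w,z)=\la(w\mlt z)=0$ for $z\in\ker\la$ and $h(w,u)=\la(w)=0$, forces $R(u)x=0$. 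Besides avoiding the case split, your route shows that this implication needs only the derivation and compatibility parts of hypothesis \eqref{geclsa1}, not the left-symmetry of $\circ$; the paper's route, by contrast, makes visible how left-symmetry of $\circ$ by itself already forces $R(u)$ to vanish on $\ker\la$ once the dimension is large enough. Both are valid; yours is arguably cleaner, the paper's slightly more informative about the role of each hypothesis.
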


\begin{proof}
Suppose there holds \eqref{geclsa2} so that $\ker \la = \ker R(u)$. Then $u \in \ker \la = \ker R(u)$ implies the contradictory $u = R(u)u = 0$. Hence $h(u, u) = \la(u) \neq 0$. For $x \in \ker \la$, $R(u)x = 0$, while $R(u) u = u$, so $R(u)$ is the projection onto $\lb u \ra$ along $\ker \la$. This is equivalent to the conditions $R(u)^{2} = R(u)$ and $\tr R(u) = \text{codim}\ker R(u) = 1$. As $x \mlt u = R(u)x \in \lb u\ra$, $\lb u \ra$ is a left ideal in $(\alg, \mlt)$. Thus \eqref{geclsa2} implies \eqref{geclsa3}. If there holds \eqref{geclsa3}, then $R(u)$ is a projection with one-dimensional image, and since $R(u)u= u$ the image is $\lb u \ra$. For any $x \in \alg$ there is $c \in \rea$ such that $R(u)x = cu$. Then $c\la(u) = \la(cu) = \la(x\mlt u) = h(x, u)= \la(x)$, so $\la(u)R(u)x = \la(x)u$ from which it follows that $\ker \la \subset \ker R(u)$. This suffices to show $\ker R(u) = \ker \la$. Thus \eqref{geclsa3} implies \eqref{geclsa2}.

If $\la(u) = h(u, u) \neq 0$ then $\circ$ is defined and $x \circ y \in \ker \la$ for all $x, y \in \alg$. If $x, y, z \in \ker \la$ then
\begin{align}\label{circlsa}
(x \circ y - y\circ x)\circ z- x\circ(y\circ z) + y\circ(x \circ z) = \la(u)^{-1}R(u)(h(x, z)y - h(y, z)x).
\end{align}
From \eqref{circlsa} it is immediate that $\ker R(u) = \ker \la$ implies that $(\ker \la, \circ)$ is an LSA. Because, for $x, y, z \in \ker \la$, $h(x\circ y, z) = h(x\mlt y, z)$, the restriction to $\ker \la$ of $h$ is a Hessian metric for $(\ker \la, \circ)$. Since $\la(L(u)x) = h(u, x) = \la(x)$, $L(u)$ preserves $\ker\la$. If $x \in \ker \la$ then $h(L(u)x, y) + h(x, L(u)y) =  h(R(u)x, y) + h(u, x\mlt y) = h(x, y)$. Since $\ker R(u) = \ker \la$, if $x \in \ker \la$ then $L(u)(x\mlt y) - L(u)x \mlt y - x \mlt L(u)y = R(u)x \mlt y = 0$. Hence, for $x, y \in \ker \la$, 
\begin{align}
\la(u)(L(u)(x \circ y) - L(u)x \circ y - x \circ L(u)y) =  (h(L(u)x, y) + h(x, L(u)y - h(x, y))u  =0 ,    
\end{align}
showing that $L(u)$ is a compatible derivation of the Hessian LSA $(\ker \la, \circ, h)$. This shows that \eqref{geclsa2} implies \eqref{geclsa1}.

Suppose there holds \eqref{geclsa1}. Since $h(u, u) \neq 0$ the restriction of $h$ to $\ker \la$ is nondegenerate. If $\dim \alg > 2$ then $\dim \ker \la > 1$ and so given $0 \neq y \in \ker \la$ there can be found $x, z \in \ker \la$ such that $h(y, z) = 0$ and $h(x, z) = 1$. Susbtituting this into \eqref{circlsa} and supposing $\circ$ is left-symmetric on $\ker \la$ shows that $R(u)y = 0$. Hence $\ker \la \subset \ker R(u)$ which suffices to show \eqref{geclsa2}. If $\dim \alg = 2$ then $u$ is transverse to both $\ker \la$ and $\ker R(u)$ and so $\ker R(u) \subset \ker \la$. Since $L(u)$ is a compatible derivation of $(\ker \la, \circ)$, for all $x \in \ker \la$, $h(R(u)x, x) = h(x\mlt u, x) = h(u\mlt x, x) - h(u, x\mlt x) + h(x, u\mlt x) = h(x, x)- h(x, x) =0$. Since $h(R(u)x, u) = \la(R(u)x) = h(u, x) = \la(x) = 0$, it follows that $R(u)x = 0$ if $x \in \ker \la$, so $\ker R(u) = \ker \la$. Thus \eqref{geclsa1} implies \eqref{geclsa2}.

Suppose there hold \eqref{geclsa2}-\eqref{geclsa1}. Let $D = L(u)$ and define $\Psi: \ker \la \oplus \lb D \ra \to \alg$ by $\Psi(x + aD) = x + au$. Let $(\ker \la \oplus \lb D \ra, \hat{\mlt}, \hat{h})$ be the graph extension of $(\ker \la, \circ, \la(u)^{-1}h)$ with respect to $D = L(u)$. In particular, for $x, y \in \ker \la$, $\hat{h}(x + aD, y + bD) = ab + \la(u)^{-1}h(x, y)$, and
\begin{align}
\begin{split}
\Psi((x + aD)\hat{\mlt}(y + bD))& = \Psi(x\circ y + aDy + (ab + \la(u)^{-1}h(x, y))D )\\
&= x\circ y + aDy + (ab + \la(u)^{-1}h(x, y))u = x\mlt y + aL(u)y + ab u \\
&= (x + au) \mlt (y + bu) = \Psi(x + aD)\mlt \Psi(y + aD),
\end{split}
\end{align} 
so $\Psi$ is an algebra isomorphism. Moreover, $h(\Psi(x + aD), \Psi(y + bD)) = h(x, y) + ab\la(u) = \la(u)\hat{h}(x + aD, y + bD)$, so $\Psi$ is isometric with respect to $\hat{h}$ and $\la(u)^{-1}h$.

Suppose there hold \eqref{geclsa2}-\eqref{geclsa1}. If $L(u)$ is not invertible there exists $0 \neq x \in \alg$ such that $L(u)x = 0$. Then $\la(x) = h(u, x) = \la(L(u)x) = 0$, so $x \in \ker \la = \ker R(u)$. Hence $[x, u] = R(u)x - L(u)x = 0$ and $x$ is contained in the normalizer in $(\alg, [\dum, \dum])$ of $\lb u \ra$. It follows that if $\lb u \ra$ equals its normalizer in $(\alg, [\dum, \dum])$, then $L(u)$ is invertible. Suppose $L(u)$ is invertible and that $x \in \alg$ is contained in the normalizer of $\lb u \ra$ in $(\alg, [\dum, \dum])$. Then $\bar{x} = x - \la(u)^{-1}\la(x)u \in \ker \la = \ker R(u)$ is also contained in the normalizer of $\lb u \ra$ in $(\alg, [\dum, \dum])$, since $[\bar{x}, u] = [x, u]$. There is some $c \in \rea$ such that $L(u)\bar{x} = (L(u) - R(u))\bar{x} = [u, \bar{x}] = cu$. Since $\la(u) \neq 0$, that $0 = \la([u, \bar{x}]) = c\la(u)$ implies $c = 0$ so $L(u)\bar{x} = [u, \bar{x}] = 0$. Since $L(u)$ is invertible, this implies $\bar{x} = 0$, so $x \in \lb u \ra$. 

The conclusions that $[\alg, \alg] = \ker \la = \ker R(u)$ and $(\alg, \mlt)$ is a simple LSA when $L(u)$ is invertible follow from \eqref{ge6b} and \eqref{ge6} of Lemma \ref{graphextensionlemma} and the fact that $(\alg, \mlt, \la(u)^{-1}\hat{h})$ is isometrically isomorphic to a graph extension with respect to $L(u)$.
\end{proof}

If $\la$ is a Koszul form, then, for any nonzero $c$, $\tilde{\la} = c\la$ is also a Koszul form. The metric $\tilde{h}$ associated with $\tilde{\la}$ is $\tilde{h} = ch$. Both $\la$ and $\tilde{\la}$ determined the same idempotent, for if $\tilde{u}$ is the idempotent associated with $\tilde{\la}$, then $ch(u, x) = c\la(x) = \tilde{\la}(x) = \tilde{h}(\tilde{u}, x) = ch(\tilde{u}, x)$ for all $x \in \alg$, so $\tilde{u} = u$, by the nondegeneracy of $h$. If $\la(u) \neq 0$ then, replacing $\la$ by $\la(u)^{-1}\la$ it can be assumed $\la(u) = 1$. A Koszul form $\la$ is \textit{normalized} if $\la(u) \in \{0, 1\}$. If $\la$ is a normalized Koszul form and there hold \eqref{geclsa2}-\eqref{geclsa1} of Lemma \ref{gecharlemma}, then $(\alg, \mlt, h)$ is the graph extension of $(\ker \la, \circ, h)$ with respect to $L(u)$.

Given an LSA $(\alg, \mlt)$ and an endomorphism $D \in \eno(\alg)$ define $\Pi(\alg, D)$ to be the set of weights for the action of $D$ on $\alg$ and let 
\begin{align}
\alg^{\al} = \{x \in \alg: (D - \al I)^{p}x = 0 \,\,\text{for some}\,\, p \geq 1\}
\end{align}
be the weight space corresponding to $\al \in \Pi(\alg, D)$.

\begin{lemma}\label{weightlemma}
Let $D \in \eno(\balg)$ be a compatible derivation of the Hessian LSA $(\balg, \circ, h)$ and let $(\alg, \mlt, \hat{h})$ be the corresponding graph extension. Then
\begin{enumerate}
\item \label{gbwt1} If $\al\in \Pi(\alg, L_{\mlt}(D))$ and $\al \neq 1$ then $\al \in \Pi(\balg, D)$ and $\balg^{\al} = \alg^{\al} \subset \balg$. Consequently $\Pi(\balg, D) \subset \Pi(\alg, L_{\mlt}(D))$.
\item \label{gbwt2} $\al \in \Pi(\balg, D)$ if and only if $1 - \al \in \Pi(\balg, D)$, and $\dim \balg^{\al} = \dim \balg^{1-\al}$.
\item \label{gbwt2b} $\balg^{1} = \{0\}$ if and only if $D$ is invertible.
\item \label{gbwt3} For $\al, \be \in \Pi(\balg, D)$, $\balg^{\al}\mlt \balg^{\be} \subset \alg^{\al + \be}$ and $\balg^{\al}\circ \balg^{\be} \subset \balg^{\al + \be}$.
\item\label{gbwt4} For  $\al, \be \in \Pi(\balg, D)$, $h:\balg^{\al}\times \balg^{\be} \to \rea$ is nondegenerate if $\al + \be = 1$ and is the zero map otherwise.
\item\label{gbwt5} $[\alg^{\al}, \alg^{1-\al}] \subset \balg^{1}$. 
\end{enumerate}
If $\Pi(\balg, D) \subset (0, 1)$ and $D$ is triangularizable then $(\balg, \circ)$ is right nilpotent, so nilpotent.
\end{lemma}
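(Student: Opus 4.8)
The plan is to exploit the grading of the multiplication $\circ$ by the weights of $D$ recorded in \eqref{gbwt3}, together with the hypothesis that all these weights are positive. First I would use the triangularizability of $D$: since $D$ is a single endomorphism whose eigenvalues all lie in the base field, $\balg$ is the direct sum $\balg = \bigoplus_{\al \in \Pi(\balg, D)}\balg^{\al}$ of its generalized eigenspaces. Set $\al_{0} = \min \Pi(\balg, D)$; by hypothesis $\Pi(\balg, D) \subset (0,1)$, so $\al_{0} > 0$, and $\Pi(\balg, D)$ is a finite set, hence bounded above by some $\al_{1} < 1$. (If $\balg = \{0\}$ the claim is trivial, so assume $\balg \neq \{0\}$.)

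The key step is a weight estimate for the right nil series, proved by induction on $i$: I claim $\rnil^{i}(\balg, \circ) \subset \bigoplus_{\ga \geq i\al_{0}}\balg^{\ga}$, with the convention $\balg^{\ga} = \{0\}$ whenever $\ga \notin \Pi(\balg, D)$. The base case $i = 1$ holds because every weight is at least $\al_{0}$. For the inductive step I would write $\rnil^{i+1}(\balg) = \rnil^{i}(\balg)\circ \balg$ and decompose both factors into weight components; by \eqref{gbwt3} each product $\balg^{\ga}\circ \balg^{\delta}$ lands in $\balg^{\ga + \delta}$, and if $\ga \geq i\al_{0}$ and $\delta \geq \al_{0}$ then $\ga + \delta \geq (i+1)\al_{0}$. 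Since every weight of $\balg$ is at most $\al_{1} < 1$, once $i\al_{0} > \al_{1}$ the indexing set is empty, forcing $\rnil^{i}(\balg) = \{0\}$. Thus $(\balg, \circ)$ is right nilpotent.

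To upgrade right nilpotence to nilpotence I would invoke Theorem \ref{trivalgtheorem}, which asserts that a right nilpotent LSA with nilpotent underlying Lie algebra is nilpotent; so it remains to check that $(\balg, [\dum, \dum])$ is nilpotent. But the bracket is weight-graded as well: for $x \in \balg^{\al}$ and $y \in \balg^{\be}$ both $x\circ y$ and $y\circ x$ lie in $\balg^{\al+\be}$ by \eqref{gbwt3}, so $[\balg^{\al}, \balg^{\be}] \subset \balg^{\al+\be}$. Running exactly the same induction on the lower central series yields $\balg^{i} \subset \bigoplus_{\ga \geq i\al_{0}}\balg^{\ga}$, which vanishes for large $i$; hence the underlying Lie algebra is nilpotent and Theorem \ref{trivalgtheorem} applies.

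The one genuinely non-formal point is this passage from right nilpotence to nilpotence: unlike in the associative setting, right nilpotence of an LSA does not by itself force nilpotence, so the grading argument must be carried out a second time for the Lie bracket in order to supply the hypothesis of Theorem \ref{trivalgtheorem}. Beyond that I expect no serious obstacle, the whole argument being a routine ``positive bounded weights'' estimate; the only care needed is notational — keeping the weight spaces $\balg^{\al}$ distinct from the lower central series terms $\balg^{i}$ — together with the explicit observation that triangularizability of the single operator $D$ is precisely what furnishes the generalized eigenspace decomposition over the base field.
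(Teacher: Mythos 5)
Your proposal proves only the final sentence of the lemma. Claims \eqref{gbwt1}--\eqref{gbwt5} are themselves part of the statement, and your argument takes the crucial one, \eqref{gbwt3}, as given (``recorded in \eqref{gbwt3}''), so as a proof of the lemma it is circular: the grading $\balg^{\al}\circ\balg^{\be}\subset\balg^{\al+\be}$ is precisely what the bulk of the paper's proof is devoted to establishing, and it is not a formality. One first needs \eqref{gbwt1} -- proved via the adjoint identity \eqref{geadj}, which gives $\hat{h}((L_{\mlt}(D)-\al I)^{p}a, D) = (1-\al)^{p}\la(a)$ and hence $\alg^{\al}\subset\balg$ for $\al\neq 1$; then the derivation property \eqref{geder} of $L_{\mlt}(D)$ on $\ker\la$ yields, by a Leibniz expansion of $(L_{\mlt}(D)-(\al+\be)I)^{m}(a\mlt b)$, that $\balg^{\al}\mlt\balg^{\be}\subset\alg^{\al+\be}$; and finally the cases $\al+\be\neq 1$ and $\al+\be=1$ must be separated, since in the first case $\hat{h}(a,b)=\la(a\mlt b)=0$ so $a\circ b=a\mlt b$, while in the second $a\circ b=a\mlt b-\hat{h}(a,b)D$ differs from $a\mlt b$ and lands in $\alg^{1}\cap\balg=\balg^{1}$. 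None of this, nor the $h$-duality between $\balg^{\al}$ and $\balg^{1-\al}$ asserted in \eqref{gbwt2} and \eqref{gbwt4}, appears in your proposal, so the lemma as stated is left mostly unproved.

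For the part you do treat, your argument is correct and is essentially the paper's: both rest on the weight grading, the positivity and boundedness of the weights, and Theorem \ref{trivalgtheorem}. The packaging differs slightly: the paper observes that the subspaces $\I^{i}=\oplus_{j\geq i}\balg^{\al_{j}}$ form a chain of two-sided ideals with trivial quotients and invokes Lemma \ref{rightnilpotentlemma}, whereas you run the estimate $\rnil^{i}(\balg)\subset\oplus_{\ga\geq i\al_{0}}\balg^{\ga}$ directly; these are interchangeable. To your credit, you make explicit a point the paper passes over in silence, namely that the same grading applied to the lower central series shows the Lie algebra underlying $(\balg,\circ)$ is nilpotent, which is exactly the hypothesis Theorem \ref{trivalgtheorem} requires. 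But as submitted, the proposal establishes the last claim of the lemma conditionally on \eqref{gbwt1}--\eqref{gbwt4}; it does not prove the lemma itself.
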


\begin{proof}
Let $\lambda = \hat{h}(D, \dum)$. 
If $\al \in \Pi(\alg, L_{\mlt}(D))$ and $0 \neq a \in \alg^{\al}$, let $p \geq 1$ be the minimal integer such that $(L_{\mlt}(D) - \al I)^{p}a = 0$. By \eqref{geadj} of Lemma \ref{graphextensionlemma}, 
\begin{align}
\begin{split}
0 &= \hat{h}((L_{\mlt}(D) - \al I)^{p}a, D) = \hat{h}(a, (L_{\mlt}(D)^{\ast} - \al I)^{p}D) \\
&= \hat{h}(a, (R_{\mlt}(D) - L_{\mlt}(D) + (1-\al)I)^{p}D) = (1-\al)^{p}\la(a), 
\end{split}
\end{align}
so if $\al \neq 1$ then $a \in \balg$. This shows $\alg^{\al} \subset \balg$ if $\al \neq 1$. It follows that $\balg^{\al} = \alg^{\al}$ if $\al \neq 1$. Since $L_{\mlt}(D)$ preserves $\balg$, $\balg^{1} = \alg^{1} \cap \balg$. This proves \eqref{gbwt1}.

If $\al \in \Pi(\balg, D)$, $0 \neq a \in \balg^{\al}$, and $x \in \balg$, then there is a minimal $p \geq 1$ such that $(D - \al I)^{p}a = 0$, and so, since $D$ is a compatible derivation,
\begin{align}\label{hathax}
0 = h((D - \al I)^{p}a, x) = (-1)^{p}h(a, (D - (1 - \al)I)^{p}x).
\end{align}
Were $(D - (1 - \al)I)^{p}$ invertible on $\balg$, then, by the nondegeneracy of $h$, \eqref{hathax} would imply $a = 0$, so $(D - (1 - \al)I)^{p}$ has nonempty kernel on $\balg$, so $\balg^{1-\al}$ is nontrivial. This proves that $1 - \al \in \Pi(\balg, D)$ and $\dim \balg^{\al} \leq \dim \balg^{1-\al}$. By symmetry, $\dim \balg^{\al} = \dim \balg^{1-\al}$, and so \eqref{gbwt2} is proved. By \eqref{gbwt2}, $\balg^{1} = \{0\}$ if and only if $\balg^{0} = \{0\}$. As $D$ is invertible on $\balg$ if and only if $\balg^{0} = \{0\}$, there follows \eqref{gbwt2b}.

If $\al, \be \in \Pi(\balg, D)$, $0 \neq a \in \balg^{\al}$, and $0 \neq b \in \balg^{\be}$, then, by \eqref{ge5} of Lemma \ref{graphextensionlemma}, $(D - (\al + \be)I)(a \mlt b) = ((D - \al I)a) \mlt b + a \mlt (D - \be I)b$, so 
\begin{align}\label{halbeprod}
(D - (\al + \be)I)^{m}(a \mlt b) = \sum_{j = 0}^{m}\binom{m}{j}(D - \al I)^{j}a \mlt (D - \be I)^{m-j}b.
\end{align}
If $m \geq 2\max\{\dim \balg^{\al}, \dim \balg^{\be}\}$ then every term on the right-hand side of \eqref{halbeprod} vanishes, showing that $a \mlt b \in \alg^{\al + \be}$. Note that, although this shows $\balg^{\al}\mlt \balg^{\be} \subset \alg^{\al + \be}$, the possibility that $a\mlt b = 0$ is not excluded and this argument does not show that $\al + \be \in \Pi(\balg, D)$, and this need not be the case. 

Suppose $\al, \be \in \Pi(\balg, D)$, $0 \neq a \in \balg^{\al}$, and $0 \neq b \in \balg^{\be}$. If $\al + \be \neq 1$ then, by \eqref{gbwt1}, $\alg^{\al + \be} = \balg^{\al + \be} \subset \balg$ so $\hat{h}(a, b) = \la(a \mlt b) = 0$. Hence $a\circ b = a\mlt b$, so $\balg^{\al} \circ \balg^{\be} \subset \balg^{\al + \be}$. If $\be = 1 -\al$ then $a \mlt b \in \alg^{1}$ so $a\circ b = a \mlt b - \hat{h}(a, b)D \in \alg^{1} \cap \balg = \balg^{1}$. This proves \eqref{gbwt3}.
If $\al, \be \in \Pi(\balg, D)$ and $\al + \be \neq 1$ then $a \mlt b \in \balg^{\al + \be} \subset\ker \la$, so $h(a, b) = \hat{h}(a, b) = \la(a\mlt b) = 0$. On the other hand, if $\al \in \Pi(\balg, D)$ and $0\neq a \in \balg^{\al}$ by the nondegeneracy of $h$ on $\balg$ there exists $x \in \balg$ such that $h(a, x) \neq 0$. Since, for $\be \neq 1 - \al$, the projection of $x$ onto $\balg^{\be}$ is $h$ orthogonal to $a$, it can be assumed $x \in \balg^{1-\al}$. This proves \eqref{gbwt4}. 
Since $[\alg, \alg] \subset \ker \la$, if $a \in \alg^{\al}$ and $b \in \alg^{1-\al}$ then $[a, b] \in \alg^{1}\cap \balg = \balg^{1}$. This proves \eqref{gbwt5}.

Suppose $\Pi(\balg, D) \subset (0, 1)$ and let $0 < \al_{1} < \al_{2} < \dots < \al_{r} < 1$ be the distinct elements of $\Pi(\balg, D)$ arranged in increasing order. If $D$ is triangularizable, then $\balg$ equals the direct sum $\oplus_{i = 1}^{r}\balg^{\al_{i}}$ of the weight spaces of $D$. By \eqref{gbwt3}, each subspace $\I^{i} = \oplus_{j \geq i}\balg^{\al_{j}}$ is a two-sided ideal and the quotients $\I^{i}/\I^{i+1}$ are trivial LSAs, so, by Lemma \ref{rightnilpotentlemma}, $(\balg, \circ)$ is right nilpotent.
Since $\Pi(\balg, D) \subset (0, 1)$, $\balg = [\alg, \alg]$ is a nilpotent Lie algebra, so by Theorem \ref{trivalgtheorem}, $(\balg, \circ)$ is a nilpotent LSA.
\end{proof}

Lemma \ref{arithmeticrelationlemma} shows that the weights of a compatible triangularizable derivation acting on a complete Hessian LSA necessarily satisfy an arithmetic relation of a particular form.

\begin{lemma}\label{arithmeticrelationlemma}
Let $(\alg, \mlt, \hat{h})$ be the graph extension of the complete Hessian LSA $(\balg, \circ, h)$ with respect to a triangularizable compatible derivation $D \in \eno(\balg)$, and let $P$ be the characteristic polynomial of $(\alg, \mlt)$. Let $\Pi(\balg, D) = \{\al_{1}, \dots, \al_{r}\}$. For each integer $0 \leq l \leq \dim \balg - 1$ such that the degree $l + 2$ component of $P$ is not identically zero, there exists a partition $l+2 = \sum_{k = 1}^{r}i_{k}$ of $l+2$ as a sum of nonnegative integers $i_{1}, \dots, i_{r}$ such that $\sum_{k = 1}^{r}i_{k}\al_{k} = 1$. In particular, this is true for at least one nonnegative integer $l$, namely that for which $l + 2 = \deg P$.
\end{lemma}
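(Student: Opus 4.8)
The plan is to read off the homogeneous components of $P$ from the explicit formula \eqref{papb} and then expand the relevant component in the weight-space decomposition of $\balg$ determined by $D$, exploiting the additivity of weights under $\circ$ and the orthogonality of weight spaces recorded in Lemma \ref{weightlemma}.

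First I would invoke Lemma \ref{completegraphextensionlemma}: since $(\balg, \circ)$ is complete, $P$ has the form \eqref{papb}, namely $P(x + aD) = 1 + a - h(x, \sum_{l = 0}^{m}(-R_{\circ}(x))^{l}Dx)$ with $m \leq \dim \balg - 1$. Because $R_{\circ}(x)$ is linear in $x$, the $l$-th summand is homogeneous of degree $l+2$ in $x$ — one factor from the outer slot $h(x, \dum)$, one from $Dx$, and $l$ from $R_{\circ}(x)^{l}$ — so the degree-$(l+2)$ homogeneous component of $P$ equals $(-1)^{l+1}F_{l}$, where $F_{l}(x) = h(x, R_{\circ}(x)^{l}Dx)$, and this component is nonzero precisely when $F_{l} \not\equiv 0$. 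I would also record that $F_{0}(x) = h(x, Dx) = \tfrac{1}{2}h(x,x)$, using that $D$ is compatible so $2h(x, Dx) = h(x,x)$; nondegeneracy of $h$ makes this nonzero, whence $\deg P \geq 2$ and $l := \deg P - 2$ is a nonnegative integer at most $m \leq \dim \balg - 1$. This will supply the final ``in particular'' clause once the general statement is proved.

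Next, using that $D$ is triangularizable, I would write $\balg = \bigoplus_{k=1}^{r}\balg^{\al_{k}}$ and, for $x = \sum_{k}x_{k}$ with $x_{k} \in \balg^{\al_{k}}$, expand multilinearly
\begin{align*}
F_{l}(x) = \sum_{k_{0}, k_{1}, \dots, k_{l}, k_{l+1}} h\bigl(x_{k_{0}},\, R_{\circ}(x_{k_{1}})\cdots R_{\circ}(x_{k_{l}})\,D x_{k_{l+1}}\bigr),
\end{align*}
the sum running over all choices of indices in $\{1, \dots, r\}$. Part \eqref{gbwt3} of Lemma \ref{weightlemma} gives $R_{\circ}(\balg^{\al})\balg^{\be} \subset \balg^{\al+\be}$, and $D$ preserves each generalized weight space, so the second slot of a fixed summand lies in $\balg^{\al_{k_{1}}+\dots+\al_{k_{l}}+\al_{k_{l+1}}}$. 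Part \eqref{gbwt4} then forces that summand to vanish identically unless $\al_{k_{0}} + \al_{k_{1}} + \dots + \al_{k_{l}} + \al_{k_{l+1}} = 1$.

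Finally, if $F_{l} \not\equiv 0$ then not every summand can vanish identically, so some tuple $(k_{0}, k_{1}, \dots, k_{l}, k_{l+1})$ satisfies the weight relation; letting $i_{k}$ be the number of its $l+2$ entries equal to $k$ produces nonnegative integers with $\sum_{k=1}^{r} i_{k} = l+2$ and $\sum_{k=1}^{r} i_{k}\al_{k} = 1$, which is the asserted partition. Taking $l = \deg P - 2$, for which $F_{l} \not\equiv 0$ by construction, settles the ``in particular'' statement. I expect the only real obstacle to be the weight bookkeeping: one must confirm that a nonvanishing term contributes exactly $l+2$ weights — the $h$-slot index $k_{0}$ and the $D$-slot index $k_{l+1}$ in addition to the $l$ copies of $R_{\circ}$ — and that the additivity \eqref{gbwt3} and orthogonality \eqref{gbwt4} apply to the \emph{generalized} weight spaces and not merely to honest eigenspaces; both are guaranteed by triangularizability of $D$ and by Lemma \ref{weightlemma} as stated.
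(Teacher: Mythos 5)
Your proof is correct, and its core is the same as the paper's: invoke Lemma \ref{completegraphextensionlemma} to get the formula \eqref{papb}, identify the degree-$(l+2)$ component of $P$ with $(-1)^{l+1}h(x, R_{\circ}(x)^{l}Dx)$, decompose $\balg$ into generalized weight spaces of $D$, expand multilinearly, and use \eqref{gbwt3} and \eqref{gbwt4} of Lemma \ref{weightlemma} to force the weight relation $\sum_{k}i_{k}\al_{k} = 1$ on any identically-nonvanishing summand. Where you genuinely diverge is the ``in particular'' clause. The paper argues by contradiction: if no admissible partition existed for any $l$, then every component $h(x, R_{\circ}(x)^{l}Dx)$ would vanish, forcing $P(x + aD) = 1 + a$, whose Hessian is identically zero, contradicting \eqref{hesslogp} and the nondegeneracy of $\tau_{\mlt} = \hat{h}$. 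You instead argue directly: compatibility of $D$ gives $h(x, Dx) = \tfrac{1}{2}h(x,x)$, which is not identically zero since $h$ is nondegenerate, so the degree-$2$ component of $P$ never vanishes, $\deg P \geq 2$, and the top-degree component (nonzero by definition of degree) corresponds to the admissible index $l = \deg P - 2 \leq \dim\balg - 1$. Your route is slightly more self-contained and in fact yields a marginally stronger conclusion — the partition exists for $l = 0$ as well, which is the algebraic shadow of Lemma \ref{weightlemma}\eqref{gbwt2} (weights pair as $\al$, $1-\al$) — while the paper's route leans on the already-established identification $\hat{h} = \tau_{\mlt}$ and the analytic identity \eqref{hesslogp}. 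Both are sound; your bookkeeping of the $l+2$ weight factors (one from the $h$-slot, one from the $D$-slot, $l$ from the $R_{\circ}$ factors) matches the paper's exactly.
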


\begin{proof}
Since $(\balg, \circ)$ is complete, by Lemma \ref{completegraphextensionlemma}, the characteristic polynomial $P_{\alg, \mlt}$ of its graph extension is given by \eqref{papb} and $\hat{h} = \tau_{\mlt}$, where $\tau_{\mlt}$ is the trace form of $(\alg, \mlt)$. Because $D$ is triangularizable, $\balg$ is the direct sum $\oplus_{\al_{i} \in \Pi(\balg, D)}\balg^{\al_{i}}$ of the weight spaces $\balg^{\al_{i}}$ of $D$. If $x \in \balg$ write $x = \sum_{i = 1}^{r}x_{\al_{i}}$ where $x_{\al_{i}} \in \balg^{\al_{i}}$. Consider an expression of the form $h(x, R_{\circ}(x)^{l}Dx)$. Each term $R_{\circ}(x)^{l}Dx$ is a linear combination of terms of the form $R_{\circ}(x_{\be_{1}})\dots R_{\circ}(x_{\be_{l}})Dx_{\be_{l+1}}$ where $\be_{1}, \dots, \be_{l+1}$ are not necessarily distinct elements of $\{\al_{1}, \dots, \al_{r}\} = \Pi(\balg, D)$. By \eqref{gbwt3} of Lemma \ref{weightlemma}, the term $R_{\circ}(x_{\be_{1}})\dots R_{\circ}(x_{\be_{l}})Dx_{\be_{l+1}}$ lies in $\balg^{\be_{1} + \dots + \be_{l+1}}$. Hence $\sum_{q = 1}^{l+1}\be_{q} = \sum_{i = 1}^{r}j_{i}\al_{i}$ where $j_{i}$ is the number of times $\al_{i}$ appears in the set $\{\be_{1}, \dots, \be_{l+1}\}$ and so $\sum_{q =1}^{r}j_{q} = l+1$. By \eqref{gbwt4} of Lemma \ref{weightlemma}, for the $h$-pairing of $R_{\circ}(x_{\be_{1}})\dots R_{\circ}(x_{\be_{l}})Dx_{\be_{l+1}}$ with some $x_{\al_{i}}$ to be nonzero, it must be that $1 - \al_{i} = \sum_{q = 1}^{r}j_{q}\al_{q}$. Equivalently, there are nonnegative integers $i_{1}, \dots, i_{r}$ such that $\sum_{q = 1}^{r}i_{q} = l+2$ and $1 = \sum_{q = 1}^{r}i_{q}\al_{q}$. By \eqref{papb} of Lemma \ref{completegraphextensionlemma}, the degree $l+2$ part of $P(x,a)$ is $(-1)^{l+1}h(x, R_{\circ}(x)^{l}Dx)$ for $(x, a) \in \alg$. If there is some $x \in \balg$ such that $(-1)^{l+1}h(x, R_{\circ}(x)^{l}Dx)$ is nonzero, then there must $\be_{1}, \dots, \be_{l+1}$ and $\al_{i}$ such that $h(x_{\al_{i}}, R_{\circ}(x_{\be_{1}})\dots R_{\circ}(x_{\be_{l}})Dx_{\be_{l+1}}) \neq 0$, so there are nonnegative integers $i_{1}, \dots, i_{r}$ such that $\sum_{q = 1}^{r}i_{q} = l+2$ and $1 = \sum_{q = 1}^{r}i_{q}\al_{q}$. If there is no such set of nonnegative integers $i_{1}, \dots, i_{r}$ for any $0 \leq l \leq \dim \balg -1$, then, by \eqref{papb} of Lemma \ref{completegraphextensionlemma}, $h(x,  R_{\circ}(x)^{l}Dx) = 0$ for all $0 \leq l \leq \dim\balg -1$. In this case $P_{\alg, \mlt}(x + aD) = 1 + a$. Then the Hessian $\hess P_{\alg, \mlt}$ is identically zero, which contradicts \eqref{hesslogp}, relating this Hessian to the nondegenerate bilinear form $\tau_{\mlt}$.
\end{proof}

\begin{lemma}\label{incompletelsalemma}
If the right principal idempotent $r$ of an incomplete $n$-dimensional LSA $(\alg, \mlt)$ with nondegenerate trace form $\tau$ and characteristic polynomial $P$ satisfies $\ker \tr R = \ker R(r)$ then:
\begin{enumerate}
\item\label{rclsa5} $\balg = \ker \tr R$ equipped with the multiplication $x\circ y = x \mlt y - \tau(x, y)r$ is a complete LSA.
\item\label{rclsa5b} The Lie algebra $(\alg, [\dum, \dum])$ is solvable.
\item\label{rclsa6}
$P(x + tr) = P(x) + t$ for all $t \in \rea$ and $x \in \alg$.
\item\label{rclsanil} For $x, y \in \balg$ write $R_{\circ}(x)y = y\circ x$. If $P$ has degree $k+1$ then there exists $x \in \balg$ such that $R_{\circ}(x)^{k-1} \neq 0$; consequently, $\rnil^{k}(\balg, \circ)\neq \{0\}$.
\item\label{rclsawt} $dP(E) = P$, where $E$ is the vector field defined by $E_{x} = (I + R(x))r = r + r\mlt x$.
\end{enumerate}
If, additionally, the Lie subalgebra $\balg = \ker \tr R$ is unimodular, then 
\begin{enumerate}
\setcounter{enumi}{5}
\item \label{rclsa10} there is a nonzero constant $\kc$ such that $\H(e^{P}) = \kc e^{nP}$ and the level sets of $P$ are improper affine spheres with affine normal a constant multiple of $r$.
\end{enumerate}
In particular, if, in addition to satisfying $\ker \tr R = \ker R(r)$, $L(r)$ is invertible, then there holds \eqref{rclsa10}, $[\alg, \alg] = \ker \tr R = \ker R(r)$, and $(\alg, \mlt)$ is a simple LSA.
\end{lemma}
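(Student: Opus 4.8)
The plan is to recognize that the standing hypotheses place $(\alg,\mlt)$ squarely inside the graph-extension framework of Lemmas \ref{gecharlemma} and \ref{completegraphextensionlemma}, with the right principal idempotent $r$ playing the role of the distinguished element, and then to read off \eqref{rclsa5}--\eqref{rclsa10} from those lemmas. First I would observe that, since $\tau$ is nondegenerate, $\la = \tr R$ is a Koszul form with associated metric $\tau$ and associated idempotent $r$. The hypothesis $\ker \tr R = \ker R(r)$ is exactly condition \eqref{geclsa2} of Lemma \ref{gecharlemma} for $\la$, $u=r$, $h=\tau$; hence \eqref{geclsa3} yields $R(r)^{2}=R(r)$, $\tr R(r)=1$, and $\la(r)=\tau(r,r)\neq 0$, so $\la$ is already normalized. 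By the conclusion of Lemma \ref{gecharlemma}, $(\alg,\mlt,\tau)$ is isometrically isomorphic to the graph extension of $(\balg,\circ,\tau|_{\balg})$, where $\balg=\ker\tr R$ and $x\circ y = x\mlt y - \tau(x,y)r$, with respect to the compatible derivation $D=L(r)$, and under this isomorphism $r$ corresponds to the distinguished idempotent of the graph extension. Since $x\circ y - y\circ x = [x,y]$ by symmetry of $\tau$, the Lie algebra underlying $(\balg,\circ)$ is exactly $(\balg,[\dum,\dum])$.

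Next I would apply Lemma \ref{completegraphextensionlemma}. Its completeness criterion $\ker\tr R_{\mlt}=\ker R_{\mlt}(\text{idempotent})$ is precisely the standing hypothesis, so $(\balg,\circ)$ is complete, proving \eqref{rclsa5}. Then \eqref{rclsa5b} is \eqref{cge5b}; \eqref{rclsa6} follows from \eqref{cge6} upon translating the $\lb r\ra$-component, since adding $tr$ raises the graph-extension coordinate $a$ by $t$ while \eqref{papb} depends on $x$ only through its $\balg$-part, so $P(x+tr)=P(x)+t$; \eqref{rclsanil} is \eqref{cgenil}; and \eqref{rclsawt} is \eqref{cgewt} with $D=r$. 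If $\balg=\ker\tr R$ is unimodular then, because the underlying Lie algebra of $(\balg,\circ)$ coincides with $(\balg,[\dum,\dum])$, Lemma \ref{completegraphextensionlemma}\eqref{cge10} gives \eqref{rclsa10}.

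For the final ``in particular'' clause, suppose $L(r)$ is invertible. The algebraic conclusions $[\alg,\alg]=\ker\tr R=\ker R(r)$ and the simplicity of $(\alg,\mlt)$ are immediate from the last part of Lemma \ref{gecharlemma} applied to the invertible $L(u)=L(r)$. To deduce \eqref{rclsa10} it remains to verify that $\balg$ is unimodular. Because $L(r)$ preserves $\balg$ (Lemma \ref{principalidempotentlemma}\eqref{lurupre}) and fixes $r$, the decomposition $\alg=\balg\oplus\lb r\ra$ is $L(r)$-invariant with $L(r)$ acting as the identity on $\lb r\ra$, so $D=L(r)|_{\balg}$ is invertible; moreover $D$ is a derivation of $(\balg,[\dum,\dum])$, being a compatible derivation of $(\balg,\circ)$. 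I would then invoke Jacobson's theorem that a finite-dimensional Lie algebra over a field of characteristic zero admitting an invertible derivation is nilpotent, whence $\balg$ is nilpotent and therefore unimodular; thus \eqref{rclsa10} holds.

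The step I expect to be the main obstacle is this last one: producing unimodularity of $\balg$ in the invertible case. The graph-extension machinery together with the weight pairing of Lemma \ref{weightlemma}\eqref{gbwt2}, \eqref{gbwt2b} only gives that the weights of $D$ pair as $\al\leftrightarrow 1-\al$ and that $0,1\notin\Pi(\balg,D)$, which does not by itself force $\Pi(\balg,D)\subset(0,1)$ and hence does not trigger the nilpotency criterion at the end of Lemma \ref{weightlemma}. The clean route is instead the external input that an invertible derivation forces nilpotency. Everything else is bookkeeping: matching the distinguished idempotent of the abstract graph extension with $r$ under the isomorphism of Lemma \ref{gecharlemma}, and tracking the coordinate in \eqref{papb} to obtain the translation identity \eqref{rclsa6}.
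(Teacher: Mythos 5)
Your proof is correct and is essentially the paper's own argument: the paper's entire proof of this lemma is the one-line citation of Lemmas \ref{gecharlemma} and \ref{completegraphextensionlemma}, which are exactly the two lemmas you chain together, and your bookkeeping (normalization of $\tr R$ via $\tr R(r)=1$, identification of $r$ with the distinguished idempotent $D$ under the isomorphism $\Psi$, transfer of the completeness criterion, and reading \eqref{rclsa6}, \eqref{rclsanil}, \eqref{rclsawt} off \eqref{papb} and \eqref{cge6}--\eqref{cgewt}) is precisely the content that citation leaves to the reader.

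The one point where you supply something the paper leaves implicit is the unimodularity of $\balg$ when $L(r)$ is invertible. Your appeal to Jacobson's theorem (a finite-dimensional Lie algebra in characteristic zero admitting an invertible derivation is nilpotent) is valid, since $L(r)|_{\balg}$ is indeed an invertible derivation of $(\balg,[\dum,\dum])$, but it is heavier than needed: by part \eqref{rclsa5b} (that is, \eqref{cge5b} of Lemma \ref{completegraphextensionlemma}) the Lie algebra $(\alg,[\dum,\dum])$ is solvable, and in characteristic zero the derived algebra of a solvable Lie algebra is nilpotent; since invertibility of $L(r)$ gives $\balg=[\alg,\alg]$ by the last part of Lemma \ref{gecharlemma}, $\balg$ is nilpotent, hence unimodular. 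This solvability route is the one the paper itself uses at the corresponding step in the proof of Theorem \ref{triangularizabletheorem}, so your ``main obstacle'' dissolves with machinery already in hand; either way the conclusion \eqref{rclsa10} follows.
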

\begin{proof}
This follows from Lemmas \ref{completegraphextensionlemma} and \ref{gecharlemma}.
\end{proof}

\begin{lemma}\label{triangularizablelemma}
Over a field of characteristic zero, a triangularizable LSA $(\alg, \mlt)$ having nondegenerate trace form $\tau$ contains an idempotent element $u$ such that the linear span $\lb u \ra$ is a left ideal and $R(u)$ is the operator of projection onto $\lb u \ra$.  That is, $R(u)^{2} = R(u)$ and $\tr R(u) = 1$. 
\end{lemma}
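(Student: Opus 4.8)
The plan is to produce the required idempotent as a suitably normalized common eigenvector of the left multiplication operators. Since $\tau$ is nondegenerate, $\tr R$ is a Koszul form whose associated metric is $\tau$, so that Lemma \ref{principalidempotentlemma} applies with $\la = \tr R$ and $h = \tau$ (its associated idempotent being the right principal idempotent $r$). Because $(\alg, \mlt)$ is triangularizable there is a basis in which every $L(x)$ is upper triangular, so the first basis vector is a common eigenvector of $L(\alg)$. More generally, if $0 \neq v \in \alg$ satisfies $L(x)v = \omega(x)v$ for all $x$ and some $\omega \in \alg^{\ast}$, then $x \mlt v = \omega(x)v$, which says exactly that $R(v)$ is the rank-at-most-one operator $x \mapsto \omega(x)v$; hence $\tr R(v) = \omega(v)$, and $\alg \mlt v \subseteq \lb v\ra$, so $\lb v\ra$ is a left ideal.

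Granting for the moment that $\omega(v) \neq 0$ for a suitable such $v$, I would set $u = \omega(v)^{-1}v$. Then $\omega(u) = 1$, so $u \mlt u = \omega(u)u = u$ is idempotent; the operator $R(u)$, given by $x \mapsto \omega(x)u$, satisfies $R(u)^{2} = R(u)$ and has image exactly $\lb u\ra$, so it is the projection onto the left ideal $\lb u\ra$; and $\tr R(u) = \omega(u) = 1$. This is precisely the asserted conclusion, so the entire lemma reduces to finding one common eigenvector $v$ of $L(\alg)$ whose weight $\omega$ satisfies $\omega(v) = \tr R(v) \neq 0$.

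The main obstacle is exactly this nonvanishing: an individual common eigenvector can perfectly well be $\tau$-null with $\omega(v) = 0$, and triangularizability by itself does not exclude this. To resolve it I would pass to the span $S$ of all common eigenvectors of $L(\alg)$, which is nonzero by the first paragraph and is a left ideal of $(\alg, \mlt)$, since $x \mlt v = \omega(x)v \in \lb v\ra$ for every common eigenvector $v$. By \eqref{rclsa6b} of Lemma \ref{principalidempotentlemma}, no nonzero left ideal of $(\alg, \mlt)$ is contained in $\ker \tr R$; hence $S \not\subseteq \ker \tr R$. Decomposing $S$ into its simultaneous eigenspaces $E_{\omega}$ and using that $\tr R(v) = \omega(v)$ for $v \in E_{\omega}$, I would pick $s \in S$ with $\tr R(s) \neq 0$ and read off from $\tr R(s) = \sum_{\omega}\tr R(s_{\omega})$ that some eigencomponent $v = s_{\omega}$ has $\tr R(v) = \omega(v) \neq 0$, which furnishes the eigenvector required above and completes the argument.
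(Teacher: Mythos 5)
Your proof is correct, and its skeleton---extract a common eigenvector $v$ of $L(\alg)$ from the invariant flag, observe that $R(v)$ is the rank-one operator $x \mapsto \omega(x)v$, so that $\lb v \ra$ is a left ideal and $\tr R(v) = \omega(v)$, then rescale---is the same as the paper's. Where you genuinely differ is the key nonvanishing step. The paper disposes of it in one line: for any common eigenvector $u$ with weight $\la$, one has $\tau(x, u) = \tr R(x \mlt u) = \la(x)\tr R(u)$ for all $x$, so if $\tr R(u) = 0$ then $u$ is $\tau$-orthogonal to all of $\alg$, contradicting nondegeneracy. Thus your worry that ``an individual common eigenvector can perfectly well be $\tau$-null with $\omega(v) = 0$'' cannot occur under the hypotheses of the lemma: nondegeneracy of $\tau$ forces \emph{every} common eigenvector of $L(\alg)$ to have nonvanishing weight. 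Your workaround---passing to the span $S$ of all common eigenvectors, invoking \eqref{rclsa6b} of Lemma \ref{principalidempotentlemma}, and decomposing $S$ into weight spaces---is valid (each step checks out, including the directness of the sum $S = \oplus_{\omega}E_{\omega}$), but it is heavier than necessary: \eqref{rclsa6b} applies directly to the one-dimensional left ideal $\lb v \ra$ itself, since $\tr R(v) = \omega(v) = 0$ would place $\lb v \ra$ inside $\ker \tr R$ by linearity of $\tr R$. That observation collapses your third paragraph to a sentence and makes both $S$ and the weight-space decomposition superfluous. In the end both routes rest on the same identity $\tr R(y \mlt x) = \tau(y, x)$---yours indirectly, through the proof of \eqref{rclsa6b}, the paper's directly---so what your approach buys is modularity (reuse of an established lemma), at the cost of machinery the direct computation avoids.
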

\begin{proof}
By assumption there exists a complete flag invariant under $L(\alg)$, so there exist $u \in \alg$ and $\la \in \alg^{\ast}$ such that $R(u)x = L(x)u = \la(x)u$ for all $x \in \alg$. Then $\tr R(u) = \la(u)$ and $\tau(x, u) = \tr R(x \mlt u) = \la(x)\tr R(u)$. Since $\tau$ is nondegenerate there is $v \in \alg$ such that $0 \neq \tau(v, u) = \la(v)\tr R(u)$, so $\la(u) = \tr R(u) \neq 0$. Replacing $u$ by $\la(u)^{-1}u$ it may be assumed that $\la(u) = \tr R(u) = 1$. Then $x\mlt u = \la(x) u$, so $\lb u \ra$ is a left ideal, and, in particular, $u\mlt u = u$. Finally, $R(u)^{2}x = \la(x)R(u)u = \la(x)u = R(u)x$.
\end{proof}
Note that Lemma \ref{triangularizablelemma} implies that a triangularizable LSA with nondegenerate trace form is incomplete. Suppose $(\alg, \mlt)$ is a triangularizable LSA with nondegenerate trace form $\tau$. It need not be the case that the right principal idempotent $r$ generate a left ideal. For example, in a clan, $r$ can be a right unit. Likewise it need not be the case that an idempotent $u$ as in Lemma \ref{triangularizablelemma} be the right principal idempotent (such a $u$ need not be uniquely determined).

Note that, for $u$ as in Lemma \ref{triangularizablelemma}, by Lemma \ref{gecharlemma} the hypothesis that $[\alg, \alg]$ have codimension one follows from the assumption that $L(u)$ be invertible.

Theorem \ref{triangularizabletheorem} is a slight refinement of Theorem \ref{triangularizabletheorem0} in the introduction. 

\begin{theorem}\label{triangularizabletheorem}
Let $(\alg, \mlt)$ be a triangularizable $n$-dimensional LSA over a field of characteristic zero and having nondegenerate trace form $\tau$ and codimension one derived Lie subalgebra $[\alg, \alg]$. Let $G$ be the simply-connected Lie group with Lie algebra $(\alg, [\dum, \dum])$. There are a nonzero constant $\kc$ and a closed unimodular subgroup $H \subset G$ having Lie algebra $[\alg, \alg]$, such that the characteristic polynomial $P$ of $(\alg, \mlt)$ solves $\H(e^{P}) = \kc e^{nP}$, and the level sets of $P$ are improper affine spheres homogeneous for the action of $H$ and having affine normals equal to a constant multiple of the right principal idempotent $r$. 
Moreover:
\begin{enumerate}
\item If $\Pi(\balg, L(r)) \subset (0, 1)$ then $(\balg, \circ)$ is right nilpotent, so nilpotent.
\item If $0 \notin \Pi(\balg, L(r))$, then $(\alg, \mlt)$ is simple.
\end{enumerate}
\end{theorem}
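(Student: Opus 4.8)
The plan is to reduce the entire statement to Lemma~\ref{incompletelsalemma}, whose hypotheses are that $(\alg,\mlt)$ be incomplete with nondegenerate trace form, that the right principal idempotent $r$ satisfy $\ker \tr R = \ker R(r)$, and that $\balg = \ker \tr R$ be unimodular. Once these are verified, conclusion~\eqref{rclsa10} of that lemma produces the nonzero constant $\kc$, the equation $\H(e^{P}) = \kc e^{nP}$, and the fact that the level sets are improper affine spheres with affine normal a constant multiple of $r$. Incompleteness is immediate from the remark following Lemma~\ref{triangularizablelemma}: a triangularizable LSA with nondegenerate $\tau$ is incomplete, so $\tr R \neq 0$ and $\ker \tr R$ has codimension one. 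Since $\tr R([x,y]) = \tau(x,y)-\tau(y,x)=0$ by symmetry of $\tau$, one has $[\alg,\alg]\subset\ker\tr R$, and as $[\alg,\alg]$ is assumed of codimension one this forces $[\alg,\alg] = \ker \tr R =: \balg$.

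The crux is the identification $\ker \tr R = \ker R(r)$, equivalently that $R(r)$ is an honest rank-one projection. First I apply Lemma~\ref{triangularizablelemma} to obtain an idempotent $u$ and a functional $\la\in\alg^{\ast}$ with $R(u)x = x\mlt u = \la(x)u$, $R(u)^{2}=R(u)$, and $\tr R(u)=\la(u)=1$. I then show $\la$ annihilates commutators directly from \eqref{rlsa}: since $x\mlt u = \la(x)u$ gives $R(x\mlt u)=\la(x)R(u)$, the relation $[L(x),R(u)] = R(x\mlt u)-R(u)R(x)$ yields $R(u)L(x) = L(x)R(u)-\la(x)R(u)+R(u)R(x)$; evaluating on $y$ and using $R(u)y=\la(y)u$ and $L(x)u=\la(x)u$ collapses everything to $R(u)(x\mlt y)=R(u)(y\mlt x)$, i.e.\ $\la([x,y])u = R(u)[x,y]=0$. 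Hence $[\alg,\alg]\subset\ker\la$, and since $\ker\la$ also has codimension one, $\ker\la=[\alg,\alg]=\ker\tr R$. Thus $\la = c\,\tr R$ for a scalar $c$, and evaluating at $u$ gives $c=1$, so $\la=\tr R$. Consequently $\tr R(x\mlt u)=\tr R(\tr R(x)\,u)=\tr R(x)$, which is exactly the equation defining the idempotent associated with the Koszul form $\tr R$; by uniqueness $u=r$. Therefore $R(r)=R(u)$ is the rank-one projection onto $\lb r\ra$ along $\ker\tr R$, giving $\ker R(r)=\ker\tr R$.

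For unimodularity of $\balg$: as $\balg=[\alg,\alg]$ is an ideal containing $[\alg,\alg]$, for $x\in\balg$ one has $\tr\ad_{\balg}(x)=\tr\ad_{\alg}(x)=\tr L(x)-\tr R(x)=\tr L(x)$, and triangularizability makes $L([a,b])=[L(a),L(b)]$ strictly triangular, so $\tr L$ vanishes on $[\alg,\alg]=\balg$; hence $\balg$ is unimodular and Lemma~\ref{incompletelsalemma} applies. The subgroup $H$ is the closed connected normal subgroup with Lie algebra the ideal $\balg$, which coincides with the kernel of the character $\chi:G\to\reap$ of Lemma~\ref{charpolylemma} satisfying $\chi\circ\exp_{G}=e^{\tr R}$, since $d\chi=\tr R$ has kernel $\balg$; it is unimodular because $\balg$ is. By \eqref{holgp}, $\chi|_{H}\equiv 1$, so $H$ preserves the level sets of $P$, and by Corollary~\ref{orbitcorollary} the orbit $\hol(G)0$ is the full component $\Om$ while the $\chi$-fibers correspond to the $P$-level sets, so $H$ acts transitively on them, giving the asserted homogeneity.

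Finally, the two refinements follow from the graph-extension picture. Because $\ker\tr R=\ker R(r)$, Lemma~\ref{gecharlemma} realizes $(\alg,\mlt,\tau)$ as the graph extension of the complete LSA $(\balg,\circ)$ along the compatible derivation $D=L(r)$, which is triangularizable since $\alg$ is. If $\Pi(\balg,L(r))\subset(0,1)$, the last sentence of Lemma~\ref{weightlemma} gives that $(\balg,\circ)$ is right nilpotent, hence nilpotent by Theorem~\ref{trivalgtheorem}. If $0\notin\Pi(\balg,L(r))$, then $D$ is invertible on $\balg$, and since $L(r)r=r$ this makes $L(r)$ invertible on all of $\alg$; the simplicity clause of Lemma~\ref{gecharlemma} then shows $(\alg,\mlt)$ is simple. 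I expect the main obstacle to be the crux step: in mixed signature the conditions of Lemma~\ref{principalidempotentlemma} that $R(r)-R(r)^{2}$ be nilpotent and $\tau$-self-adjoint do not force $R(r)$ to be a projection, so Vinberg's positive-definite argument is unavailable, and the codimension-one hypothesis is precisely what permits replacing $r$ by the projection idempotent $u$ of Lemma~\ref{triangularizablelemma} and then proving $u=r$.
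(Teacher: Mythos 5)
Your proposal is correct in its core and follows essentially the same route as the paper's proof: Lemma \ref{triangularizablelemma} supplies the pair $(u,\la)$; the codimension-one hypothesis forces $[\alg,\alg]=\ker\la=\ker\tr R$, whence $\la=\tr R$ and $u=r$, so $\ker R(r)=\ker\tr R$; the analytic conclusions are then delegated to Lemmas \ref{gecharlemma} and \ref{incompletelsalemma}, homogeneity comes from $H=\ker\chi$ and Corollary \ref{orbitcorollary}, the nilpotence clause from Lemma \ref{weightlemma} and Theorem \ref{trivalgtheorem}, and simplicity from Lemma \ref{gecharlemma} (your citation there is in fact more apt than the paper's, which points back to Lemma \ref{triangularizablelemma}). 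Two small remarks: your verification from \eqref{rlsa} that $\la$ kills commutators is valid but roundabout, since $\la([x,y])u=L([x,y])u=[L(x),L(y)]u=0$ follows at once from $L(x)u=R(u)x=\la(x)u$; and triangularizability is not needed for unimodularity, because $\tr L$ vanishes on $[\alg,\alg]$ in any LSA, $\tr[L(a),L(b)]$ being zero.

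The one genuine shortfall is the homogeneity of \emph{all} level sets. The diffeomorphism $\dev:G\to\Om$ of Corollary \ref{orbitcorollary} matches the fiber $\chi^{-1}(c)$ with $\{x\in\Om:P(x)=c\}$, not with the full level set $\{x\in\alg:P(x)=c\}$, so your argument gives transitivity of $H$ only on level sets contained in $\Om$. By conclusion \eqref{rclsa6} of Lemma \ref{incompletelsalemma}, $P(x+tr)=P(x)+t$, so $P$ attains every real value; the level sets with $P<0$ lie in the component of $\alg\setminus\{P=0\}$ other than $\Om$, and your fiber correspondence says nothing about them (even for $c>0$ one needs this translational homogeneity to see that $\Om=\{P>0\}$, hence that $\{P=c\}\subset\Om$). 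The paper closes exactly this point by observing that $\{P=1+t\}=\{P=1\}+tr$ is an affine image of $\{P=1\}$, hence again a homogeneous improper affine sphere. Alternatively one can argue that $\hol(H)$ itself is transitive on every nonzero level set: each such level set is connected, being a graph over a hyperplane transverse to $r$ by translational homogeneity, and every $\hol(H)$-orbit in it is open, since by \eqref{holexp0} the differential of the orbit map at $x$ is the restriction of $I+R(x)$ to $[\alg,\alg]$, which is injective wherever $P(x)\neq 0$. Either patch is short, but as written your proof establishes the stated homogeneity only for the level sets meeting $\Om$.
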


\begin{proof}
Let $u$ and $\lambda$ be as in the proof of Lemma \ref{triangularizablelemma}. Since $[\alg, \alg] \subset \ker \la \cap \ker \tr R$, if $[\alg, \alg]$ has codimension one then $[\alg, \alg] = \ker \la = \ker \tr R$. Since also $\la(u) = \tr R(u)$, it follows that $\la = \tr R$, and so $u$ is the right principal idempotent. The claims that the level sets of $P$ are affine spheres with affine normals a multiple of the right principal idempotent $r$ follow from Lemmas \ref{graphextensionlemma}, \ref{gecharlemma}, and \ref{incompletelsalemma}. If $P(x) = 1$ then $x \in \hol(G)0$, so $x = \hol(g)0$ for some $g \in G$. By \eqref{holgp}, $1 = P(x) = P(\hol(g)0) = \chi(g)P(0) = \chi(g)$, so $g$ is contained in the subgroup $H = \ker \chi$, which is unimodular, since the Lie algebra of $\ker \chi$ is $[\alg, \alg] = \ker \tr R$ is unimodular. It follows that the level set $\{x \in \alg: P(x) = 1\}$ is homogeneous for the action of $\ker \chi$. By the translational homogeneity of $P$ in the $r$ direction shown in \eqref{rclsa6} of Lemma \ref{principalidempotentlemma}, $\{x \in\alg: P(x) = 1 + t\} = \{x \in \alg: P(x) = 1\} + tr$ is an affine image of $\{x \in \alg: P(x) = 1\}$, so is also a homogeneous improper affine sphere. 

Suppose $\Pi(\balg, L(r)) \subset (0, 1)$ and let $0 < \al_{1} < \al_{2} < \dots < \al_{r} < 1$ be the distinct elements of $\Pi(\balg, L(r))$ arranged in increasing order. By \eqref{gbwt3} of Lemma \ref{weightlemma}, each subspace $\I^{i} = \oplus_{j \geq i}\balg^{\al_{j}}$ is a two-sided ideal and the quotients $\I^{i}/\I^{i+1}$ are trivial LSAs, so, by Lemma \ref{rightnilpotentlemma}, $(\balg, \circ)$ is right nilpotent. Since $(\alg, \mlt)$ is triangularizable, by Lemma \ref{cslemma} its underlying Lie algbera is solvable, so the underlying Lie algebra of $\balg = [\alg, \alg]$ is nilpotent. Hence, by Theorem \ref{trivalgtheorem}, $(\balg, \circ)$ is nilpotent.

That $0 \notin \Pi(\balg, L(r))$ means $L(r)$ is invertible. In this case $(\alg, \mlt)$ is simple by Lemma \ref{triangularizablelemma}.
\end{proof}

Note that the argument showing $(\balg, \circ)$ is right nilpotent if $\Pi(\balg, L(r)) \subset (0, 1)$ might fail were there supposed instead $\Pi(\balg, L(r)) \subset [0, 1]$. The problem is that one cannot conclude that the quotient $\I^{1}/\I^{2}$ is a trivial LSA.

\begin{remark}
In \cite{Mizuhara}, Mizuhara takes as hypotheses conditions corresponding to special cases of the conclusions \eqref{gbwt1}-\eqref{gbwt4} of Lemma \ref{weightlemma}. In particular, Mizuhara considers LSAs as in Theorem \ref{triangularizabletheorem} and for which $\Pi(\balg, L(r))$ is contained in $(0, 1)$ and $L(r)$ is diagonalizable. Such hypotheses exclude nontrivial LSAs. In example \ref{negeigexample} there are constructed LSA satisfying the hypotheses of Theorem \ref{triangularizabletheorem} and for which $\Pi(\balg, L(r))$ contains negative numbers or for which $L(r)$ is not diagonalizable.
\end{remark}

\begin{remark}
That a triangularizable LSA satisfying certain conditions must be simple shows a sense in which LSAs are very different from Lie algebras. 
\end{remark}

\begin{remark}
By \eqref{dptra} and \eqref{hesslogp} the characteristic polynomial $P$ of an LSA $(\alg, \mlt)$ determines the Koszul form $\tr R$ and the metric $\tau$, and, by \eqref{nablakdlogp} (with $k = 2$), $P$ determines the commutative part of the multiplication $\mlt$, but by itself $P$ does not determine in an obvious way the multiplication $\mlt$. To reconstruct $\mlt$ it seems necessary, but not sufficient without further hypotheses, to know also the underlying Lie algebra $(\alg, [\,,\,])$ and its relation to $L(r)$. This raises the question whether two triangularizable $n$-dimensional LSAs having nondegenerate trace form and codimension one derived Lie algebra and having the same characteristic polynomial must be isomorphic. Example \ref{negeigexample} shows that the answer is negative. There needs to be imposed some normalization condition situating the underlying Lie algebra of the LSA inside the symmetry algebra of $P$; it is planned to discuss this elsewhere.
\end{remark}

\section{Examples}\label{examplesection}
This section records some of the simplest illustrative examples.

\begin{example}\label{posdefsection}

Given an $(n-1)$-dimensional vector space $\ste$ with an nondegenerate inner product $g$ define an $n$-dimensional LSA $(\parab_{n}(g), \mlt)$ as follows. Equip $\alg = \ste \oplus \lb u \ra$ with the multiplication $\mlt$ defined, for $x, y \in \ste$, by $x\mlt y = g(x, y)u$, $x\mlt u = 0$, $u \mlt x = x/2$, and $u \mlt u = u$. It is straightforward to check that $(\parab_{n}, \mlt)$ is an LSA with trace form $\tau(x + au, y + bu) = ab + g(x, y)$ and characteristic polynomial $P(x + au)= 1 + a - g(x, x)/2$, whose level sets are paraboloids. In the case that $g = \delta$ is the standard Euclidean inner product, there will be written simply $\parab_{n} = \parab_{n}(\delta)$. In this case $\tau$ is positive definite. Moreover, if $g$ is positive definite then $\parab_{n}(g)$ is isomorphic to $\parab_{n}$.

\begin{lemma}\label{posdeflemma}
Suppose the $n$-dimensional LSA $(\alg, \mlt)$ carries a Koszul form $\la \in \alg^{\ast}$ for which the associated Hessian metric $h$ is positive definite. Suppose that the associated idempotent $u$ satisfies $\la(u) = 1$ and that $L(u)$ is invertible with real eigenvalues. Then $(\alg, \mlt)$ is isomorphic to $\parab_{n}$.
\end{lemma}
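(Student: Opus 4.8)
The plan is to exploit positive definiteness to rigidify the action of the idempotent $u$ and then recognize $(\alg,\mlt)$ as a graph extension of a trivial algebra. First I would feed the hypotheses into Lemma \ref{principalidempotentlemma}: since $R(u)-R(u)^{2}$ is nilpotent and, by positive definiteness of $h$, also $h$-self-adjoint, it must vanish, so $R(u)$ is an $h$-orthogonal projection commuting with $L(u)$. The assumption that $L(u)$ has real eigenvalues then forces $R(u)$ to have real (hence $\{0,1\}$) eigenvalues, and feeding $R(u)=R(u)^{\ast}$ into \eqref{lladj} gives $2L(u)=R(u)+I$, i.e. $L(u)=\tfrac12(I+R(u))$, which is self-adjoint and invertible with eigenvalues only $\tfrac12$ and $1$. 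Thus $\alg=\alg_{0}\oplus\alg_{1}$ is the $h$-orthogonal eigenspace decomposition with $\alg_{0}=\ker R(u)$ (eigenvalue $\tfrac12$) and $\alg_{1}=\operatorname{Im}R(u)\ni u$ (eigenvalue $1$).

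Next I would read off the multiplication table from $[L(u),L(x)]=L([u,x])$. For $a\in\alg_{1}$ one has $[u,a]=0$, giving $\alg_{1}\mlt\alg_{1}\subset\alg_{1}$ and $u\mlt a=a\mlt u=a$, so $u$ is a two-sided unit of the subalgebra $(\alg_{1},\mlt)$; for $x\in\alg_{0}$ one has $[u,x]=\tfrac12 x$, and a short computation with $L(u)L(x)=L(x)L(u)+\tfrac12 L(x)$ yields $L(u)(x\mlt y)=x\mlt y$ and $L(u)(a\mlt x)=\tfrac12(a\mlt x)$, so $\alg_{0}\mlt\alg_{0}\subset\alg_{1}$ and $\alg_{1}\mlt\alg_{0}\subset\alg_{0}$. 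Crucially, the same computation gives $L(u)(x\mlt a)=\tfrac32(x\mlt a)$ for $x\in\alg_{0}$, $a\in\alg_{1}$; since $\tfrac32$ is not an eigenvalue of $L(u)$, this forces $\alg_{0}\mlt\alg_{1}=0$. These are exactly the structure relations of $\parab_{n}$ with $\ste=\alg_{0}$ and $\lb u\ra$ replaced by $\alg_{1}$.

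The decisive step is to show $\alg_{1}=\lb u\ra$, equivalently $\tr R(u)=1$, equivalently $\ker\la=\ker R(u)$ (one always has $\ker R(u)\subset\ker\la$ because $\la=\la\circ R(u)$, so the content is $\alg_{1}\cap\ker\la=\alg_{1}\cap u^{\perp}=0$). Once this rank-one property is in hand the proof closes quickly: by Lemma \ref{gecharlemma}, $(\alg,\mlt,h)$ is isometrically isomorphic to the graph extension of $(\ker\la,\circ,h)$ with compatible derivation $D=L(u)$, where $x\circ y=x\mlt y-h(x,y)u$ (using $\la(u)=1$). But $D=L(u)$ restricts to $\tfrac12 I$ on $\ker\la=\alg_{0}$, so $\Pi(\ker\la,D)=\{\tfrac12\}$, and Lemma \ref{weightlemma}\eqref{gbwt3} gives $\ker\la\circ\ker\la\subset(\ker\la)^{1}=\{0\}$; hence $(\ker\la,\circ)$ is the trivial (zero-multiplication) algebra. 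Plugging the trivial base and $D=\tfrac12 I$ into the graph-extension product \eqref{gemlt} reproduces verbatim the defining relations of $\parab_{n}(h|_{\ker\la})$, and since $h|_{\ker\la}$ is positive definite, $\parab_{n}(h|_{\ker\la})\cong\parab_{n}$, as asserted.

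I expect the rank-one reduction $\ker\la=\ker R(u)$ to be the main obstacle, since the preceding structural relations alone are consistent with $\dim\alg_{1}>1$ (a positive-definite clan with unit realizes $\alg_{0}=0$). The normalization $\la(u)=1$ is what must be leveraged here — interpreting $\la$ as the trace-form Koszul form $\tr R$, so that $\tr R(u)=\la(u)=1$ pins the weight-$1$ space to $\lb u\ra$ — and I would present this reduction carefully, as it is precisely the point distinguishing the improper-affine-sphere (rank-one) regime from the convex-cone (unit) regime discussed after Lemma \ref{principalidempotentlemma}.
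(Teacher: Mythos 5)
Your opening and closing steps track the paper's proof closely: the paper also begins with the observations (from the discussion following Lemma \ref{principalidempotentlemma}) that positive definiteness forces the nilpotent self-adjoint operator $R(u)-R(u)^{2}$ to vanish, and that $L(u)$ is then self-adjoint with spectrum in $\{\tfrac12,1\}$, and it ends by showing $x\mlt y=h(x,y)u$ for $x,y\in\ker\la$, which is your Peirce relation $\alg_{0}\mlt\alg_{0}\subset\alg_{1}$ specialized to $\alg_{1}=\lb u\ra$. One repair is needed in your first step: $2L(u)=R(u)+I$ does not follow by ``feeding $R(u)=R(u)^{\ast}$ into \eqref{lladj}'', since \eqref{lladj} only gives $L(u)+L(u)^{\ast}=R(u)+I$; you must first prove that $L(u)$ is self-adjoint, and that is where the real-eigenvalue hypothesis actually enters: $[L(u),R(u)]=R(u)-R(u)^{2}=0$, so $L(u)$ commutes with $L(u)^{\ast}=R(u)+I-L(u)$, hence is $h$-normal, and a normal operator with real eigenvalues is self-adjoint. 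With that fixed, your Peirce relations and your graph-extension endgame are correct (the latter is heavier than the paper's direct computation, but fine).

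The genuine gap is the step you yourself single out as decisive: $\alg_{1}=\lb u\ra$, equivalently $\ker\la=\ker R(u)$. You do not prove it, and the route you propose --- reading $\la$ as the trace form $\tr R$ so that $\tr R(u)=\la(u)=1$ --- is not available: nothing in the hypotheses identifies $\la$ with $\tr R$, and $\la(u)=1$ is a pure rescaling normalization (any Koszul form with $\la(u)\neq 0$ can be so normalized), so it carries no information about $\tr R(u)$. In fact no argument from the ingredients you use can close this gap. Take $\alg=\rea\oplus\rea$ with componentwise multiplication and $\la(x)=\tfrac12(x_{1}+x_{2})$: then $h(x,y)=\tfrac12(x_{1}y_{1}+x_{2}y_{2})$ is positive definite, the associated idempotent is $u=(1,1)$ with $\la(u)=1$, and $L(u)=I$ is invertible with real eigenvalues, yet $\alg_{0}=0$ and $\dim\alg_{1}=2$; this is exactly the ``positive-definite clan with unit'' you mention (and it is not isomorphic to $\parab_{2}$, which has only one nonzero idempotent). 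So the rank-one reduction is genuinely external input rather than a consequence of the normalization $\la(u)=1$. The paper supplies it wholesale by citing ``the argument proving Proposition 4.1 of \cite{Shima-homogeneoushessian}'' for the claim that every eigenvalue of $L(u)$ on $\ker\la$ equals $\tfrac12$, which immediately gives $\alg_{1}\cap\ker\la=0$. Your proposal therefore reduces the lemma correctly to its crux, but leaves that crux unproved, and the fix you sketch would not prove it; what is needed is to invoke Shima's proposition with the hypotheses it carries (the rank-one/graph-domain setting in force in the paper's application, where $\la=\tr R$ and $\ker R(u)=[\alg,\alg]$ has codimension one), or equivalently to take $\ker\la=\ker R(u)$ as an explicit assumption.
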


\begin{proof}
By the discussion following the proof of Lemma \ref{principalidempotentlemma}, $L(u)$ is self-adjoint and so diagonalizable. By the argument proving Proposition $4.1$ of \cite{Shima-homogeneoushessian}, the eigenvalues of the restriction of $L(u)$ to $\ker \la$ are all $1/2$, so that this restriction is half the identity, and, if $x, y \in \ker\la$, then $x \mlt y$ is an eigenvector of $L(u)$ with eigenvalue $1$, so equals $h(x, y)u$. It follows that $(\alg, \mlt)$ is isomorphic to $\parab_{n}(g)$ where $g$ is the restriction of $h$ to $\ker\la$.
\end{proof}

Lemma \ref{posdeflemma} can be deduced as a special case of Proposition II.$5$ of \cite{Vinberg}. Its relevance here is that an LSA yielding a homogeneous improper affine sphere that is not an elliptic paraboloid must have an indefinite signature trace form. 
\end{example}

\begin{example}\label{cayleyexample}
Let $\cayn$ be an $n$-dimensional real vector space equipped with a basis $e_{1}, \dots, e_{n}$. Let $\ep_{1}, \dots, \ep_{n}$ be the dual basis of $\cayn^{\ast}$ and write $e_{ij} = e_{i}\tensor \ep_{j}$ for the standard basis of $\eno(\cayn)$. For $x =  \sum_{i=1}^{n}x_{i}e_{i}$ there holds $e_{ij}(x) = x_{j}e_{i}$. For $x = \sum_{i=1}^{n}x_{i}e_{i}, y =  \sum_{i=1}^{n}y_{i}e_{i} \in \cayn$, define $L(x)$ and $R(x)$ by 
\begin{align}\label{cayl}
\begin{split}
L(x) &= x_{n}\sum_{i=1}^{n}ie_{ii} + \sum_{1 \leq j < i \leq n}x_{i-j}e_{ij} = 
\begin{pmatrix} 
x_{n} & 0 & 0 & \dots & 0 &0 & 0\\
x_{1} & 2x_{n} & 0 & \dots &0& 0 & 0\\
x_{2} & x_{1} & 3x_{n} & \dots & 0&0 & 0\\
\vdots & \vdots & \vdots & \dots& \vdots & \vdots & \vdots \\
x_{n-2} & x_{n-3} & x_{n-4} & \dots & x_{1} & (n-1)x_{n} & 0\\
x_{n-1} & x_{n-2} & x_{n-3} & \dots & x_{2} & x_{1} & nx_{n}
\end{pmatrix},\\
R(x) &= \sum_{i = 1}^{n}ix_{i}e_{in} + \sum_{1 \leq j < i \leq n}x_{i-j}e_{ij}= 
\begin{pmatrix} 
0 & 0 & 0 & \dots & 0 &0 & x_{1}\\
x_{1} & 0 & 0 & \dots &0& 0 & 2x_{2}\\
x_{2} & x_{1} & 0 & \dots & 0&0 & 3x_{3}\\
\vdots & \vdots & \vdots & \dots& \vdots & \vdots & \vdots \\
x_{n-2} & x_{n-3} & x_{n-4} & \dots & x_{1} & 0 & (n-1)x_{n-1}\\
x_{n-1} & x_{n-2} & x_{n-3} & \dots & x_{2} & x_{1} & nx_{n}
\end{pmatrix},
\end{split}
\end{align}
so that $x\mlt y = L(x)y = R(y)x$ and $[x, y]$ are given by
\begin{align}\label{cayl2}
\begin{split}
x\mlt y &= \sum_{i = 1}^{n}\left(ix_{n}y_{i} + \sum_{1 \leq j < i}x_{i-j}y_{j}\right)e_{i}, \\
 [x, y] &= \sum_{i = 1}^{n-1}i(y_{i}x_{n} - x_{i}y_{n})e_{i}.
\end{split}
\end{align}
(When $n = 1$, $(\cayn, \mlt)$ is just the algebra of real numbers.)
The \textit{Cayley algebra} $(\cayn, \mlt)$ is an LSA with underlying Lie bracket $[x, y]$. Since $(n+1)\tr R(x) = n(n+1)x_{n} = 2\tr L(x)$, $(\cayn, \mlt)$ is not complete. By \eqref{cayl2},
\begin{align}
\tau(x, y) = \tr R(x\mlt y) = n(x \mlt y)_{n} = n^{2}x_{n}y_{n} + n\sum_{1 \leq j < n}x_{n-j}y_{j},
\end{align}
where $(x\mlt y)_{n}$ is the coefficient of $e_{n}$ in $x \mlt y$. The right principal idempotent in $\cayn$ is $r = n^{-1}e_{n}$ and $L(r)$ is invertible with
\begin{align}
\Pi(\cayn, L(r)) = \{\tfrac{1}{n}, \tfrac{2}{n}, \dots, \tfrac{n-1}{n}, 1\}.
\end{align}
By \eqref{cayl2}, $\fili_{n-1} = [\cayn, \cayn] = \ker \tr R = \ker R(r)$ is a codimension one abelian Lie subalgebra, and $(\cayn, [\dum, \dum])$ is solvable. By Lemma \ref{incompletelsalemma} the multiplication
\begin{align}\label{fn1}
x \circ y = x\mlt y - \tfrac{1}{n}\tau(x, y)e_{n} = \sum_{i = 2}^{n-1}\left( \sum_{1 \leq j < i}x_{i-j}y_{j}\right)e_{i}
\end{align}
on $\fili_{n-1}$ makes $(\fili_{n-1}, \circ)$ a complete LSA for which the restriction of $h = \tfrac{1}{n}\tau$ to $(\fili_{n-1}, \circ)$ is a Hessian metric on which $L(r)$ acts as a compatible derivation and satisfying
\begin{align}\label{fn2}
&h(x, y) = \sum_{1 \leq j \leq n-1}x_{n-j}y_{j},&& x, y \in \fili_{n-1}.
\end{align}
In terms of the basis $\{e_{1}, \dots, e_{n-1}\}$ the relations \eqref{fn1} and \eqref{fn2} have the forms
\begin{align}
&e_{i}\circ e_{j} = \begin{cases} e_{i+j} & i + j \leq n-1\\ 0 & i+j > n - 1\end{cases},& &\tau(e_{i}, e_{j}) = \begin{cases} 1 & i + j = n\\ 0 & i+j \neq n\end{cases}, 
\end{align}
and $L(r)$ is the derivation defined by $De_{i} = \tfrac{i}{n}e_{i}$. The expression
\begin{align}
h(x\circ y, z) = \sum_{i + j + k = n, i\geq 1, j \geq 1, k\geq 1}x_{i}y_{j}z_{k},
\end{align}
is completely symmetric and $\circ$ is abelian.

From Lemma \ref{incompletelsalemma} it follows that the level sets of the characteristic polynomial $P_{n}$ are improper affine spheres homogeneous for the action of the simply connected abelian Lie group corresponding to $[\cayn, \cayn]$. As was mentioned in the introduction and will be proved now, these level sets are the hypersurfaces called \textit{Cayley hypersurfaces} in \cite{Eastwood-Ezhov}. 

\begin{lemma}
For $n \geq 1$, the characteristic polynomial $P_{n}(x_{1}, \dots, x_{n})$ of $(\cayn, \mlt)$ satisfies the recursion
\begin{align}\label{cayleyrecursion}
P_{n}(x_{1}, \dots, x_{n})  - 1 = \sum_{i = 1}^{n-1}x_{i}(1 - P_{n-i}(x_{1}, \dots, x_{n-i})) + nx_{n}, 
\end{align}
where $P_{1}(x) = 1 + x$ and, when $n = 1$, the sum in \eqref{cayleyrecursion} is understood to be trivial. 
\end{lemma}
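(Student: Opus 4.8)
The plan is to compute the determinant $P_{n}(x) = \det(I + R(x))$ directly from the explicit matrix in \eqref{cayl} and then extract the recursion by comparing the resulting closed form with the analogous formula for the lower-dimensional polynomials $P_{n-i}$. The base case $n = 1$ is immediate, since there $R(x) = (x_{1})$ and $P_{1} = 1 + x_{1}$, in agreement with \eqref{cayleyrecursion} under the convention that its sum is empty.

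First I would read off from \eqref{cayl} that $I + R(x)$ differs from the unit lower-triangular Toeplitz matrix $T_{n}$ with $(T_{n})_{ij} = x_{i-j}$ for $i > j$ only in its last column: precisely $I + R(x) = T_{n} + v\,\ep_{n}$, where $\ep_{n}$ is the last dual basis covector and $v = \sum_{i=1}^{n} i\, x_{i}\, e_{i}$ records the entries $i x_{i}$ of the final column of $R(x)$. Since $T_{n}$ is unit triangular, $\det T_{n} = 1$, and the standard identity for the determinant of a rank-one perturbation (the same one invoked in the proof of Lemma \ref{graphextensionlemma}) gives $P_{n} = \det(T_{n} + v\,\ep_{n}) = 1 + \ep_{n}(T_{n}^{-1}v)$. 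The inverse of a unit lower-triangular Toeplitz matrix is again unit lower-triangular Toeplitz, its diagonals governed by the reciprocal power series; setting $s_{0} = 1$ and $s_{k} = -\sum_{j=1}^{k} x_{j} s_{k-j}$ for $k \geq 1$, one has $(T_{n}^{-1})_{ni} = s_{n-i}$, whence
\[
P_{n} - 1 = \ep_{n}(T_{n}^{-1}v) = \sum_{i=1}^{n} i\, x_{i}\, s_{n-i}.
\]

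The term $i = n$ contributes $n x_{n} s_{0} = n x_{n}$, which matches the last summand of \eqref{cayleyrecursion}, so it remains to identify $\sum_{i=1}^{n-1} i x_{i} s_{n-i}$ with $\sum_{i=1}^{n-1} x_{i}(1 - P_{n-i})$. Applying the closed form above to $P_{n-i}$ yields $1 - P_{n-i} = -\sum_{l=1}^{n-i} l\, x_{l}\, s_{n-i-l}$, while the defining recursion for the coefficients yields $s_{n-i} = -\sum_{j=1}^{n-i} x_{j} s_{n-i-j}$. Substituting both, each side becomes a sum over the set of pairs $\{(a,b) : a,b \geq 1,\ a + b \leq n\}$ of the symmetric quantity $x_{a} x_{b}\, s_{n-a-b}$, weighted by the first index $a$ on the left and by the second index $b$ on the right; since the involution $(a,b)\mapsto(b,a)$ preserves this index set and the summand while interchanging the two weights, the sums coincide.

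This symmetry/index-swap step is the crux of the argument and the only nonformal point; everything else is the rank-one determinant identity and bookkeeping. The main subtlety I would flag is that the coefficients $s_{k}$ depend only on $x_{1},\dots,x_{k}$ and not on $n$, which is exactly what legitimizes reusing the same closed form — with the same sequence $s_{k}$ — for each $P_{n-i}$.
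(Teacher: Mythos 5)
Your proof is correct, and it follows a genuinely different route from the paper's. The paper obtains \eqref{cayleyrecursion} by elementary column operations on the determinant \eqref{cayrec}: adding the penultimate column to the last column, expanding by cofactors down the penultimate column to peel off the term $-x_{1}P_{n-1}$, and iterating the procedure. You instead prove a closed formula: writing $I + R(x) = T_{n} + v\,\ep_{n}$ with $T_{n}$ unit lower-triangular Toeplitz and applying the matrix determinant lemma together with the fact that $T_{n}^{-1}$ is again unit lower-triangular Toeplitz with reciprocal power-series coefficients $s_{k}$, you get $P_{n} - 1 = \sum_{i=1}^{n} i\,x_{i}\,s_{n-i}$, and then deduce the recursion by substituting this same closed form for each $P_{n-i}$ and the defining recursion for $s_{n-i}$; the two resulting double sums are carried into one another by the involution $(a,b)\mapsto(b,a)$ of the symmetric index set $\{a,b\geq 1,\ a+b\leq n\}$, since both carry the same overall sign and the summand $x_{a}x_{b}s_{n-a-b}$ is symmetric. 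The point you flag --- that $s_{k}$ depends only on $x_{1},\dots,x_{k}$, so a single sequence serves all the $P_{m}$ simultaneously --- is indeed what makes the substitution legitimate, and the remaining steps (the rank-one determinant identity, the Toeplitz inverse) are standard and correctly applied. What your approach buys is the explicit formula itself: in generating-function language it says $P_{n}-1$ is the coefficient of $t^{n}$ in $tf'(t)/f(t)$, where $f(t) = 1 + \sum_{k\geq 1}x_{k}t^{k}$, which meshes with the power-series model $(\fili_{\infty},\circ)$ described at the end of Example \ref{cayleyexample} and makes the identity $P_{n}-1 = -n\Phi_{n}$ of Lemma \ref{cayleypolynomiallemma} nearly immediate, since $\Phi_{n}$ is the coefficient of $t^{n}$ in $-\log f$. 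What the paper's argument buys is economy: it is purely determinant bookkeeping, needing neither the Toeplitz inverse nor the auxiliary sequence $s_{k}$.
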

\begin{proof}
Write 
\begin{tiny}
\begin{align}\label{cayrec}
\begin{split}&P_{n}(x) = \det(I + R(x))\\
 &= 
\begin{vmatrix} 
1 & 0 & 0 & \dots & 0 &0 & x_{1}\\
x_{1} & 1 & 0 & \dots &0& 0 & 2x_{2}\\
x_{2} & x_{1} & 1 & \dots & 0&0 & 3x_{3}\\
\vdots & \vdots & \vdots & \dots& \vdots & \vdots & \vdots \\
x_{n-3} & x_{n-4} & x_{n-5} & \dots & 1& 0 &  (n-2)x_{n-2}\\
x_{n-2} & x_{n-3} & x_{n-4} & \dots & x_{1} & 1 & (n-1)x_{n-1}\\
x_{n-1} & x_{n-2} & x_{n-3} & \dots & x_{2} & x_{1} & 1 + nx_{n}
\end{vmatrix} = \begin{vmatrix} 
1 & 0 & 0 & \dots &0 & 0 & x_{1}\\
x_{1} & 1 & 0 & \dots& 0 & 0 & 2x_{2}\\
x_{2} & x_{1} & 1 & \dots& 0 & 0 & 3x_{3}\\
\vdots & \vdots & \vdots & \dots& \vdots  & \vdots & \vdots \\
x_{n-3} & x_{n-4} & x_{n-5} & \dots & 1 & 0 &  (n-2)x_{n-2}\\
x_{n-2} & x_{n-3} & x_{n-4} & \dots & x_{1} & 1 &  1+ (n-1)x_{n-1}\\
x_{n-1} & x_{n-2} & x_{n-3} & \dots &x_{2} & x_{1} & 1 + nx_{n} + x_{1}
\end{vmatrix}\\
& =  \begin{vmatrix} 
1 & 0 & 0 & \dots & 0 & x_{1}\\
x_{1} & 1 & 0 & \dots & 0 & 2x_{2}\\
x_{2} & x_{1} & 1 & \dots & 0 & 3x_{3}\\
\vdots & \vdots & \vdots & \dots & \vdots & \vdots \\
x_{n-3} & x_{n-4} & x_{n-5} & \dots & 1 &  (n-2)x_{n-2}\\
x_{n-1} & x_{n-2} & x_{n-3} & \dots & x_{2} & 1 + nx_{n} + x_{1}
\end{vmatrix}-x_{1}P_{n-1}(x_{1}, \dots, x_{n-1}),
\end{split}
\end{align}
\end{tiny}

\noindent
where the first equality results from adding the penultimate column to the last column, and the second equality results upon 
evaluating the second determinant by expanding by cofactors down the penultimate column. Iterating the same procedure applied to the determinant appearing in the final expression of \eqref{cayrec} yields the recursion \eqref{cayleyrecursion}.
\end{proof}

\begin{lemma}\label{cayleypolynomiallemma}
The Eastwood-Ezhov polynomials $\Phi_{n}$ defined in \eqref{eepolynomials} are related to the polynomials $P_{n}$ by 
\begin{align}\label{eec}
P_{n} - 1= -n\Phi_{n}.
\end{align}
As a consequence the polynomials $\Phi_{n}$ satisfy the recursion \eqref{cayleyrecursion2}.
\end{lemma}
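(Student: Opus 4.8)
The plan is to prove the identity $P_n - 1 = -n\Phi_n$ by induction on $n$, using the determinant recursion \eqref{cayleyrecursion} established in the preceding lemma, and then to read off \eqref{cayleyrecursion2} as a formal consequence. The base case $n=1$ is the check $P_1 - 1 = x = -\Phi_1$, using $\Phi_1 = -x$. For the inductive step I would substitute the inductive hypothesis $P_{n-i}-1 = -(n-i)\Phi_{n-i}$ into \eqref{cayleyrecursion}: since $1 - P_{n-i} = (n-i)\Phi_{n-i}$, the right-hand side becomes $\sum_{i=1}^{n-1}(n-i)x_i\Phi_{n-i} + nx_n$, and the desired conclusion $P_n - 1 = -n\Phi_n$ is then \emph{exactly equivalent} to the assertion that the explicit polynomials \eqref{eepolynomials} obey the recursion \eqref{cayleyrecursion2}. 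Thus the whole lemma collapses to one combinatorial fact about the Eastwood-Ezhov polynomials, and that same fact furnishes the "consequence" claim \eqref{cayleyrecursion2}.

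The combinatorial core is the only place where real work is needed, and I would handle it with generating functions. Writing $f(t) = \sum_{k\ge1} x_k t^k$ in a formal variable $t$ (treating $x_1, x_2, \dots$ as independent indeterminates, so $\Phi_n$ is a genuine function of $x_1,\dots,x_n$), the inner sum $\sum_{j_1 + \dots + j_i = n} x_{j_1}\cdots x_{j_i}$ of \eqref{eepolynomials} is precisely the coefficient of $t^n$ in $f(t)^i$. Because the terms with $i>n$ do not contribute to the coefficient of $t^n$, summing $\sum_{i\ge1}\tfrac{(-1)^i}{i}u^i = -\log(1+u)$ at $u=f(t)$ yields the closed form
\begin{align}
\sum_{n\ge1}\Phi_n\, t^n = -\log\bigl(1 + f(t)\bigr) = -\log F(t), \qquad F(t) := 1 + f(t).
\end{align}

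From this closed form the recursion \eqref{cayleyrecursion2} is immediate. Setting $\Phi(t) = \sum_{n\ge1}\Phi_n t^n$, the identity $\Phi(t) = -\log F(t)$ is equivalent to the first-order relation $F'(t) + F(t)\Phi'(t) = 0$. I would multiply by $t$ and extract the coefficient of $t^n$, using $tF'(t) = \sum_{n\ge1} n x_n t^n$, $t\Phi'(t) = \sum_{n\ge1} n\Phi_n t^n$, and $F(t) = 1 + \sum_{k\ge1}x_k t^k$. The coefficient of $t^n$ in $t\Phi'$ is $n\Phi_n$, and in $\bigl(F-1\bigr)t\Phi'$ it is $\sum_{i=1}^{n-1}(n-i)x_i\Phi_{n-i}$; hence matching coefficients in $tF' + F\,t\Phi' = 0$ gives $nx_n + n\Phi_n + \sum_{i=1}^{n-1}(n-i)x_i\Phi_{n-i} = 0$, which is precisely \eqref{cayleyrecursion2} after dividing by $n$.

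I expect the main obstacle to be purely bookkeeping: justifying the passage from the double sum in \eqref{eepolynomials} to the logarithm (in particular the truncation of the $i$-sum and the rearrangement of summations in the ring of formal power series over $\mathbb{Q}[x_1, x_2, \dots]$, where the $1/i$ and $1/n$ require characteristic zero), and keeping the shifted indices and the weights $i/n$ straight when reading coefficients out of $F' + F\Phi' = 0$. Once \eqref{cayleyrecursion2} holds for the polynomials \eqref{eepolynomials}, the induction of the first paragraph closes and both assertions follow; alternatively the last step may be phrased as a uniqueness argument, noting that \eqref{cayleyrecursion} forces $(1-P_n)/n$ to satisfy \eqref{cayleyrecursion2} with the same initial value $-x$, so it must coincide with $\Phi_n$.
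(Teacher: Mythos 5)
Your proof is correct, but it takes a genuinely different route from the paper's. The paper argues via common infinitesimal symmetries: by \eqref{dptra} the vector fields $E_{i} = (I + R(x))e_{i}$, $1 \leq i \leq n-1$, annihilate $P_{n}$ (because $\tr R(e_{i}) = 0$ for $i < n$), they annihilate $\Phi_{n}$ by Proposition $1$ of \cite{Eastwood-Ezhov}, and $\pr_{n}$ annihilates $P_{n} + n\Phi_{n}$ because $x_{n}$ enters both $P_{n}$ and $-n\Phi_{n}$ only through the monomial $nx_{n}$; since these $n$ vector fields are everywhere linearly independent, $P_{n} + n\Phi_{n}$ is constant, equal to $1$ by evaluation at $0$, and \eqref{cayleyrecursion2} is then \emph{deduced} from \eqref{cayleyrecursion} and \eqref{eec}. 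You instead prove \eqref{cayleyrecursion2} first, directly from the definition \eqref{eepolynomials}, via the generating-function identity $\sum_{n \geq 1}\Phi_{n}t^{n} = -\log\bigl(1 + \sum_{k \geq 1}x_{k}t^{k}\bigr)$ and coefficient extraction from $tF' + F\,t\Phi' = 0$, and then close an induction on the determinant recursion \eqref{cayleyrecursion} to obtain \eqref{eec}; this reverses the logical roles of the two claims (in your argument \eqref{cayleyrecursion2} is the key lemma rather than the consequence), which is harmless since both assertions of the lemma end up established, and your inductive step is exactly the correct equivalence. What your route buys: it is self-contained, making no appeal to the Eastwood--Ezhov symmetry result, and the closed form $-\log F(t)$ is an illuminating structural statement about the polynomials \eqref{eepolynomials} in its own right. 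What the paper's route buys: given the already-developed machinery \eqref{dptra}, it is shorter, and it exhibits the geometric reason for the identity --- $P_{n}$ and $\Phi_{n}$ are invariants of the same abelian group of affine motions, so must agree up to the normalization fixed by their $x_{n}$-dependence and values at the origin.
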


\begin{proof}
Write $P = P_{n}$ and $\Phi = \Phi_{n}$. By \eqref{dptra}, for $1 \leq i \leq n-1$, the vector field $E_{i} = (I + R(x))e_{i} = \pr_{i} + \sum_{j = i+1}^{n}x_{j-i}\pr_{j}$ satisfies $dP_{x}(E_{i}) = P(x)\tr R(e_{i}) = 0$. By Proposition $1$ of \cite{Eastwood-Ezhov}, $d\Phi(E_{i}) =0$. Examining the descriptions \eqref{cayleyrecursion} and \eqref{eepolynomials} of $P$ and $\Phi_{n}$, it is straightforward to see that that in $P$ and $-n\Phi$ the only monomial in which $x_{n}$ appears is $nx_{n}$, so that $\pr_{n}$ annihilates $P + n\Phi$. Hence the $n$ linearly independent vector fields $E_{1}, \dots, E_{n-1}$, and $\pr_{n}$ annihilate  $P + n\Phi$, so it is constant. As $P(0) = 1$ and $\Phi(0) = 0$, the relation \eqref{eec} follows. The recursion \eqref{cayleyrecursion2} for $\Phi_{n}$ follows from \eqref{cayleyrecursion} and \eqref{eec}.
\end{proof}

Since the polynomials \eqref{cayleyrecursion} are related to the polynomials $\Phi_{n}$ by $P_{n} - 1= -n\Phi_{n}$, the preceding discussion of $\cayn$ proves that the Cayley hypersurfaces are homogeneous improper affine spheres in a manner different from that in \cite{Eastwood-Ezhov}.

\begin{lemma}
The characteristic polynomial $P_{n}$ of the Cayley algebra $(\cayn, \mlt)$ solves
\begin{align}\label{hepn}
\H(e^{P_{n}}) =(-1)^{n(n-1)/2}n^{n+1}e^{nP_{n}}.
\end{align}
\end{lemma}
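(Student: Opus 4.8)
The plan is to invoke the machinery already in place so that the statement reduces to pinning down a single constant, and then to extract that constant by evaluating everything at the origin, where $P_n$ and its derivatives are completely explicit. First I would record that $(\cayn, \mlt)$ satisfies all the hypotheses needed: the example has already verified $(n+1)\tr R(x) = n(n+1)x_{n} = 2\tr L(x)$, hence $2\tr L = (n+1)\tr R$; the translational homogeneity $P_n(x + tr) = P_n(x) + t$ in the direction of the right principal idempotent $r = n^{-1}e_{n}$ is \eqref{rclsa6} of Lemma \ref{incompletelsalemma}; the trace form $\tau$ is nondegenerate; and $\fili_{n-1} = \ker \tr R = \ker R(r)$ is abelian, so unimodular. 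Therefore either Theorem \ref{lsacptheorem} or \eqref{rclsa10} of Lemma \ref{incompletelsalemma} already yields a \emph{nonzero} constant $\kc$ with $\H(e^{P_n}) = \kc\, e^{nP_n}$, and the only remaining task is to show $\kc = (-1)^{n(n-1)/2}n^{n+1}$.

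Next I would evaluate at $x = 0$. Since $R(0) = 0$ we have $P_n(0) = \det I = 1$, so $\log P_n$ is defined near $0$. The identity $P^{n}\,\H(\log P) = -e^{-nP}\H(e^{P})$ from the proof of Theorem \ref{lsacptheorem}, evaluated at $x=0$ where $P_n(0)=1$, gives
\begin{align}
\H(\log P_n)(0) = -e^{-n}\,\H(e^{P_n})(0) = -e^{-n}\,\kc\, e^{n} = -\kc .
\end{align}
On the other hand, the consequence of \eqref{hesslogp} recorded immediately after it states $(\hess \log P_n)_{0} = -\tau$. Writing the Hessian determinant in the coordinates dual to the basis $e_1, \dots, e_n$ (a unimodular basis for $\Psi$), this gives $\H(\log P_n)(0) = \det[-\tau(e_i,e_j)] = (-1)^{n}\det[\tau(e_i,e_j)]$, and hence $\kc = (-1)^{n+1}\det[\tau(e_i,e_j)]$.

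It then remains to compute $\det[\tau(e_i,e_j)]$ from the explicit formula $\tau(x,y) = n^{2}x_{n}y_{n} + n\sum_{1\le j<n}x_{n-j}y_{j}$. In the basis $e_1,\dots,e_n$ the matrix of $\tau$ is block diagonal: the $(n-1)\times(n-1)$ upper-left block (indices $1,\dots,n-1$) equals $n$ times the anti-diagonal reversal permutation matrix $J_{n-1}$, the entry $\tau(e_n,e_n)$ equals $n^{2}$, and all cross terms $\tau(e_i,e_n)$ with $i<n$ vanish. Hence
\begin{align}
\det[\tau(e_i,e_j)] = n^{\,n-1}\det(J_{n-1})\cdot n^{2} = n^{\,n+1}(-1)^{(n-1)(n-2)/2},
\end{align}
since the reversal of $n-1$ letters has $\binom{n-1}{2} = (n-1)(n-2)/2$ inversions.

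Finally I would combine these to get $\kc = (-1)^{\,n+1+(n-1)(n-2)/2}\,n^{n+1}$ and reconcile the exponent with the target. I do not expect any serious obstacle here: the only step needing care is the parity bookkeeping. Since $n(n-1)/2 - (n-1)(n-2)/2 = n-1$, the two exponents satisfy $\bigl(n+1 + (n-1)(n-2)/2\bigr) - n(n-1)/2 = (n+1) - (n-1) = 2$, so they have the same parity and $(-1)^{\,n+1+(n-1)(n-2)/2} = (-1)^{n(n-1)/2}$. This yields $\kc = (-1)^{n(n-1)/2}n^{n+1}$, establishing \eqref{hepn}. (As a sanity check one may confirm $n=1,2,3$ give $\kc = 1, -8, -81$.)
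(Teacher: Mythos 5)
Your proof is correct, and it takes a genuinely different route from the paper's. The paper proves \eqref{hepn} by a direct frame computation: with $E_{i} = (I + R(x))e_{i}$ for $i < n$ and $E_{n} = \pr_{n}$, the relations $\pr_{E_{i}}E_{j} = E_{i+j}$ (zero if $i+j > n$) and $dP_{n}(E_{i}) = 0$ for $i < n$ show that $\hess P_{n} + dP_{n}\tensor dP_{n}$ has, in this unimodular frame, entries $-n$ on the anti-diagonal $i+j=n$, the entry $n^{2}$ in the $(n,n)$ slot, and zeros elsewhere; taking the determinant gives $(-1)^{n(n-1)/2}n^{n+1}\Psi^{2}$ at every point, and Lemma \ref{improperlemma} then converts this into \eqref{hepn}, with no appeal to Theorem \ref{lsacptheorem}. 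You instead obtain the existence of a nonzero constant $\kc$ from the general machinery (Lemma \ref{incompletelsalemma}\eqref{rclsa10}, or Theorem \ref{lsacptheorem} plus \eqref{rclsa6}; the example has indeed verified all hypotheses), and then pin $\kc$ down by a single pointwise evaluation at the origin, using $(\hess \log P_{n})_{0} = -\tau$ from \eqref{hesslogp} and $\H(\log P_{n})(0) = -\kc$, which reduces the whole lemma to computing $\det[\tau(e_{i},e_{j})]$; your block computation of that determinant and the parity bookkeeping are correct, as are the check values $1, -8, -81$ for $n = 1, 2, 3$. What each approach buys: the paper's computation is self-contained for this family and independently re-derives the constancy of $e^{-nP_{n}}\H(e^{P_{n}})$, so it serves as a concrete check on the general theory; your argument is shorter modulo the quoted results and isolates a reusable principle, namely that for any LSA satisfying the hypotheses of Theorem \ref{lsacptheorem} with nondegenerate trace form, the constant is $\kc = (-1)^{n+1}\det[\tau(e_{i},e_{j})]$ in any $\Psi$-unimodular basis, so that the constant in \eqref{hepn} is nothing but the determinant (signature) of the trace form. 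One small caution: the identity you quote from the proof of Theorem \ref{lsacptheorem} should read $P^{n+1}\H(\log P) = -e^{-nP}\H(e^{P})$; the stated power $P^{n}$ drops the factor $P^{-1}$ arising from $\hess \log P = P^{-1}\hess P - P^{-2}dP\tensor dP$ via Lemma \ref{twisteddetlemma}. Since you evaluate only at $x = 0$, where $P_{n}(0) = 1$, this discrepancy does not affect your argument.
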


\begin{proof}
Define $E_{i} = (I + R(x))e_{i} = \pr_{i} + \sum_{j = i+1}^{n}x_{j-i}\pr_{j}$ for $1 \leq i \leq n-1$ and $E_{n} = \pr_{n}$. (Note that $E_{n}$ is not equal to $(I + R(x))e_{n}$.)
These vector fields satisfy $\pr_{E_{i}}E_{j} = E_{i+j}$ if $i + j \leq n$ and $\pr_{E_{i}}E_{j} = 0$ if $i + j> n$. By \eqref{cayleyrecursion}, $dP_{n}(E_{n}) = -n$. Since $dP_{n}(E_{i}) = 0$ if $i < n$, there results
\begin{align}
\left(\hess P_{n} + dP_{n} \tensor dP_{n}\right)(E_{i}, E_{j}) = \begin{cases}
-n & \text{if}\,\, i + j = n,\\
n^{2} & \text{if}\,\, i=n= j,\\
0 & \text{if} \,\, i+j \neq n,\, i+j < 2n,
\end{cases}
\end{align}
from which it follows that $\det(\hess P_{n} + dP_{n} \tensor dP_{n}) = (-1)^{n(n-1)/2}n^{n+1}\Psi^{2}$, where $\Psi$ is the standard volume form such that $\Psi(E_{1}, \dots, E_{n}) = 1$. The claim \eqref{hepn} follows from Lemma \ref{improperlemma}.
\end{proof}

Here is an alternative description of $\fili_{n-1}$. Consider an $n$-dimensional real vector space $\ste$ with basis $\{\ep_{1}, \dots, \ep_{n}\}$. Let $\{\ep^{1}, \dots, \ep^{n}\}$ be the dual basis of $\std$. View $\ep_{i}\,^{j} = \ep_{i}\tensor \ep^{j}$ as an element of $\eno(\ste)$, such that $\ep_{i}\,^{j}(x) = x^{j}\ep_{i}$ for $x = \sum_{i = 1}^{n}x^{p}\ep_{p}$. Let $\eno_{0}(\ste)$ be the subspace of trace-free endomorphisms. The associative algebra structure on $\eno(\ste)$ is given via composition by $\ep_{i}\,^{j}\ep_{k}\,^{l} = \delta_{k}\,^{j}\ep_{i}\,^{l}$, where $\delta_{k}\,^{j}$ is the Kronecker delta. The associated Lie bracket is the usual commutator of endomorphisms. The Lie centralizer $\cent(J)$ in $\eno_{0}(\ste)$ of the nilpotent endomorphism $J = \sum_{i = 1}^{n}\ep_{i}\,^{i+1}$ is the $(n-1)$-dimensional subspace generated by the nontrivial powers of $J$. The map $P:\fili_{n-1} \to \cent(J)$ associating with $x = \sum_{i = 1}^{n-1}x_{i}e_{i} \in \fili_{n-1}$ the polynomial $P(x) = \sum_{i = 1}^{n-1}x_{i}J^{i}$ is a linear isomorphism, and it is straightforward to check that $P(x)P(y) = P(x \circ y)$, where juxtaposition of elements of $\eno(\ste)$ indicates composition. 
Let $E = \sum_{i = 1}^{n}\tfrac{n+1 - 2i}{2n}\ep_{i}\,^{i} \in \eno_{0}(\ste)$ ($E$ is the sum of the standard positive roots of $\eno_{0}(\ste)$, divided by $2n$). Then $[E, J^{k}] = \tfrac{k}{n}J^{k}$, so $D = \ad(E)$ is the derivation of $\cent(J)$ corresponding to $L(r)$ in the sense that $\ad(E)P(x) = P(L(r)x)$. 

Define $\fili_{\infty}$ to be the vector space of infinite sequences $(x_{1}, x_{2}, \dots)$. It is straightforward to check that the multiplication $\circ$ defined on $\fili_{\infty}$ by
\begin{align}\label{filii}
(x \circ y)_{i} = \sum_{1 \leq j < i}x_{i-j}y_{j}
\end{align}
is associative (so left-symmetric) and commutative. The subspaces $\fili_{\infty}^{k} = \{x \in \fili_{\infty}: x_{i} = 0 \,\,\text{if}\,\, i \leq k\}$ constitute a decreasing filtration of $\fili_{\infty}$. If $x \in \fili_{\infty}^{p}$ and $y \in \fili_{\infty}^{q}$ and $i \leq p + q$, then for a summand in \eqref{filii} to be nonzero it must be that $i-j > p$ and $j > q$. These inequalities together yield the vacuous condition $q < j < i - p \leq q$. This shows that $\fili_{\infty}^{p}\circ \fili_{\infty}^{q} \subset \fili_{\infty}^{p+q}$. Define $\pi_{n}:\fili_{\infty} \to \fili_{n}$ to be the projection onto the first $n$ components, so that $\ker \pi_{n} = \fili_{\infty}^{n}$. The preceding implies that $\pi_{n}(x \circ y) = \pi_{n}(x) \circ \pi_{n}(y)$, so that $\pi_{n}$ is a left-symmetric homomorphism. Define $\la_{n}:\fili_{\infty} \to \rea$ by $\la_{n}(x) = x_{n}$. It is claimed that the metric $h$ on $\fili_{n-1}$ is defined by $h(x, y) = \la_{n}(\bar{x} \circ \bar{y})$ where $\bar{x}$ and $\bar{y}$ are any elements of $\fili_{\infty}$ such that $\pi_{n-1}(\bar{x}) = x$ and $\pi_{n-1}(\bar{y}) = y$. If $\pi_{n-1}(\hat{x}) = x$, then $\hat{x} - \bar{x} \in \fili_{\infty}^{n-1}$, and so its product with $\bar{y}$ is contained in $\fili_{\infty}^{n}$. Consequently, $\la_{n}(\bar{x}\circ \bar{y})$ does not depend on the choice of $\bar{x}$ and $\bar{y}$. It is given explicitly by \eqref{fn2}, and this shows that it is nondegenerate. 

The space $(\fili_{\infty}, \circ)$ can be viewed as the space of formal power series $\sum_{i \geq 1}x_{i}t^{i}$ with no constant term with its usual multiplication. It is straightforward to check that the formal Euler operator $E:\fili_{\infty} \to \fili_{\infty}$ defined by $E(x)_{i} = ix_{i}$ is a derivation of $\circ$, that is $E(x\circ y) = E(x)\circ y + x \circ E(y)$.
\end{example}

\begin{example}
Here is given an example of a $6$-dimensional incomplete LSA $(\alg, \mlt)$ with nondegenerate trace form and satisfying the conditions of Theorem \ref{triangularizabletheorem}. With respect to the standard basis on $\alg = \rea^{6}$ the left and right multiplication operators are given by
\begin{small}
\begin{align}
&L(x) = \begin{pmatrix} 
\tfrac{1}{4}x_{6} & 0 & 0 & 0 & 0 & 0\\
0 & \tfrac{1}{4}x_{6} & 0 & 0 & 0 & 0\\    
0 & 0 & \tfrac{1}{2}x_{6} & 0 & 0 & 0\\
0 & 0 & x_{2} & \tfrac{3}{4}x_{6} & 0 & 0\\
x_{3} & x_{3} & x_{1} + 2x_{2}& 0 & \tfrac{3}{4}x_{6} & 0 \\
6x_{4} & 6x_{5} & 6x_{3} & 6x_{1} & 6x_{2} & x_{6}
\end{pmatrix},&&
&R(x) = \begin{pmatrix} 
0 & 0 & 0 & 0 & 0 & \tfrac{1}{4}x_{1}\\
0 & 0 & 0 & 0 & 0 & \tfrac{1}{4}x_{2}\\    
0 & 0 & 0 & 0 & 0 & \tfrac{1}{2}x_{3}\\
0 & x_{3} & 0 & 0 & 0 & \tfrac{3}{4}x_{4}\\
x_{3} & 2x_{3} & x_{1} + x_{2}& 0 & 0 & \tfrac{3}{4}x_{5} \\
6x_{4} & 6x_{5} & 6x_{3} & 6x_{1} & 6x_{2} & x_{6}
\end{pmatrix}.
\end{align}
\end{small}
That this defines an LSA is tedious but straightforward. The trace form is
\begin{align}
\tau(x, y) = 6(x_{1}y_{4} + x_{4}y_{1} + x_{2}y_{5} + x_{5}y_{2} + x_{3}y_{3}) + x_{6}y_{6}.
\end{align}
The derived algebra $[\alg, \alg] = \ker \tr R = \ker R(r)$ has codimension one and is nilpotent but not abelian, for $[[\alg, \alg], [\alg, \alg]]$ is a two-dimensional abelian subalgebra. The characteristic polynomial is
\begin{align}
P(x) = 6x_1x_2x_3 + 6x_2^2x_3 - 6x_1x_4 - 6x_2x_5 - 3x_3^2 + x_6 + 1,
\end{align}
and $p(x_{1}, \dots, x_{5}) = P(x) - 1 - 6x_{6}$ solves $\H(p) = - 6^{5}$. By Lemma \ref{incompletelsalemma} the level sets of $P$ are improper affine spheres with affine normals parallel to $\pr_{x_{6}}$.
\end{example}

\begin{example}\label{negeigexample}
Here is described a class of examples of incomplete LSAs $(\alg, \mlt)$ having nondegenerate trace forms, satisfying the conditions of Theorem \ref{triangularizabletheorem}, and such that $L(r)$ has various interesting properties, such as a negative eigenvalue or a nontrivial Jordan block.

Consider a trivial LSA $(\balg, \circ)$. Any metric $h$ is necessarily Hessian, and any endomorphism $D \in \eno(\balg)$ is a derivation. Any invertible endomorphism of $\balg$ is an automorphism of $\circ$, so modulo automorphisms of $(\balg, \circ)$ it can be assumed that $D$ has its real Jordan normal form. Supposing a particular relation between $D$ and $h$ leads to the following examples.

 Work with respect to the standard basis $e_{1}, \dots, e_{n+1}$ of $\alg = \rea^{n+1}$ and write $x = \bar{x} + \hat{x}e_{n+1}$ where $\bar{x} = x_{1}e_{1} + \dots + x_{n}e_{n}$. Let $\balg$ be the subspace of $\alg$ spanned by $e_{1}, \dots, e_{n}$. Define $J \in \eno(\balg)$ by $J(e_{n+1-i}) = e_{i}$. Let $D \in \eno(\balg)$ be diagonal with respect to the standard basis and such that the diagonal entries $d_{i}$ defined by $D(e_{i}) = d_{i}e_{i}$ satisfy $d_{n+1-i} = 1- d_{i}$. This is equivalent to the relation $JD + DJ = J$. Let $N\in \eno(\balg)$ be a nilpotent endomorphism of $\balg$, strictly lower triangular with respect to the standard basis, and satisfying $[D, N] = 0$ and $N^{t}J = -JN$, where the transposed endomorphism $N^{t}$ is defined using the Euclidean structure on $\balg$ for which the standard basis is orthonormal. Examples showing that there exist $J$, $D$, and $N$ satisfying the conditions $DJ + JD = J$, $[D, N] = 0$, and $N^{t}J + JN = 0$ are given below. The conditions $DJ + JD = J$ and $N^{t}J + JN = 0$ correspond to requiring that the derivation of the trivial LSA structure on $\balg$ having matrix $D + N$ with respect to the standard basis be compatible with $h$. Using the mentioned Euclidean structure, $x \in \alg$ is identified with a column vector with components $x_{i}$. Let $\al$ be a nonzero real number. The left and right multiplication operators are defined by
\begin{align}
&L(x) = \begin{pmatrix*}[c] \hat{x}(D + N) & 0 \\ \al\bar{x}^{t}J & \hat{x}\end{pmatrix*},& &R(x) = \begin{pmatrix*}[c] 0 & (D + N)\bar{x} \\ \al\bar{x}^{t}J & \hat{x}\end{pmatrix*}.
\end{align}
That this defines an LSA is straightforward using the identities $DJ + JD = J$, $[D, N] = 0$, and $N^{t}J = -JN$. The multiplication and underlying Lie bracket are given explicitly by 
\begin{align}\label{dnjex}
&x \mlt y = \begin{pmatrix*}[c]\hat{x}(D+N)\bar{y} \\ \al\bar{x}^{t}J\bar{y} + \hat{x}\hat{y}\end{pmatrix*},& &[x, y] = \begin{pmatrix*}[c]\hat{x}(D+N)\bar{y}  - \hat{y}(D+N)\bar{x}\\ 0\end{pmatrix*}
\end{align}
The trace form is 
\begin{align}
\tau(x, y) = \hat{x}\hat{y} + \al \bar{x}^{t}J\bar{y}  = x_{n+1}y_{n+1} + \al\sum_{i = 1}^{n}x_{n+1-i}y_{i},
\end{align}
which is evidently nondegenerate. 
The induced multiplication $\circ$ on $\balg$ is trivial. The characteristic polynomial is $P(x) = 1 + \hat{x} - \al \bar{x}^{t}JD\bar{x}$. Note that neither $\tau$ nor $P$ depends on the choice of $N$. 

The derived algebra $[\alg, \alg]$ is abelian. If $D$ is invertible then, because $D$ and $N$ commute, $D^{-1}N$ is nilpotent, so $D + N = D(I + D^{-1}N)$ is invertible and it is apparent from \eqref{dnjex} that $[\alg, \alg]$ has codimension one in $\alg$. However, if $D$ is not invertible, then it can be that $[\alg, \alg]$ has codimension greater than one in $\alg$, as occurs for example for the LSA given by 
\begin{align}
& J = \begin{pmatrix*}[r]0  & 1 \\ 1 & 0 \end{pmatrix*},&& D = \begin{pmatrix*}[r]1  & 0 \\ 0 & 0 \end{pmatrix*},&
\end{align}
$N = 0$, and $\al = 1$.

A simple special case of the preceding showing that $\Pi(\alg, L(r))$ can contain nonpositive numbers and irrational numbers is the following example of dimension $n+1 = 4$. For any $\si \in \rea$, define
\begin{align}
& J = \begin{pmatrix*}[r]0 & 0 & 1 \\ 0 & 1 & 0  \\ 1 & 0 & 0 \end{pmatrix*},&&  D = \begin{pmatrix*}[r]  \si & 0 & 0  \\ 0 & 1/2 & 0  \\ 0 & 0 & 1- \si \end{pmatrix*},
\end{align}
and $N = 0$.
With respect to the standard basis on $\alg = \rea^{4}$ the left and right multiplication operators are
\begin{align}
&L(x) = \begin{pmatrix} 
\si x_{4} & 0 & 0 & 0 \\
0 & \tfrac{1}{2}x_{4} & 0 & 0 \\    
0 & 0 & (1-\si)x_{4} & 0 \\
x_{3} & x_{2} & x_{1} & x_{4}
\end{pmatrix},&&
&R(x) =  \begin{pmatrix} 
0 & 0 & 0 & \si x_{1} \\
0 & 0 & 0 & \tfrac{1}{2}x_{2} \\    
0 & 0 & 0 & (1-\si)x_{3} \\
x_{3} & x_{2} & x_{1} & x_{4}
\end{pmatrix}.
\end{align}
The trace form is
\begin{align}
\tau(x, y) = x_{1}y_{3} + x_{3}y_{1} + x_{2}y_{2} + x_{4}y_{4}.
\end{align}
The derived algebra $[\alg, \alg]$ is abelian and, if $\si \notin \{0, 1\}$, it has codimension one and equals $\ker \tr R = \ker R(r)$. The characteristic polynomial is $P(x) = 1 + x_{4} - x_{1}x_{3} - \tfrac{1}{2}x_{2}^{2}$. 

A simple special case of the preceding showing that $L(r)$ need not be diagonalizable is the following example of dimension $n+1 = 4$. Define
\begin{align}
& J = \begin{pmatrix*}[r]0 & 0 & 1 \\ 0 & 1 & 0  \\ 1 & 0 & 0 \end{pmatrix*},&&  N = \begin{pmatrix*}[r]  0 & 0 & 0  \\ t & 0 & 0  \\ 0 & -t & 0 \end{pmatrix*},
\end{align}
where $t \in \rea$, and let $D = (1/2)I$. The left and right multiplication operators are
\begin{align}
&L(x) = \begin{pmatrix*}[c] \tfrac{1}{2}x_{4}I + x_{4}N & 0 \\ \bar{x}^{t}J & x_{4}\end{pmatrix*},& &R(x) = \begin{pmatrix*}[c] 0 & \tfrac{1}{2}\bar{x} + N\bar{x} \\ \bar{x}^{t}J & x_{4}\end{pmatrix*}.
\end{align}
That this defines an LSA is straightforward using the identity $N^{t}J = -JN$. The trace form is 
\begin{align}
\tau(x, y) = \bar{x}^{t}J\bar{y} + x_{4}y_{4} = x_{1}y_{3} + x_{3}y_{1} + x_{2}y_{2} + x_{4}y_{4},
\end{align}
which is evidently nondegenerate. 
The derived algebra $[\alg, \alg] = \ker \tr R = \ker R(r)$ has codimension one and is abelian. The characteristic polynomial is $P(x) = 1 + x_{4} -x_{1}x_{3} - \tfrac{1}{2}x_{2}^{2}$. 
\end{example}

In the preceding examples $\tau$ and $P$ are the same, although the underlying LSAs $(\alg, \mlt)$ are not isomorphic because they have different $\Pi(\alg, L(r))$. This shows that a triangularizable LSA having nondegenerate trace form and codimension one derived Lie algebra is not determined up to isomorphism by its characteristic polynomial. On the other hand, the LSAs $(\balg, \circ)$ are in all cases trivial, so isomorphic.

\bibliographystyle{amsplain}
\def\polhk#1{\setbox0=\hbox{#1}{\ooalign{\hidewidth
  \lower1.5ex\hbox{`}\hidewidth\crcr\unhbox0}}} \def\cprime{$'$}
  \def\cprime{$'$} \def\cprime{$'$}
  \def\polhk#1{\setbox0=\hbox{#1}{\ooalign{\hidewidth
  \lower1.5ex\hbox{`}\hidewidth\crcr\unhbox0}}} \def\cprime{$'$}
  \def\cprime{$'$} \def\cprime{$'$} \def\cprime{$'$} \def\cprime{$'$}
  \def\cprime{$'$} \def\polhk#1{\setbox0=\hbox{#1}{\ooalign{\hidewidth
  \lower1.5ex\hbox{`}\hidewidth\crcr\unhbox0}}} \def\cprime{$'$}
  \def\Dbar{\leavevmode\lower.6ex\hbox to 0pt{\hskip-.23ex \accent"16\hss}D}
  \def\cprime{$'$} \def\cprime{$'$} \def\cprime{$'$} \def\cprime{$'$}
  \def\cprime{$'$} \def\cprime{$'$} \def\cprime{$'$} \def\cprime{$'$}
  \def\cprime{$'$} \def\cprime{$'$} \def\cprime{$'$} \def\dbar{\leavevmode\hbox
  to 0pt{\hskip.2ex \accent"16\hss}d} \def\cprime{$'$} \def\cprime{$'$}
  \def\cprime{$'$} \def\cprime{$'$} \def\cprime{$'$} \def\cprime{$'$}
  \def\cprime{$'$} \def\cprime{$'$} \def\cprime{$'$} \def\cprime{$'$}
  \def\cprime{$'$} \def\cprime{$'$} \def\cprime{$'$} \def\cprime{$'$}
  \def\cprime{$'$} \def\cprime{$'$} \def\cprime{$'$} \def\cprime{$'$}
  \def\cprime{$'$} \def\cprime{$'$} \def\cprime{$'$} \def\cprime{$'$}
  \def\cprime{$'$} \def\cprime{$'$} \def\cprime{$'$} \def\cprime{$'$}
  \def\cprime{$'$} \def\cprime{$'$} \def\cprime{$'$} \def\cprime{$'$}
  \def\cprime{$'$} \def\cprime{$'$} \def\cprime{$'$} \def\cprime{$'$}
  \def\cprime{$'$} \def\cprime{$'$}
\providecommand{\bysame}{\leavevmode\hbox to3em{\hrulefill}\thinspace}
\providecommand{\MR}{\relax\ifhmode\unskip\space\fi MR }
\providecommand{\MRhref}[2]{%
  \href{http://www.ams.org/mathscinet-getitem?mr=#1}{#2}
}
\providecommand{\href}[2]{#2}

\end{document}